\theoremstyle{plain}
\newtheorem{lemma}{Lemma}[section]
\newtheorem{theorem}[lemma]{Theorem}
\newtheorem{corollary}[lemma]{Corollary}
\newtheorem{proposition}[lemma]{Proposition}
\theoremstyle{remark}
\newtheorem*{remark}{Remark}
\newtheorem{definition}[lemma]{Definition}
\newcommand{\RR}{\mathbb{R}}
\newcommand{\C}{\mathbb{C}}
\newcommand{\N}{\mathbb{N}}
\newcommand{\F}{\mathcal{F}}
\newcommand{\Sym}[1]{\mathfrak{S}_{#1}}
\newcommand{\CCC}{\mathcal{C}}
\newcommand{\HHH}{\mathcal{H}}
\newcommand{\Z}{\mathbb{Z}}
\newcommand{\II}{\mathbb{I}}
\newcommand{\OO}{\mathbb{O}}
\newcommand{\QQ}{\mathbb{Q}}
\newcommand{\PP}{\mathbb{P}}
\newcommand{\Pola}{\Lambda_\star^{(\alpha)}}
\newcommand{\Polun}{\Lambda_\star^{(1)}}
\newcommand{\Pol}{\Lambda_\star}
\newcommand{\Polaext}{\left(\Lambda_\star^{(\alpha)}\right)^{\ext}}
\newcommand{\Poluext}{\left(\Lambda_\star^{(1)}\right)^{\ext}}
\newcommand{\lo}{\lambda^{(i)}}
\newcommand{\esper}{\mathbb{E}}
\newcommand{\cl}{\mathcal{C}}
\newcommand{\Young}{\mathcal{Y}}
\DeclareMathOperator{\Tr}{Tr}
\DeclareMathOperator{\Var}{Var}
\DeclareMathOperator{\id}{id}
\DeclareMathOperator{\Id}{Id}
\DeclareMathOperator{\Ch}{Ch}
\DeclareMathOperator{\ext}{ext}
\newcommand{\m}{\mathfrak{h}}
\author[M.~Dołęga]{Maciej Dołęga}
 \address{LIAFA, Universit\'e Paris 7, Case 7014, 75205 Paris Cedex 13,
France \newline \indent Instytut Matematyczny,
Uniwersytet Wrocławski,  \mbox{pl.\ Grunwaldzki~2/4,} 50-384
Wrocław, Poland}
\email{dolega@liafa.univ-paris-diderot.fr}
\author[V.~Féray]{Valentin Féray}
\address{Institut für Mathematik, Universität Zürich, Winterthurerstrasse 190, 8057 Zürich, Switzerland}
 \email{valentin.feray@math.uzh.ch}
 \thanks{
MD has been supported by the grant of National Center of Sciences 2011/03/N/ST1/00117 and by ANR project CARTAPLUS ANR 12-JS02-001-01.
VF has been partially supported by ANR project PSYCO ANR-11-JS02-001 and by SNSF grant ``Dual combinatorics of Jack polynomials''.}
\keywords{random partitions; Jack measure; bulk fluctuations; Jack polynomials; polynomial functions on Young diagrams; Stein's method}
\subjclass[2010]{60C05, 05E05 (secondary:60B20).}
\title[Large diagrams and structure constants]
{Gaussian fluctuations of Young diagrams and structure constants of Jack characters}
\begin{document}

\maketitle

\begin{abstract}
In this paper, we consider a deformation of Plancherel measure linked to Jack polynomials.
Our main result is the description of the first and second-order asymptotics of the bulk of a random Young diagram under this distribution, which extends celebrated results of Vershik-Kerov and Logan-Shepp (for the first order asymptotics) and Kerov (for the second order asymptotics).
This gives more evidence of the connection with Gaussian $\beta$-ensemble, 
already suggested by some work of Matsumoto.

Our main tool is a polynomiality result for the structure constant of
some quantities that we call {\em Jack characters}, recently introduced by Lassalle.
We believe that this result is also interested in itself and 
we give several other applications of it.
\end{abstract}

\section{Introduction}

\subsection{Jack deformation of Plancherel measure and random matrices}
\label{subsect:JackPlancherel}

Random partitions occur in mathematics and physics in a wide variety of contexts,
in particular in the Gromov-Witten and Seiberg-Witten theories,
see \cite{OkounkovRandomPartitions} for an introduction to the subject.
Another aspect which makes them attractive
is the link with random matrices.
Namely, some classical models of random matrices have random partition counterparts,
which display the same kind of asymptotic behaviour.

In this paper, we consider the following probability measure
on the set of partitions (or equivalently, Young diagrams) of size $n$:
\begin{equation}
\label{eq:JackMeasure}
\PP_n^{(\alpha)} ( \lambda ) = \frac{\alpha^n n!}{j_\lambda^{(\alpha)}},
\end{equation}
where $j_\lambda^{(\alpha)}$ is a deformation of the square of the
hook products:
\[j_\lambda^{(\alpha)} = \prod_{\Box \in \lambda} \bigg(\alpha a(\Box) + \ell(\Box)
+1\bigg)
\bigg(\alpha a(\Box) + \ell(\Box)+\alpha\bigg).\]
Here, $a(\Box) := \lambda_j - i$ and $\ell(\Box) := \lambda_i' - j$ are respectively the arm and leg length
of the box $\Box = (i,j)$ (the same definitions as in \cite[Chapter I]{Macdonald1995}).

When $\alpha=1$, the measure $\PP_n^{(\alpha)}$ specializes to the well-known \emph{Plancherel measure}
for the symmetric groups.
In general, it is called {\em Jack deformation of Plancherel measure} (or Jack measure for short),
because of its connection with the celebrated Jack polynomials that we shall explain later.
It has appeared recently in
several research papers \cite{BorodinOlshanskiJackZMeasure,FulmanFluctuationChA2,
MatsumotoJackPlancherelFewRows,OlshanskiJackPlancherel,MatsumotoOddJM}
and it is presented as an important area of research in Okounkov's survey on 
random partitions \cite[Section 3.3]{OkounkovRandomPartitions}.

Recall that Plancherel measure has a combinatorial interpretation:
it is the push-forward of the uniform measure on permutations
by Robinson-Schensted algorithm (we keep only the common shape of the
tableaux in the output of RS algorithm).
A similar description holds for Jack measure for $\alpha=2$
(and $\alpha=1/2$ by symmetry): it is the push-forward of
the uniform measure on fixed point free involutions by RS algorithm
(in this case, the resulting diagram has always even parts
and we divide each part by 2) -- see \cite[Section 3]{MatsumotoJackPlancherelFewRows}.

Thus Jack measure can be considered as an interpolation between 
these two combinatorially relevant models of random partitions.

\subsubsection{$\alpha = 1$ case: Plancherel measure and GUE ensemble}

There is a strong connection between Plancherel measure
and the \emph{Gaussian unitary ensemble} (called \emph{GUE}) in random matrix theory.
The Gaussian unitary ensemble is a random Hermitian matrix with independent normal entries.
The probability density function for the eigenvalues of that matrix (of the size $d \times d$) is proportional to the weight
\begin{equation}
\label{eq:BetaDensity}
e^{-\beta/2 \sum x_i^2} \prod_{i < j \leq d}(x_i-x_j)^\beta
\end{equation}
with $\beta = 2$ (see \cite{AndersonGuionnetZeitouni2010}).
Consider the scaled spectral measure of the GUE ensemble
\[ \mu^{(2)}_d := \frac{1}{d}\left(\delta_{x_1} + \cdots + \delta_{x_d}\right),\]
where $x_1 \geq \dots \geq x_d$ are eigenvalues of our random matrix
and $\delta$ is the notation for Dirac operator.
Then the famous \emph{Wigner law} states that, as $d \to \infty$, the spectral measure tends almost surely to a \emph{semicircular law}, i.e. to a probability measure $\mu_{S-C} := \frac{\sqrt{4-x^2}}{2\pi}1_{[-2,2]}(x)dx$ 
supported on the interval $[-2,2]$ (see \cite{AndersonGuionnetZeitouni2010}).
The second order asymptotics is also known and one can observe Gaussian fluctuations around the limiting distribution (see \cite{Johansson1998}).
Informally speaking, looking at the scaled spectral measure of GUE as a generalized function
\[ \mu^{(2)}_d(x) = \frac{1}{d}\left(\delta_{x-x_1} + \cdots + \delta_{x-x_d}\right),\]
we have that
\[ \mu^{(2)}_d(x) \sim \mu_{S-C}(x) + \frac{1}{d}\widetilde{\Delta}^{(2)}(x),\]
as $d \to \infty$, where $\tilde{\Delta^{(2)}}(x)$ is an explicit Gaussian process on the interval $[-2,2]$
with values in the space of generalized functions $(\C^\infty(\RR))'$.

\begin{figure}[tb]
    \begin{tikzpicture}

\begin{scope}[scale=0.5/sqrt(2),rotate=-45,draw=gray]

      \begin{scope}
          \clip[rotate=45] (-2,-2) rectangle (7.5,6.5);
          \draw[thin, dotted, draw=gray] (-10,0) grid (10,10);
          \begin{scope}[rotate=45,draw=black,scale=sqrt(2)]
              \draw[thin, dotted] (0,0) grid (15,15);
          \end{scope}
      \end{scope}

      \draw[->,thick] (-4.5,0) -- (4.5,0)
node[anchor=west,rotate=-45]{\textcolor{gray}{$z$}};
      \foreach \z in { -3, -2, -1, 1, 2, 3}
            { \draw (\z, -2pt) node[anchor=north,rotate=-45]
{\textcolor{gray}{\tiny{$\z$}}} -- (\z, 2pt); }

      \draw[->,thick] (0,-0.4) -- (0,9.5)
node[anchor=south,rotate=-45]{\textcolor{gray}{$t$}};
      \foreach \t in {1, 2, 3, 4, 5, 6, 7, 8, 9}
            { \draw (-2pt,\t) node[anchor=east,rotate=-45]
{\textcolor{gray}{\tiny{$\t$}}} -- (2pt,\t); }

      \begin{scope}[draw=black,rotate=45,scale=sqrt(2)]

          \draw[->,thick] (0,0) -- (6,0) node[anchor=west]{{{$x$}}};
          \foreach \x in {1, 2, 3, 4, 5}
              { \draw (\x, -2pt) node[anchor=north] {{\tiny{$\x$}}} -- (\x,
2pt); }

          \draw[->,thick] (0,0) -- (0,5) node[anchor=south] {{{$y$}}};
          \foreach \y in {1, 2, 3, 4}
              { \draw (-2pt,\y) node[anchor=east] {{\tiny{$\y$}}} -- (2pt,\y); }

          \draw[ultra thick,draw=black] (5.5,0) -- (4,0) -- (4,1) -- (3,1) --
(3,2) -- (1,2) -- (1,3) -- (0,3) -- (0,4.5) ;
          \fill[fill=gray,opacity=0.1] (4,0) -- (4,1) -- (3,1) -- (3,2) -- (1,2)
-- (1,3) -- (0,3) -- (0,0) -- cycle ;

      \end{scope}
 
\end{scope}

\begin{scope}[xshift=7cm, yshift=-0.5cm, scale=0.5]
       \begin{scope}
          \clip (-4.5,0) rectangle (5.5,5.5);
          \draw[thin, dotted] (-6,0) grid (6,6);
          \begin{scope}[rotate=45,draw=gray,scale=sqrt(2)]
              \clip (0,0) rectangle (4.5,5.5);
              \draw[thin, dotted] (0,0) grid (6,6);
          \end{scope}
      \end{scope}

      \draw[->,thick] (-6,0) -- (6,0) node[anchor=west]{$z$};
      \foreach \z in {-5, -4, -3, -2, -1, 1, 2, 3, 4, 5}
            { \draw (\z, -2pt) node[anchor=north] {\tiny{$\z$}} -- (\z, 2pt); }

      \draw[->,thick] (0,-0.4) -- (0,6) node[anchor=south]{$t$};
      \foreach \t in {1, 2, 3, 4, 5}
            { \draw (-2pt,\t) node[anchor=east] {\tiny{$\t$}} -- (2pt,\t); }

\begin{scope}[draw=gray,rotate=45,scale=sqrt(2)]

          \draw[->,thick] (0,0) -- (6,0) node[anchor=west,rotate=45]
{\textcolor{gray}{{$x$}}};
          \foreach \x in {1, 2, 3, 4, 5}
              { \draw (\x, -2pt) node[anchor=north,rotate=45]
{\textcolor{gray}{\tiny{$\x$}}} -- (\x, 2pt); }

          \draw[->,thick] (0,0) -- (0,5) node[anchor=south,rotate=45]
{\textcolor{gray}{{$y$}}};
          \foreach \y in {1, 2, 3, 4}
              { \draw (-2pt,\y) node[anchor=east,rotate=45]
{\textcolor{gray}{\tiny{$\y$}}} -- (2pt,\y); }

          \draw[ultra thick,draw=black] (5.5,0) -- (4,0) -- (4,1) -- (3,1) --
(3,2) -- (1,2) -- (1,3) -- (0,3) -- (0,4.5) ;
          \fill[fill=gray,opacity=0.1] (4,0) -- (4,1) -- (3,1) -- (3,2) -- (1,2)
-- (1,3) -- (0,3) -- (0,0) -- cycle ;

      \end{scope}
\end{scope}

    \end{tikzpicture}

    \caption{Young diagram $\lambda=(4,3,1)$
    and the graph of the associated function $\omega(\lambda)$.}

    \label{FigOmega}
\end{figure}
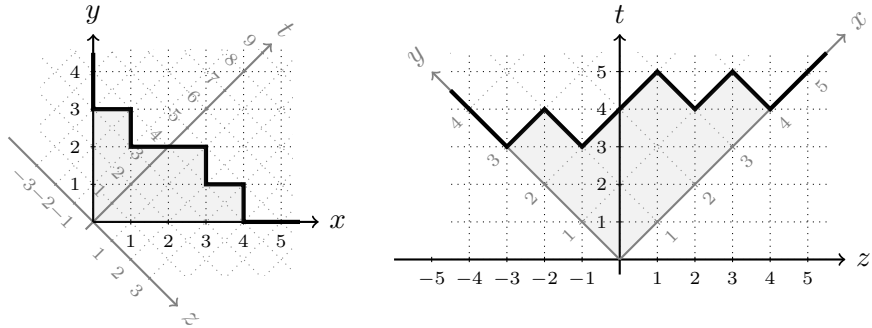

It was also discovered that a similar phenomenon holds for the Plancherel measure.
The first order asymptotics for the Plancherel measure was found by Vershik and Kerov \cite{KerovVershikLimitYD} and, independently, by Logan and Shepp \cite{LoganShepp1977}.
They noticed that a Young diagram $\lambda$ can be encoded by a continuous piecewise-affine function $\omega(\lambda)$ from $\RR$ to $\RR$:
this encoding is represented in Figure \ref{FigOmega} and formally defined in Section \ref{SubsectContinuousYD}.
Then they proved that for appropriately scaled Young diagrams $\overline{\lambda_{(n)}}$ of size $n$
distributed according to the Plancherel measure (the bar encodes the scaling),
one has the convergence in probability
\[ \sup\limits_{x \in \RR} \left|\omega({\overline{\lambda_{(n)}}})(x) - \Omega(x) \right| \stackrel{(P)}{\longrightarrow} 0\]
where the limit shape $\Omega$ is given by
\begin{equation}\label{eq:DefOmega}
    \Omega(x) = \begin{cases}
    |x| & \text{if }|x| \geq 2;\\
    \frac{2}{\pi} \left( x \cdot \arcsin \frac{x}{2} + \sqrt{4-x^2} \right) 
    & \text{otherwise.}
\end{cases}
\end{equation}
There is a strong connection between this limit shape
and the semicircular law $\mu_{S-C}$, namely the so-called 
{\em transition measure} of $\Omega$,
seen as a {\em continuous Young diagram},
is the semicircular law $\mu_{S-C}$
-- see \cite[Section 1.2]{Biane1998}.

The problem of the second order asymptotics was stated as an open question in late seventies
and was solved by Kerov \cite{Kerov1993} who proved that, exactly as in the GUE case, the fluctuations around the limit shape are Gaussian.
Informally Kerov's result can be presented as follows
\[\omega\big({\overline{\lambda_{(n)}}}\big)(x) \sim \Omega(x) + \frac{2}{\sqrt{n}} \Delta^{(1)}_\infty(x)\]
where $\Delta^{(1)}_\infty$ is the Gaussian process on $[-2,2]$ with values in  $(\mathcal{C}^\infty(\RR))'$ defined by:
\[\Delta^{(1)}_\infty(2\cos(\theta)) =\frac{1}{\pi} \sum_{k=2}^\infty \frac{\xi_k}{\sqrt{k}} \sin(k \theta).\]
The detailed proof of this remarkable theorem can be found in \cite{IvanovOlshanski2002}.
Although they are not equal, the Gaussian processes
$\Delta^{(1)}_\infty$ (which describe bulk fluctuations of Young diagram under Plancherel measure)
and $\widetilde{\Delta}^{(2)}$ (which describe bulk fluctuations of the GUE)
have quite similar definition.

Remarkably, the similarity between the Plancherel measure and the GUE ensemble
does not only take place at the level of bulk fluctuations but also "at the edge".
To be more precise, it was proved that 
the distribution of finitely many (properly scaled) first rows of 
random Young diagrams (with respect to the Plancherel measure)
is the same as the distribution of the same number of (properly scaled) largest eigenvalues of the GUE ensemble,
as $n \to \infty$ (see \cite{BaikDeiftJohansson1999, Okounkov2000, BorodinOkounkovOlshanski2000, Johansson2001a, Johansson2001, BaikDeiftRains2001} for details).

\subsubsection{General $\alpha$-case and Gaussian $\beta$-ensembles}

There are two famous analogues of the GUE ensembles in random matrix theory: 
\emph{Gaussian orthogonal ensembles (GOE)} and \emph{Gaussian symplectic ensembles (GSE)} (see \cite{Mehta2004}).
The GOE ensemble (GSE ensemble, respectively) is a random real symmetric matrix 
(complex self-adjoint quaternion matrix, respectively) with independent normal entries
(with mean $0$ and well-chosen variance).
The density function for the distribution of eigenvalues of GOE (GSE, respectively) is, up to normalization, the function given by equation~\eqref{eq:BetaDensity} with parameter $\beta = 1$ ($\beta = 4$, respectively).
Therefore, it is tempting to introduce
\emph{Gaussian $\beta$-ensemble} (G$\beta$E, also called $\beta$-Hermite ensemble)
that  has a distribution density function proportional to \eqref{eq:BetaDensity} for any positive real value of $\beta$.

The G$\beta$E ensembles are well-studied objects.
For the first order of asymptotic behaviour of the spectral measure
\[ \mu^{(\beta)}_d := \frac{1}{d}\left(\delta_{\frac{\beta}{2}x_1} + \cdots + \delta_{\frac{\beta}{2}x_d}\right),\]
where $x_1 \geq \cdots \geq x_d$ are eigenvalues of the G$\beta$E of size $d\times d$, Johansson \cite{Johansson1998} showed that the Wigner law holds, i.e., as $d\to \infty$,
one has the almost-sure convergence
\begin{equation}\label{eq:first-order-GBE}
    \mu^{(\beta)}_d \to \mu_{S_C}.
\end{equation}
A central limit theorem for the G$\beta$E was proved by Dumitriu and Edelman \cite{DumitriuEdelman2006}.
Here, we can observe Gaussian fluctuations around the limit shape, similarly to the GUE case.
Additionally, a surprising phenomenon takes place:
the Gaussian process that describes the second order asymptotic is translated by a deterministic shift,
which disappears for $\beta = 2$ (see \cite{DumitriuEdelman2006} for details).

A natural question is to find a discrete counterpart for G$\beta$E.
Some results of Matsumoto \cite{MatsumotoJackPlancherelFewRows} suggest that
a good candidate for such probability measure on the set of Young diagrams is \emph{Jack measure} given by \eqref{eq:JackMeasure},
where the relation between parameters $\alpha$ and $\beta$ is $\beta = \frac{2}{\alpha}$.
 Matsumoto was studying a restriction of Jack measure to the set of Young diagrams of size $n$ with at most $d$ rows.
 His main result states that the joint distribution of that suitably normalized $d$ rows 
 is the same as the joint distribution of the eigenvalues of $d$-dimensional traceless G$\beta$E with $\beta = \frac{2}{\alpha}$, 
 as $d$ is fixed and $n \to \infty$.

\subsection{Main result}

The main result of this paper is the description of the first and second order asymptotics
of the bulk of Young diagrams under Jack measure.

First, we prove a law of large numbers.
If $\lambda$ is a Young diagram of size $n$, we denote by $\overline{A_\alpha(\lambda)}$ 
the (generalized) Young diagram obtained from $\lambda$ by an horizontal stretching of ratio $\sqrt{\alpha/n}$
and a vertical stretching of ratio $1/\sqrt{n\alpha}$
(a formal definition of generalized Young diagrams is given in Section \ref{SubsectGeneralizedYD}).
We will prove in Section \ref{sect:FirstOrder} the following result.
\begin{theorem}
\label{theo:RYD-limit}
For each $n$, let $\lambda_{(n)}$ be a random Young diagram of size $n$
 distributed according to Jack measure. Then, in probability,
 \[ \lim_{n \to \infty} \sup\limits_{x\in \RR} \left| \omega\left(\overline{A_\alpha(\lambda_{(n)})}\right)(x)
 - \Omega(x) \right|  = 0.\]
\end{theorem}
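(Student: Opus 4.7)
The approach is the character-moment method, adapted from the classical Plancherel ($\alpha=1$) case to the Jack setting, with Lassalle's Jack characters $\Ch_\mu^{(\alpha)}$ playing the role of normalized irreducible characters. Up to appropriate powers of $n$, these characters encode the transition measure of the continuous Young diagram $\overline{A_\alpha(\lambda)}$, which in turn determines $\omega\bigl(\overline{A_\alpha(\lambda)}\bigr)$ via Kerov's correspondence between continuous diagrams and probability measures. Since the transition measure of the target shape $\Omega$ is exactly the semicircle $\mu_{S-C}$, it suffices to prove that, in probability, the rescaled $\Ch_k^{(\alpha)}(\lambda_{(n)})$ converges to the value dictated by the free cumulants of $\mu_{S-C}$ (namely $1$ for $k=1$ and $0$ for $k\geq 2$).

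The mean is handled by a direct calculation from \eqref{eq:JackMeasure} combined with the Parseval identity for Jack polynomials in the $\alpha$-deformed Hall inner product: this produces a Plancherel-type vanishing formula of the form $\esper_n^{(\alpha)}\bigl[\Ch_\mu^{(\alpha)}\bigr]=0$ unless $\mu = 1^{|\mu|}$, with the trivial case reducing to a known polynomial in $n$. For the variance, the key input is the paper's polynomiality result for the structure constants of Jack characters: expanding $\Ch_\mu^{(\alpha)}\cdot\Ch_\nu^{(\alpha)}$ as a linear combination of $\Ch_\rho^{(\alpha)}$ with coefficients that are polynomials of controlled degree in $n$ and $\alpha$, and then applying the vanishing formula for the mean, shows that $\Var\bigl(\Ch_k^{(\alpha)}\bigr)$ is of strictly lower order in $n$ than the square of $\esper_n^{(\alpha)}\bigl[\Ch_k^{(\alpha)}\bigr]$ at the scaling dictated by $A_\alpha$. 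Chebyshev's inequality then yields convergence in probability of each rescaled Jack character.

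From this, joint convergence in probability of finitely many rescaled Jack characters forces all moments of the rescaled transition measure of $\overline{A_\alpha(\lambda_{(n)})}$ to converge in probability to the moments of $\mu_{S-C}$, whence weak convergence of the transition measures (in probability) to the semicircle. Uniform convergence of the shapes $\omega\bigl(\overline{A_\alpha(\lambda_{(n)})}\bigr)$ to $\Omega$ follows from Kerov's bijection combined with an equicontinuity argument: all shape functions are $1$-Lipschitz and coincide with $|x|$ outside a uniformly bounded interval, which promotes weak convergence of transition measures to uniform convergence of shapes. The main difficulty is the variance step: for $\alpha=1$ this is essentially free thanks to the orthogonality of irreducible characters, but for general $\alpha$ no such orthogonality is available, so one genuinely relies on the polynomiality of the structure constants of Jack characters, which is the main technical contribution of the paper.
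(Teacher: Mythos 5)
Your overall architecture coincides with the paper's: compute expectations of polynomial functions from the vanishing formula \eqref{eq:exp_Ch}, control variances, apply Chebyshev to get convergence in probability of the transition measure of $D_{1/\sqrt{n}}(A_\alpha(\lambda_{(n)}))$ to the semicircle, and then upgrade to uniform convergence of profiles by the Ivanov--Olshanski argument. The gap is in the variance step. For $k\ge 3$ one needs $R^{(\alpha)}_k(\lambda_{(n)})/n^{k/2}\to 0$, i.e.\ $\Var\big(\Ch^{(\alpha)}_{(k-1)}\big)=o(n^{k})$ (note that $\esper_{\PP_n^{(\alpha)}}\big(\Ch^{(\alpha)}_{(k-1)}\big)=0$, so comparing the variance to ``the square of the mean'' is vacuous here). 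Writing $\esper_{\PP_n^{(\alpha)}}\big((\Ch^{(\alpha)}_{(k-1)})^2\big)=\sum_l g_{(k-1),(k-1);1^l}\,(n)_l$, the degree bounds of Theorem~\ref{theo:struct-const} only force $l\le k$ and hence only give $O(n^{k})$: that is a variance of order $O(1)$ at the required scaling, which is exactly the central-limit order of magnitude, not the $o(1)$ needed for a law of large numbers. The whole point is the cancellation of the borderline coefficient $g_{(k-1),(k-1);1^{k}}$, and polynomiality with degree control does not deliver it. In the paper this cancellation comes from the identification of the top-degree term of $\Ch_\mu$ as $\prod_i R_{\mu_i+1}$ (Corollary~\ref{corol:dominant_deg1}) combined with the integrality of degrees: $\big(\Ch^{(\alpha)}_{(k-1)}\big)^2-\Ch^{(\alpha)}_{(k-1,k-1)}$ has $\deg_1\le 2k-1$, so Lemma~\ref{LemDegEsper} bounds its expectation by $O(n^{k-1})$, while $\esper_{\PP_n^{(\alpha)}}\big(\Ch^{(\alpha)}_{(k-1,k-1)}\big)=0$; alternatively one can quote the exact borderline values of Lemma~\ref{lem:SC}, obtained by interpolation from $\alpha=1,2$. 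Either ingredient must be added to your argument; as it stands the Chebyshev step fails. (Incidentally, Section~\ref{sect:FirstOrder} of the paper never invokes Theorem~\ref{theo:struct-const} for the first-order result; the structure constants are needed only for the fluctuations.)

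A second, smaller gap: the assertion that the rescaled profiles ``coincide with $|x|$ outside a uniformly bounded interval'' is precisely the hypothesis that must be verified before weak convergence of transition measures can be promoted to uniform convergence of shapes; it is not automatic for random diagrams. The paper supplies it via Fulman's exponential tail bound on $\lambda_1$ and $\lambda_1'$ (Lemma~\ref{lem:FiniteSupport}). The remaining ingredients of your outline --- the expectation formula via the deformed Hall inner product, the identification of the limit through the free cumulants of $\mu_{S-C}$, and the final Lipschitz argument --- do match the paper.
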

Note that the limiting curve is exactly the same as in the case $\alpha=1$.

Moreover, we establish a central limit theorem.
Informally, it can be presented as follows:
\begin{equation}\label{eq:informal}
    \omega\left(\overline{A_\alpha(\lambda_{(n)})}\right)(x) \sim \Omega(x) +\frac{2}{\sqrt{n}} \Delta^{(\alpha)}_{\infty}(x),
\end{equation}
where $\Delta^{(\alpha)}_{\infty}$ is the Gaussian process on $[-2,2]$ with values in  $(\mathcal{C}^\infty(\RR))'$ defined by:
\[
\Delta_\infty^{(\alpha)} (2 \cos(\theta) )=
 \frac{1}{\pi} \sum_{k=2}^\infty \frac{\Xi_k}{\sqrt{k}} \sin(k \theta) - \gamma/4 +\gamma \theta/2\pi.
\]
Here, and throughout the paper, $\gamma$ is the difference $\sqrt{\alpha}^{-1}-\sqrt{\alpha}$.
The formal version of this result is stated and proved in Section~\ref{sect:CLT2},
while the explanation for the informal reformulation is given in Section~\ref{subsect:informal}.
Note that the random part of the second order asymptotics is independent of $\alpha$,
while a deterministic term proportional to $\gamma$
(and, hence, vanishing for $\alpha=1$) appears.

Here again, the similarity with the G$\beta$E ensemble is striking.
Indeed, for the bulk of the spectral measure of the G$\beta$E ensemble, we also have the following phenomena:
\begin{itemize}
    \item the first order asymptotics is independent of $\beta$ -- see equation~\eqref{eq:first-order-GBE};
    \item the second order asymptotics is the sum of two terms: 
        a random one and a deterministic one.
        Moreover, the quotient of the deterministic one over the random one is proportional to
        $\gamma$ (see \cite[Theorem 1.2]{DumitriuEdelman2006}).
\end{itemize}
Therefore our result is a new hint towards the deep connection between Jack measure
and the G$\beta$E ensemble.

\subsection{Jack polynomials and Jack measure}
To explain our intermediate results
and the main steps of the proof,
we first need to review the connection between Jack measure and Jack polynomials.

\subsubsection{Jack polynomials}
In a seminal paper \cite{Jack1970/1971},
Jack introduced a family of symmetric functions
$J^{(\alpha)}_\lambda$ depending on an additional parameter
$\alpha$. These functions are now called \emph{Jack polynomials}.
For some special values of $\alpha$, they coincide with some
established families of symmetric functions. Namely, up to multiplicative
constants, for $\alpha=1$, Jack polynomials coincide with Schur polynomials, for
$\alpha=2$, they coincide with zonal polynomials, for $\alpha=1/2$, they
coincide with symplectic zonal polynomials, for $\alpha = 0$, we recover the
elementary symmetric functions and finally their highest degree component in
$\alpha$ are the monomial symmetric functions. Moreover, Jack polynomials for
$\alpha=-(k+ 1)/(r+ 1)$ verify some interesting annihilation conditions \cite{Feigin2002}
and the general $\alpha$ case (with some additional, technical assumptions)
is relevant in Kadell's work in relation with generalizations of
Selberg's integral \cite{Kadell1997}.

Over the time it has been shown that several
results concerning Schur and zonal polynomials can be generalized in a rather
natural way to Jack polynomials (Section (VI.10) of Macdonald's book \cite{Macdonald1995}
gives a few results of this kind), therefore Jack polynomials
can be viewed as a natural
interpolation between several interesting families of symmetric functions.

\subsubsection{A characterization of Jack measure}

Expanding Jack polynomials $J_\lambda^{(\alpha)}$ 
in power-sum symmetric function basis, 
we define the coefficients 
$\theta_{\rho}^{(\alpha)}(\lambda)$ by:
\begin{equation}
\label{eq:jack-characters}
J_\lambda^{(\alpha)}=\sum_{\substack{\rho: \\
|\rho|=|\lambda|}} 
\theta_{\rho}^{(\alpha)}(\lambda)\ p_{\rho}.
\end{equation}
In the case $\alpha=1$, Jack polynomials specialize to
\[J_\lambda^{(1)} = \frac{n!}{\dim(\lambda)} s_\lambda,\] 
where $s_\lambda$ is the Schur function and $\dim(\lambda)$ is
the dimension of the symmetric group representation associated to $\lambda$.
Hence, using Frobenius formula \cite[page 114]{Macdonald1995},
we can express $\theta_{\rho}^{(1)}(\lambda)$ in terms of irreducible character
values of the symmetric group. Namely
\[ \theta_\rho^{(1)}(\lambda)=\frac{n!}{z_\rho} \, \frac{\chi^{\lambda}_\rho}{\dim(\lambda)},\]
where $\chi^{\lambda}_\rho$ is the character of the irreducible
representation indexed by $\lambda$ evaluated on a permutation
of cycle-type $\rho$ and $z_\rho$ is the standard numerical factor $\prod_i i^{m_i(\rho)} m_i(\rho)!$, where $m_i(\rho)$ is the number of parts of $\rho$ equal to $i$.
By analogy to this, we use in the general case the terminology
{\em Jack characters} for $\theta_{\rho}^{(\alpha)}(\lambda)$
(while they do not have any representation theoretical interpretation,
they share a lot of property with characters of the symmetric groups).

The following property,
which corresponds to the case
$\pi=(1^n)$ of \cite[Equation (8.4)]{MatsumotoOddJM},
characterizes Jack measure: 
\begin{equation}
    \esper_{\PP_n^{(\alpha)}}( \theta^{(\alpha)}_\rho(\lambda) ) =
\delta_{\rho,(1^n)},
\label{eq:expect_Jack_character}
\end{equation}
where $\lambda$ is a random Young diagram with $n$ boxes distributed according to
$\PP_n^{(\alpha)}$.

\subsubsection{A central limit theorem for Jack characters}
As in the case $\alpha=1$, an important intermediate result, which may be of independent interest,
is an {\em algebraic} central limit theorem.
Namely we prove that Jack characters for hooks ({\em i.e.} $\rho$ is a hook)
have joint Gaussian fluctuations.

\begin{theorem}
\label{theo:FluctuationsJackCharacters}
Choose a sequence $\left(\Xi_k \right)_{k=2,3,\dots}$
of independent standard Gaussian random variables.
Let $(\lambda_{(n)})_{n \ge 1}$ be a sequence of random Young diagrams of size $n$ distributed according to Jack measure.
Define the random variable
\[W_k(\lambda_{(n)}) = \frac{\sqrt{k} \, \theta^{(\alpha)}_{(k,1^{n-k})}(\lambda_{(n)})}{ n^{k/2}}.\]
Then, as $n \to \infty$, we have:
\[ \left( W_k \right)_{k=2,3,\dots}
\xrightarrow{d} \left( \Xi_k \right)_{k=2,3,\dots}, \]
where $\xrightarrow{d}$ means convergence in distribution of the finite-dimensional laws.
\end{theorem}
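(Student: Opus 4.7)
The plan is to apply the method of moments. By \eqref{eq:expect_Jack_character}, each $W_k$ has mean zero, since $k \geq 2$ implies $(k, 1^{n-k}) \neq (1^n)$. It therefore suffices to establish that for every $r \geq 1$ and every $k_1, \dots, k_r \geq 2$,
\[
\esper\left[W_{k_1} \cdots W_{k_r}\right] \xrightarrow[n \to \infty]{} \esper\left[\Xi_{k_1} \cdots \Xi_{k_r}\right],
\]
where the right-hand side is given by Wick's formula: it vanishes unless the multiset $\{k_1, \dots, k_r\}$ splits into pairs of equal values, in which case it equals the number of such pair-matchings. The limiting Gaussians are of course determined by their moments.

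The key algebraic input is the expansion of a product of Jack characters in the Jack character basis. Iterating the pairwise multiplication rule, write
\[
\prod_{i=1}^{r} \theta^{(\alpha)}_{(k_i, 1^{n-k_i})}(\lambda) \;=\; \sum_{\rho \,:\, |\rho|=n} c^{(n)}_\rho \, \theta^{(\alpha)}_\rho(\lambda),
\]
where $c^{(n)}_\rho$ is an iterated structure constant depending on $n$ and on the $k_i$. Taking expectation and invoking \eqref{eq:expect_Jack_character}, only the term $\rho = (1^n)$ survives, so
\[
\esper\left[\prod_{i=1}^{r} \theta^{(\alpha)}_{(k_i, 1^{n-k_i})}(\lambda_{(n)})\right] \;=\; c^{(n)}_{(1^n)}.
\]
Consequently, all joint-moment information is encoded in a single iterated structure constant.

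The second ingredient is the polynomiality theorem for structure constants of Jack characters, the main technical result advertised in the abstract. It asserts that $c^{(n)}_\rho$ is, for fixed $\rho$ and fixed $k_i$, a polynomial in $n$ whose degree and leading coefficient admit a combinatorial description. Applied to $c^{(n)}_{(1^n)}$, this should yield the sharp degree bound $\deg_n c^{(n)}_{(1^n)} \leq (k_1 + \cdots + k_r)/2$, with equality only in the "pair" configurations where the $k_i$ split into pairs of equal value; each such pair should contribute a factor exactly cancelling the normalization $\sqrt{k_i}/n^{k_i/2}$ in the definition of $W_{k_i}$, and produce Wick's formula on the nose.

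The main obstacle is extracting, from the polynomiality theorem, both the sharp degree bound and the precise leading coefficient on pair-matching configurations: one has to show that unpaired indices strictly decrease the degree, so that odd moments vanish asymptotically and that configurations mixing three or more indices together contribute only to lower order. This hinges on the combinatorial interpretation of the top-degree part of Jack structure constants developed in this paper; in the specialization $\alpha = 1$, it reduces to the mechanism by which Ivanov and Olshanski \cite{IvanovOlshanski2002} recover Kerov's CLT for the Plancherel measure, and what is really new here is showing that the same pairing combinatorics controls the leading asymptotics for arbitrary $\alpha$.
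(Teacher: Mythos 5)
Your proposal rests on the method of moments, and the step that fails is precisely the one you flag as the ``main obstacle'': extracting from the polynomiality theorem the sharp degree bound $\deg_n c^{(n)}_{(1^n)} \le (k_1+\cdots+k_r)/2$. That bound is equivalent to the statement that $\deg(\Ch_\mu)=|\mu|+m_1(\mu)$ (Kerov's filtration, equation~\eqref{eq:Kerov_filtration}) defines a filtration of $\Pola$, i.e.\ is submultiplicative. This is known for $\alpha=1$ (Ivanov--Olshanski) but is explicitly left as an \emph{open question} in this paper for general $\alpha$; the remark at the end of Section~\ref{SubsectDegCh} states that if it could be proved, the moment method would go through. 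The three gradations that Theorem~\ref{theo:struct-const} actually controls, $|\mu|+\ell(\mu)$, $|\mu|-\ell(\mu)$ and $|\mu|-\ell(\mu)+m_1(\mu)$, each give too weak a bound here: for instance $\deg_1$ only yields $\esper[W_{k_1}\cdots W_{k_r}]=O(n^{r/2})$, and $\deg_3$ yields $O(n^{(\sum k_i)/2-r})$, which already diverges for $r=2$, $k_1=k_2=k\ge 3$. There is also no ``combinatorial interpretation of the top-degree part of Jack structure constants'' for general $\alpha$ in the paper to fall back on; the only sharp information available (Lemma~\ref{lem:SC}) is obtained by polynomial interpolation between the representation-theoretic cases $\alpha=1$ and $\alpha=2$, and the parity/degree constraints of Theorem~\ref{theo:struct-const} only permit this when the relevant degree defect is at most $2$, which confines the method to moments of low order.

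The paper's actual route is different for exactly this reason: it controls (mixed, conditional) moments of order at most $4$ via Lemma~\ref{lem:SC}, and then replaces the moment method by the multivariate Stein method of Reinert and R\"ollin, applied to Fulman's exchangeable pair $(\tilde W_d,\tilde W_d^*)$ built from a Markov chain reversible with respect to Jack measure. Stein's method needs only those low-order moment estimates (Propositions~\ref{prop:SteinCondition1}, \ref{prop:SteinCondition2}, \ref{prop:ErrorTermA}, \ref{prop:ErrorTermB}) and yields, as a bonus, the $O(n^{-1/4})$ rate of Theorem~\ref{theo:SpeedConvergence}. So your first two displayed reductions (mean zero, and joint moments reducing to an iterated structure constant $c^{(n)}_{(1^n)}$) are correct, but the asymptotic evaluation of that structure constant for arbitrary $r$ is not available from the results of the paper, and the argument cannot be completed as written.
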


In the case $\alpha=1$, this theorem was proved independently by Kerov 
\cite{Kerov1993gaussian,IvanovOlshanski2002}
and Hora \cite{HoraFluctuationsCharacters}.
With the method developed here,
one can even give an upper bound on the speed of convergence of the distribution function:
\begin{theorem}
    We use the same notation as in Theorem~\ref{theo:FluctuationsJackCharacters}.
    Then, for any integer $d \ge2$ and real numbers $x_2,\cdots,x_d$,
    \[\big|\PP ( W_2 \le x_2, \cdots, W_d \le x_d) - \PP (\Xi_2 \le x_2, \cdots, \Xi_d \le x_d) \big|
     = O(n^{-1/4}),\]
    where the constant hidden in the Landau symbol $O$ 
    is {\em uniform} on $x_2,\cdots,x_d$, but depends on $d$.
    \label{theo:SpeedConvergence}
\end{theorem}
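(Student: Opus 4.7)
The plan is to make Theorem~\ref{theo:FluctuationsJackCharacters} quantitative by combining three ingredients: the polynomiality of structure constants of Jack characters (the main algebraic input of this paper), a central limit theorem for smooth test functions obtained from cumulant bounds, and a smoothing argument to pass to the Kolmogorov-type uniform bound in the statement.

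First I would control the joint cumulants of $W_2,\dots,W_d$. The characterizing identity \eqref{eq:expect_Jack_character} says that $\esper \theta^{(\alpha)}_\rho$ vanishes unless $\rho = (1^n)$, so the expectation of any product $\theta^{(\alpha)}_{(k_1,1^{n-k_1})}\cdots \theta^{(\alpha)}_{(k_r,1^{n-k_r})}$ reduces to reading off the coefficient of $\theta^{(\alpha)}_{(1^n)}$ in its Jack-character expansion. The polynomiality of structure constants (the main algebraic result advertised in the abstract) gives precise control of these coefficients as polynomials in $n$ of controlled degree, from which one extracts cumulant estimates of the shape $\kappa_r(W_{k_1},\dots,W_{k_r}) = O(n^{-(r-2)/2})$, uniform in the (fixed) indices $k_i$. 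In particular $\esper W_k = 0$ for $k\ge 2$ and the covariance matrix converges to $\delta_{k,l}$ at speed $O(n^{-1})$. This is the quantitative upgrade of the argument already needed for Theorem~\ref{theo:FluctuationsJackCharacters}, and it extends the $\alpha=1$ analysis of Ivanov--Olshanski to general $\alpha$.

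Second, I would turn these cumulant bounds into a smooth-function central limit theorem, either via a Saulis--Statulevi\v cius-style method-of-cumulants argument, or, more in the spirit of the paper's keywords, via multivariate Stein's method with an exchangeable pair built from the Jack measure and controlled through the same structure-constant expansion. Either route produces the bound
\[ \big| \esper h(W_2,\dots,W_d) - \esper h(\Xi_2,\dots,\Xi_d) \big| \le C_h\, n^{-1/2} \]
for any $h\colon\RR^{d-1}\to \RR$ with bounded derivatives of some fixed order $m$, with $C_h$ depending only on the $C^m$-norm of $h$ and on $d$.

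Third, I would pass from smooth test functions to orthant probabilities by convolving the indicator of the orthant $\{y_i\le x_i\}_{i=2}^d$ with a centered Gaussian kernel of width $\varepsilon$. The smoothed function $f_\varepsilon$ has derivatives of size $O(\varepsilon^{-m})$, and since the limit $(\Xi_2,\dots,\Xi_d)$ has bounded density, replacing the indicator by $f_\varepsilon$ costs an additional $O(\varepsilon)$ on both sides. Feeding $f_\varepsilon$ into the smooth-function bound and balancing $\varepsilon^{-m}n^{-1/2}$ against $\varepsilon$ yields the announced rate $O(n^{-1/4})$, and the smoothing argument automatically makes the bound uniform in $x_2,\dots,x_d$.

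The main obstacle is the first step: obtaining the joint cumulant bounds with the correct decay in $n$. The algebra of Jack-character structure constants does the heavy lifting, but extracting the leading $n$-power of each cumulant requires tracking how the structure constants combine across iterated products, and how the cancellations that promote a naive $O(1)$ estimate to the necessary $O(n^{-(r-2)/2})$ arise. Once these estimates are in place, the central-limit step and the smoothing step are essentially standard and do not use any Jack-specific input.
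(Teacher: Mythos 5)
There are two genuine gaps here, both at places where you wave at "essentially standard" machinery that the paper cannot, in fact, deploy.

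First, your step one asks for joint cumulant bounds $\kappa_r(W_{k_1},\dots,W_{k_r})=O(n^{-(r-2)/2})$ for all $r$, extracted from the structure constants. The paper explicitly identifies this as the step it cannot carry out: controlling the power of $n$ in iterated products of $\Ch_{(k)}$'s requires knowing that $\deg(\Ch_\mu)=|\mu|+m_1(\mu)$ defines a filtration of $\Pola$ (equation~\eqref{eq:Kerov_filtration}), which is known for $\alpha=1$ from Ivanov--Olshanski but is left as an open question for general $\alpha$ (see the remark in Section~\ref{SubsectDegCh}). What the polynomiality of structure constants plus the interpolation between $\alpha=1$ and $\alpha=2$ (Lemma~\ref{lem:SC}) actually delivers is control of mixed moments of order at most $4$ — which is why the paper abandons the moment/cumulant method entirely and uses an exchangeable pair, whose hypotheses only involve conditional moments up to order $3$ plus a variance of a conditional second moment. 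So the method-of-cumulants branch of your step two is not available; only the Stein branch is, and it requires the specific Fulman Markov-chain pair (and Lemma~\ref{lem:FiniteSupport} to control the third-moment term $B$), not a generic construction.

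Second, your smoothing arithmetic does not produce $n^{-1/4}$. The smooth-function bound you obtain in step two necessarily involves second and third derivatives of $h$ (this is the form of Theorem~\ref{theo:MultivariateStein}), so a Gaussian mollification at scale $\varepsilon$ costs $\varepsilon^{-2}A+\varepsilon^{-3}B+O(\varepsilon)$ with $A,B=O(n^{-1/2})$; optimizing gives $\varepsilon=n^{-1/8}$ and hence only the rate $n^{-1/8}$. Your claimed balance of $\varepsilon^{-m}n^{-1/2}$ against $\varepsilon$ yields $n^{-1/4}$ only when $m=1$, which is not the regularity the Stein bound requires. The paper gets $n^{-1/4}$ by invoking the non-smooth-test-function version of Reinert--R\"ollin (their Corollary 3.1, used in Theorem~\ref{theo:SpeedConvergenceConvex}), in which the third-derivative contribution enters as $B'/\sqrt{T'}$ rather than $B'\varepsilon^{-3}$ thanks to a finer treatment of the Gaussian smoothing; balancing $B'/\sqrt{T'}$ against $\sqrt{T'}$ is what gives $\sqrt{B'}=O(n^{-1/4})$. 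A naive convolution argument loses this.
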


In the case $\alpha=1$ and $d=2$, the study of the speed of convergence
of $W_2$ towards a Gaussian variable has been initiated by Fulman
in~\cite{FulmanFluctuationChA2}, who proved a bound of order $n^{-1/4}$.
This bound has then been improved to $n^{-1/2}$ in two different
works \cite{SpeedCvCharacters1,SpeedCvCharacters2}.
Fulman then generalized the $n^{-1/4}$ bound to any $W_i$,
in the cases $\alpha=1$ and $\alpha=2$,
using the representation-theoretical backgrounds for these particular values of $\alpha$,
see~\cite{FulmanSpeedCvRepresentations}.
Some ideas from these papers are fundamental here, as explained below.

The main novelty in Theorem~\ref{theo:SpeedConvergence}
is of course the fact that our result holds
for any value of the parameter $\alpha$.
But, even in the cases $\alpha=1$ and $\alpha=2$,
a bound for the speed convergence of a vector of random variables
and not only for real-valued random variables seems to be new.

\begin{remark}\label{rem:Fluct_NonHook}
We are not able to describe the fluctuations of 
Jack character $\theta^{(\alpha)}_\rho$
when $\rho$ is not a hook.
This is discussed in Subsection~\ref{subsect:Fluct_NonHook}.
\end{remark}

\subsection{Ingredients of the proof}

We shall now say a word on the proof of our main results.
The proof of Theorem \ref{theo:RYD-limit} is easier than
the proof of the fluctuation result, so we will focus here on the second
and compare it to the work of Ivanov, Kerov and Olshanski.

\begin{remark}
While beautiful and elementary, Hora's proof of the central limit theorem
for characters in the case $\alpha=1$ seems very hard to be generalized for 
a generic value of $\alpha$ as it relies from the beginning
on the representation-theoretical background.
\end{remark}

\subsubsection{Polynomial functions}
A central idea in the paper \cite{IvanovOlshanski2002} is to consider 
characters as functions on all Young diagrams by defining:
\[\Ch_{\mu}^{(1)}(\lambda)=
\begin{cases}
    |\lambda|(|\lambda|-1)\cdots (|\lambda|-|\mu|+1) \frac{\chi^\lambda_{\mu\, 1^{|\lambda|-|\mu|}}}
    {\dim(\lambda)} &\text{ if }|\lambda| \ge |\mu|\\
    0 &\text{ if }|\lambda| < |\mu|.
\end{cases}\]
In Sections 1-4 of paper \cite{IvanovOlshanski2002},
the authors prove that the functions $\Ch_{\mu}^{(1)}$ span linearly
a subalgebra of the algebra of functions on {\em all} Young diagrams,
give several equivalent descriptions of this subalgebra
and describe combinatorially the product $\Ch_{\mu}^{(1)} \Ch_{\nu}^{(1)}$
($\Ch_{\mu}^{(1)}$ is denoted by $\bm{p^\#_\mu}$ in~\cite{IvanovOlshanski2002}).
This subalgebra is called the \emph{algebra of
polynomial functions} on the set of Young diagrams (see also \cite{KerovOlshanskiPolFunc})
and denoted here by $\Polun$.

In the general $\alpha$-case, one can define a deformation of the function above as follows:
for an integer partition $\mu$, denote $|\mu|$ its size, $\ell(\mu)$
its length, $m_i(\mu)$ its number of parts of $\mu$ equal to $i$
and $z_\mu$ the standard numerical factor $\prod_i i^{m_i(\mu)} m_i(\mu)!$.
We define
\begin{displaymath}
\label{eq:definition-Jack}
\Ch_{\mu}^{(\alpha)}(\lambda)=
\begin{cases}
\alpha^{-\frac{|\mu|-\ell(\mu)}{2}}
\binom{|\lambda|-|\mu|+m_1(\mu)}{m_1(\mu)}
\ z_\mu \ \theta^{(\alpha)}_{\mu,1^{|\lambda|-|\mu|}}(\lambda)
&\text{if }|\lambda| \ge |\mu| ;\\
0 & \text{if }|\lambda| < |\mu|.
\end{cases}
\end{displaymath}
While Jack characters have been studied for a long time,
the idea, due to Lassalle, to look at them as a function of $\lambda$ as above
is quite recent \cite{Lassalle2008a,Lassalle2009}.
Among other things, he proved that, as in the case $\alpha=1$,
the functions $\Ch_{\mu}^{(\alpha)}$ span linearly a subalgebra
of functions on all Young diagrams,
which has a nice characterization:
we present these results in Section \ref{SectDefKerov},
see in particular Proposition~\ref{prop:Jack-characters-basis}.
This subalgebra is called \emph{algebra of
$\alpha$-polynomial functions} on the set of Young diagrams (see also \cite{KerovOlshanskiPolFunc})
and denoted here by $\Pola$.

As a function on {\em all} Young diagrams,
$\Ch^{(\alpha)}_\mu$ can be restricted to diagrams of size $n$
and hence considered as a random variable in our problem.
It follows directly from equation~\eqref{eq:expect_Jack_character} that
\begin{equation}\label{eq:exp_Ch}
    \esper_{\PP_n^{(\alpha)}}(\Ch^{(\alpha)}_\mu)=\begin{cases}
    n(n-1)\cdots(n-k+1) & \text{if }\mu=1^k \text{ for some }k \leq n,\\
    0   & \text{otherwise.}
\end{cases}
\end{equation}
Throughout the paper, we shall use the standard notation
$(n)_k$ for the falling factorial $n(n-1)\cdots(n-k+1)$. 


\subsubsection{Moment method and structure constants}
Another idea in paper~\cite{IvanovOlshanski2002} is to use the method of moments 
and thus to compute asymptotically (for $h \ge 1$)
\[\esper_{\PP_n^{(1)}} \big( \Ch^{(1)}_{(k)}(\lambda_{(n)})^h \big). \]
Recall that the algebra $\Polun$ has a linear basis given by the family of normalized characters $\left(\Ch^{(1)}_\mu\right)_\mu$.
As the expectation of $\Ch_{\mu}^{(1)}(\lambda_{(n)})$ is particularly simple
-- see equation~\eqref{eq:exp_Ch} --, one can compute expectation of $\big(\Ch^{(1)}_{(k)}\big)^h$ by expanding it on the basis $\Ch_{\mu}^{(1)}$ of $\Polun$.

To do this, the authors of~\cite{IvanovOlshanski2002} need to understand how a product
$\Ch_{\nu}^{(1)} \Ch_{\rho}^{(1)}$ expands on the $\Ch_{\mu}^{(1)}$ basis,
that is they need to study the {\em structure constants} of this basis.
They provide a combinatorial description of these structure constants 
\cite[Proposition 4.5]{IvanovOlshanski2002}.
Unfortunately, this combinatorial description relies on the representation-theoretical
interpretation of $\theta_{\rho}^{(1)}(\lambda)$ and has {\em a priori} no
extension to a general value of $\alpha$.

To overcome this difficulty, we prove that the structure constants
of the $\Ch_{\mu}^{(\alpha)}$ basis depends polynomially on
the auxiliary parameter $\gamma={\sqrt{\alpha}}^{-1}-\sqrt{\alpha}$.
This is a non-trivial result and has other interesting applications
than the study of large Young diagrams under Jack measure.
Therefore, we think that it may be on independent interest and
present it in details in Section~\ref{subsect:Second_Main_Result}
as our second main result.

Our polynomiality result for structure constants
(Theorem \ref{theo:struct-const}) allows us to show that
some properties proved combinatorially in the case $\alpha=1$ still holds
in the general $\alpha$-case
(we will also rely on the case $\alpha=2$, 
which also has some representation-theoretical background,
and use polynomial interpolation).
Our result gives a good estimate of
moments of $\Ch_{(k)}^{(\alpha)}$ of order at most 4 (but not of any order).
Therefore, we can not conclude with the moment method.
To overcome that difficulty, we have to use another ingredient in our proof:
the multivariate Stein's method.

\subsubsection{Multivariate Stein's method and Fulman's construction}
Stein's method is a classical method in probability
to prove convergence in distrbution towards Gaussian or Poisson distribution,
together with bounds on the speed of convergence ;
see the monograph of Stein \cite{Stein}.
To use it, one needs to construct an {\em exchangeable pair} for the relevant random variable.
But, when this pair is constructed, one can prove Gaussian fluctuations,
using only bounds on (mixed conditional) moments of order at most $4$
(while the moment method requires control on moments of all order).

In the framework of Jack characters, an exchangeable pair
has already been built by Fulman to prove a fluctuation result for $\Ch^{(\alpha)}_{(2)}$.
The same construction extends to $\Ch^{(\alpha)}_{(k)}$,
but the analysis of the first moments becomes more tricky, requires new ideas
and heavily relies on our polynomiality result for structure constants.

Let us note that, unlike Fulman's result, our result is a result
of convergence in distribution of {\em vectors} of random variables.
Therefore we need to use a multivariate analog of Stein's classical theorem.
The one recently established by
Reinert and R{\"o}llin~\cite{ReinertRollin2009}
turns out to be suitable for our purpose.\medskip

\subsection{Second main result: polynomiality of structure constants of Jack characters}
\label{subsect:Second_Main_Result}
It follows from the work of Lassalle -- see Proposition~\ref{prop:Jack-characters-basis} --
that the functions $\Ch_{\mu}^{(\alpha)}$ span linearly the algebra of $\alpha$-polynomial functions
denoted by $\Pola$
(when $\mu$ runs over integer partitions of all sizes).
Hence, there exist some rational numbers $g^{(\alpha)}_{\mu,\nu;\pi}$,
depending on $\alpha$ such that
\begin{equation}\label{eq:definition-structure-constants}
    \Ch^{(\alpha)}_\mu \cdot \Ch^{(\alpha)}_\nu =\sum_{\substack{\pi \text{ partition} \\ \text{of any size}}} 
    g^{(\alpha)}_{\mu,\nu;\pi} \Ch^{(\alpha)}_\pi.
\end{equation}
These numbers are often called {\em structure constants} 
of the basis $(\Ch_{\mu}^{(\alpha)})$.
It is a worthy goal to understand them, because they describe
the multiplicative structure of the algebra.

Our second main result is a polynomiality result
for these structure constants with precise bounds on the degree:
let
\begin{align*}
    n_1(\mu) &= |\mu|+\ell(\mu), \\
    n_2(\mu) &= |\mu|-\ell(\mu), \\
    n_3(\mu) &= |\mu|-\ell(\mu)+m_1(\mu).
\end{align*}
Then we have:
\begin{theorem}
\label{theo:struct-const}
    Fix three partitions $\mu$, $\nu$ and $\pi$.
    The structure constant $g^{(\alpha)}_{\mu,\nu;\pi}$ is a polynomial in 
    $\gamma={\sqrt{\alpha}}^{-1}-\sqrt{\alpha}$ with rational coefficients and 
    of degree at most
    \[ \min_{i=1,2,3} n_i(\mu) + n_i(\nu) - n_i(\pi). \]
    Moreover, if $n_1(\mu) + n_1(\nu) - n_1(\pi)$ is even (respectively, odd),
    it is an even (respectively, odd) polynomial.
\end{theorem}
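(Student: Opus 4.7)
The plan is to realize each of the three bounds in the minimum as coming from a distinct algebra filtration on an extended algebra $\Polaext$ in which $\gamma$ is adjoined as a formal parameter. Concretely, I would build three weight functions $\deg_i$ ($i=1,2,3$) on $\Polaext$ satisfying $\deg_i(\Ch_\mu^{(\alpha)}) \le n_i(\mu)$, $\deg_i(\gamma) \le 1$, and the sub-multiplicative property $\deg_i(FG) \le \deg_i(F) + \deg_i(G)$. Expanding $\Ch^{(\alpha)}_\mu\, \Ch^{(\alpha)}_\nu$ on the basis $(\Ch^{(\alpha)}_\pi)_\pi$ and comparing $\deg_i$ on both sides then forces $\deg_\gamma g^{(\alpha)}_{\mu,\nu;\pi} \le n_i(\mu)+n_i(\nu)-n_i(\pi)$ for every $i$, which is exactly the minimum appearing in the statement; polynomiality (rather than mere rationality) in $\gamma$ comes along the way, since a pole would give a negative $\gamma$-degree and violate the bound in at least one filtration.

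To construct the three filtrations I would rely on the equivalent descriptions of $\Pola$ due to Lassalle that are recalled in Section~\ref{SectDefKerov}, together with auxiliary generating sets of $\Polaext$; natural candidates are $\alpha$-shifted power sums, $\alpha$-deformed free cumulants of the transition measure, and Lassalle's Stanley-type formula for $\Ch^{(\alpha)}_\mu$ in terms of bipartite maps. The filtration $\deg_1$ would be a $\gamma$-enhancement of Kerov's classical grading $|\mu|+\ell(\mu)$; $\deg_2$ an Euler-characteristic type grading in which $\gamma$ records a topological defect, specializing at $\gamma=0$ to the usual grading by $|\mu|-\ell(\mu)$ on the centre of $\mathbb{Q}[\Sym{n}]$; and $\deg_3$ would be the refinement of $\deg_2$ that charges one extra unit to each part of $\mu$ equal to $1$, reflecting the factor $\binom{|\lambda|-|\mu|+m_1(\mu)}{m_1(\mu)}$ that enters the definition of $\Ch^{(\alpha)}_\mu$. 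Sub-multiplicativity of each $\deg_i$ is the technical core; the cleanest route is to display a generating family of $\Polaext$ on which both $\deg_i$ and the product are manifestly controlled, and to check the bound on generators only.

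The parity statement would come from Jack duality $J^{(1/\alpha)}_{\lambda'}=\alpha^{|\lambda|}J^{(\alpha)}_\lambda$, which sends $\gamma$ to $-\gamma$ and multiplies $\Ch^{(\alpha)}_\mu$ by $\pm 1$, the sign depending on $\mu$ only through the parity of $n_1(\mu)$ (equivalently $n_2(\mu)$, since $n_1-n_2=2\ell(\mu)$ is even); applying this involution to both sides of $\Ch^{(\alpha)}_\mu\,\Ch^{(\alpha)}_\nu = \sum_\pi g^{(\alpha)}_{\mu,\nu;\pi}\Ch^{(\alpha)}_\pi$ then yields $g^{(\alpha)}_{\mu,\nu;\pi}(-\gamma)=(-1)^{n_1(\mu)+n_1(\nu)-n_1(\pi)}g^{(\alpha)}_{\mu,\nu;\pi}(\gamma)$. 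The main obstacle I anticipate is the sub-multiplicativity of $\deg_3$: its combinatorial meaning is the least transparent of the three and it does not come from an obvious change of basis. A secondary difficulty is producing a single presentation of $\Polaext$ on which all three filtrations are simultaneously visible, since the three weight functions emphasize quite different combinatorial features of a partition.
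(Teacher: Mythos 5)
Your high-level strategy --- three filtrations on a $\gamma$-extended algebra with $\deg_i(\Ch^{(\alpha)}_\mu)\le n_i(\mu)$, $\deg_i(\gamma)=1$, followed by degree comparison --- is indeed the shape of the paper's argument, and your duality trick for the parity statement ($\alpha\mapsto\alpha^{-1}$ sends $\gamma$ to $-\gamma$ and $\Ch^{(\alpha)}_\mu(\lambda)$ to $\pm\Ch^{(1/\alpha)}_\mu(\lambda')$ with sign governed by the parity of $|\mu|-\ell(\mu)$) is a genuinely different and cleaner route than the paper's, which tracks parity through an induction. However, the proposal has a genuine gap at its core: the entire technical content of Theorem~\ref{theo:struct-const} is the inequality $\deg_i(\Ch^{(\alpha)}_\mu)\le n_i(\mu)$ itself, i.e.\ the fact that the coefficients of $\Ch^{(\alpha)}_\mu$ in a homogeneous generating set are polynomials in $\gamma$ of controlled degree. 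You treat this as a hypothesis to be arranged and propose to realize the filtrations via ``$\alpha$-deformed free cumulants'' and a ``Stanley-type formula in terms of bipartite maps''. No such map formula is known for general $\alpha$ --- its existence is essentially the open Lassalle/Goulden--Jackson positivity problem, and the paper explicitly notes that the combinatorial descriptions available at $\alpha=1,2$ have no known extension. The paper instead obtains all three bounds from the single expansion $\Ch_\mu=\sum_\rho a^\mu_\rho M_\rho$ in moments of the $\alpha$-anisotropic transition measure, with weights $\deg_1(M_k)=k$, $\deg_2(M_k)=k-2$ and $\deg_3$ on a corrected basis $M'_k$, proving $\deg_\gamma(a^\mu_\rho)\le n_i(\mu)-\deg_i(M_\rho)$ by a descending induction through Lassalle's algorithm (Propositions~\ref{PropBound1}, \ref{PropBound2}, \ref{PropBound3}). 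Note that submultiplicativity, which you single out as the main obstacle, is automatic once weights are assigned to algebraically independent generators; the bound on $\Ch_\mu$ is where all the work lies.

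A second gap is the deduction step. Knowing $\deg_i(\Ch_\mu\Ch_\nu)\le n_i(\mu)+n_i(\nu)$ does not by itself force $\deg_\gamma g^{(\alpha)}_{\mu,\nu;\pi}\le n_i(\mu)+n_i(\nu)-n_i(\pi)$, and polynomiality does not ``come along the way'': a degree bound on a quantity not yet known to be polynomial in $\gamma$ excludes nothing (a rational function with no real pole satisfies any such bound vacuously). One needs (i) that each filtration subspace is spanned by the $\Ch_\pi$ it contains --- true for $i=1,3$ by a dimension count, but this argument fails for $i=2$, whose homogeneous components are infinite-dimensional, which is why the paper derives the $\deg_2$ bound from the $\deg_3$ bound combined with $\Ch_{\mu\cup(1)}=(|\lambda|-|\mu|)\Ch_\mu$ rather than directly; and (ii) that the change of basis from $(M_\rho)$ to $(\Ch_\pi)$ is block-triangular with diagonal blocks invertible over $\QQ$ (not merely over $\QQ(\gamma)$), so that solving for the $g_{\mu,\nu;\pi}$ keeps them in $\QQ[\gamma]$ with the stated degree --- this is the subsystem \eqref{syst:Sp} in the paper's proof. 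Without these two ingredients your degree comparison does not close.
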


\subsubsection{Other applications of the second main result}
\label{subsect:other_applications}
In addition of the main purpose of this paper,
Theorem \ref{theo:struct-const} can be applied to several different problems from the literature.
\begin{itemize}
    \item It contains a fifty-year old result from Farahat and Higman,
        stating that the structure constants of the center of the symmetric group algebra
        behave polynomially in $n$ \cite{FarahatHigmanCentreQSn}.
    \item A natural analog of the center of the symmetric group algebra
        is the Hecke algebra of the pair $(S_{2n},H_n)$,
        see {\em e.g.} \cite[Section 7.2]{Macdonald1995} for an introduction to it.
        Theorem \ref{theo:struct-const}
        implies also an analog of Farahat and Higman's result
        in this context: up to some explicit normalization factor,
        structure constants also behave polynomially in $n$\footnote{Let
        us also mention the work of Aker and Can \cite{AkerCanFarahatHecke}
        in this direction. Unfortunately, a factor $2^n n!$ is missing in the main result
        and the authors are not able to correct this \cite{CanMistake}}.
        The same result has been proved independently by Tout
        \cite{Tout-Structure-constant-S2n-Hn}.
    \item Goulden and Jackson \cite{GouldenJacksonMatchingJack},
        have defined, using Jack polynomials,
        an interpolation between the structure constants of both algebras.
        By construction, these quantities are rational functions
        in $\alpha$ but they were conjectured to be in fact
        polynomials in $\alpha-1$ with non-negative integer coefficients
        having some combinatorial interpretation
        \cite[Section 4]{GouldenJacksonMatchingJack}.
        Here, we prove that they are {\em polynomials} in $\alpha$
        (or equivalently in $\alpha-1$) with rational coefficients.
        Unfortunately, we are not able 
        to prove either the integrity
        or the positivity of the coefficients.
    \item We are also able to prove two conjectures of 
        Matsumoto~\cite[Section 9]{MatsumotoOddJM},
        arising in the context of matrix integrals (Section 
        \ref{SubsectMatsumoto}).
    \item We can give a short proof of a recent result 
of Vassilieva \cite{VassilievaJack} which generalizes 
a famous result of Dénes \cite{Denes1959} for the number of {\em minimal} factorizations
of a cycle in the symmetric group.
\end{itemize}

The link between our main result and the first two items is presented
in Section \ref{SectSpecialValues},
while the connection with the last three items is explained in Appendix 
\ref{app:Matching-Jack_And_Matsumoto}.

\subsubsection{Tool: Kerov's polynomial for Jack characters}
Let us now say a word about the proof of our second main result.

The algebra $\Pola$, linearly spanned by the functions $\Ch_\mu^{(\alpha)}$,
admits also some interesting algebraic basis: for example the basis
of {\em free cumulants} $(R_k^{(\alpha)})_{k \ge 2}$ -- see Section~\ref{SectDefKerov}.
Thus $\Ch_\mu^{(\alpha)}$ writes uniquely as a polynomial in free cumulants.
As it was first considered by Kerov in the case $\alpha=1$,
it is usually termed {\em Kerov's polynomial}
(or {\em Kerov's expansion} to avoid repetition of the word polynomial).

In \cite{Lassalle2009}, Lassalle has described an inductive algorithm
to compute the coefficients of this expansion.
In this paper, by a careful analysis of Lassalle's algorithm, 
we obtain some polynomiality results (with several bounds on the degree) 
for these coefficients: see 
Propositions~\ref{PropBound1}, \ref{PropBound2} and \ref{PropBound3}.

Clearly, writing some functions in a multiplicative basis may help
to understand how to multiply them
and we can deduce Theorem \ref{theo:struct-const} from
these results on Kerov's polynomials.

While inappropriate to obtain close formulas,
this way of studying structure constants is, as far as we know, original.
Usually, results of structure constants are obtained using their 
combinatorial description of {\em via} representation theory tools 
\cite{FarahatHigmanCentreQSn,GouldenJacksonMapsZonal,IvanovKerovPartialPermutations,GoupilSchaefferStructureCoef,Tout-Structure-constant-S2n-Hn}.

To finish this paragraph, let us mention that there is an appealing
positivity conjecture on Kerov's polynomials
for Jack characters \cite[Conjecture 1.2]{Lassalle2009}.
While we can not solve this conjecture, our analysis of Lassalle's algorithm
gives some partial results: we prove in general the polynomiality of
the coefficients and we compute a few specific values that were
conjectured by Lassalle (see Appendix \ref{app:Kerov_polynomials}).

Another interesting application of our result on Kerov's polynomials is
a new proof of the polynomial dependence of Jack polynomials in
term of Jack parameter $\alpha$, which was an important open problem
in the early nineties.
This is presented in Section \ref{SubsectCoefJackPoly}.

\subsection{An open problem: edge fluctuations of Jack measure}
Another natural question on asymptotics of Jack measure is the behavior
of the first few rows of the Young diagrams.
This kind of results, orthogonal to the ones in this paper,
is called {\em edge fluctuations}.
Our law of large number on the bulk on Young diagrams implies
that, for any fixed positive integer $k$ and real number $C<1$,
\[\PP \left[ \frac{(\lambda_{(n)})_k}{\sqrt{n}} \leq \frac{2\, C}{\sqrt{\alpha}} \right] \to 0,\]
while Lemma~\ref{lem:FiniteSupport} tells us that $(\lambda_{(n)})_k/\sqrt{n}$
exceeds $(2e)/\sqrt{\alpha}$ with exponentially small probability.

A natural conjecture, considering the case $\alpha=1$ where the edge fluctuations
are well described (see \cite{Okounkov2000,BorodinOkounkovOlshanski2000} and references therein)
and the link with $\beta$-ensemble, would be the following:
for each integer $k \ge 1$, the quantity $(\lambda_{(n)})_k/\sqrt{n}$ converges in
probability towards $2/\sqrt{\alpha}$ and the joint vector 
\[ \left[ n^{1/3} \left( \frac{(\lambda_{(n)})_j}{\sqrt{n}} - \frac{2}{\sqrt{\alpha}} \right) \right]_{1 \le j \le k}\]
converges in law towards the $\beta$-Tracy-Widom distribution,
which has been introduced and studied in \cite{BetaEdgeFluctuations} to study
edge fluctuations of $\beta$-ensemble.
Naturally, a similar conjecture can be formulated for the lengths of the first columns of
the Young diagram $\lambda_{(n)}$.

These conjectures hold true for the first row/column in the case $\alpha=1/2$ and $\alpha=2$.
The proof uses the combinatorial interpretation of Jack measure at these particular values of $\alpha$,
using Robinson-Schensted on random fixed point free involutions:
see \cite{BaikRainsMonotomeSubsequenceInvolutions}.

We have not made computer experiments to confirm this conjecture
and let this problem wide open for future research.

\subsection{Outline of the paper}
The paper is organized as follows. Section \ref{SectDefKerov} gives all
definitions and background on Jack characters, free cumulants
and Kerov polynomials.
In Section \ref{SectPolynomial} we prove the polynomiality of
the coefficients of Kerov's polynomials, with bounds on the degrees.
Then our second main result (that is the polynomiality of structure constants,
with precise bound on the degree)
is proved in Section \ref{SectStructureConstants}.
Section \ref{SectSpecialValues} presents technical statements on structure
constants, that will be used in the analysis of large Young diagrams.
The last three sections deal with convergence results for large Young diagrams:
Section \ref{sect:FirstOrder} presents the first order asymptotics, 
 Section~\ref{sect:CLT} gives the central limit theorem for Jack characters and Section \ref{sect:CLT2}
 establishes the Gaussian fluctuations of large random Young diagrams around the limit shape.

Appendices are devoted to partial answers or some solutions
to questions from the literature.

\section{Jack characters and Kerov polynomials}
\label{SectDefKerov}

\subsection{Polynomial functions on the set of Young diagrams}
\label{subsec:polfunct}
The ring $\Polun$ of \emph{polynomial functions on the set of Young diagrams}
(briefly: the ring of \emph{polynomial functions})
has been introduced by Kerov and Olshanski in order to study irreducible
character values of the symmetric groups \cite{KerovOlshanskiPolFunc}.

The first characterization of $\Polun$,
that we shall use as definition, is the following.
\begin{definition}\label{def:polynomial}
    A function $F$ on the set of all Young diagrams belongs to $\Polun$
    if there exists a collection of polynomials
$\left(F_h \in \QQ[\lambda_1,\dots,\lambda_h]\right)_{h > 0}$ such that 
\begin{itemize}
    \item for a diagram $\lambda=(\lambda_1,\dots,\lambda_h)$ of length $h$,
        one has $F(\lambda)=F_h(\lambda_1,\ldots,\lambda_h)$;
    \item each $F_h$ is symmetric in variables
$\lambda_1 - 1, \lambda_2 -2, \ldots ,\lambda_h-h$; 
\item the compatibility
relation
\[F_{h+1}(\lambda_1,\dots,\lambda_h,0) = F_h(\lambda_1,\dots,\lambda_h)\]
holds true for all values of $h$.
\end{itemize}    
\end{definition}
The ring $\Polun$, as defined above, is sometimes called
the ring of \emph{shifted symmetric functions} in $\lambda_1, \lambda_2,\ldots$.
It was first considered by Knop and Sahi \cite{KnopSahiShiftedSym} in a more general context.
While this is not obvious, 
one can prove that $\Ch^{(1)}_\mu$ belongs to $\Polun$.
In fact, one has more \cite[Section 3]{KerovOlshanskiPolFunc}.
\begin{proposition}
    When $\mu$ runs over all partitions,
    the family $(\Ch^{(1)}_\mu)_\mu$ forms a linear basis of $\Polun$.
\end{proposition}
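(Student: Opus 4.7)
The plan is to establish the basis property via a triangularity argument with respect to a natural grading on $\Polun$. Assigning $\deg \lambda_i = 1$ in Definition~\ref{def:polynomial} yields a well-defined filtration on $\Polun$: for $F$ represented by polynomials $(F_h)_{h>0}$, set $\deg F := \deg F_h$, which is independent of $h$ for $h$ large thanks to the compatibility relation. Under this filtration, $\Polun$ is known to admit as a linear basis the family of \emph{shifted power sums} $(p^{\#}_\nu)_\nu$ indexed by partitions, with $\deg p^{\#}_\nu = |\nu| + \ell(\nu)$ (this is the original characterization of $\Polun$ from~\cite{KerovOlshanskiPolFunc}).

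Second, I would verify that each $\Ch^{(1)}_\mu$ belongs to $\Polun$ and satisfies $\deg \Ch^{(1)}_\mu = |\mu| + \ell(\mu)$. Starting from Frobenius's formula
\[\Ch^{(1)}_\mu(\lambda) = (|\lambda|)_{|\mu|} \cdot z_\mu \cdot \frac{\chi^\lambda_{\mu \cup 1^{|\lambda|-|\mu|}}}{\dim(\lambda)},\]
one expresses the normalized central character as a polynomial in power sums of the contents $c(\Box) = j-i$ of $\lambda$, via the classical description of central characters through Jucys--Murphy elements acting on the irreducible module. A direct computation shows that for each $k \ge 1$,
\[\sum_{\Box \in \lambda} c(\Box)^k \;=\; \sum_i \bigl[g_k(\lambda_i - i) - g_k(-i)\bigr]\]
for a polynomial $g_k$ of degree $k+1$; hence such power sums of contents are shifted symmetric of the correct degree, and so is $\Ch^{(1)}_\mu$.

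Third, I would pin down the top-degree part of $\Ch^{(1)}_\mu$ in the grading as, up to a nonzero scalar, the top-degree part of $p^{\#}_\mu := \prod_i p^{\#}_{\mu_i}$. This is done by testing on a sequence of large rescaled diagrams (or, equivalently, by inspecting the maximal-degree monomial in the Jucys--Murphy/content expansion), which matches, up to an explicit combinatorial factor, the top term of $\prod_i p^{\#}_{\mu_i}$. Since the transition matrix from $(p^{\#}_\mu)_\mu$ to $(\Ch^{(1)}_\mu)_\mu$ is therefore triangular with nonzero diagonal entries when partitions are ordered compatibly with the degree $|\mu|+\ell(\mu)$, the family $(\Ch^{(1)}_\mu)_\mu$ is also a basis of $\Polun$.

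The main obstacle is the top-degree identification in the third step; everything else is either formal or standard. This is the classical non-trivial input, going back to Kerov and Olshanski, and relies on a careful analysis of Frobenius's formula or equivalently on the combinatorics of Jucys--Murphy elements acting on irreducible symmetric group modules.
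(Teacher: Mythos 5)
Your argument is correct in outline, but note that the paper does not actually prove this proposition: it is quoted from Kerov and Olshanski \cite[Section 3]{KerovOlshanskiPolFunc}, and what you have written is essentially a reconstruction of that cited proof (membership of $\Ch^{(1)}_\mu$ in $\Polun$ via content power sums, then triangularity against products of shifted power sums via the top homogeneous component). For the general-$\alpha$ analogue (Proposition~\ref{prop:Jack-characters-basis}) the paper takes a genuinely different and softer route: it invokes Lassalle's linear isomorphism $f\mapsto f^\#$ from symmetric functions onto ($\alpha$-)shifted symmetric functions together with the identity $\Ch^{(\alpha)}_\mu=\alpha^{(|\mu|-\ell(\mu))/2}p_\mu^\#$, so the basis property is transported directly from the basis $(p_\mu)$ of the symmetric function ring with no filtration argument at all. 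The trade-off is that your route is self-contained modulo the top-term computation, whereas the paper's route hides that computation inside the construction of $f\mapsto f^\#$ and works uniformly in $\alpha$.

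One correction to your bookkeeping. The filtration obtained by assigning $\deg\lambda_i=1$ is the filtration by polynomial degree, and under it the product of shifted power sums indexed by $\nu$ has degree $|\nu|$ and $\deg\Ch^{(1)}_\mu=|\mu|$, not $|\nu|+\ell(\nu)$ and $|\mu|+\ell(\mu)$: for instance $\Ch^{(1)}_{(1)}(\lambda)=\sum_i\lambda_i$ has degree $1$, and $\sum_{\Box\in\lambda}c(\Box)^k$ has degree $k+1$ by your own Faulhaber-type computation. The values $|\mu|+\ell(\mu)$ belong to a different filtration (Kerov's weight filtration, the gradation $\deg_1$ of Section~\ref{SubsectBound1}), under which the top term of $\Ch_\mu$ is $\prod_iR_{\mu_i+1}$ rather than a power sum. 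Because you apply the same shifted degree to both families and match $\mu\leftrightarrow\mu$ on the diagonal, the triangularity you assert is not destroyed; with the correct degrees the argument is in fact cleaner, since the top homogeneous component of $\Ch^{(1)}_\mu$ in polynomial degree is exactly $p_\mu(\lambda_1,\lambda_2,\dots)$, which is also the top component of the corresponding product of shifted power sums, giving a unitriangular transition matrix over the finite-dimensional spaces $\{F:\deg F\le n\}$. (Your displayed Frobenius formula also carries a spurious factor $z_\mu$ relative to the paper's normalization of $\Ch^{(1)}_\mu$; this only rescales rows and does not affect the conclusion.)
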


An equivalent description of $\Polun$
can be given using Kerov's interlacing coordinates of a Young diagram.
Recall that the \emph{content} of a box of a Young diagram is $j-i$, where $j$ is its column
index and $i$ its row index and, more generally, the content of a point of the plane is 
the difference between its $x$-coordinate and its $y$-coordinate.
We denote by $\II_\lambda$ the set of contents of the \emph{inner corners} of $\lambda$, that is
corners, at which a box could be added to $\lambda$ to obtain a new diagram of size $|\lambda|+1$.
Similarly, the set $\OO_\lambda$ is defined as the contents of the \emph{outer corners}, that is corners at which a box can be removed from $\lambda$ to obtain a new diagram of size $|\lambda|-1$.
An example is given in Figure \ref{FigCorners} (we use the French convention to draw Young diagrams).

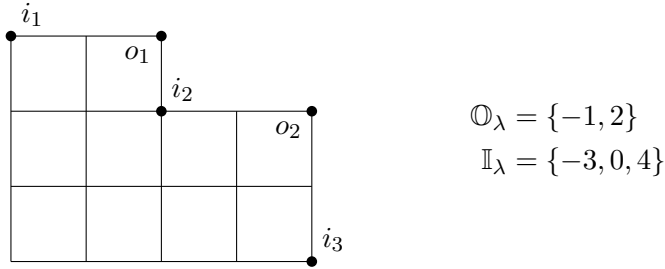
\begin{figure}[tb]
    \begin{minipage}{.6 \linewidth}
    \begin{flushright}
        \begin{tikzpicture}
            \foreach \i in {0,...,2}
                \draw (0,\i) -- (4,\i);
            \draw (0,3) -- (2,3);
            \foreach \j in {0,...,2}
                \draw (\j,0) -- (\j,3);
            \foreach \j in {3,4}
                \draw (\j,0) -- (\j,2);
            \fill (0,3) node[anchor=south west] {$i_1$} circle(2pt);
            \fill (2,2) node[anchor=south west] {$i_2$} circle(2pt);
            \fill (4,0) node[anchor=south west] {$i_3$} circle(2pt);
            \fill (2,3) node[anchor=north east] {$o_1$} circle(2pt);
            \fill (4,2) node[anchor=north east] {$o_2$} circle(2pt);
        \end{tikzpicture}
    \end{flushright}
\end{minipage} \hfill
\begin{minipage}{.35\linewidth}
    \begin{align*}
        \OO_\lambda&=\{-1,2\} \\
        \II_\lambda&=\{-3,0,4\}
    \end{align*}
\end{minipage}
    \caption{A Young diagram with its inner and outer corners (marked respectively with $i$ and $o$).}
    \label{FigCorners}
\end{figure}
If $k$ is a positive integer, one can consider the power-sum symmetric function
$p_k$, evaluated on the difference of alphabets $\II_\lambda - \OO_\lambda$.
By definition, it is a function on Young diagrams given by:
\[\lambda \mapsto p_k(\II_\lambda - \OO_\lambda):= \sum_{i \in \II_\lambda} i^k - 
\sum_{o \in \OO_\lambda} o^k.\]
It can easily be seen that, for any Young diagram, $p_1(\II_\lambda - \OO_\lambda)=0$.
As any symmetric function can be written (uniquely) in terms of $p_k$,
we can define $f(\II_\lambda - \OO_\lambda)$ for any symmetric function $f$ as
follows:
if $a_\rho$ ($\rho$ partition) are the coefficients of the $p$-expansion of $f$,
that is $f=\sum_\rho a_\rho p_{\rho_1} \cdots p_{\rho_\ell}$, then by definition,
\[f(\II_\lambda - \OO_\lambda)=\sum_\rho a_\rho p_{\rho_1}(\II_\lambda - \OO_\lambda)
\cdots p_{\rho_\ell} (\II_\lambda - \OO_\lambda).\]
With this notion of symmetric functions evaluated on difference of alphabets,
the ring $\Polun$ admits the following equivalent description
\cite[Corollary 2.8]{IvanovOlshanski2002}.
\begin{proposition}\label{prop:pk_basis}
    The functions 
    $(\lambda \mapsto p_k(\II_\lambda - \OO_\lambda))_{k \geq 2}$
    form an algebraic basis of $\Polun$.
\end{proposition}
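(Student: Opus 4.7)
The plan is to give a closed-form polynomial expression for $p_k(\II_\lambda - \OO_\lambda)$ in terms of the (shifted) row coordinates of $\lambda$, read off shifted symmetry from the formula, and then pass to an appropriate "symbol" in the usual ring $\Lambda$ of symmetric functions to deduce algebraic independence and generation.

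The first step is to realize the multiset difference $\II_\lambda - \OO_\lambda$ explicitly. For $\lambda$ of length $h$, extend by $\lambda_{h+1}=0$ and consider
\[X = \{\lambda_i - i + 1 : 1 \le i \le h+1\}, \qquad Y = \{\lambda_i - i : 1 \le i \le h\}.\]
A direct case analysis, using that $\lambda_i - i + 1 = \lambda_{i'} - i'$ forces $i' = i-1$ together with $\lambda_{i-1} = \lambda_i$, shows that $X \setminus Y = \II_\lambda$ and $Y \setminus X = \OO_\lambda$ as multisets (this is nothing but the interlacing structure of inner and outer corners). Consequently,
\[p_k(\II_\lambda - \OO_\lambda) = \sum_{i=1}^{h+1}(\lambda_i - i + 1)^k - \sum_{i=1}^{h}(\lambda_i - i)^k = \sum_{i=1}^{h}\bigl[(\lambda_i - i + 1)^k - (\lambda_i - i)^k\bigr] + (-h)^k,\]
the constant $(-h)^k$ arising from the extra element $\lambda_{h+1} - h - 1 + 1 = -h$ of $X$.

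The right-hand side is manifestly a polynomial in $\lambda_1, \dots, \lambda_h$ that is symmetric in the shifted variables $\lambda_i - i$, and the compatibility relation of Definition \ref{def:polynomial} follows from the telescoping
\[\bigl[(-h)^k - (-h-1)^k\bigr] + (-h-1)^k = (-h)^k,\]
which handles the new term obtained by appending a zero. Hence $p_k(\II_\lambda - \OO_\lambda) \in \Polun$ for every $k \ge 2$. A small computation gives $p_2(\II_\lambda - \OO_\lambda) = 2|\lambda|$, so the "missing" generator $|\lambda|$ is recovered; for the same reason $p_1(\II_\lambda - \OO_\lambda) = 0$, which explains the restriction $k \ge 2$.

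For algebraic independence and generation, I would use the filtration on $\Polun$ in which each $\lambda_i$ carries degree one, whose associated graded algebra is naturally a subalgebra of $\Lambda$. In this filtration the leading term of the displayed formula is $k \sum_i \lambda_i^{k-1} = k\, p_{k-1}$. Since $\{p_j\}_{j \ge 1}$ is an algebraic basis of $\Lambda$, the family of symbols $\{k\, p_{k-1}\}_{k \ge 2}$ is algebraically independent and generates, so by the usual induction on filtration degree these properties lift to $\{p_k(\II_\lambda - \OO_\lambda)\}_{k \ge 2}$ in $\Polun$ itself. The main obstacle I expect is the bookkeeping of the multiset identity $\II_\lambda - \OO_\lambda = X - Y$: one must track carefully how cancellations occur for partitions with consecutive equal parts and how the extra element $-h$ is produced. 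Once this identity is in place, the remaining steps are routine applications of Definition \ref{def:polynomial} and a standard leading-term argument.
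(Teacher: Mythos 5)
Your proof is correct. Note that the paper itself offers no proof of this proposition: it simply cites \cite[Corollary 2.8]{IvanovOlshanski2002}, so you are supplying an argument the authors delegated to the literature. What you do is essentially the standard route taken in that reference: realize $\II_\lambda-\OO_\lambda$ as the difference of the two interlacing alphabets $\{\lambda_i-i+1\}_{1\le i\le h+1}$ and $\{\lambda_i-i\}_{1\le i\le h}$, so that $p_k(\II_\lambda-\OO_\lambda)=\sum_{i\le h}\bigl[(\lambda_i-i+1)^k-(\lambda_i-i)^k\bigr]+(-h)^k$ is visibly a shifted symmetric function (each summand is $f(\lambda_i-i)$ with $f(t)=(t+1)^k-t^k$, and your telescoping handles the compatibility in $h$), and then pass to top homogeneous components, where the symbol $k\,p_{k-1}$ identifies the family, up to scalars, with the algebraic basis $(p_j)_{j\ge1}$ of the associated graded algebra $\Lambda$ of $\Polun$. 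I checked your multiset identity on the paper's running example $\lambda=(4,4,2)$: $X=\{4,3,0,-3\}$, $Y=\{3,2,-1\}$, giving $X\setminus Y=\{4,0,-3\}=\II_\lambda$ and $Y\setminus X=\{2,-1\}=\OO_\lambda$ as in Figure \ref{FigCorners}, and your case analysis ($\lambda_i-i+1=\lambda_{i'}-i'$ forces $i'=i-1$ and $\lambda_{i-1}=\lambda_i$, by strict monotonicity of $i\mapsto\lambda_i-i$) is sound in general. The only steps left implicit are standard: that the top homogeneous component map identifies the associated graded of the filtration by total degree with a subalgebra of $\Lambda$, and the routine induction on filtration degree that lifts generation and algebraic independence from symbols to the filtered algebra; both are legitimate and are exactly the devices used in \cite{IvanovOlshanski2002}.
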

In other terms, any function $F$ in $\Polun$
is equal to $\lambda \mapsto f(\II_\lambda - \OO_\lambda)$,
for some symmetric function $f$.
This symmetric function $f$ is unique up to addition of a multiple of $p_1$.

\subsection{Transition measure and free cumulants}
\label{subsect:TransMeasure}
Kerov \cite{KerovTransitionMeasure} introduced the notion
of \emph{transition measure} of a Young diagram.
This is a probability measure $\mu_\lambda$ on the real line $\RR$ associated to $\lambda$
and defined by its Cauchy transform:
\[G_{\mu_\lambda}(z) = \int_\RR \frac{d\mu_\lambda(x)}{z-x} =
\frac{\prod_{o \in \OO_\lambda} z-o}{\prod_{i \in \II_\lambda} z-i}.\]
In particular, transition measure is supported on $\II_\lambda$.
Besides, its \emph{moment generating series} is given by
\[ \sum_{k \ge 0} M^{(1)}_k(\lambda) \, t^k := \frac{1}{t} \, G_{\mu_\lambda}(1/t)
=\frac{\prod_{o \in \OO_\lambda} 1-o\, t}{\prod_{i \in \II_\lambda} 1-i\, t}, \]
where $M^{(1)}_k(\lambda):= \int_\RR x^k d\mu_\lambda(x)$ is the \emph{$k$-th moment} of
$\mu_\lambda$.
It is easily seen that, for any diagram, $M^{(1)}_0(\lambda)=1$ and $M^{(1)}_1(\lambda)=0$.
This generating series can be rewritten as
\begin{multline*}
    \sum_{k \ge 0} M^{(1)}_k(\lambda) \, t^k = \exp\left( \sum_{i \in \II_\lambda}\sum_{k \ge 1}
\frac{i^k}{k}\, t^k - \sum_{o \in \OO_\lambda}\sum_{k \ge 1} \frac{o^k}{k}\, t^k \right) \\
= \exp \left( \sum_{k \ge 1} \frac{1}{k} p_k(\II_\lambda -\OO_\lambda)\, t^k \right).
\end{multline*}
This implies that $M^{(1)}_k(\lambda)=h_k(\II_\lambda -\OO_\lambda)$,
where $h_k$ is the complete symmetric function of degree $k$;
see \cite[page 25]{Macdonald1995}.
\begin{corollary}
    The family $(M^{(1)}_k)_{k \geq 2}$ forms an algebraic basis of $\Lambda_{(1)}^\star$.
\end{corollary}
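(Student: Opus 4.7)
The plan is to combine Proposition \ref{prop:pk_basis}, which tells us that $\bigl(p_k(\II_\lambda - \OO_\lambda)\bigr)_{k\geq 2}$ is an algebraic basis of $\Polun$, with the classical generating-series relation between the complete symmetric functions $h_k$ and the power sums $p_k$. Since $M^{(1)}_k = h_k(\II_\lambda - \OO_\lambda)$ and $p_1(\II_\lambda - \OO_\lambda) = 0$, the identity
$$\sum_{k \geq 0} M^{(1)}_k(\lambda)\, t^k = \exp\left(\sum_{k \geq 2} \frac{p_k(\II_\lambda - \OO_\lambda)}{k}\, t^k\right)$$
already recalled in the text yields, upon expanding the exponential and extracting the coefficient of $t^k$ for each $k\geq 2$, a formula of the shape
$$M^{(1)}_k = \frac{1}{k}\, p_k(\II_\lambda - \OO_\lambda) + Q_k\bigl(p_2(\II_\lambda - \OO_\lambda), \ldots, p_{k-1}(\II_\lambda - \OO_\lambda)\bigr),$$
where $Q_k$ is a polynomial with rational coefficients having no constant and no linear term.

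This is a triangular change of generators with nonzero diagonal entries $1/k$. I would then argue by induction on $k$ that each $p_k(\II_\lambda - \OO_\lambda)$ lies in the subalgebra generated by $M^{(1)}_2, \ldots, M^{(1)}_k$; consequently the subalgebras generated by the two families coincide, and by Proposition \ref{prop:pk_basis} this common subalgebra is all of $\Polun$. Algebraic independence of $\bigl(M^{(1)}_k\bigr)_{k\geq 2}$ follows from the algebraic independence of $\bigl(p_k(\II_\lambda - \OO_\lambda)\bigr)_{k\geq 2}$ together with the invertibility of this triangular change of variables. I do not expect any real obstacle here: the statement is essentially a triangularity argument applied to the standard $p \leftrightarrow h$ transition in the ring of symmetric functions, available in $\Polun$ precisely because $p_1$ is killed by the evaluation at $\II_\lambda - \OO_\lambda$.
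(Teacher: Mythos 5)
Your proof is correct and follows essentially the same route as the paper, which derives the corollary from the identity $M^{(1)}_k(\lambda)=h_k(\II_\lambda-\OO_\lambda)$, the fact that the $h_k$ form an algebraic basis of symmetric functions, and Proposition~\ref{prop:pk_basis}. You merely make explicit the standard triangular $p\leftrightarrow h$ transition (with $p_1$ killed by the evaluation), which is exactly the content the paper leaves implicit.
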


We will also be interested in free cumulants $R^{(1)}_k(\lambda)$
of the transition measure $\mu_\lambda$.
They are defined by their generating series
\[K_\lambda(t) =t^{-1} + \sum_{k \ge 1} R^{(1)}_k(\lambda) \, t^{k-1},\]
where $K_\lambda$ is the (formal) compositional inverse of $G_{\mu_\lambda}$.
The fact that $M^{(1)}_1(\lambda)=0$ implies that either $R^{(1)}_1(\lambda)=0$ 
(for all diagrams $\lambda$).

As explained by Lassalle \cite[Section 5]{Lassalle2009},
they can be expressed as
\begin{equation}
\label{eq:FreeCumulantsLassalle}
R^{(1)}_k(\lambda) = e_k^\star(\II_\lambda -\OO_\lambda)
\end{equation}
for some homogeneous symmetric function $e_k^\star$ of degree $k$.
Functions $e_k^\star$ form an algebraic basis of symmetric functions, hence
we have the following corollary of Proposition \ref{prop:pk_basis}.
\begin{corollary}
$(R^{(1)}_k)_{k \geq 2}$ is an algebraic basis of ring of polynomial functions
on the set of Young diagrams.
\end{corollary}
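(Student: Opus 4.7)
My plan is to deduce this corollary from the preceding one (which states that $(M^{(1)}_k)_{k\ge 2}$ is an algebraic basis of $\Polun$) by exhibiting a triangular change of algebraic basis between moments and free cumulants. An alternative route would use the claim in the paper that the $e^\star_k$ form an algebraic basis of symmetric functions together with Proposition~\ref{prop:pk_basis}, but the triangularity argument is self-contained and keeps everything in $\Polun$.

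The starting point is the definition of the free cumulants via the compositional inverse $K_\lambda=G_{\mu_\lambda}^{-1}$. Applying Lagrange inversion to the identity $K_\lambda(G_{\mu_\lambda}(z))=z$ (or equivalently, unpacking Speicher's combinatorial moment-cumulant formula indexed by non-crossing partitions) yields, for every $k\ge 1$, a universal polynomial $P_k\in\QQ[x_1,\ldots,x_{k-1}]$ such that
\[R^{(1)}_k(\lambda)\;=\;M^{(1)}_k(\lambda)\;+\;P_k\!\left(M^{(1)}_1(\lambda),\ldots,M^{(1)}_{k-1}(\lambda)\right).\]
Since $M^{(1)}_1$ and $R^{(1)}_1$ both vanish identically on Young diagrams, this identity specializes, for $k\ge 2$, to
\[R^{(1)}_k\;=\;M^{(1)}_k\;+\;\widetilde P_k\!\left(M^{(1)}_2,\ldots,M^{(1)}_{k-1}\right)\;\in\;\QQ[M^{(1)}_2,\ldots,M^{(1)}_k],\]
and symmetrically $M^{(1)}_k\in\QQ[R^{(1)}_2,\ldots,R^{(1)}_k]$ by inverting the triangular system.

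The conclusion will then be a routine change-of-generators argument. By induction on $k$, the two subalgebras $\QQ[M^{(1)}_2,\ldots,M^{(1)}_k]$ and $\QQ[R^{(1)}_2,\ldots,R^{(1)}_k]$ coincide inside $\Polun$; taking the union over $k$ shows that the family $(R^{(1)}_k)_{k\ge 2}$ generates $\Polun$ as an algebra. Algebraic independence of this family is obtained in the same fashion: any polynomial relation among the $R^{(1)}_k$ would be transported, by substituting $R^{(1)}_j = M^{(1)}_j + \widetilde P_j(M^{(1)}_2,\ldots,M^{(1)}_{j-1})$ and exploiting the degree-triangularity, into a polynomial relation among the $M^{(1)}_k$, contradicting the preceding corollary. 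The only step that is not immediate is the Lagrange-inversion identity between moments and free cumulants, but this is entirely classical, so I do not anticipate any serious obstacle.
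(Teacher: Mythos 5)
Your proof is correct, but it takes a genuinely different route from the paper's. The paper deduces the corollary in one line from Lassalle's identity $R^{(1)}_k(\lambda)=e_k^\star(\II_\lambda-\OO_\lambda)$, where the $e_k^\star$ are homogeneous symmetric functions of degree $k$ forming an algebraic basis of the ring of symmetric functions; since $e_1^\star$ is a multiple of $p_1$ and $p_1(\II_\lambda-\OO_\lambda)=0$, Proposition~\ref{prop:pk_basis} then gives the statement directly. You instead stay entirely at the level of the functionals $M^{(1)}_k$ and $R^{(1)}_k$ and invoke the classical free-probability moment--cumulant relation (Lagrange inversion, or Speicher's sum over non-crossing partitions), which, after using $M^{(1)}_1=R^{(1)}_1=0$, yields the triangular change of generators $R^{(1)}_k=M^{(1)}_k+\widetilde P_k\bigl(M^{(1)}_2,\dots,M^{(1)}_{k-1}\bigr)$ and lets you transport the preceding corollary about the moments to the free cumulants. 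Both arguments are sound and short: yours is more self-contained in that it avoids the $e_k^\star$ and uses only a standard combinatorial identity, while the paper's formulation has the structural advantage of exhibiting $R^{(1)}_k$ as a symmetric function evaluated on $\II_\lambda-\OO_\lambda$, which is precisely what is needed later to extend these functions to generalized and continuous Young diagrams. The one step you should make fully explicit is algebraic independence: the substitution $x_k\mapsto x_k+\widetilde P_k(x_2,\dots,x_{k-1})$ is a (triangular, unipotent) automorphism of $\QQ[x_2,x_3,\dots]$, hence carries nonzero polynomials to nonzero polynomials; your appeal to ``degree-triangularity'' is exactly this and is fine.
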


\begin{remark}
Free cumulants are classical objects in free probability theory 
\cite{Voiculescu1986, SpeicherFreeCumulants},
but considering them outside this context may seem strange at first sight.
The relevance of free cumulants of the transition measure
of Young diagrams first appeared in the work of Biane \cite{Biane1998} and
they have played an important role in asymptotic
representation theory since then.
\end{remark}

\subsection{Generalized Young diagrams}\label{SubsectGeneralizedYD}
The second description of $\Polun$ is interesting because
it shows that the value of a polynomial function is defined
on more general objects than just Young diagrams.

\begin{definition}
    A {\em generalized Young diagram} is a broken line
    going from a point $(0,y)$ on the $y$-axis to a point $(x,0)$ on the $x$-axis
    such that every piece is either a horizontal segment from left to right or
    a vertical segment from top to bottom.
\end{definition}
Any Young diagram can be seen as such a broken line: just consider its border.
The notions of inner and outer corners can be easily adapted
to generalized Young diagrams, as well as
the sets $\II_L$ and $\OO_L$ of their contents.
It is illustrated in Figure~\ref{FigGeneralizedYD}.
Note also that the relation $p_1(\II_L-\OO_L)=0$
holds for generalized Young diagrams as well.

\begin{figure}[tb]
    \begin{center}
        \begin{tikzpicture}
            \draw[dashed,gray] grid  (4.5,3.5);
            \draw[->,thick] (-.2,0) -- (4.7,0);
            \draw[->,thick] (0,-.2) -- (0,3.7);
            \draw (0,2)   node[above left,fill=white]   {\tiny $i_1=-2$};  
            \draw (.5,2)  node[above right, fill=white] {\tiny $o_1=-1.5$}; 
            \draw (.5,.5) node[above right, fill=white] {\tiny $i_2=0$};
            \draw (3,.5)  node[above right, fill=white] {\tiny $o_2=2.5$};
            \draw (3,0)   node[below right, fill=white] {\tiny $i_3=3$}; 
            \draw[ultra thick] 
                    (0,2) circle (1.5pt) --
                    (.5,2)  circle (1.5pt) --
                    (.5,.5) circle (1.5pt) --
                    (3,.5)  circle (1.5pt) --
                    (3,0)   circle (1.5pt);
        \end{tikzpicture}
    \end{center}
    \caption{A generalized Young diagram $L$ with the corresponding sets $\OO_L = \{o_1,o_2\}$ and $\II_L = \{i_1, i_2, i_3\}$.}
    \label{FigGeneralizedYD}
\end{figure}
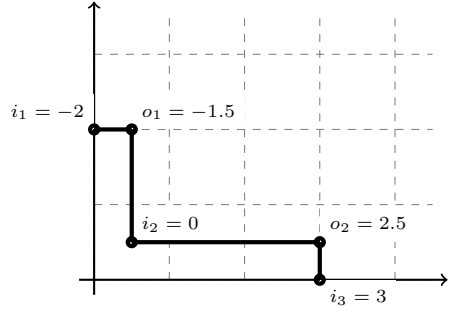

Any polynomial function $F$ on the set of Young diagrams corresponds to the function
\[\lambda \mapsto f(\II_\lambda - \OO_\lambda) \]
for some symmetric function $f$. 
Recall that $f$ is uniquely determined up to addition of a multiple of $p_1$.
Thus, $F$ can be canonically extended to generalized Young diagrams by
setting
\[F(L) = f(\II_L - \OO_L).\]
As the relation $p_1(\II_L-\OO_L)=0$ holds, $F(L)$ is well-defined,
{\em i.e.} it does not depend on the choice of $f$.

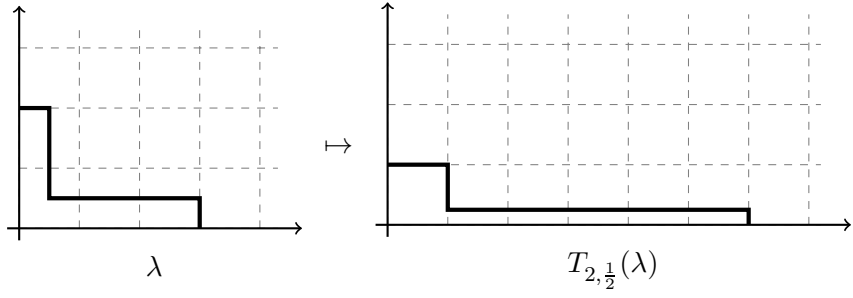
\begin{figure}[tb]
    \[ \begin{array}{c}
        \begin{tikzpicture}[scale=0.8]
            \draw[dashed,gray] grid (4.3,3.4);
            \draw[->,thick] (-0.2,0) -- (4.7,0);
            \draw[->,thick] (0,-0.2) -- (0,3.7);
            \draw[ultra thick] (0,2) 
            -- (.5,2) 
            -- (.5,.5) 
            -- (3,.5) 
            -- (3,0) ;
        \end{tikzpicture}\\
        \lambda
    \end{array}
    \mapsto 
     \begin{array}{c}
        \begin{tikzpicture}[scale=0.8]

            \draw[dashed,gray] grid (7.2,3.5);
            \draw[->,thick] (-0.2,0) -- (7.7,0);
            \draw[->,thick] (0,-0.2) -- (0,3.7);

            \begin{scope}[xscale=2,yscale=.5]
                        \draw[ultra thick] (0,2) 
                        -- (.5,2) 
                        -- (.5,.5) 
                        -- (3,.5) 
                        -- (3,0) ;
            \end{scope}
        \end{tikzpicture}\\
        T_{2,\frac{1}{2}} (\lambda)
    \end{array}\]
    \caption{Example of a Young diagram $\lambda$ on the left and a stretched Young diagram $T_{2,\frac{1}{2}} (\lambda)$ on the right.}
    \label{FigStretching}
\end{figure}

We will be in particular interested in the following generalized Young diagrams.
Let $\lambda$ be a (generalized) Young diagram and $s$ and $t$ two positive real numbers.
$T_{s,t}(\lambda)$ denotes the generalized Young diagram obtained from $\lambda$ by stretching it
horizontally by a factor $s$ and vertically by a factor $t$ in French convention (see Figure~\ref{FigStretching}).
These \emph{anisotropic} Young diagrams have been first considered by Kerov in \cite{KerovAnisotropicYD},
also in the context of Jack polynomials.

In the case $s=t$, we denote by $D_s(\lambda) := T_{s,s}(\lambda)$ the diagram obtained
from $\lambda$ by applying a homothetic transformation of ratio $s$ and we will call it \emph{dilated Young diagram}.
In the case $s = t^{-1} = \sqrt{\alpha}$ for some $\alpha \in \RR_+$ we denote by 
$A_\alpha(\lambda) := T_{\sqrt{\alpha},\sqrt{\alpha}^{-1}} (\lambda)$ the diagram obtained from $\lambda$ by stretching it
horizontally by a factor $\sqrt{\alpha}$ and vertically by a factor $\sqrt{\alpha}^{-1}$. We call it \emph{$\alpha$-anisotropic Young diagram}.

It is easy to check that the sets $\II_{D_s(\lambda)}$ and $\OO_{D_s(\lambda)}$
are obtained from $\II_\lambda$ and $\OO_\lambda$ by multiplying all values by $s$.
In particular, if $F$ is a polynomial function such that the corresponding symmetric
function $f$ is homogeneous of degree $d$, then
\[ \lambda \mapsto F(D_s(\lambda)) = f(\II_{D_s(\lambda)} - \OO_{D_s(\lambda)})
= s^d f(\II_\lambda - \OO_\lambda) = s^d F(\lambda) \]
is also a polynomial function.
Finally, for any fixed $s>0$,
$F$ is a polynomial function if and only if $\lambda \mapsto F(D_s(\lambda))$
is a polynomial function.

\subsection{Continuous Young diagrams}
\label{SubsectContinuousYD}

A generalized Young diagram can also be seen as a function on the real line.
Indeed, if one rotates the zigzag line counterclockwise by $45\degree$
and scale it by a factor $\sqrt{2}$ (so that the new $z$-coordinate corresponds to contents), then it can
be seen as the graph of a piecewise affine continuous function with slope $\pm 1$.
We denote this function by $\omega(\lambda)$.
This definition is illustrated in Figure \ref{FigOmega}.
It is very useful to state convergence results for Young diagrams.

Note that the limiting function $\Omega$ corresponds neither to a real Young diagram,
nor to a generalized Young diagram.
Therefore, it is natural to work with even more general objects than generalized Young diagrams, i.~e.~\emph{continuous Young diagrams}. 
\begin{definition}
We say that a function $\omega: \RR \rightarrow \RR$ is a \emph{continuous Young diagram} if:
\begin{itemize}
\item $\omega$ is Lipshitz continuous function with constant $1$, i.~e.~ for any $x_1, x_2 \in \RR$ $|\omega(x_1) - \omega(x_2)| \leq |x_1 - x_2|$;
\item $\omega(x)-|x|$ is compactly supported.
\end{itemize}
\end{definition}
There is a natural extension for the definitions of transition measure and evaluation of polynomial functions for continuous Young diagrams, see \cite[Section 1.2]{Biane1998}.
However, the general setting will not be relevant in this paper:
we will only need to know that the free
cumulants of the transition measure of $\Omega$ are
\[R_k(\Omega) = \begin{cases}
    1 &\text{ if }k=2 ;\\
    0 &\text{ if }k>2.
\end{cases}\]
This was established by Biane \cite[Section 3.1]{Biane2001}.

\subsection{$\alpha$-polynomial functions}
\begin{definition}
    We say that $F$ is an \emph{$\alpha$-polynomial function} on the set of
(continuous) Young diagrams if 
    \[\lambda \mapsto F ( T_{\alpha^{-1},1}(\lambda) ) \]
    is a polynomial function.
    The set of $\alpha$-polynomial functions is an algebra which will be denoted by $\Pola$.
\end{definition}
Using Definition \ref{def:polynomial}, this means
that the polynomial $F(\alpha^{-1} \lambda_1, \cdots, \alpha^{-1} \lambda_h)$ is 
symmetric in $\lambda_1 -1$, \ldots, $\lambda_h - h$.
Equivalently (by a change of variables), $F$ is symmetric in $\alpha \lambda_1 -1$,
\ldots, $\alpha \lambda_h - h$ or in 
\[\lambda_1 -\frac{1}{\alpha}, \dots, \lambda_h - \frac{h}{\alpha}.\]
The last characterization is the definition of what is usually called an
$\alpha$-shifted symmetric function \cite{OkounkovOlshanskiShiftedJack,Lassalle2008a}.

It would be equivalent to ask in the definition of $\alpha$-polynomial functions that
\[\lambda \mapsto F (A_{\alpha^{-1}}(\lambda) ) \]
is a polynomial function, 
where $A_{\alpha^{-1}}(\lambda) = T_{\sqrt{\alpha}^{-1},\sqrt{\alpha}}(\lambda)$
is an $\alpha$-anisotropic Young diagram.
Indeed, $T_{\sqrt{\alpha}^{-1},\sqrt{\alpha}}(\lambda)$ is a dilatation of
$T_{\alpha^{-1},1}(\lambda)$ and the property of {\em being polynomial} is invariant
by dilatation of the argument.

Therefore, the $\alpha$-anisotropic moments and free cumulants defined by
\begin{align*}
    M_k^{(\alpha)}(\lambda)&:= M^{(1)}_k \left( A_{\alpha}(\lambda) \right),\\
    R_k^{(\alpha)}(\lambda) &:= R^{(1)}_k \left( A_{\alpha}(\lambda) \right), \\
\end{align*}
are $\alpha$-polynomial.
Moreover, the families $(M_k^{(\alpha)})_{k\geq 2}$ and $(R_k^{(\alpha)})_{k\geq 2}$ are algebraic bases of the algebra $\Pola$ of
$\alpha$-polynomial functions.

The following property is due to Lassalle (under a slightly different form).
\begin{proposition}\label{prop:Jack-characters-basis}
When $\mu$ runs over all partitions,
 Jack characters $\left(\Ch^{(\alpha)}_\mu\right)_\mu$ form a linear basis of
the algebra of $\alpha$-polynomial functions.
\end{proposition}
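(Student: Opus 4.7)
My plan has two steps: first, to show that each $\Ch^{(\alpha)}_\mu$ lies in $\Pola$; then, to show that the family is a linear basis. Throughout I would use the equivalent description of $\Pola$ as the algebra of $\alpha$-shifted symmetric functions, i.e.\ functions of $(\lambda_1, \lambda_2, \ldots)$ that are polynomial and symmetric in the shifted variables $\lambda_i - i/\alpha$ (and compatible with padding by zeros), an identification which is immediate from Definition~\ref{def:polynomial}.

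For the first step, I would invoke the theory of shifted Jack polynomials $\left(J^{*(\alpha)}_\mu\right)_\mu$ introduced by Knop--Sahi and developed by Okounkov--Olshanski. By construction these are $\alpha$-shifted symmetric, characterized uniquely by the vanishing $J^{*(\alpha)}_\mu(\nu) = 0$ for $|\nu| \leq |\mu|$, $\nu \neq \mu$, together with an explicit non-zero value at $\mu$; their top-homogeneous component in $\lambda_1, \ldots, \lambda_h$ coincides with the ordinary Jack polynomial $J^{(\alpha)}_\mu$. The identity
\[ J^{(\alpha)}_\lambda \;=\; \sum_{\rho \vdash |\lambda|} \theta^{(\alpha)}_\rho(\lambda) \, p_\rho, \]
combined with the known duality between evaluating shifted Jack polynomials at partitions and reading off power-sum coefficients of ordinary Jack polynomials, shows that, once the 1-parts of $\rho$ are properly absorbed into the normalization, $\Ch^{(\alpha)}_\mu(\lambda)$ is indeed an $\alpha$-shifted symmetric function of $\lambda$. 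The binomial factor $\binom{|\lambda| - |\mu| + m_1(\mu)}{m_1(\mu)}$ in the definition of $\Ch^{(\alpha)}_\mu$ is precisely what is needed to make this stabilization work as $|\lambda|$ grows past $|\mu|$.

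For the second step, I would argue by triangularity. The family $(R_k^{(\alpha)})_{k \ge 2}$ is already known to be an algebraic basis of $\Pola$, so the monomials $\prod_i R^{(\alpha)}_{\mu_i + 1}$, as $\mu$ ranges over all partitions, form a linear basis of $\Pola$. Endow $\Pola$ with the grading $\deg R^{(\alpha)}_k = k$. Then the Kerov expansion of $\Ch^{(\alpha)}_\mu$ in free cumulants, which will be analysed in detail in Section~\ref{SectPolynomial}, has top degree $|\mu| + \ell(\mu)$, with top-degree component a non-zero scalar multiple of $\prod_i R^{(\alpha)}_{\mu_i + 1}$. Ordering partitions compatibly (for instance first by $|\mu|+\ell(\mu)$, then lexicographically), this produces a triangular change of basis between $(\Ch^{(\alpha)}_\mu)_\mu$ and the monomial basis above, yielding both linear independence and spanning at once.

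The main difficulty is step one: unlike the case $\alpha = 1$, where $\Ch^{(1)}_\mu$ is shifted symmetric as a consequence of the representation theory of $S_n$ (via Frobenius's formula and the Kerov--Olshanski characterization), for arbitrary $\alpha$ no such interpretation is available. One must instead exploit directly the algebraic structure of Jack polynomials --- most naturally via the Sekiguchi--Debiard operators, which diagonalize Jack polynomials and whose eigenvalues are explicit $\alpha$-shifted symmetric functions of $\lambda$ --- to derive the stability property. This is exactly the computation carried out by Lassalle, and it is where the entire $\alpha$-shifted structure of $\Pola$ enters.
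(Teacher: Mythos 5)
Your proof is correct, and its first step is essentially the paper's: the paper simply cites Lassalle's isomorphism $f\mapsto f^{\#}$ between symmetric functions and $\alpha$-shifted symmetric functions together with his identity $\Ch^{(\alpha)}_\mu = \alpha^{(|\mu|-\ell(\mu))/2}\,p_\mu^{\#}$, which is exactly the shifted-Jack-polynomial machinery you describe. Where you genuinely diverge is in the second step. The paper extracts the basis property for free from the same identity: since $(p_\mu)_\mu$ is a basis of the symmetric function ring and $\#$ is a \emph{linear isomorphism} onto the $\alpha$-shifted symmetric functions, $(\Ch^{(\alpha)}_\mu)_\mu$ is automatically a linear basis of $\Pola$ --- no triangularity needed. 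You instead discard the isomorphism structure after step one and re-derive linear independence and spanning by unitriangularity of $\Ch^{(\alpha)}_\mu = \prod_i R^{(\alpha)}_{\mu_i+1} + (\text{lower }\deg_1)$ against the free-cumulant monomial basis. This works (the graded pieces for $\deg_1$ are finite-dimensional, and $\mu\mapsto\prod_i R^{(\alpha)}_{\mu_i+1}$ is a bijection onto monomials in $R_2,R_3,\dots$), and it is not circular, because the Kerov-expansion analysis of Section~\ref{SectPolynomial} and Corollary~\ref{corol:dominant_deg1} only require membership of $\Ch^{(\alpha)}_\mu$ in $\Pola$ (your step one), not the basis property. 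But it is more expensive: it imports the top-degree identification of Corollary~\ref{corol:dominant_deg1}, which itself rests on Lassalle's inductive algorithm plus the known $\alpha=1$ case, whereas the paper's route closes the argument in one line. The payoff of your route is that it yields the useful extra information about leading terms as a by-product; the payoff of the paper's route is economy and independence from the later sections.
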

\begin{proof}
    In \cite[Section 3]{Lassalle2008a}, Lassalle builds a
    linear isomorphism
    $\lambda \mapsto \lambda^\#$ between symmetric functions and 
    $\alpha$-shifted symmetric functions.
    Then he shows \cite[Proposition 2]{Lassalle2008a} that,
    for any two partitions $\lambda$ and $\mu$ with $|\lambda| \ge |\mu|$,
    \[\alpha^{(|\mu|-\ell(\mu))/2}  p_\mu^\#(\lambda) = \Ch^{(\alpha)}_\mu(\lambda).\]
    It is straight-forward to check that, for $|\lambda| < |\mu|$,
    both sides of equality above are equal to $0$.
    Hence, as functions on all Young diagrams,
    $\Ch^{(\alpha)}_\mu$ is equal, up to a scalar multiple, to $p_\mu^\#$.
    The facts that $p_\mu$ is a basis of the symmetric function ring and
    $f\mapsto f^\#$ a linear isomorphism conclude the proof.
\end{proof}
In particular, they are $\alpha$-polynomial functions and can be expressed in
terms of the algebraic bases above.

\begin{proposition}
    Let $\mu$ be a partition and $\alpha >0$ a fixed real number.
    There exist unique polynomials $L_\mu^{(\alpha)}$ and $K_\mu^{(\alpha)}$
    such that, for every $\lambda$,
    \begin{align*}
        \Ch^{(\alpha)}_\mu(\lambda) &= L_\mu^{(\alpha)}\left(M_2^{(\alpha)}(\lambda),
            M_3^{(\alpha)}(\lambda),\cdots \right); \\
        \Ch^{(\alpha)}_\mu(\lambda) &= K_\mu^{(\alpha)}\left(R_2^{(\alpha)}(\lambda),
    R_3^{(\alpha)}(\lambda),\cdots \right).
\end{align*}
\end{proposition}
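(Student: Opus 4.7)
The plan is to derive this proposition directly from the two facts established just above it: that Jack characters $\Ch^{(\alpha)}_\mu$ lie in the algebra $\Pola$ (from Proposition~\ref{prop:Jack-characters-basis}), and that the families $(M_k^{(\alpha)})_{k\geq 2}$ and $(R_k^{(\alpha)})_{k\geq 2}$ are algebraic bases of $\Pola$. Once both ingredients are in hand, the statement is essentially a tautology about algebraic bases.

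For existence, I would argue as follows. Since $\Ch^{(\alpha)}_\mu$ is an $\alpha$-polynomial function, it belongs to the algebra $\Pola$. Because $(M_k^{(\alpha)})_{k \geq 2}$ generates $\Pola$ as an algebra, $\Ch^{(\alpha)}_\mu$ can be written as a polynomial expression in $M_2^{(\alpha)}, M_3^{(\alpha)}, \ldots$, which furnishes $L_\mu^{(\alpha)}$. The argument for $K_\mu^{(\alpha)}$ is identical, using $(R_k^{(\alpha)})_{k \geq 2}$ instead. Note that only finitely many of the $M_k^{(\alpha)}$ (respectively $R_k^{(\alpha)}$) actually occur in the expression, so one gets an honest polynomial rather than a formal series.

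For uniqueness, the point is that an algebraic basis consists of algebraically independent generators. If two polynomials $L$ and $L'$ both represented $\Ch^{(\alpha)}_\mu$ as a polynomial in $M_2^{(\alpha)}, M_3^{(\alpha)}, \ldots$, then $L - L'$ would be a polynomial relation among $M_2^{(\alpha)}, M_3^{(\alpha)}, \ldots$ vanishing as a function on all Young diagrams. Algebraic independence of these generators forces $L - L' = 0$ as a formal polynomial. The same reasoning yields uniqueness of $K_\mu^{(\alpha)}$.

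The only subtle point, which is not really an obstacle since it is invoked from the preceding material, is knowing that $(M_k^{(\alpha)})_{k\geq 2}$ and $(R_k^{(\alpha)})_{k\geq 2}$ are genuinely algebraic bases of $\Pola$. This is stated just before the proposition and follows from the corresponding statements for $\alpha=1$ (the corollaries to Propositions~\ref{prop:pk_basis}) transported through the stretching map $\lambda \mapsto A_\alpha(\lambda)$, under which $M_k^{(1)}$ and $R_k^{(1)}$ pull back to $M_k^{(\alpha)}$ and $R_k^{(\alpha)}$ and polynomial functions pull back to $\alpha$-polynomial functions. Thus the proof is very short and mostly a matter of assembling already-established facts.
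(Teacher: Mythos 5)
Your argument is correct and is exactly the route the paper takes: the proposition is stated as an immediate consequence of Proposition~\ref{prop:Jack-characters-basis} (membership of $\Ch^{(\alpha)}_\mu$ in $\Pola$) together with the fact that $(M_k^{(\alpha)})_{k\geq 2}$ and $(R_k^{(\alpha)})_{k\geq 2}$ are algebraic bases of $\Pola$, with no further proof given. Your write-up simply makes explicit the existence-from-generation and uniqueness-from-algebraic-independence steps that the paper leaves implicit.
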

The polynomials $K_\mu^{(\alpha)}$ have been introduced by Kerov in the case
$\alpha=1$ \cite{Kerov2000talk,Biane2003} and by Lassalle in the general case \cite{Lassalle2009} and they are called \emph{Kerov polynomials}.
Once again, our normalizations are different from his.
We will explain this choice later.

From now on, when it does not create any confusion, we suppress the superscript
$(\alpha)$.

We present a few examples of polynomials $L_{\mu}$ and $K_{\mu}$
(in particular the case of a
one-part partition $\mu$ of length lower than $6$).
This data has been computed using the one given in \cite[page 2230]{Lassalle2009}.
Recall that we set $\gamma:=\frac{1-\alpha}{\sqrt{\alpha}}$.
\begin{align*}
    L_{(1)} &= M_2, \\
    L_{(2)} &= M_3 + \gamma M_2, \\
    L_{(3)} &= M_4 - 2M_2^2 + 3\gamma M_3 + (1 + 2\gamma^2)M_2, \\
    L_{(4)} &= M_5 - 5M_3M_2 + 6\gamma M_4 - 11\gamma M_2^2 + (5 + 11\gamma^2)M_3 + (7\gamma +
6\gamma^3)M_2, \\
L_{(5)} &= M_6 - 6M_4M_2 -3M_3^2 + 7M_2^3 + 10\gamma M_5 - 45\gamma M_3M_2 \\
&\qquad + (15+35\gamma^2)M_4 -
(25+60\gamma^2)M_2^2 \\
&\qquad+ (55\gamma + 50\gamma^3)M_3 + (8 + 46\gamma^2 + 24\gamma^4)M_2,\\
L_{(2,2)} &= M_3^2 + 2\gamma M_3 M_2 - 4 M_4 + (\gamma^2+6) M_2^2 - 10 \gamma M_3
-(6\gamma^2+2) M_2.\\
\intertext{Similarly,}
    K_{(1)} &= R_2, \\
    K_{(2)} &= R_3 + \gamma R_2, \\
    K_{(3)} &= R_4 + 3\gamma R_3 + (1 + 2\gamma^2)R_2, \\
    K_{(4)} &= R_5 + 6\gamma R_4 + \gamma R_2^2 + (5 + 11\gamma^2)R_3 + (7\gamma +
6\gamma^3)R_2, \\
K_{(5)} &= R_6 + 10\gamma R_5 + 5\gamma R_3R_2 + (15 + 35 \gamma^2) R_4 + (5+10\gamma^2) R_2^2 \\
&\qquad+ (55\gamma + 50\gamma^3)R_3 + (8 + 46\gamma^2 + 24\gamma^4)R_2,\\
K_{(2,2)} &= R_3^2 + 2\gamma R_3 R_2 - 4 R_4 + (\gamma^2-2) R_2^2 - 10 \gamma R_3
-(6\gamma^2+2) R_2.
\end{align*}

A few striking facts appear on these examples. First, all coefficients are polynomials in the auxiliary parameter $\gamma$:
        we prove this fact in the next section with explicit bounds on the 
        degrees.
    Besides, for one part partition, polynomials $K_{(r)}$ 
        have non-negative coefficients.
        We are unfortunately unable to prove this statement, which is a slightly more
        precise version of \cite[Conjecture 1.2]{Lassalle2009}.
        A similar conjecture holds for several part partitions,
        see also \cite[Conjecture 1.2]{Lassalle2009}.

\section{Polynomiality in Kerov's expansion}
\label{SectPolynomial}
\subsection{Notations}
As in the previous sections, most of our objects are indexed by integer partitions.
Therefore it will be useful to use some short notations for small modifications
(adding or removing a box or a part) of partitions.
We denote by $\mu \cup (r)$ ($\mu \setminus (r)$, respectively)
the partition obtained from $\mu$ by adding  (deleting, respectively)
one part equal to $r$.
We denote by $\mu_{\downarrow r}=\mu \setminus (r) \cup (r-1)$ the partition
obtained from $\mu$ by removing one box in a row of size $r$.
The reader might wonder what do $\mu \setminus (r)$ and $\mu_{\downarrow r}$ mean
if $\mu$ does not have a part equal to $r$:
we will not use these notations in this context.
Finally, if $i$ is an inner corner of $\lambda$,
we denote by $\lambda^{(i)}$ the diagram obtained from $\lambda$ by adding a box
at place $i$.

\subsection{How to compute Jack character polynomials?}
\label{SubsectAlgo}
Unfortunately, the argument given above to prove the existence of $L_\mu$
and $K_\mu$ is not effective.
M.~Lassalle \cite{Lassalle2009} gave an algorithm for computing $K_\mu$
by induction over $\mu$.
In this section we present a slightly simpler version of this algorithm which allows to compute
$L_\mu$ instead of $K_\mu$.

One of the base ingredients is the following formula, which corresponds
to \cite[Proposition 8.3]{Lassalle2009}.
\begin{proposition}\label{PropMkLo}
    Let $k \geq 2$, $\lambda$ be a Young diagram and $i=(x,y)$ an inner corner of $\lambda$.
    \begin{multline*} M_k(\lambda^{(i)})-M_k(\lambda) =
        \sum_{\substack{r\geq 1, s,t \geq 0, \\ 2r+s+t \leq k}}
    z_i^{k-2r-s-t} \\
    \binom{k-t-1}{2r+s-1}\binom{r+s-1}{s}
    \left( -\gamma \right)^s M_t(\lambda),
\end{multline*}
    where $z_i=\sqrt{\alpha} x - \sqrt{\alpha}^{-1} y$ is the content of the 
    corner corresponding to $i$ in the $\alpha$-anisotropic diagram 
    $A_\alpha(\lambda)$.
\end{proposition}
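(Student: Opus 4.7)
The plan is to derive the formula from a generating-function identity followed by a binomial expansion. By the formula for the moment generating series of Subsection~\ref{subsect:TransMeasure}, applied to the generalized Young diagram $A_\alpha(\lambda)$,
\[
F_\lambda(u) := \sum_{k \ge 0} M_k(\lambda)\, u^k = \frac{\prod_{c \in \OO_{A_\alpha(\lambda)}} (1 - c u)}{\prod_{c \in \II_{A_\alpha(\lambda)}} (1 - c u)},
\]
where $\II_{A_\alpha(\lambda)}$ and $\OO_{A_\alpha(\lambda)}$ are the multisets of contents of the inner and outer corners of $A_\alpha(\lambda)$ (i.e.\ the $\alpha$-anisotropic contents of the corners of $\lambda$). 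The heart of the proof is to establish
\[
\frac{F_{\lambda^{(i)}}(u)}{F_\lambda(u)} = \frac{(1 - u z_i)\bigl(1 - u(z_i - \gamma)\bigr)}{\bigl(1 - u(z_i + \sqrt{\alpha})\bigr)\bigl(1 - u(z_i - \sqrt{\alpha}^{-1})\bigr)}.
\]

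Write $i=(x,y)$ in vertex coordinates, so $z_i = x\sqrt{\alpha} - y\sqrt{\alpha}^{-1}$. Only four vertex positions can be affected by adding a box at $i$: the corner $(x,y)$ itself, the opposite corner $(x+1, y+1)$ of the new box, and the lateral positions $(x+1, y)$ and $(x, y+1)$, with $\alpha$-contents $z_i$, $z_i-\gamma$, $z_i+\sqrt{\alpha}$, and $z_i - \sqrt{\alpha}^{-1}$ respectively. The corner $(x,y)$ ceases to be an inner corner as it is now filled, while $(x+1, y+1)$ always joins $\OO_{\lambda^{(i)}}$: since $(x, y) \in \II_\lambda$, neither $(x+1, y)$ nor $(x, y+1)$ lies in $\lambda$, hence not in $\lambda^{(i)}$, so the new box is removable. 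For each of the two lateral positions, a short local analysis gives a dichotomy: either it was already an outer corner of $\lambda$ and ceases to be one after the box is added, or it lay in neither $\II_\lambda$ nor $\OO_\lambda$ and becomes a new inner corner of $\lambda^{(i)}$. In either branch the net contribution at that position to the signed multiset $\II - \OO$ increases by exactly $1$. Summing the contributions yields
\[
(\II - \OO)_{\lambda^{(i)}} - (\II - \OO)_\lambda = \bigl\{z_i + \sqrt{\alpha},\ z_i - \sqrt{\alpha}^{-1}\bigr\} - \bigl\{z_i,\ z_i - \gamma\bigr\},
\]
which translates into the ratio displayed above.

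A short computation using $(z_i + \sqrt{\alpha})(z_i - \sqrt{\alpha}^{-1}) = z_i(z_i - \gamma) - 1$ then gives
\[
F_{\lambda^{(i)}}(u) - F_\lambda(u) = F_\lambda(u) \cdot \frac{u^2}{\bigl(1 - u(z_i + \sqrt{\alpha})\bigr)\bigl(1 - u(z_i - \sqrt{\alpha}^{-1})\bigr)}.
\]
Setting $v := u/(1 - u z_i)$ and using $(1 - \sqrt{\alpha}\, v)(1 + \sqrt{\alpha}^{-1}\, v) = 1 + \gamma v - v^2$, the multiplier becomes $v^2/(1 + \gamma v - v^2) = \sum_{r \ge 1} v^{2r}/(1 + \gamma v)^r$. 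Expanding $(1 + \gamma v)^{-r} = \sum_{s \ge 0}\binom{r+s-1}{s}(-\gamma)^s v^s$ and then $v^m = \sum_{n \ge 0}\binom{n+m-1}{m-1} z_i^n\, u^{n+m}$ with $m = 2r+s$, multiplying by $F_\lambda(u) = \sum_t M_t(\lambda) u^t$, and extracting the coefficient of $u^k$ (so that $t + n + m = k$ with $n = k - t - 2r - s \ge 0$) recovers exactly the stated sum.

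The main obstacle is the local analysis leading to the uniform ratio: the cancellation between disappearing outer corners and newly appearing inner corners at the two lateral positions is what makes the formula independent of the detailed shape of $\lambda$ near $i$. Once this uniform generating-function identity is in place, the remaining steps are purely formal manipulations.
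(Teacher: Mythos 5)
Your proof is correct, and it is genuinely different from what the paper does: the paper does not prove Proposition~\ref{PropMkLo} at all, but simply identifies it with Proposition~8.3 of Lassalle's paper via a dictionary of normalizations ($M_k(\lambda)=\alpha^{k/2}\bm{M_k(\lambda)}$, $z_i=\sqrt{\alpha}\,\bm{x_i}$, etc.). You instead give a self-contained derivation from the Cauchy/moment generating series of the transition measure. I checked the key steps: the local analysis of how $\II-\OO$ changes when a box is added is the classical Kerov interlacing argument, correctly transported to the anisotropic diagram $A_\alpha(\lambda)$ (the four relevant contents $z_i$, $z_i-\gamma$, $z_i+\sqrt{\alpha}$, $z_i-\sqrt{\alpha}^{-1}$ are right, and the dichotomy at the two lateral positions does yield a net $+1$ in $\II-\OO$ in both branches, which is exactly what makes the ratio of generating series independent of the shape of $\lambda$ near $i$); the linear terms in the numerator of $F_{\lambda^{(i)}}/F_\lambda-1$ cancel because $\sqrt{\alpha}-\sqrt{\alpha}^{-1}=-\gamma$, leaving the clean $u^2$; and the substitution $v=u/(1-z_iu)$ together with $(1-\sqrt{\alpha}v)(1+\sqrt{\alpha}^{-1}v)=1+\gamma v-v^2$, the negative-binomial expansions, and the bookkeeping $n+m-1=k-t-1$, $m-1=2r+s-1$ reproduce the stated coefficients exactly. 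What your route buys is independence from Lassalle's paper and a transparent explanation of where each binomial factor comes from; what the paper's route buys is brevity and an explicit record of how its normalizations relate to Lassalle's, which is reused elsewhere. The only cosmetic quibble is the justification that $(x+1,y+1)$ always joins $\OO_{\lambda^{(i)}}$: the cleanest argument is simply that removing the newly added box returns the valid diagram $\lambda$, rather than the statement about the lateral positions not lying in $\lambda$.
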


\begin{proof}
    As mentioned above, this is exactly \cite[Proposition 8.3]{Lassalle2009}.
    To help the reader, we compare our notations to Lassalle's ones
    (we use boldface to refer to his notations):
    \begin{align*}
        M_k(\lambda^{(i)}) &= \alpha^{k/2} \bm{M_k(\lambda^{(i)})}; \\
        M_t(\lambda) &= \alpha^{t/2} \bm{M_t(\lambda)}; \\
        z_i &= \sqrt{\alpha} \cdot \bm{x_i};\\
        \gamma &= \sqrt{\alpha}^{-1} - \sqrt{\alpha}.\qedhere
    \end{align*}
\end{proof}

For any partition $\rho$ we define $M_\rho(\lambda):=\prod_i M_{\rho_i}(\lambda)$ by multiplicativity.
The above proposition implies immediately the following corollary:

\begin{corollary}\label{CorolMlo}
    For any partition $\rho$, any diagram $\lambda$ and any inner corner $i$ of $\lambda$,
    \[M_{\rho}(\lambda^{(i)}) = M_{\rho}(\lambda)
    + \sum_{\substack{ g,h \geq 0, \\ \pi \vdash h}} b_{g,\pi}^\rho(\gamma)
    \ z_i^g\ M_\pi(\lambda),\]
    where $b_{g,\pi}^\rho(\gamma)$ is a polynomial in $\gamma$.
\end{corollary}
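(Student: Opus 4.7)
The plan is to deduce this statement directly from Proposition \ref{PropMkLo} by exploiting the multiplicativity of $M_\rho$. Indeed, rewriting Proposition \ref{PropMkLo} more compactly, we have for every $k \ge 2$ an identity of the shape
\[ M_k(\lambda^{(i)}) - M_k(\lambda) = \sum_{g,t \ge 0} c^k_{g,t}(\gamma)\ z_i^g\ M_t(\lambda), \]
where $c^k_{g,t}(\gamma)$ is a polynomial in $\gamma$ (and only finitely many of these coefficients are nonzero for a given $k$). The values $k=0$ and $k=1$ are also harmless: since $M_0 \equiv 1$ and $M_1 \equiv 0$, the corresponding difference $M_k(\lambda^{(i)}) - M_k(\lambda)$ is zero, so an expansion of the same form trivially holds.

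Writing $\rho = (\rho_1,\dots,\rho_\ell)$ and setting $\Delta_j := M_{\rho_j}(\lambda^{(i)}) - M_{\rho_j}(\lambda)$, multiplicativity of $M_\rho$ gives
\[ M_\rho(\lambda^{(i)}) = \prod_{j=1}^\ell \bigl( M_{\rho_j}(\lambda) + \Delta_j \bigr). \]
Expanding this product, the ``all-$M_{\rho_j}(\lambda)$'' term contributes $M_\rho(\lambda)$, and every other term is a product in which at least one factor is a $\Delta_j$. Each $\Delta_j$ is by the above a finite sum $\sum_{g,t} c^{\rho_j}_{g,t}(\gamma)\, z_i^g\, M_t(\lambda)$. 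Multiplying such expressions together and with the remaining factors $M_{\rho_{j'}}(\lambda)$, one obtains a finite sum of terms of the form
\[ \bigl(\text{polynomial in }\gamma\bigr) \cdot z_i^{g} \cdot M_\pi(\lambda), \]
where $g$ is the sum of the various $z_i$-exponents and $\pi$ is the partition whose parts are the indices $t$ coming from the $\Delta_j$ factors, together with the indices $\rho_{j'}$ from the untouched factors. Collecting all contributions for each pair $(g,\pi)$ defines the polynomial $b^\rho_{g,\pi}(\gamma)$, and the claim follows.

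There is no real obstacle here; the content of the corollary is just the bookkeeping of expanding a product of terms to which Proposition \ref{PropMkLo} applies factor by factor. The only point that deserves a brief mention is that $b^\rho_{g,\pi}(\gamma)$ is indeed a polynomial and not merely a rational function in $\gamma$, but this is automatic since it is built from sums and products of the polynomial coefficients $c^k_{g,t}(\gamma)$ provided by Proposition \ref{PropMkLo}.
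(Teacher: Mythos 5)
Your argument is correct and is essentially the paper's own proof: the one-part case is read off from Proposition~\ref{PropMkLo}, and the general case follows by multiplying these expansions, with $b_{g,\pi}^\rho(\gamma)$ obtained as sums and products of polynomial coefficients. The extra bookkeeping you spell out is exactly what the paper leaves implicit.
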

\begin{proof}
    The case when $\rho$ consists of only one part is a direct consequence of Proposition \ref{PropMkLo} (one even has an explicit
    expression for $b_{g,\pi}^\rho(\gamma)$ in this case).
    The general case follows by multiplication.
\end{proof}
This corollary is an analogue of Equation (8.1) in \cite{Lassalle2009}.

Let $\mu$ be a partition. 
By definition of $L_\mu$, there exist some numbers $a^\mu_\rho$ (depending on $\alpha$)
such that, for any Young diagram $\lambda$,
\[ \Ch_\mu(\lambda) = \sum_\rho a^\mu_\rho\ M_\rho(\lambda). \]
Using Corollary~\ref{CorolMlo} we can compute
\begin{multline}\label{EqIng1Lassalle}
    \Ch_\mu(\lambda^{(i)}) = \sum_\rho a^\mu_\rho\ M_\rho(\lambda^{(i)}) \\
    = \Ch_\mu(\lambda) + \sum_\rho a^\mu_\rho \left( \sum_{\substack{g,h \geq 0, \\ \pi \vdash h}}
    b_{g,\pi}^\rho(\gamma)\ z_i^g\ M_\pi(\lambda) \right).
    \end{multline}

The second ingredient of Lassalle's algorithm is a linear identity between the values of
Jack character evaluated on different diagrams.
We denote by $c_i(\lambda)$ the probability of the corner $i$ in the transition measure 
$\mu_{A_\alpha(\lambda)}$,
so that
\begin{equation}\label{eq:Mk_co}
    M_k(\lambda) = \sum_i c_i(\lambda) z_i^k.
\end{equation}
In particular,
\begin{align}
    \sum_i c_i(\lambda)&=1, \label{eq:Sum_co}\\
    \sum_i c_i(\lambda) z_i&=0.\label{Sum_cozo}
\end{align}

Then we have \cite[Equation (3.6)]{Lassalle2009} the following proposition.
\begin{proposition}\label{PropLinearRelation}
    For any (continuous) Young diagram $\lambda$ and any partition $\mu$
    \begin{align*}
        \sum_{i \in \II_\lambda} c_i(\lambda) \Ch_\mu(\lambda^{(i)})
        &= m_1(\mu) \Ch_{\mu \backslash 1}(\lambda) + \Ch_\mu(\lambda), \\
        \sum_{i \in \II_\lambda} c_i(\lambda) z_i \Ch_\mu(\lambda^{(i)})
        &= \sum_{r \geq 2} r m_r(\mu) \Ch_{\mu_{\downarrow r}} (\lambda).
    \end{align*}
\end{proposition}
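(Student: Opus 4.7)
The plan is to prove both identities by extracting coefficients from Pieri-type expansions of $p_1 \cdot J_\lambda^{(\alpha)}$ in the power-sum basis, with the bookkeeping of normalization constants from the definition of $\Ch^{(\alpha)}_\mu$ carried out carefully. The starting point is the Cauchy-transform description of the transition measure $\mu_{A_\alpha(\lambda)}$ given in Section~\ref{subsect:TransMeasure}: since $G_{\mu_{A_\alpha(\lambda)}}(z) = \prod_o (z-o)/\prod_{i'} (z-z_{i'})$, the weight $c_i(\lambda)$ admits the explicit residue form
\[ c_i(\lambda) = \frac{\prod_{o \in \OO_{A_\alpha(\lambda)}}(z_i - o)}{\prod_{i' \neq i}(z_i - z_{i'})}, \]
which, up to the explicit ratio $j_\lambda^{(\alpha)}/j_{\lambda^{(i)}}^{(\alpha)}$ and a factor independent of $i$, coincides with the coefficient of $J_{\lambda^{(i)}}^{(\alpha)}$ in the Pieri-type expansion of $p_1 \cdot J_\lambda^{(\alpha)}$. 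Thus, averaging a function of $\lambda^{(i)}$ against $c_i(\lambda)$ can be translated into a coefficient-extraction statement on the Jack side.

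For the first identity, I would expand both sides of
\[ p_1 \cdot J_\lambda^{(\alpha)} = \sum_{i \in \II_\lambda} \kappa_i(\lambda)\, J_{\lambda^{(i)}}^{(\alpha)} \]
in the power-sum basis via $J_\lambda^{(\alpha)} = \sum_\rho \theta_\rho^{(\alpha)}(\lambda) p_\rho$, and then match the coefficient of $p_\pi$ for $\pi = \mu \cup (1^{|\lambda|+1-|\mu|})$. On the left-hand side, multiplication by $p_1$ simply appends one part of size $1$, so the coefficient is $\theta^{(\alpha)}_{\mu \cup (1^{|\lambda|-|\mu|})}(\lambda)$. Converting back to the normalized $\Ch^{(\alpha)}$ scale via the definition, the Pascal identity
\[ \binom{|\lambda|+1-|\mu|+m_1(\mu)}{m_1(\mu)} = \binom{|\lambda|-|\mu|+m_1(\mu)}{m_1(\mu)} + \binom{|\lambda|-|\mu|+m_1(\mu)}{m_1(\mu)-1} \]
splits the binomial prefactor in the definition of $\Ch^{(\alpha)}_\mu$ into exactly the two summands $\Ch_\mu(\lambda)$ and $m_1(\mu)\Ch_{\mu \backslash 1}(\lambda)$ appearing on the right.

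The second identity is obtained by the same coefficient-extraction procedure applied to an auxiliary Pieri-type expansion carrying an extra factor $z_i$ on the one-box-addition component; a natural realization is to expand $p_2 \cdot J_\lambda^{(\alpha)}$ and use the vanishing $\sum_i c_i(\lambda)\, z_i = 0$ from~\eqref{Sum_cozo} to kill the contributions coming from two-box additions, so that only the $z_i$-weighted one-box additions survive. Extracting the coefficient of $p_{\mu \cup (1^{|\lambda|-|\mu|+1})}$ then forces each part of $\mu$ of size $r \geq 2$ to be decremented to $r-1$: the factor $r$ comes from $z_\mu / z_{\mu_{\downarrow r}}$ after shortening, and the multiplicity $m_r(\mu)$ arises as the number of parts of $\mu$ equal to $r$ available to shorten. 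The main obstacle is the careful tracking of normalization factors---$\alpha^{-(|\mu|-\ell(\mu))/2}$, $z_\mu$, the binomial coefficient, and the explicit $\alpha$-dependent form of $\kappa_i(\lambda)$---which must combine precisely to land on the clean right-hand sides; once the identification $\kappa_i(\lambda) = \bigl(j_\lambda^{(\alpha)}/j_{\lambda^{(i)}}^{(\alpha)}\bigr) \cdot (\text{factor depending only on } |\lambda|) \cdot c_i(\lambda)$ is established, both identities reduce to this power-sum coefficient matching.
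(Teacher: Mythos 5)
The paper does not prove this proposition from scratch: its ``proof'' is a one-line reduction to Equations (3.6) of Lassalle's paper, rewritten in the $\Ch_\mu^{(\alpha)}$ normalization, so you are attempting something more self-contained than the authors do. Your treatment of the \emph{first} identity is essentially the standard derivation behind Lassalle's formula: expand $p_1\, J_\lambda^{(\alpha)}=\sum_i \kappa_i(\lambda)\, J_{\lambda^{(i)}}^{(\alpha)}$ in the power-sum basis, extract the coefficient of $p_{\mu\cup(1^{|\lambda|+1-|\mu|})}$, and use Pascal's rule on the binomial prefactor. The one point you must state precisely is Kerov's theorem that, in the $J$-normalization, the Pieri coefficients satisfy $\kappa_i(\lambda)=c_i(\lambda)$ \emph{exactly}; your formulation ``up to the explicit ratio $j_\lambda^{(\alpha)}/j_{\lambda^{(i)}}^{(\alpha)}$ and a factor independent of $i$'' would, taken literally, ruin the coefficient matching, since that ratio depends on $i$. (Also note that a symmetric-function argument only covers genuine Young diagrams; the extension to continuous diagrams needs the polynomiality of both sides.)

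The second identity is where your argument genuinely fails. Every term in the expansion of $p_2\cdot J_\lambda^{(\alpha)}$ is a $J_\Lambda^{(\alpha)}$ with $|\Lambda|=|\lambda|+2$: the expansion consists \emph{only} of two-box additions, so there are no ``$z_i$-weighted one-box additions'' left to survive, and the scalar identity $\sum_i c_i(\lambda)z_i=0$ has no bearing on which $J_\Lambda$ occur. The missing ingredient is a \emph{content-weighted} Pieri rule. Let $D$ be the differential operator for which the $J_\lambda^{(\alpha)}$ are eigenfunctions with eigenvalue the total $\alpha$-content of $\lambda$. Applying $D$ to $p_1 J_\lambda^{(\alpha)}=\sum_i c_i(\lambda) J_{\lambda^{(i)}}^{(\alpha)}$ and using that the eigenvalue of $J_{\lambda^{(i)}}^{(\alpha)}$ exceeds that of $J_\lambda^{(\alpha)}$ by $z_i$ yields
\[\sum_{i\in\II_\lambda}c_i(\lambda)\,z_i\,J_{\lambda^{(i)}}^{(\alpha)}=[D,\,p_1]\,J_\lambda^{(\alpha)},\]
and $[D,p_1]$ is, up to normalization, the first-order operator $\sum_{k\ge1}k\,p_{k+1}\partial_{p_k}$, which acts on $p_\rho$ by lengthening one part by $1$ with weight $k\,m_k(\rho)$. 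Extracting the coefficient of $p_{\mu\cup(1^{|\lambda|+1-|\mu|})}$ from \emph{this} identity --- not from $p_2 J_\lambda^{(\alpha)}$ --- is what produces $\sum_{r\ge2}r\,m_r(\mu)\,\Ch_{\mu_{\downarrow r}}(\lambda)$ after the same bookkeeping of $z_\mu$ and binomial factors as in the first identity; this is precisely how Lassalle obtains the second of his Equations (3.6). Without this commutator (or an equivalent device), the second relation is not reachable from the $p_1$-Pieri rule alone.
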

\begin{proof}
    It is an exercise to adapt Equations (3.6) of \cite{Lassalle2009} to our notations.
\end{proof}

Using Equations~\eqref{EqIng1Lassalle}, \eqref{eq:Mk_co}, \eqref{eq:Sum_co} and \eqref{Sum_cozo}
together with Proposition~\ref{PropLinearRelation}, we obtain the following equalities 
between functions on the set of (continuous) Young diagrams: for any partition $\mu$,
\begin{align}
    \sum_\rho a^\mu_\rho \left( \sum_{\substack{g,h \geq 0, \\ \pi \vdash h}} b_{g,\pi}^\rho(\gamma) 
    \, M_\pi \, M_g  \right) &= m_1(\mu) \Ch_{\mu \backslash 1},
    \tag{A}\label{EqRec1}\\
    \sum_\rho a^\mu_\rho \left( \sum_{\substack{g,h \geq 0, \\ \pi \vdash h}} b_{g,\pi}^\rho(\gamma) 
    \, M_{\pi} \, M_{g+1}  \right) &= \sum_{r \geq 2} r \cdot m_r(\mu) \Ch_{\mu_{\downarrow r}}.
    \tag{B}\label{EqRec2}
\end{align}
Fix some partition $\tau$.
We can identify the coefficient of a given monomial $M_\tau$ in the above equations.
This gives us two linear equations which will be denoted by $(A_\tau)$ and $(B_\tau)$:
\begin{align}
    \sum_\rho a^\mu_\rho \left( \sum_{\substack{g,h \geq 0, \ \pi \vdash h \\ \pi \cup (g)=\tau}} b_{g,\pi}^\rho(\gamma) 
     \right) &= m_1(\mu) \, a^{\mu \backslash 1}_\tau,
    \tag{$A_\tau$}\label{EqRecTau1}\\
    \sum_\rho a^\mu_\rho \left( \sum_{\substack{g,h \geq 0, \ \pi \vdash h \\ \pi \cup (g+1)=\tau}} b_{g,\pi}^\rho(\gamma) 
    \right) &= \sum_{r \geq 2} r \cdot m_r(\mu) \, a^{\mu_{\downarrow r}}_\tau.
    \tag{$B_\tau$}\label{EqRecTau2}
\end{align}

Now, assume that, for some partition $\mu$, we can compute $L_\nu$ for all partitions $\nu$ of size smaller than $|\mu|$.
Then the equations \eqref{EqRecTau1} and \eqref{EqRecTau2} can be interpreted as a linear system,
where the variables are the coefficients $a^\mu_\rho$.

This is a {\em finite} system of linear equations (indeed, $a^\mu_\rho=0$
as soon as $|\rho| \geq |\mu|+\ell(\mu)$ \cite[Proposition 9.2 (ii)]{Lassalle2009}).
As explained by M.~Lassalle, the system obtained that way has a unique solution
(we shall see another explanation of that in the next paragraph) and
thus, one can compute the coefficients $a^\mu_\rho$ by induction over $|\mu|$.

\subsection{A triangular subsystem}
In the previous section we explained how to determine the coefficients $a^\mu_\rho$
(where $\rho$ runs over partitions without parts equal to $1$) of $L_\mu$ as the solution
of an overdetermined linear system of equations.
In this section, we extract from this system a triangular subsystem.

We will need an order on all partitions: let us define $<_1$ as follows:
\[ \rho <_1 \rho' \iff
\begin{cases}
    & |\rho| <|\rho'|; \\
    \text{or}&|\rho| = |\rho'| \text{ and } \ell(\rho) > \ell(\rho'); \\
    \text{or}&|\rho|= |\rho'|,\ \ell(\rho) = \ell(\rho') \text{ and } \min(\rho) > \min(\rho').
\end{cases}\]

We say that an equation involves a variable if its coefficient is non-zero.
\begin{lemma}
    Let $\rho$ be a partition and $q=\min(\rho)$ its smallest part.
    \begin{itemize}
        \item If $q=2$, set $\tau=\rho\setminus (2)$.
            Then Equation $(A_\tau)$ involves the variable $a^\mu_\rho$
            and involves some of the variables $a^\mu_{\rho'}$ for $\rho' >_1 \rho$ 
            (and no other variables $a^\mu_{\rho'}$).
        \item If $q>2$, set $\tau= \rho_{\downarrow q}$.
            Then Equation $(B_\tau)$ involves the variable $a^\mu_\rho$
            and some of the variables $a^\mu_{\rho'}$ for $\rho' >_1 \rho$
            (and no other variables $a^\mu_{\rho'}$).
    \end{itemize}
    \label{LemTriangle}
\end{lemma}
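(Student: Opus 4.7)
The plan is to extract from Corollary~\ref{CorolMlo} a sharp degree bound on the polynomials $b^{\rho'}_{g,\pi}(\gamma)$, and then combine it with combinatorial constraints on lengths and multiplicities of parts to isolate the admissible $\rho'$. Starting from the multiplicative expansion
\[M_{\rho'}(\lambda^{(i)})-M_{\rho'}(\lambda)=\sum_{\emptyset\neq S\subseteq\{1,\dots,\ell(\rho')\}}\ \prod_{j\in S}\bigl(M_{\rho'_j}(\lambda^{(i)})-M_{\rho'_j}(\lambda)\bigr)\ \prod_{j\notin S}M_{\rho'_j}(\lambda),\]
Proposition~\ref{PropMkLo} shows that each difference factor contributes only monomials $z_i^{g_j}M_{t_j}$ with $g_j+t_j\leq\rho'_j-2$ (the bound $r\geq 1$ in that formula forces $2r+s\geq 2$). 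Hence $b^{\rho'}_{g,\pi}(\gamma)\neq 0$ requires $|\pi|+g\leq|\rho'|-2$, and in the equality case one must have $|S|=1$: writing $S=\{j_0\}$ and $g_{j_0}+t_{j_0}=\rho'_{j_0}-2$, the corresponding coefficient is the $\gamma$-independent positive integer $g_{j_0}+1$ (coming from $r=1$, $s=0$), the partition $\pi$ equals $\{t_{j_0}\}\cup(\rho'_j)_{j\neq j_0}$ (with $\{t_{j_0}\}$ omitted when $t_{j_0}=0$), and in particular $\ell(\pi)\in\{\ell(\rho')-1,\ell(\rho')\}$.

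Applied to equations $(A_\tau)$ and $(B_\tau)$, the constraints $\pi\cup(g)=\tau$ and $\pi\cup(g+1)=\tau$ respectively force $|\pi|+g=|\rho|-2$, so the above degree bound immediately yields $|\rho'|\geq|\rho|$, eliminating the case $|\rho'|<|\rho|$. When $|\rho'|=|\rho|$ we are in the equality case of the degree bound, and a length count on $\tau$ gives $\ell(\pi)\leq\ell(\rho)-1$ in both situations (with equality forced in the $(B_\tau)$ case). Comparison with $\ell(\pi)\geq\ell(\rho')-1$ yields $\ell(\rho')\leq\ell(\rho)$, ruling out $\ell(\rho')>\ell(\rho)$; in the remaining case $\ell(\rho')=\ell(\rho)$ one is forced to take $t_{j_0}=0$, so $g=\rho'_{j_0}-2$ and $\pi=\rho'\setminus(\rho'_{j_0})$, and in the $(A_\tau)$ setting additionally $g=0$ and $\rho'_{j_0}=2$.

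To eliminate $\rho'\neq\rho$ with $\min(\rho')\geq\min(\rho)$, I would use a multiset comparison. In case $\min(\rho)=2$, the previous step directly gives $\rho'\setminus(2)=\pi=\tau=\rho\setminus(2)$, hence $\rho'=\rho$. In case $\min(\rho)=q>2$, the surviving relation is
\[\rho'\setminus(\rho'_{j_0})\cup(\rho'_{j_0}-1)=\rho\setminus(q)\cup(q-1),\]
and counting multiplicities of the part $q-1$ on both sides (using $m_{q-1}(\rho)=m_{q-1}(\rho')=0$ from the $\min$ hypotheses) forces $\rho'_{j_0}=q$ and hence $\rho'=\rho$. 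Finally, for $\rho'=\rho$ the sum of the positive contributions $g_{j_0}+1$ over the admissible indices $j_0$ (those with $\rho_{j_0}=\min(\rho)$) evaluates to $m_2(\rho)>0$ in case $q=2$ and $m_q(\rho)(q-1)>0$ in case $q>2$, showing that the coefficient of $a^\mu_\rho$ is indeed nonzero. The main obstacle is the multiplicity bookkeeping in the last step: one must carefully exclude any hypothetical ``swap'' of parts between $\rho$ and $\rho'$ that could rebalance the multiset equality, a possibility precisely forbidden by the constraint on $\min(\rho')$.
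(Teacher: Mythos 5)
Your proposal is correct and follows essentially the same route as the paper: both extract from Proposition~\ref{PropMkLo} the bound $|\pi|+g\le|\rho'|-2$ together with the explicit description of the equality case (one part of $\rho'$ loses $2$ and splits, with $\gamma$-free coefficient $g+1$), then compare sizes, lengths and the multiplicity of the part $q-1$ to force $\rho'=\rho$ or $\rho'>_1\rho$, and finally sum the contributions to get the diagonal coefficients $m_2(\rho)$ and $(q-1)m_q(\rho)$. Your multiplicity count of $q-1$ under the hypothesis $\min(\rho')\ge\min(\rho)$ is just the contrapositive of the paper's observation that otherwise $\rho'$ contains a part $q-1$ and hence $\rho'>_1\rho$.
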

\begin{proof}
    We can refine Corollary~\ref{CorolMlo} as follows:
    for any partition $\rho$, any diagram $\lambda$ and any inner corner $i$ of $\lambda$,
\begin{multline}
    M_\rho(\lambda^{(i)}) = M_\rho(\lambda) + \sum_{j \leq \ell(\rho)}
    M_{\rho \setminus \rho_j} 
    \left( \sum_{\substack{g,t \geq 0, \\ g=\rho_j-2-t}} (\rho_j-t-1) M_t(\lambda) z_i^g \right)\\
    + \sum_{\substack{\pi,g \\ |\pi|+g < |\rho| - 2}} b_{g,\pi}^\rho(\gamma) M_\pi(\lambda) z_i^g.
    \label{eq:DsLemTriang}
\end{multline}
Indeed, it is true for $\rho=(k)$ and follows directly for any $\rho$ by multiplication.
The right-hand side is a linear combination of $M_\pi z_i^g$ with
\[  |\pi|+g \leq |\rho| - 2. \]
Moreover equality occurs only if $\pi \cup (g)$ is obtained from $\rho$
by choosing a part, removing $2$ to this part and
splitting it in two (possibly empty) parts.

Let us consider the first statement of the lemma.
Fix a partition $\rho$ with a smallest part equal to $2$,
that is $\rho=\tau \cup (2)$ for some partition $\tau$.
Let us determine which variables $a^\mu_{\rho'}$ appear
in the left-hand side of Equation $(A_\tau)$.
In other terms, we want to determine for which $\rho'$,
the difference $M_{\rho'}(\lambda^{(i)})-M_{\rho'}(\lambda)$ contains
some term $M_\pi z_i^g$, for which $\tau=\pi \cup (g)$.
As explained above,
a necessary condition is the inequality $|\tau|=|\pi|+g \leq |\rho'| - 2$, \emph{i.e.}
$|\rho'| \geq |\tau|+2$.
Moreover, if $|\rho'| = |\tau| +2$, then $\tau$ must be obtained from $\rho'$ by removing
$2$ from some part and splitting it into two.
In particular, $\rho'$ cannot be longer than $\tau$, unless both new
parts are empty.
This happens only if the split part of $\rho'$ was $2$, that is if $\rho'= \tau \cup (2)$.
We have proved that $(A_\tau)$ can involve $a^\mu_{\rho'}$
only if $\rho'=\rho$ or $\rho' >_1 \rho$ (either $\rho'$ has a bigger size than $\rho$, or it has a smaller length).
It is easy to check that the coefficient of $a^\mu_\rho$ is equal to $m_2(\rho)$
(and thus, non-zero),
which finishes the proof of the first point.

The proof of the second point is quite similar.
Fix a partition $\rho$, denote $q$ its smallest part (assume $q>2$)
and $\tau=\rho_{\downarrow q}$.
The same argument than above tells us that
the variable $a^\mu_{\rho'}$ can appear in the equation $(B_\tau)$ only if
$|\tau|=|\pi|+g+1 \leq |\rho'| - 1$, { i.e.} $|\rho'| \geq |\tau|+1$.
Moreover, if there is equality,  $\tau$ must be obtained from $\rho'$ by removing
$1$ from some part and splitting it into two.
One of the two new parts is always non-empty (as $\rho'$ has no parts equal to $1$),
thus $\rho'$ is at most as long as $\tau$.
If they have the same length, it means that $\tau$ is obtained from $\rho'$ by shortening
a part.
If this part is equal to $q$, then $\rho'=\rho$.
Otherwise $\rho'$ contains a part $q-1$ and thus $\rho' >_1 \rho$
(they have same size and same length).
Finally, we have proved that $(B_\tau)$ can involve $a^\mu_{\rho'}$
only if $\rho' >_1 \rho$. 
Once again, the coefficient of $a^\mu_\rho$ in $(B_\tau)$ is easy to compute:
it is equal to $(q-1) m_q(\rho)$ and, hence, non-zero.
\end{proof}

The first interesting consequence is the following.

\begin{corollary}
    The coefficient $a_\rho^\mu$ is a polynomial in $\gamma$ with rational coefficients.
    The same is true for the coefficients of Kerov's polynomials $K_\mu$.
\end{corollary}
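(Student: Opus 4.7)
The plan is a double induction: an outer induction on $|\mu|$, and for each fixed $\mu$, an inner descending induction on $\rho$ with respect to the order $<_1$ introduced in the previous subsection. Recall that $\rho$ ranges only over partitions without parts equal to $1$, and $a_\rho^\mu = 0$ as soon as $|\rho| \ge |\mu|+\ell(\mu)$, so there are only finitely many nonzero variables and the inner induction has a trivial base case (zero is polynomial in $\gamma$).

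For the inductive step, assume that $a_\sigma^{\mu'}$ is a polynomial in $\gamma$ with rational coefficients whenever $|\mu'|<|\mu|$, and that $a_{\rho'}^\mu$ is such a polynomial whenever $\rho' >_1 \rho$. We want to show that $a_\rho^\mu$ is again such a polynomial. Let $q=\min(\rho)$. If $q=2$, apply Lemma~\ref{LemTriangle} to equation $(A_\tau)$ with $\tau=\rho\setminus(2)$; if $q>2$, apply it to $(B_\tau)$ with $\tau=\rho_{\downarrow q}$. In either case the equation isolates a nonzero rational multiple of $a_\rho^\mu$ (the coefficient is $m_2(\rho)$, respectively $(q-1)m_q(\rho)$) equal to a $\QQ[\gamma]$-linear combination of variables $a_{\rho'}^\mu$ with $\rho'>_1\rho$ (polynomial in $\gamma$ by the inner hypothesis), plus a right-hand side of the form $m_1(\mu)\,a_\tau^{\mu\setminus 1}$ or $\sum_{r\ge2} r\,m_r(\mu)\,a_\tau^{\mu_{\downarrow r}}$ (polynomial in $\gamma$ by the outer hypothesis). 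The coefficients $b_{g,\pi}^\rho(\gamma)$ of Corollary~\ref{CorolMlo} are themselves polynomials in $\gamma$ with rational coefficients, since they come from Proposition~\ref{PropMkLo} where the only $\gamma$-dependence is an explicit polynomial factor $(-\gamma)^s$. Dividing by the nonzero leading rational coefficient therefore expresses $a_\rho^\mu$ as a polynomial in $\gamma$ with rational coefficients, completing the inner induction, and then the outer one.

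For the statement about Kerov's polynomials $K_\mu$, note that the transition from the basis $(M_k^{(\alpha)})_{k\ge 2}$ to the basis $(R_k^{(\alpha)})_{k\ge 2}$ is given by the universal moment-to-free-cumulant change of variables (applied to the transition measure of $A_\alpha(\lambda)$). These relations have rational coefficients that do not depend on $\alpha$ or $\gamma$. Hence once $L_\mu$ is expressed as a polynomial in $M_2^{(\alpha)},M_3^{(\alpha)},\ldots$ with coefficients in $\QQ[\gamma]$, rewriting it in terms of $R_k^{(\alpha)}$ preserves the property that every coefficient lies in $\QQ[\gamma]$, giving the same conclusion for $K_\mu$.

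The main subtlety, rather than a genuine obstacle, is simply to be careful about bookkeeping: one must verify that at every step of the inner induction the equation $(A_\tau)$ or $(B_\tau)$ selected by Lemma~\ref{LemTriangle} only involves variables $a_{\rho'}^\mu$ with $\rho'\ge_1\rho$ and reductions $a_\tau^{\mu\setminus 1}$, $a_\tau^{\mu_{\downarrow r}}$ covered by the outer hypothesis; this is exactly what Lemma~\ref{LemTriangle} guarantees, so the argument goes through without additional work.
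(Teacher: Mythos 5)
Your proof is correct and is essentially the paper's own argument: the paper phrases it as solving a triangular linear system over $\QQ[\gamma]$ with diagonal entries invertible in $\QQ[\gamma]$ and right-hand side in $\QQ[\gamma]$ by induction on $|\mu|$, and your double induction (outer on $|\mu|$, inner descending along $<_1$ via Lemma~\ref{LemTriangle}) is precisely the explicit unfolding of that triangular solve. The treatment of $K_\mu$ via the $\gamma$-independent moment-to-free-cumulant change of basis also matches the paper's one-line justification.
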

\begin{proof}
    We proceed by induction over $|\mu|$.
    The quantities $a_\rho^\mu$ are the solution of a triangular linear system,
    whose right-hand side is a vector of $a_\tau^{\mu'}$ with $|\mu'|<|\mu|$.
    By induction hypothesis, the right-hand side belongs to $\QQ[\gamma]$.
    The coefficients $b_{g,\pi}^\rho(\gamma)$ of the system also belong to $\QQ[\gamma]$.
    Moreover, the diagonal coefficients of the system (given in the proof above)
    are invertible in $\QQ[\gamma]$, hence the solution is also in $\QQ[\gamma]$.

    For the second statement, it is enough to say that each $M_k$ is a polynomial in the
    $R_k$'s with integer coefficients.
\end{proof}

\begin{remark}
    Lemma \ref{LemTriangle} does not hold for Lassalle's system of equation
    \cite[Equations (9.1) and (9.2)]{Lassalle2009}, which computes recursively $K_\mu$.
\end{remark}

\subsection{A first bound on the degree}\label{SubsectBound1}
Recall that $(M_k)_{k \geq 2}$ is an algebraic basis of the ring
$\Pola$ of
$\alpha$-polynomial functions on Young diagrams.
Hence, we can define a gradation on $\Pola$ by choosing
arbitrarily the degree of each of the generators $M_k$.
In this section, we do the following natural choice: 
\[ \deg_1(M_k)=k \qquad \text{for } k\geq 2. \]

Our goal is to obtain a bound on the degree of the polynomial $a^\mu_\rho\in\QQ[\gamma]$.
We begin by the following lemma concerning the polynomials
$b_{g,\pi}^\rho(\gamma)$.

{\em Notational convention.}
To emphasize the difference with gradations on $\Pola$, we denote throughout the paper
degrees of polynomials in $\gamma$ by $\deg_\gamma$.

\begin{lemma}
\label{lem:firstbound}
    Let $\rho$ and $\pi$ be two partitions and $g\geq 0$ be an integer. One has
    \[\deg_\gamma(b_{g,\pi}^\rho(\gamma)) \leq \deg_1(M_\rho) - \deg_1(M_{\pi \cup (g)}) - 2. \]
    Moreover, if the right-hand side is an even (odd, respectively) number,
    then $b_{g,\pi}^\rho(\gamma)$ is an even (odd, respectively) polynomial.
    \label{LemDeg1B}
\end{lemma}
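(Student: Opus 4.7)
The plan is to trace, through Proposition~\ref{PropMkLo} and the multiplicativity $M_\rho = \prod_j M_{\rho_j}$, the exact power of $\gamma$ that can occur in any monomial contributing to the coefficient of $z_i^g M_\pi(\lambda)$ in $M_\rho(\lambda^{(i)}) - M_\rho(\lambda)$, and read the bound off from the combinatorics of that expansion.

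First, I would handle the one-part case $\rho = (k)$ by a direct reading of Proposition~\ref{PropMkLo}: the coefficient $b_{g,(t)}^{(k)}(\gamma)$ is a sum, over integers $r \geq 1$ with $s := k - t - g - 2r \geq 0$, of terms $\binom{k-t-1}{2r+s-1}\binom{r+s-1}{s}(-\gamma)^s$. The maximal value of $s$ is attained at $r = 1$ and equals $k - t - g - 2 = \deg_1(M_{(k)}) - \deg_1(M_{(t) \cup (g)}) - 2$, and every occurring $s$ differs from this maximum by the even integer $2(r-1)$. This settles both the degree bound and the parity claim when $\ell(\rho) = 1$.

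For a general partition $\rho = (\rho_1, \ldots, \rho_\ell)$ I would expand
\[M_\rho(\lambda^{(i)}) - M_\rho(\lambda) = \sum_{\emptyset \neq S \subseteq \{1,\ldots,\ell\}}\ \prod_{j \in S} \bigl(M_{\rho_j}(\lambda^{(i)}) - M_{\rho_j}(\lambda)\bigr) \prod_{j \notin S} M_{\rho_j}(\lambda),\]
and inside each factor $M_{\rho_j}(\lambda^{(i)}) - M_{\rho_j}(\lambda)$ invoke Proposition~\ref{PropMkLo}. A typical monomial in the resulting sum is specified by the set $S$ together with triples $(r_j, s_j, t_j)$ for $j \in S$ with $r_j \geq 1$ and $s_j, t_j \geq 0$; it contributes to $b_{g,\pi}^\rho(\gamma)$ a term proportional to $\gamma^{s}$, where $s = \sum_{j \in S} s_j$, $g_j := \rho_j - 2 r_j - s_j - t_j$, $g = \sum_{j \in S} g_j$, and $\pi$ has parts $\{t_j : j \in S\} \cup \{\rho_j : j \notin S\}$.

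The proof is then sealed by the one-line identity
\[|\rho| - |\pi| - g \;=\; \sum_{j \in S}(\rho_j - t_j - g_j) \;=\; \sum_{j \in S}(2r_j + s_j),\]
which rearranges to $|\rho| - |\pi| - g - 2 - s = 2\bigl(\sum_{j \in S} r_j - 1\bigr) \geq 0$. Since $\deg_1(M_\rho) - \deg_1(M_{\pi \cup (g)}) - 2 = |\rho| - |\pi| - g - 2$, this single inequality simultaneously yields the degree bound $s \leq \deg_1(M_\rho) - \deg_1(M_{\pi \cup (g)}) - 2$ and shows that $s$ has the parity of that upper bound (their difference is always even). Both properties therefore pass to $b_{g,\pi}^\rho(\gamma)$ as the sum of its contributing monomials. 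The only care point is the bookkeeping when reassembling the $t_j$'s (remembering that $M_0 = 1$ and that $M_1 = 0$ annihilates some contributions) and the $\rho_j$ for $j \notin S$ into a single partition $\pi$; beyond that I do not foresee any deeper obstacle.
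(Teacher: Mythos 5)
Your proof is correct and follows essentially the same route as the paper: the paper introduces a ``pre-degree'' $\deg_\gamma(b)+|\pi|+g$ on terms $b(\gamma)\,M_\pi\,z_i^g$, observes from Proposition~\ref{PropMkLo} that $M_k(\lambda^{(i)})-M_k(\lambda)$ has pre-degree at most $k-2$, and concludes by multiplicativity, which is exactly the content of your subset-$S$ expansion and the identity $|\rho|-|\pi|-g-s=2\sum_{j\in S}r_j\ge 2$. You have merely unpacked the multiplication step explicitly; no gap.
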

\begin{proof}
    By Proposition~\ref{PropMkLo}, $M_k(\lambda^{(i)})$ can be written as a linear combination of terms of the form
    $b(\gamma)\ M_\pi\ z_i^g$, where $b$ is some polynomial.
    We define the pre-degree (with respect to $\deg_1$)
    of such a term to be the quantity $\deg_\gamma(b) + |\pi| + g$.
    This degree is multiplicative.
    Then, 
    \[M_k(\lo) = M_k(\lambda) + \text{ terms of degree smaller or equal to }k-2.\]
    By multiplying this kind of expressions we obtain that
    \[M_\rho(\lo) = M_\rho(\lambda) + \text{ terms of degree smaller or equal to }|\rho|-2,\]
    which corresponds to our bound on the degree.
    The parity also follows immediately from the one-part case by multiplication.
\end{proof}

This yields the following result.
\begin{proposition}\label{PropBound1}
    The coefficient $a^\mu_\rho$ of $M_\rho$ in
    Jack character polynomial $L_\mu$ is a polynomial in $\gamma$
    of degree smaller or equal to $|\mu|+\ell(\mu) - |\rho|$.
    Moreover, it has the same parity as the integer  $|\mu|+\ell(\mu) - |\rho|$.

    The same is true for $K_\mu$.
\end{proposition}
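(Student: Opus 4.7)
The plan is to prove the statement by a double induction: an outer induction on $|\mu|$, and, for each fixed $\mu$, an inner \emph{descending} induction on $\rho$ with respect to the ordering $<_1$. The outer base case $\mu = \emptyset$ is trivial since $L_\emptyset = 1 = M_\emptyset$ gives a constant in $\gamma$. The inner base case is also free: by \cite[Proposition 9.2 (ii)]{Lassalle2009}, $a^\mu_\rho = 0$ whenever $|\rho| \geq |\mu| + \ell(\mu)$, so the bound holds vacuously for all sufficiently large $\rho$ in the order $<_1$.

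For the inductive step I would fix $\mu$ and $\rho$, let $q = \min(\rho) \geq 2$, and invoke Lemma \ref{LemTriangle}: depending on whether $q = 2$ or $q > 2$, one of the equations $(A_\tau)$ with $\tau = \rho \setminus (2)$ or $(B_\tau)$ with $\tau = \rho_{\downarrow q}$ determines $a^\mu_\rho$ up to division by the nonzero rational $m_2(\rho)$ or $(q-1)m_q(\rho)$. Solving, $a^\mu_\rho$ is a $\QQ$-linear combination of a right-hand-side term of the form $a^{\mu'}_\tau$ with $|\mu'| = |\mu|-1$ (namely $\mu' = \mu \setminus 1$ or $\mu' = \mu_{\downarrow r}$), together with ``off-diagonal'' terms $a^\mu_{\rho'} \cdot b^{\rho'}_{g,\pi}(\gamma)$ for $\rho' >_1 \rho$.

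It now suffices to check that each contribution has $\deg_\gamma \leq |\mu|+\ell(\mu)-|\rho|$ with matching parity. For the right-hand side, the outer IH applied to $\mu'$ gives, in case $(A_\tau)$, a bound of $(|\mu|-1)+(\ell(\mu)-1)-(|\rho|-2) = |\mu|+\ell(\mu)-|\rho|$ (with the term vanishing when $m_1(\mu) = 0$); in case $(B_\tau)$, a bound of $(|\mu|-1)+\ell(\mu)-(|\rho|-1) = |\mu|+\ell(\mu)-|\rho|$. For each off-diagonal term, the inner IH bounds $\deg_\gamma(a^\mu_{\rho'})$ by $|\mu|+\ell(\mu)-|\rho'|$, while Lemma \ref{LemDeg1B} gives $\deg_\gamma(b^{\rho'}_{g,\pi}) \leq |\rho'|-|\pi\cup(g)|-2 = |\rho'|-|\rho|$ (using $|\pi|+g = |\tau|$ or $|\tau|-1$ according to the case). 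The two degrees add to exactly $|\mu|+\ell(\mu)-|\rho|$. Each of these bounds has the stated parity by the parity clause of Lemma \ref{LemDeg1B} together with the outer and inner IHs, so the parity assertion propagates unchanged. Finally, the result for $K_\mu$ follows immediately: moment-cumulant relations express each $M_k$ as a $\mathbb{Z}$-polynomial in $R_2, R_3, \dots$ (independent of $\gamma$) preserving the $\deg_1$-grading, so substituting into $L_\mu = \sum_\rho a^\mu_\rho M_\rho$ produces $K_\mu$ without increasing $\deg_\gamma$ or altering parities.

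The main obstacle is the bookkeeping: with partitions $\mu, \mu', \rho, \rho', \tau$ and the pair $(g,\pi)$ all in play, one must verify that the arithmetic $(|\mu'|+\ell(\mu')-|\tau|) = (|\mu|+\ell(\mu)-|\rho|)$ and $(|\mu|+\ell(\mu)-|\rho'|) + (|\rho'|-|\rho|) = (|\mu|+\ell(\mu)-|\rho|)$ work out simultaneously for both equations $(A_\tau)$ and $(B_\tau)$. Once one observes that the order $<_1$ from Lemma \ref{LemTriangle} is tuned precisely to make the telescoping of $|\rho'|-|\rho|$ against the inner IH succeed, the argument reduces to the adding up of exponents, with nothing deeper needed beyond Lemma \ref{LemDeg1B}.
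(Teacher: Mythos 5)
Your proposal is correct and follows essentially the same route as the paper's proof: a descending induction along $<_1$ (justified by the vanishing $a^\mu_\rho=0$ for $|\rho|\ge|\mu|+\ell(\mu)$), combined with the triangularity of Lemma~\ref{LemTriangle}, the degree and parity bounds of Lemma~\ref{LemDeg1B}, and the same exponent arithmetic in the two cases $(A_\tau)$ and $(B_\tau)$; the passage to $K_\mu$ via the $\gamma$-free integer moment--cumulant relations is also the paper's argument. The only cosmetic difference is the base case ($\mu=\emptyset$ versus the paper's $\mu=(1)$), which is immaterial.
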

\begin{proof}
    We proceed by induction over $(\mu,\rho)$.
    The base case $\mu=(1)$ is trivial as $L_{(1)}=M_2$.
    Fix two partitions $\mu$ and $\rho$.
    We assume that our result holds for any pair $(\mu',\rho')$
    with $|\mu'| < |\mu|$ or $|\mu'|=|\mu|$ and $\rho' >_1 \rho$.

    It may seem strange to assume that the result holds for $\rho' \bm{>_1} \rho$.
    We are indeed doing some kind of {\em descending induction}.
    This is possible because, for a given $\mu$, the number of partitions $\rho$
    we shall consider is finite:
    indeed, $a^\mu_\rho=0$ as soon as $|\rho| \geq |\mu|+\ell(\mu)$ 
    \cite[Proposition 9.2 (ii)]{Lassalle2009}.
    The same remark holds for most proofs in this section.

    Let us first consider the case when $\rho=\tau \cup (2)$ contains a part equal to 2.
    By Lemma~\ref{LemTriangle}, Equation $(A_\tau)$ can be written as:
    \[ m_2(\rho) \cdot a^\mu_\rho = m_1(\mu) a^{\mu \backslash 1}_{\tau} 
    - \sum_{\substack{\pi,g, \\ \pi \cup (g) = \tau}} \sum_{\rho' >_1 \rho}
    b_{g,\pi}^{\rho'}(\gamma) a^\mu_{\rho'}.\]
    The first term on the right-hand-side is by convention equal to 0 if $\mu$ does not contain any part equal to 1.
    If $\mu$ contains a part equal to $1$, as $|\mu \setminus 1|$ is smaller than $|\mu|$,
    by induction hypothesis $a^{\mu \backslash 1}_{\tau}$ is a polynomial of degree at most
    \[|\mu \setminus 1| + \ell(\mu \setminus 1) - |\tau| = 
    |\mu|-1 + \ell(\mu)-1-(|\rho|-2)=|\mu|+\ell(\mu) - |\rho|.\]
    As $\rho' >_1 \rho$, 
    we can also apply the induction hypothesis to each summand 
    of the second term: $a^\mu_{\rho'}$ is  polynomial of degree at most
    $|\mu|+\ell(\mu)-|\rho'|$.
    But using Lemma~\ref{LemDeg1B}, $b_{g,\pi}^{\rho'}(\gamma)$ has degree at most
    $|\rho'|-|\pi \cup (g)|-2$.
    Hence the degree of the product is bounded by
    \[|\mu|+\ell(\mu) - (|\pi \cup (g)|+2) = |\mu|+\ell(\mu) - |\rho|.\]
    The last equality comes from the fact that $\pi \cup (g)=\tau=\rho \setminus 2$.

    The proof of the case when the smallest part $q$ of $\rho$ is greater than $2$ is similar.
    We use Equation $(B_\tau)$ for $\tau=\rho_{\downarrow q}$, which takes the form:
    \[ (q-1) m_q(\rho) \cdot a^\mu_\rho = \sum_{r\geq 2} r \cdot m_r(\mu) a^{\mu_{\downarrow r}}_{\tau} 
    - \sum_{\substack{\pi,g, \\ \pi \cup (g+1) = \tau}} \sum_{\rho' >_1 \rho}
    b_{g,\pi}^{\rho'}(\gamma) a^\mu_{\rho'}.\]
    Note that $|\mu_{\downarrow r}|<|\mu|$, therefore by induction hypothesis
    $a^{\mu_{\downarrow r}}_{\tau}$ is a polynomial in $\gamma$ of degree at most
    \[|\mu_{\downarrow r}| + \ell(\mu_{\downarrow r}) -|\tau|=|\mu|-1+\ell(\mu)-(|\rho|-1)
    = |\mu|+\ell(\mu) - |\rho|.\]
    For the second summand, the argument is the same as before, except that here
    the equality $|\pi \cup (g)|+2=|\rho|$ comes from the fact that $|\tau|=|\rho|-1$
    and $|\tau|=|\pi \cup (g+1)|= |\pi \cup (g)|+1$.

    The parity is obtained the same way.
\end{proof}

\begin{corollary}
    \label{corol:dominant_deg1}
For any partition $\mu$ one has:
\[ \deg_1(\Ch_\mu) = |\mu| + \ell(\mu).\]
Moreover
\[ \Ch_\mu = \prod_{i}R_{\mu_i+1} + \text{lower degree terms with respect to $\deg_1$}. \]
\end{corollary}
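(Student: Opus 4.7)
The plan is to split the proof into two parts: a straightforward upper bound from Proposition~\ref{PropBound1}, and identification of the top-degree homogeneous component via the observation that it does not depend on $\alpha$, so it can be computed by specializing to the classical case $\alpha=1$.

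First I would establish the upper bound $\deg_1(\Ch_\mu) \leq |\mu|+\ell(\mu)$. Write $\Ch_\mu = \sum_\rho a^\mu_\rho M_\rho$. Since $\gamma$ is a scalar (once $\alpha$ is fixed), each nonzero term satisfies $\deg_1(a^\mu_\rho M_\rho)=|\rho|$, and Proposition~\ref{PropBound1} gives $\deg_\gamma(a^\mu_\rho) \leq |\mu|+\ell(\mu)-|\rho|$. When $|\rho|>|\mu|+\ell(\mu)$, this bound is negative, forcing $a^\mu_\rho=0$.

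The heart of the argument is the identification of the top-degree part. For $|\rho|=|\mu|+\ell(\mu)$, Proposition~\ref{PropBound1} gives $\deg_\gamma(a^\mu_\rho)\le 0$, so $a^\mu_\rho\in\QQ$, independent of $\gamma$ and hence of $\alpha$. Therefore the top-degree homogeneous component $\sum_{|\rho|=|\mu|+\ell(\mu)} a^\mu_\rho\, M_\rho^{(\alpha)}$ has $\alpha$-independent rational coefficients. The moment-cumulant relation $M_k = \sum_{\pi\in \NC(k)} \prod_{V\in\pi} R_{|V|}$ (in which singleton blocks drop out since $R_1=0$) is a purely combinatorial identity, $\alpha$-independent and $\deg_1$-preserving, so the same independence carries over when one rewrites the top-degree part in the $R$-basis. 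Specializing to $\alpha=1$, Biane's classical formula~\cite{Biane1998} gives $\Ch^{(1)}_\mu = \prod_i R^{(1)}_{\mu_i+1} +$ lower $\deg_1$ terms; by the independence just established, $\Ch_\mu^{(\alpha)} = \prod_i R^{(\alpha)}_{\mu_i+1} +$ lower $\deg_1$ terms for every $\alpha$, and the non-vanishing of this leading term yields the equality $\deg_1(\Ch_\mu)=|\mu|+\ell(\mu)$.

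The step demanding the most care is the passage from \emph{$\alpha$-independent coefficients in the $M$-basis} to \emph{$\alpha$-independent coefficients in the $R$-basis}: one has to make precise that the algebras $\Pola$ for different $\alpha$ are canonically identified via their algebraic generators $M_k$ (equivalently $R_k$), so that numerical coefficients can be compared across values of $\alpha$. Once this identification is fixed, no further combinatorial work is required beyond invoking Biane's formula at $\alpha=1$.
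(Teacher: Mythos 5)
Your proposal is correct and follows essentially the same route as the paper: the upper bound and the $\alpha$-independence of the top-degree component both come from Proposition~\ref{PropBound1} (degree $\le 0$ in $\gamma$ forces constant coefficients), and the leading term is then read off from the known $\alpha=1$ case. The only difference is that you spell out the $\deg_1$-preserving, $\alpha$-independent passage from the $M$-basis to the $R$-basis, which the paper leaves implicit.
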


\begin{proof}
    By Proposition~\ref{PropBound1}, $\Ch_\mu$ has at most degree $|\mu|+\ell(\mu)$
(this has also been proved by Lassalle
\cite[Proposition 9.2 (ii)]{Lassalle2009})
and its component of degree $|\mu|+\ell(\mu)$ does not depend on $\alpha$.
Hence the result follows as this dominant term is known in the case $\alpha=1$
(see for example \cite[Theorem 4.9]{SniadyGenusExpansion}).
\end{proof}

\subsection{A second bound on degrees}\label{SubsectBound2}
For some purposes the bound on the degree of $a_\rho^\mu$ given by Proposition \ref{PropBound1} is not strong enough.
In this section we give another bound which is related to another gradation of $\Pola$
defined by:
\[ \ \deg_2(M_k)=k-2 \qquad \text{for } k\geq 2 .\]
One has the following analogue of Lemma~\ref{LemDeg1B}:
\begin{lemma}
    Let $\rho$ and $\pi$ be two partitions and $g\geq 0$ an integer.
    Then
    \begin{align*}
            \deg_\gamma(b_{g,\pi}^\rho(\gamma)) &\leq \deg_2(M_\rho) - \deg_2(M_{\pi \cup (g)}), \\
            \deg_\gamma(b_{g,\pi}^\rho(\gamma)) &\leq \deg_2(M_\rho) - \deg_2(M_{\pi \cup (g+1)}) - 1. 
    \end{align*}
    \label{LemDeg2B}
\end{lemma}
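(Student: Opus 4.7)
The plan is to adapt the proof of Lemma~\ref{LemDeg1B}, replacing the pre-degree used there by one adapted to the gradation $\deg_2$. On a formal monomial $T = b(\gamma)\,z_i^g\,M_\pi$ appearing in the expansion of $M_\rho(\lambda^{(i)}) - M_\rho(\lambda)$, I define
\[\widehat{\deg}_2(T) := \deg_\gamma(b) + g + |\pi| - 2\ell(\pi).\]
If one adopts the formal convention $\deg_2(M_q) = q - 2$ for every integer $q \ge 0$, then the second (stronger) claim of the lemma is exactly $\widehat{\deg}_2(T) \le \deg_2(M_\rho)$, while the first claim becomes $\widehat{\deg}_2(T) \le \deg_2(M_\rho) + 2$; so it suffices to prove the sharper inequality. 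The essential property that makes this strategy work is that $\widehat{\deg}_2$ is additive under multiplication: the product $(bb')\,z_i^{g+g'}\,M_{\pi \cup \pi'}$ has pre-degree equal to the sum of the pre-degrees of its two factors.

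For the base case $\rho = (k)$, Proposition~\ref{PropMkLo} gives summands of the form $(-\gamma)^s\binom{k-t-1}{2r+s-1}\binom{r+s-1}{s}\,z_i^{k-2r-s-t}\,M_t(\lambda)$ with $r \ge 1$ and $2r + s + t \le k$, where $\deg_\gamma(b) = s$, $g = k-2r-s-t$, and $\pi = (t)$ (or $\pi = \emptyset$ if $t = 0$; the terms with $t=1$ do not contribute because $M_1 = 0$). A direct computation yields
\[\widehat{\deg}_2(T) = \begin{cases} k - 2r - 2 & \text{if } t \ge 2, \\ k - 2r & \text{if } t = 0, \end{cases}\]
and in either case $\widehat{\deg}_2(T) \le k-2 = \deg_2(M_{(k)})$ since $r \ge 1$, with equality only when $r = 1$ and $t = 0$.

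To extend to an arbitrary partition $\rho$, I write
\[M_\rho(\lambda^{(i)}) - M_\rho(\lambda) = \prod_{j}\bigl(M_{\rho_j}(\lambda) + \Delta_{\rho_j}\bigr) - \prod_{j} M_{\rho_j}(\lambda),\]
where $\Delta_{\rho_j} := M_{\rho_j}(\lambda^{(i)}) - M_{\rho_j}(\lambda)$, and distribute. Each resulting monomial is a product in which every factor is either an unchanged $M_{\rho_j}(\lambda)$, viewed as the formal term $1\cdot z_i^0\cdot M_{(\rho_j)}$ of pre-degree $\rho_j - 2$, or a summand of $\Delta_{\rho_j}$, whose pre-degree is at most $\rho_j - 2$ by the base case. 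By additivity of $\widehat{\deg}_2$, every resulting monomial therefore has $\widehat{\deg}_2 \le \sum_j (\rho_j - 2) = \deg_2(M_\rho)$, which is precisely the sharper of the two bounds, and the weaker bound follows. The only nonobvious step is identifying the correct pre-degree; once that is done, everything reduces to bookkeeping that runs in parallel with Lemma~\ref{LemDeg1B}.
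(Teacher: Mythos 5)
Your proof is correct and follows essentially the same route as the paper: the pre-degree $\deg_\gamma(b)+|\pi|-2\ell(\pi)+g$ you introduce is exactly the paper's pre-degree with respect to $\deg_2$, and both arguments rest on the base case from Proposition~\ref{PropMkLo} plus multiplicativity, with the two stated bounds then falling out of the comparison between $\deg_2(M_{\pi\cup(g)})$, $\deg_2(M_{\pi\cup(g+1)})$ and $\deg_2(M_\pi)+g$ (including the degenerate cases $g=0$ and $M_1=0$). The only cosmetic difference is that you package this comparison as a ``formal convention'' and prove the single sharper inequality, whereas the paper states the two inequalities separately.
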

\begin{proof}
    The proof is similar to the one of Lemma \ref{LemDeg1B}.
    We define the {\em pre-degree} (with respect to $\deg_2$)
    of an expression of the form $b(\gamma)\ M_\pi\ z_i^g$
    to be $\deg_\gamma(b)+\deg_2(M_\pi)+g$.
    By Proposition \ref{PropMkLo}, the pre-degree of $M_k(\lo)$ is equal to $k-2$.
    Note that this pre-degree is multiplicative. Then
    $M_\rho(\lo)$ has pre-degree $|\rho|-2 \ell(\rho)=\deg_2(M_\rho)$.
    The lemma follows from the following inequalities: for $g\geq 0$,
    \begin{align*}
        \deg_2(M_{\pi \cup (g)}) &\leq \deg_2(M_{\pi})+g; \\
        \deg_2(M_{\pi \cup (g+1)}) &\leq \deg_2(M_{\pi})+g-1.
    \end{align*}
    Note that in the first inequality the difference between the right hand side and the left hand side is equal to $2$, unless $g=0$;
    in that case we have an equality.
    In the second inequality, the case $g=0$ is obvious as $M_{\pi \cup (1)}=0$
    and hence its degree is $-\infty$ by convention.
    In all other cases, we have an equality.
\end{proof}

We deduce from this lemma a new bound on the degree of $a_\rho^\mu$.
\begin{proposition}
    \label{PropBound2}
    The coefficient $a^\mu_\rho$ of $M_\rho$ in
    Jack character polynomial $L_\mu$ is a polynomial in $\gamma$
    of degree smaller or equal to $|\mu|-\ell(\mu) - (|\rho|-2\ell(\rho))$.

    The same is true for $K_\mu$.
\end{proposition}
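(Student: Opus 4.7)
The plan is to essentially reproduce the argument of Proposition~\ref{PropBound1}, using the triangular linear system provided by Lemma~\ref{LemTriangle} and the descending induction on $\rho$ (with respect to $<_1$) and induction on $|\mu|$, but substituting the bound from Lemma~\ref{LemDeg2B} in place of Lemma~\ref{LemDeg1B}. The only non-routine part is bookkeeping of lengths, since in $\deg_2$ the length $\ell(\rho)$ now enters with coefficient $-2$, and this must be matched carefully against how $\ell$ changes under the operations $\mu \mapsto \mu \setminus (1)$, $\mu \mapsto \mu_{\downarrow r}$, $\rho \mapsto \tau = \rho \setminus (2)$ or $\rho \mapsto \tau = \rho_{\downarrow q}$.

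More concretely, fix $\mu$ and $\rho$, and assume the bound holds for all $(\mu',\rho')$ with $|\mu'|<|\mu|$, or $|\mu'|=|\mu|$ and $\rho' >_1 \rho$. In the first case of Lemma~\ref{LemTriangle} (where $\rho=\tau\cup (2)$), equation $(A_\tau)$ gives
\[ m_2(\rho)\, a^\mu_\rho = m_1(\mu)\, a^{\mu\setminus (1)}_\tau - \sum_{\substack{\pi,g\\ \pi\cup(g)=\tau}} \sum_{\rho' >_1 \rho} b^{\rho'}_{g,\pi}(\gamma)\, a^\mu_{\rho'}. \]
The quantity $|\mu|-\ell(\mu)-(|\rho|-2\ell(\rho))$ is invariant under $(\mu,\rho)\mapsto (\mu\setminus(1), \tau)$, so the induction hypothesis directly bounds the first summand. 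For the second summand, Lemma~\ref{LemDeg2B} gives $\deg_\gamma b^{\rho'}_{g,\pi}\le \deg_2(M_{\rho'}) - \deg_2(M_{\pi\cup(g)})= |\rho'|-2\ell(\rho')-(|\rho|-2\ell(\rho))$ (using $\pi\cup(g)=\tau=\rho\setminus(2)$, which has the same $\deg_2$ as $\rho$), and the induction hypothesis bounds $a^\mu_{\rho'}$ by $|\mu|-\ell(\mu)-(|\rho'|-2\ell(\rho'))$, so the products telescope to give the desired degree.

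In the second case ($q=\min(\rho)>2$, $\tau=\rho_{\downarrow q}$), equation $(B_\tau)$ takes the form
\[ (q-1)m_q(\rho)\, a^\mu_\rho = \sum_{r\ge 2} r\,m_r(\mu)\, a^{\mu_{\downarrow r}}_\tau - \sum_{\substack{\pi,g \\ \pi\cup(g+1)=\tau}} \sum_{\rho' >_1 \rho} b^{\rho'}_{g,\pi}(\gamma)\, a^\mu_{\rho'}. \]
Since $q>2$, removing a box from a row of length $r\ge 2$ in $\mu$ preserves lengths, and $|\mu_{\downarrow r}|-\ell(\mu_{\downarrow r})=|\mu|-1-\ell(\mu)$; similarly $|\tau|-2\ell(\tau)=(|\rho|-1)-2\ell(\rho)$, so the induction hypothesis applied to $a^{\mu_{\downarrow r}}_\tau$ yields exactly the desired bound. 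The key point for the second summand is that all parts of $\tau$ are $\ge q-1\ge 2$, so any decomposition $\pi\cup(g+1)=\tau$ forces $g+1\ge 2$, i.e.\ $g\ge 1$; this is the regime in which the \emph{second} (stronger) inequality of Lemma~\ref{LemDeg2B} applies, giving $\deg_\gamma b^{\rho'}_{g,\pi}\le \deg_2(M_{\rho'}) - \deg_2(M_{\pi\cup(g+1)}) - 1$, and here $\deg_2(M_\tau)=(|\rho|-1)-2\ell(\rho)$, so the $-1$ exactly compensates for the difference $|\tau|-|\rho|=-1$. Combined with the inductive bound on $a^\mu_{\rho'}$, this produces the required degree.

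The main (and only genuinely non-routine) obstacle is the verification that the $-1$ in the second clause of Lemma~\ref{LemDeg2B} is indeed available in Case~2; this is what the observation $\min(\tau)\ge 2$ (and hence $g\ge 1$) secures. Once this is checked, the rest of the proof is parallel to that of Proposition~\ref{PropBound1}, and the extension to $K_\mu$ follows as before from the fact that each $M_k$ is an integer polynomial in the $R_j$'s, which does not increase $\deg_2$.
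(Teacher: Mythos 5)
Your proof is correct and is exactly the adaptation of Proposition~\ref{PropBound1} that the paper's own one-line proof describes: run the same triangular induction, replacing Lemma~\ref{LemDeg1B} by Lemma~\ref{LemDeg2B} and $|\rho|$ by $|\rho|-2\ell(\rho)$. You also correctly pin down the one point the paper leaves implicit, namely that in the $(B_\tau)$ case the condition $\pi\cup(g+1)=\tau$ with $\min(\tau)\ge 2$ forces $g\ge 1$, so the second (sharper) inequality of Lemma~\ref{LemDeg2B} is genuinely available.
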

\begin{proof}
    It is a straightforward exercise to adapt the proof of Proposition \ref{PropBound1}.
    We use Lemma~\ref{LemDeg2B} instead of Lemma~\ref{LemDeg1B} and 
    $|\rho|$ has to be replaced by $|\rho| - 2\ell(\rho)$.
\end{proof}
As an immediate consequence we have
\begin{corollary}
For any partition $\mu$ one has:
\[ \deg_2(\Ch_\mu) = |\mu| - \ell(\mu),\]
and the top degree part does not depend on $\alpha$.
\end{corollary}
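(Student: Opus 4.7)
The plan is to combine Proposition \ref{PropBound2} with Corollary \ref{corol:dominant_deg1}. Writing $\Ch_\mu = \sum_\rho a^\mu_\rho M_\rho$, and recalling that $\deg_2(M_\rho) = |\rho|-2\ell(\rho)$, both assertions reduce to statements about the coefficients $a^\mu_\rho$.

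\textbf{Upper bound and $\alpha$-independence.} Proposition \ref{PropBound2} asserts
\[ \deg_\gamma a^\mu_\rho \le |\mu|-\ell(\mu) - \bigl(|\rho|-2\ell(\rho)\bigr). \]
If $|\rho|-2\ell(\rho) > |\mu|-\ell(\mu)$ the right-hand side is strictly negative, forcing $a^\mu_\rho \equiv 0$; hence $\deg_2(\Ch_\mu) \le |\mu|-\ell(\mu)$. For the extremal partitions $\rho$ saturating $|\rho|-2\ell(\rho) = |\mu|-\ell(\mu)$, the same bound reads $\deg_\gamma a^\mu_\rho \le 0$, so $a^\mu_\rho$ is a rational \emph{constant}, independent of $\gamma$ and therefore of $\alpha$. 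This already shows that the $\deg_2$-top component of $\Ch_\mu$ does not depend on $\alpha$, provided it is nonzero.

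\textbf{Lower bound.} To show non-vanishing, I would exhibit one specific nonzero coefficient. Set $\tau := (\mu_1+1,\mu_2+1,\ldots,\mu_{\ell(\mu)}+1)$, so that $|\tau|-2\ell(\tau) = |\mu|-\ell(\mu)$. By Corollary \ref{corol:dominant_deg1},
\[ \Ch_\mu = \prod_i R_{\mu_i+1} + (\text{terms of strictly smaller } \deg_1). \]
A standard property of the moment-cumulant transform (Möbius inversion on the non-crossing partition lattice) is that $R_k = M_k + Q_k(M_2,\ldots,M_{k-1})$, where $Q_k$ is a $\deg_1$-homogeneous polynomial every monomial of which involves at least two $M$-factors; this identity is universal in $\alpha$ because $R_k^{(\alpha)} = R_k^{(1)} \circ A_\alpha$ and $M_k^{(\alpha)} = M_k^{(1)} \circ A_\alpha$. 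Expanding $\prod_i R_{\mu_i+1}$ in the $M$-basis, the monomial $M_\tau$ with exactly $\ell(\mu)$ parts can arise \emph{only} by selecting the leading $M_{\mu_i+1}$ in each factor, which contributes coefficient $1$; every other choice produces some $M_\sigma$ with $\ell(\sigma) > \ell(\mu)$. Finally, the ``lower $\deg_1$'' remainder consists of $M_\sigma$ with $|\sigma| < |\mu|+\ell(\mu) = |\tau|$, so it cannot contribute to $M_\tau$. Consequently $a^\mu_\tau = 1 \neq 0$, which gives $\deg_2(\Ch_\mu) \ge |\mu|-\ell(\mu)$ and completes the proof.

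The argument is essentially bookkeeping and I do not anticipate a serious obstacle; the only point to watch is the uniqueness of the $M_\tau$-contribution inside $\prod_i R_{\mu_i+1}$, which rests on the combinatorial fact that every non-leading term in the $R\to M$ expansion strictly increases the length of the indexing partition.
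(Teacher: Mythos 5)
Your proof is correct. The first half (upper bound and $\alpha$-independence of the top $\deg_2$-component, read off from the degree bound $\deg_\gamma a^\mu_\rho \le |\mu|-\ell(\mu)-(|\rho|-2\ell(\rho))$ of Proposition~\ref{PropBound2}) is exactly what the paper means by ``immediate consequence''. Where you diverge is the lower bound: the paper, following the same pattern as the proof of Corollary~\ref{corol:dominant_deg1}, disposes of it by noting that the top component is $\alpha$-independent and therefore coincides with the known $\alpha=1$ case, whereas you give a self-contained argument by exhibiting the single coefficient $a^\mu_\tau=1$ for $\tau=(\mu_1+1,\dots,\mu_{\ell(\mu)}+1)$. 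Your argument is sound: the moment--cumulant inversion does give $R_k=M_k+Q_k$ with $Q_k$ weight-homogeneous of weight $k$ and every monomial having at least two factors (blocks of size $1$ contribute $M_1=0$, so they cannot produce single-factor corrections), the relation is indeed universal in $\alpha$ since both families are obtained by composing with $A_\alpha$, and the length count correctly isolates the all-leading term as the unique source of $M_\tau$; the $\deg_1$-remainder is excluded because it only involves $M_\sigma$ with $|\sigma|<|\tau|$. What your route buys is independence from the $\alpha=1$ literature on the $\deg_2$-filtration (you only use Corollary~\ref{corol:dominant_deg1}, which the paper has already established); what it costs is the extra bookkeeping about the $R\to M$ change of basis, which the paper's one-line reduction avoids. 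Either way the statement holds.
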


Note that Proposition \ref{PropBound2} is neither weaker nor stronger than Proposition \ref{PropBound1}.
But it is sometimes more appropriate, as we shall see in the next section.

\begin{remark}
    The top degree part of $\Ch_\mu$ for $\deg_2$ does not admit an
    explicit expression as for $\deg_1$.
    One can however compute its linear terms in free cumulants,
    see \cite[Section 3]{FerayGouldenHook}.
\end{remark}

\subsection{Polynomiality of $\theta_\mu(\lambda)$}\label{SubsectCoefJackPoly}
In this section we prove that $\theta_\mu(\lambda)$ is a polynomial
with rational coefficients in $\alpha$.
This simple statement does not follow directly from the definition of Jack polynomials
and had been open for twenty years.
It was then proved by Lapointe and Vinet~\cite{LapointeVinetJack},
who also proved the integrality of the coefficients in the monomial basis.
Short after that, this result was completed by a positivity result
from Knop and Sahi~\cite{KnopSahiCombinatoricsJack}.

Using the material of this Section, we can find a new proof of the polynomiality
in $\alpha$.
Integrity and positivity seem unfortunately impossible to obtain {\em via} this method.

First, we consider the dependence of $M_k(\lambda)$ on $\alpha$.
\begin{lemma}
    Let $k \geq 2$ be an integer and $\lambda$ a partition.
    Then $\sqrt{\alpha}^{k-2} M_k(\lambda)$ is a polynomial in
    $\alpha$ with integer coefficients.
    \label{LemMkPol}
\end{lemma}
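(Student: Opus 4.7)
The plan is to combine the generating function description of the moments recalled in Section~\ref{subsect:TransMeasure} with a simple rescaling in $t$. Using the definition $M_k(\lambda) = M_k^{(1)}\bigl(A_\alpha(\lambda)\bigr)$ together with the fact (already recorded in Proposition~\ref{PropMkLo}) that a corner $(x,y)$ of $\lambda$ has content $\sqrt{\alpha}\,x - \sqrt{\alpha}^{-1}y$ in the anisotropic diagram $A_\alpha(\lambda)$, I would start from
\begin{equation*}
\sum_{k\ge 0} M_k(\lambda)\, t^k \;=\; \frac{\prod_{o \in \OO_{A_\alpha(\lambda)}} (1-o\,t)}{\prod_{i \in \II_{A_\alpha(\lambda)}}(1-i\,t)}
\end{equation*}
and substitute $t \mapsto \sqrt{\alpha}\, t$. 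The point is that $\sqrt{\alpha}$ times a corner content is $\alpha x - y \in \Z[\alpha]$, so the substituted identity becomes
\begin{equation*}
G(t) \;:=\; \sum_{k\ge 0} \sqrt{\alpha}^{\,k} M_k(\lambda)\, t^k \;=\; \frac{\prod_{(x,y)\in\OO_\lambda}\bigl(1-(\alpha x - y)\,t\bigr)}{\prod_{(x,y)\in\II_\lambda}\bigl(1-(\alpha x - y)\,t\bigr)}.
\end{equation*}

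Both numerator and denominator of $G(t)$ now lie in $\Z[\alpha][t]$ with constant term $1$ in $t$, hence the denominator is invertible in $\Z[\alpha][[t]]$ and each coefficient satisfies $\sqrt{\alpha}^{\,k}M_k(\lambda) \in \Z[\alpha]$. This already gives half of the statement; to finish, I need to upgrade the conclusion to divisibility by $\alpha$ for $k\ge 1$, which is equivalent to checking $G(t)\big|_{\alpha=0}=1$.

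This last step is where the combinatorics of corners enters. I would label the inner corners of $\lambda$ (in French convention) as $(x_0, y_0), \dots, (x_d, y_d)$ with $0 = x_0 < x_1 < \dots < x_d$, so that $y_0 > y_1 > \dots > y_d = 0$; a direct reading of the border then identifies the outer corners as $(x_k, y_{k-1})$ for $k = 1, \dots, d$. Setting $\alpha=0$ yields the telescoping
\begin{equation*}
G(t)\big|_{\alpha=0} \;=\; \frac{\prod_{k=1}^d (1 + y_{k-1}\, t)}{\prod_{k=0}^d (1 + y_k\, t)} \;=\; \frac{1}{1 + y_d\, t} \;=\; 1.
\end{equation*}

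Combining these two observations, the element $\sqrt{\alpha}^{\,k} M_k(\lambda)$ of $\Z[\alpha]$ vanishes at $\alpha=0$ for every $k \ge 1$ and is therefore divisible by $\alpha$. For $k \ge 2$ this gives $\sqrt{\alpha}^{\,k-2} M_k(\lambda) = \sqrt{\alpha}^{\,k}M_k(\lambda)/\alpha \in \Z[\alpha]$, which is exactly the claim. The whole argument is essentially formal; the only non-trivial input is the combinatorial description of outer corners in terms of inner ones, and the main ``obstacle'' is really just making sure that the substitution $t \mapsto \sqrt{\alpha}\, t$ clears the square roots uniformly, which is precisely the reason for the factor $\sqrt{\alpha}^{\,k-2}$ appearing in the statement.
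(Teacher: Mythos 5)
Your proof is correct, but it follows a genuinely different route from the paper's. The paper proves Lemma~\ref{LemMkPol} by induction on $|\lambda|$ (and $k$): it rewrites the box-addition recursion of Proposition~\ref{PropMkLo} after multiplying through by $\sqrt{\alpha}^{k-2}$, observes that every factor — $\alpha^r$, $(\sqrt{\alpha}z_i)^{k-2r-s-t}=(\alpha x - y)^{k-2r-s-t}$, $(\alpha-1)^s$ and $\sqrt{\alpha}^{t-2}M_t(\lambda)$ — is then a polynomial in $\alpha$ with integer coefficients, and concludes. You instead work directly with the moment generating series of Section~\ref{subsect:TransMeasure}, clear the square roots by the substitution $t\mapsto\sqrt{\alpha}\,t$, and obtain integrality of $\sqrt{\alpha}^{k}M_k(\lambda)$ from invertibility of the denominator in $\Z[\alpha][[t]]$; the extra divisibility by $\alpha$ (which accounts for the exponent $k-2$ rather than $k$) then comes from the evaluation $G(t)\big|_{\alpha=0}=1$, which you justify correctly via the interlacing of inner and outer corners and the fact that the last inner corner sits at height $y_d=0$. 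Your argument has the advantage of being self-contained — it uses only the definition of the moments via the Cauchy transform and elementary corner combinatorics, not Lassalle's nontrivial recursion — and it isolates transparently where the two ``missing'' powers of $\alpha$ come from; the paper's induction is shorter only because Proposition~\ref{PropMkLo} is already needed elsewhere. (It is worth noting that the paper's one-line induction silently relies on $\alpha^r$ with $r\ge 1$ absorbing the $\alpha^{-1}$ hidden in $\sqrt{\alpha}^{t-2}M_t$ when $t=0$; your route avoids that small subtlety altogether.)
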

\begin{proof}
    We use induction over $|\lambda|$ and $k$.
    Proposition~\ref{PropMkLo} can be rewritten as
    \begin{multline*} \sqrt{\alpha}^{k-2} M_k(\lambda^{(i)})
        -\sqrt{\alpha}^{k-2} M_k(\lambda) =
        \sum_{\substack{r\geq 1, s,t \geq 0, \\ 2r+s+t \leq k}}
        \alpha^r (\sqrt{\alpha} z_i)^{k-2r-s-t} \\
    \binom{k-t-1}{2r+s-1}\binom{r+s-1}{s}
    \left( \alpha-1 \right)^s \sqrt{\alpha}^{t-2} M_t(\lambda).
\end{multline*}
Note that $\sqrt{\alpha} z_i = \alpha x - y$ is a polynomial
in $\alpha$ with integer coefficients.
Hence the induction is immediate.
\end{proof}
Now we write, for $\mu,\lambda \vdash n$,
\begin{multline*}
    z_\mu \theta_\mu(\lambda)=\alpha^{\frac{|\mu|-\ell(\mu)}{2}} \Ch_\mu(\lambda)
= \alpha^{\frac{|\mu|-\ell(\mu)}{2}} \sum_\rho a^\mu_\rho M_\rho(\lambda) \\
= \sum_\rho \alpha^{\frac{|\mu|-\ell(\mu)-(|\rho|-2\ell(\rho))}{2}} 
a^\mu_\rho \left( \prod_{i \leq \ell(\rho)} \sqrt{\alpha}^{\rho_i-2}
M_{\rho_i}(\lambda) \right).
\end{multline*}
The quantities $\alpha^{\frac{|\mu|-\ell(\mu)-(|\rho|-2\ell(\rho))}{2}} 
a^\mu_\rho$ and $\sqrt{\alpha}^{\rho_i-2}
M_{\rho_i}(\lambda)$ are polynomials in $\alpha$ (by Proposition~\ref{PropBound2}
and Lemma~\ref{LemMkPol}), hence $\theta_\mu(\lambda)$ is a polynomial
in $\alpha$. \qedhere

\subsection{Yet another gradation and bound on degrees}
The gradation introduced in Section \ref{SubsectBound2} is suitable for some
purposes (as we have seen in the previous section), but it has the unpleasant
aspect that all homogeneous spaces have an infinite dimension.
In particular, Proposition~\ref{PropBound2} does not give any information on the
maximal power of $M_2$ which can appear in $L_\mu$.
In this section we propose a way to avoid this difficulty.
It is technical but will be useful in the next section.

We define a new algebraic basis of $\Pola$ by:
\begin{align*} 
M'_2 & =M_2,\\ 
M'_k & = M_k - (-\gamma)^{k-2} M_2 \qquad \text{for } k\geq 3.
\end{align*}
We also consider the gradation defined by:
\[\deg_3(M'_2) = 1, \quad \deg_3(M'_k) =k-2 \text{ for }k\ge 3,\]
so that $\deg_3(M'_\rho)=|\rho|-2\ell(\rho) +m_2(\rho)$.
Obviously, there exists a polynomial $L'_\mu$ such that
\[ \Ch_\mu = L'_\mu(M'_2,M'_3,\dots).\]
For example, one has:
\[L'_{(2,2)} = (M'_3)^2 +6(M'_2)^2 - 4 M'_4 -10 \gamma M'_3 -2M'_2.\]
We denote by $(a')_\rho^\mu$ the coefficient of $M'_\rho$ in $L'_\mu$.
Then, one has the following result.

\begin{proposition}\label{PropBound3}
    The coefficient $(a')_\rho^\mu$ is a polynomial in $\gamma$
    of degree at most $|\mu|-\ell(\mu)+m_1(\mu) - (|\rho|-2\ell(\rho) +m_2(\rho))$.
\end{proposition}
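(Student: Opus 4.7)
The plan is to mirror the proofs of Propositions~\ref{PropBound1} and~\ref{PropBound2}, now working in the basis $(M'_k)$ with the gradation $\deg_3$. First I would establish the analog of Corollary~\ref{CorolMlo} in the $M'_k$ basis: for any partition $\rho$, diagram $\lambda$ and inner corner $i$ of $\lambda$,
\[
M'_\rho(\lambda^{(i)}) = M'_\rho(\lambda) + \sum_{g \geq 0,\, \pi} (b')^\rho_{g,\pi}(\gamma)\, z_i^g\, M'_\pi(\lambda),
\]
with $(b')^\rho_{g,\pi}(\gamma) \in \QQ[\gamma]$. The one-part case rests on the identity
\[
M'_k(\lambda^{(i)}) - M'_k(\lambda) = [M_k(\lambda^{(i)}) - M_k(\lambda)] - (-\gamma)^{k-2},
\]
which itself uses $M_2(\lambda^{(i)}) - M_2(\lambda) = 1$. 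The subtraction $-(-\gamma)^{k-2} M_2$ defining $M'_k$ is tuned precisely to cancel the $(r,s,t) = (1, k-2, 0)$ contribution of Proposition~\ref{PropMkLo}, which is the summand of maximal $\gamma$-degree sitting at the problematic $(z_i^0, M_0)$ position.

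The central technical step would then be the analog of Lemmas~\ref{LemDeg1B} and~\ref{LemDeg2B}, namely
\[
\deg_\gamma\bigl((b')^\rho_{g,\pi}\bigr) \leq \deg_3(M'_\rho) - \deg_3(M'_{\pi \cup (g)}),
\]
together with the sharper variant obtained by replacing $g$ with $g+1$ and subtracting $1$. By multiplicativity of $\deg_3$ and of the pre-degree, this reduces to the one-part case, where one rewrites each remaining summand of Proposition~\ref{PropMkLo} (after the cancellation above) using $M_t = M'_t + (-\gamma)^{t-2} M'_2$ for $t \geq 3$. The delicate subcase is when this expansion lands on $M'_2$: the extra factor $(-\gamma)^{t-2}$ raises the $\gamma$-degree by $t-2$, but this is absorbed by the drop $\deg_3(M'_t) - \deg_3(M'_2) = t-3$ (for $t \geq 4$) combined with the inequality $2r+s+t \leq k$ and $r \geq 1$.

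Finally, combining the $M'$-expansion with Proposition~\ref{PropLinearRelation} and extracting the coefficient of $M'_\tau$ yields equations $(A'_\tau)$ and $(B'_\tau)$ analogous to~\eqref{EqRecTau1} and~\eqref{EqRecTau2}. The triangularity argument of Lemma~\ref{LemTriangle} then carries over verbatim: when $\min(\rho) = 2$ one uses $(A'_{\rho \setminus (2)})$, and when $\min(\rho) = q > 2$ one uses $(B'_{\rho_{\downarrow q}})$, isolating in each case $(a')^\mu_\rho$ with a nonzero diagonal coefficient. A double induction on $|\mu|$ and on $\rho$ (descending in $<_1$) then closes, the key bookkeeping identity being $|\mu \setminus (1)| - \ell(\mu \setminus (1)) + m_1(\mu \setminus (1)) = |\mu| - \ell(\mu) + m_1(\mu) - 1$, which matches exactly the unit drop $\deg_3(M'_\rho) - \deg_3(M'_\tau) = \deg_3(M'_2) = 1$ when $\rho = \tau \cup (2)$, together with the degree bounds on $(b')^{\rho'}_{g,\pi}$ controlling the off-diagonal contributions. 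The main obstacle is the second step: verifying carefully that the very specific subtraction $-(-\gamma)^{k-2} M_2$ in the definition of $M'_k$ produces exactly the sharp $\deg_3$-bound on $(b')^\rho_{g,\pi}$ that the induction requires across all cases.
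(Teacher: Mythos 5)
Your overall architecture coincides with the paper's: the same basis $M'_k$, the observation that the subtraction $-(-\gamma)^{k-2}M_2$ cancels exactly the $(r,s,t)=(1,k-2,0)$ term of Proposition~\ref{PropMkLo}, the $M'$-analogue of Corollary~\ref{CorolMlo}, the systems $(A'_\tau)$ and $(B'_\tau)$, the triangularity of Lemma~\ref{LemTriangle}, and the double induction. The gap lies in the central degree bound you state. You propose the literal analogue of Lemma~\ref{LemDeg2B}, namely $\deg_\gamma\bigl((b')^{\rho}_{g,\pi}\bigr) \le \deg_3(M'_\rho)-\deg_3(M'_{\pi\cup(g)})$. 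That inequality is true, but it is one unit too weak to close the induction in the $(A'_\tau)$ step: there $\rho=\tau\cup(2)$ and $\pi\cup(g)=\tau$, so $\deg_3(M'_\rho)=\deg_3(M'_\tau)+1$ because $\deg_3(M'_2)=1$ (unlike $\deg_2(M_2)=0$, which is why this format did suffice for Proposition~\ref{PropBound2}). Combined with the induction hypothesis, your bound gives each off-diagonal product $(b')^{\rho'}_{g,\pi}(a')^\mu_{\rho'}$ a $\gamma$-degree at most $|\mu|-\ell(\mu)+m_1(\mu)-\deg_3(M'_\tau)=|\mu|-\ell(\mu)+m_1(\mu)-\deg_3(M'_\rho)+1$, exceeding the target by one for every pair $(g,\pi)$. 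Your ``sharper variant'' (replacing $g$ by $g+1$ and subtracting $1$) only concerns the $(B'_\tau)$ equations and does not repair this.

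What the induction actually requires, and what the paper proves in Lemma~\ref{LemDeg3B'}, is $\deg_\gamma\bigl((b')^\rho_{g,\pi}\bigr) \le \deg_3(M'_\rho)-\deg_3(M'_\pi)-\max(g,1)$. For $g\ge 1$ this follows from the pre-degree count you sketch, since then one may subtract the full $g$ rather than $\deg_3(M'_g)$. The critical case is $g=0$, where an extra $-1$ beyond the naive pre-degree argument is needed; the paper obtains it by examining the constant term in $z_i$ of $M'_k(\lambda^{(i)})$ and checking that, thanks to the cancellation of the $(1,k-2,0)$ summand, $M'_k(\lambda^{(i)})|_{z_i=0}=M'_k(\lambda)|_{z_i=0}$ plus terms of pre-degree $k-3$. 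This is the one place where the specific definition of $M'_k$ does real work, and it is missing from your outline: the ``delicate subcase'' you do discuss (the rewriting $M_t=M'_t+(-\gamma)^{t-2}M'_2$) is a different and less critical issue. As written, your key lemma would leave the induction short by one degree at every step of the $(A'_\tau)$ equations.
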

\begin{remark}
    The analogous result is not true for $a_\rho^\mu$,
    as it can be seen on the case $\mu=(2,2)$.
\end{remark}
The algorithm to compute the coefficient $(a')_\rho^\mu$ is the same as for $a_\rho^\mu$
and the proof of the bound on degrees is similar to those of 
Propositions~\ref{PropBound1} and \ref{PropBound2}.
Let us give some details.

First, one can rewrite Proposition~\ref{PropMkLo} in terms of the quantities $M'_k$:
\begin{multline}
    M'_k(\lambda^{(i)}) - M'_k(\lambda)
    =M_k(\lambda^{(i)}) - M_k(\lambda) - (-\gamma)^{k-2} \\
    =    \sum_{\substack{ r\geq 1, s,t \geq 0, \\ 2r+s+t \leq k \\  \bm{(r,s,t) \neq (1,k-2,0) }}}
     \Bigg[ z_i^{k-2r-s-t}  \binom{k-t-1}{2r+s-1}\binom{r+s-1}{s} \\ 
    \cdot (-\gamma)^s (M'_t(\lambda)+ (-\gamma)^{t-2} M'_2(\lambda)) \Bigg].
    \label{EqM'klo}
\end{multline}
Please note that the term $(-\gamma)^{k-2}$ corresponding to $(r,s,t)=(1,k-2,0)$
does not belong to the sum any more.
By multiplication, there exist some polynomials $(b')_{g,\pi}^\rho(\gamma)$ such that
\[M'_\rho(\lo) = M'_\rho(\lambda) + \sum_{g,\pi} (b')_{g,\pi}^\rho(\gamma)\
z_i^g\ M'_\pi(\lambda).\]
Using Equation~\eqref{EqIng1Lassalle} and Proposition~\ref{PropLinearRelation}, we obtain
the following equalities:
\begin{align}
    \sum_\rho (a')^\mu_\rho \left( \sum_{\substack{g,h \geq 0, \\ \pi \vdash h}} (b')_{g,\pi}^\rho(\gamma) 
    M'_\pi M_g  \right) &= m_1(\mu) \Ch_{\mu \backslash 1},
    \tag{$A'$}\label{EqRec1'}\\
    \sum_\rho (a')^\mu_\rho \left( \sum_{\substack{g,h \geq 0, \\ \pi \vdash h}} (b')_{g,\pi}^\rho(\gamma) 
    M'_\pi M_{g+1}  \right) &= \sum_{r \geq 2} r \cdot m_r(\mu) \Ch_{\mu_{\downarrow r}}.
    \tag{$B'$}\label{EqRec2'}
\end{align}
Plugging $M_g=M'_g + (-\gamma)^{g-2} M'_2$ in these equations and identifying the coefficient
of $M'_\tau$ on both sides, we obtain the following system:
\begin{multline*}
    \sum_\rho (a')^\mu_\rho \left( \sum_{\substack{g, \pi, \\ \pi \cup (g)=\tau}} (b')_{g,\pi}^\rho(\gamma) 
    + \sum_{\substack{g>2, \pi, \\ \pi \cup (2)=\tau}} (-\gamma)^{g-2} (b')_{g,\pi}^\rho(\gamma)  \right) \\ =
    m_1(\mu) (a')^{\mu \backslash 1}_\tau,
    \tag{$A'_\tau$}\label{EqRec1'tau}
\end{multline*}
\begin{multline*}
    \sum_\rho (a')^\mu_\rho \left( \sum_{\substack{g, \pi, \\ \pi \cup (g+1)=\tau}} (b')_{g,\pi}^\rho(\gamma) 
    + \sum_{\substack{g \geq 2, \pi ,\\ \pi \cup (2)=\tau}} (-\gamma)^{g-1} (b')_{g,\pi}^\rho(\gamma)
    \right) \\= \sum_{r \geq 2} r \cdot m_r(\mu) (a')^{\mu_{\downarrow r}}_\tau,
    \tag{$B'_\tau$}\label{EqRec2'tau}
 \end{multline*}
It is easy to check that Lemma~\ref{LemTriangle} still holds for this system.

The next step is to give a bound on the degree of $(b')_{g,\pi}^\rho(\gamma)$.
\begin{lemma}
    \[ \deg_\gamma(b_{g,\pi}^\rho(\gamma)) \leq \deg_3(M'_\rho) - \deg_3(M'_\pi) - \max(g,1).\]
    \label{LemDeg3B'}
\end{lemma}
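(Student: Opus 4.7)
The plan is to mirror the proofs of Lemmas~\ref{LemDeg1B} and \ref{LemDeg2B}, but with a pre-degree adapted both to $\deg_3$ and to the slightly unusual $\max(g,1)$ appearing in the bound. Concretely, I would assign to each monomial $b(\gamma)\, z_i^g\, M'_\pi$ the \emph{pre-degree}
\[\deg_\gamma(b) + \deg_3(M'_\pi) + \max(g, 1).\]
The lemma is then equivalent to the statement that every term in the expansion $M'_\rho(\lambda^{(i)}) - M'_\rho(\lambda) = \sum_{g,\pi} (b')_{g,\pi}^\rho(\gamma)\, z_i^g\, M'_\pi(\lambda)$ has pre-degree at most $\deg_3(M'_\rho)$.

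The key ingredient is the one-part case $\rho=(k)$. For $k=2$, Proposition~\ref{PropMkLo} gives $M'_2(\lambda^{(i)}) - M'_2(\lambda) = 1$, whose pre-degree is $1 = \deg_3(M'_2)$. For $k \geq 3$, identity~\eqref{EqM'klo} expresses $M'_k(\lambda^{(i)}) - M'_k(\lambda)$ as an explicit sum of terms $z_i^{k-2r-s-t}(-\gamma)^s\bigl(M'_t + (-\gamma)^{t-2} M'_2\bigr)$, with the standard convention when $t \in \{0, 2\}$. A short case analysis on whether $k-2r-s-t$ vanishes or is strictly positive, and on the value of $t$, then shows that every summand has pre-degree at most $k-2 = \deg_3(M'_k)$, using in an essential way that $r \geq 1$. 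The inequality would fail by exactly one for the forbidden triple $(r,s,t) = (1, k-2, 0)$, whose pre-degree is $k-1$; this is precisely the term removed when passing from $M_k$ to $M'_k$, which is why the auxiliary basis $M'$ is set up this way.

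The general case then follows by multiplication. Writing $M'_{\rho_j}(\lambda^{(i)}) = M'_{\rho_j}(\lambda) + \Delta_j$, expansion of the product gives
\[\prod_j M'_{\rho_j}(\lambda^{(i)}) - \prod_j M'_{\rho_j}(\lambda) = \sum_{\emptyset \neq S \subseteq \{1,\dots,\ell(\rho)\}} \prod_{j \in S} \Delta_j \cdot \prod_{j \notin S} M'_{\rho_j}(\lambda).\]
A generic summand, coming from a choice $(g_j, \pi_j)_{j \in S}$ inside the $\Delta_j$'s, has pre-degree
\[\sum_{j \in S} \deg_\gamma\bigl((b')_{g_j,\pi_j}^{\rho_j}\bigr) + \sum_{j \in S} \deg_3(M'_{\pi_j}) + \sum_{j \notin S} \deg_3(M'_{\rho_j}) + \max\!\Bigl(\sum_{j \in S} g_j,\, 1\Bigr).\]
Applying the one-part bound in each factor and using the elementary inequality $\max\bigl(\sum_{j=1}^m g_j,\, 1\bigr) \leq \sum_{j=1}^m \max(g_j, 1)$, valid for every $m \geq 1$, yields pre-degree at most $\deg_3(M'_\rho)$, as required.

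The main subtlety is exactly this non-multiplicativity of $\max(g,1)$, which stops the pre-degree from being an honest gradation; the elementary inequality above is what compensates for it. The other delicate point is the one-part verification for small $t$ (where the formal identity $M_t = M'_t + (-\gamma)^{t-2} M'_2$ breaks down and one has to read~\eqref{EqM'klo} with the correct conventions for $t \in \{0,2\}$): here the critical ingredient is that the triple $(1, k-2, 0)$ has been removed from the sum, so that the pre-degree bound is saturated but not violated.
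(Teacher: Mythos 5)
Your proposal is correct and takes essentially the same route as the paper: a pre-degree relative to $\deg_3$, the one-part case read off from Equation~\eqref{EqM'klo} (where the removal of the triple $(r,s,t)=(1,k-2,0)$ is exactly what saves the bound), and then multiplication over the parts of $\rho$. The only organizational difference is that the paper keeps the multiplicative pre-degree $\deg_\gamma(b)+\deg_3(M'_\pi)+g$ and treats $g=0$ as a separate case by inspecting the $z_i$-constant term, whereas you fold $\max(g,1)$ into the pre-degree and pay for the resulting loss of multiplicativity with the subadditivity $\max\bigl(\sum_j g_j,1\bigr)\le\sum_j\max(g_j,1)$ --- both computations are the same in substance.
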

\begin{proof}
    Let us call \emph{pre-degree} (with respect to $\deg_3$)
    of an expression of the form $b(\gamma)\ M'_\pi\ z_i^g$
    the quantity $\deg_\gamma(b)+\deg_3(M'_\pi)+g$.
    It is multiplicative.
    Clearly, $M'_k(\lo)$ has pre-degree $\max(k-2,1)$ (see Equation~\eqref{EqM'klo}),
    thus $M'_\rho(\lo)$ has pre-degree $\deg_3(M'_\rho)$, which finishes the proof
    of the case $g \geq 1$.
    For $g=0$, one has to look at the term which does not involve $z_i$.
    It is easy to check on Equation~\eqref{EqM'klo} (here, it is crucial to use $M'$
    and not $M$) that
    \[M'_k(\lo)|_{z_i=0}=M'_k(\lambda)|_{z_i=0} + \left(\text{terms of pre-degree $k-3$}\right).\]
    Hence by multiplication, 
    \[M'_\rho(\lo)|_{z_i=0}=M'_\rho(\lambda)|_{z_i=0} + 
    \left(\text{terms of pre-degree }\deg_3(M'_\rho) - 1\right),\]
    which finishes the proof of the lemma.
\end{proof}

We have now all the tools to prove Proposition~\ref{PropBound3} by induction.
As usual, we first consider the case where $\rho=\tau \cup (2)$ has the smallest part equal to $2$.
Then Equation $(A_\tau)$ can be written as:
\begin{multline*}
    m_2(\rho) \cdot (a')^\mu_\rho = m_1(\mu) (a')^{\mu \backslash 1}_{\tau} 
    - \sum_{\substack{\pi,g, \\ \pi \cup (g) = \tau}} \sum_{\rho' >_1 \rho}
    (b')_{g,\pi}^{\rho'}(\gamma) (a')^\mu_{\rho'} \\
    -\sum_{\substack{g>2, \pi, \\ \pi \cup (2)=\tau}} \sum_{\rho' >_1 \rho}
    (-\gamma)^{g-2} (b')_{g,\pi}^\rho(\gamma)(a')^\mu_{\rho'}.
\end{multline*}
With arguments similar to the ones used previously,
the first two terms are polynomials in $\gamma$ of degree at most
$|\mu|-\ell(\mu)+m_1(\mu) - \deg_3(M'_{\rho})$.
Let us focus on the last summand.
By induction hypothesis $(a')^\mu_{\rho'}$ is a polynomial of degree 
$|\mu|-\ell(\mu)+m_1(\mu) - \deg_3(M'_{\rho'})$.
By Lemma \ref{LemDeg3B'}, $(b')_{g,\pi}^\rho(\gamma)$ has degree equal to
$\deg_3(M'_\rho) - \deg_3(M'_\pi) - g$.
Hence the product of these two terms with $(-\gamma)^{g-2}$ has degree at most
\[|\mu|-\ell(\mu)+m_1(\mu) - (\deg_3(M'_\pi) - 2) = |\mu|-\ell(\mu)+m_1(\mu) - \deg_3(M'_\rho).\]
The equality comes from the fact that $\rho=\tau \cup (2)=\pi \cup (2,2)$.

Finally one obtains that $(a')^\mu_\rho$ has degree at most
$|\mu|-\ell(\mu)+m_1(\mu) - \deg_3(M'_{\rho})$.

The case when $\rho$ has no parts equal to $2$ is similar. \qed

\begin{corollary}
For any partition $\mu$ one has:
\[ \deg_3(\Ch_\mu) = |\mu| - \ell(\mu)+m_1(\mu),\]
and the top degree part does not depend on $\alpha$.
    \label{corol:deg3_Ch}
\end{corollary}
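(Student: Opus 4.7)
The plan is to extract the upper bound and the $\alpha$-independence of the top part directly from Proposition~\ref{PropBound3}, and then to establish the matching lower bound by exhibiting an explicit monomial whose coefficient one can compute at $\alpha=1$.

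First, I would observe that if $(a')^\mu_\rho \neq 0$, Proposition~\ref{PropBound3} forces $|\mu|-\ell(\mu)+m_1(\mu)-\deg_3(M'_\rho) \geq 0$, giving the inequality $\deg_3(\Ch_\mu) \leq |\mu|-\ell(\mu)+m_1(\mu)$. Moreover, for any $\rho$ saturating this bound, the same proposition forces the $\gamma$-degree of $(a')^\mu_\rho$ to be at most zero, so $(a')^\mu_\rho$ is a rational constant; this already yields the claimed $\alpha$-independence of the top $\deg_3$ part.

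For the reverse inequality, I would consider the partition $\rho^\star=(\mu_1+1,\ldots,\mu_{\ell(\mu)}+1)$. A short computation gives $|\rho^\star|=|\mu|+\ell(\mu)$, $\ell(\rho^\star)=\ell(\mu)$ and $m_2(\rho^\star)=m_1(\mu)$, so that $\deg_3(M'_{\rho^\star})=|\mu|-\ell(\mu)+m_1(\mu)$. By the first step $(a')^\mu_{\rho^\star}$ is independent of $\alpha$, so it suffices to compute its value at $\alpha=1$ (where $M'_k=M_k$). By Corollary~\ref{corol:dominant_deg1}, $\Ch_\mu^{(1)}=\prod_i R_{\mu_i+1}+P$ with $\deg_1(P)<|\mu|+\ell(\mu)$. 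Since each $R_k$, expanded in the $M$-basis via Lagrange inversion, is $\deg_1$-homogeneous of degree $k$ with leading term $M_k$, the coefficient of $M_{\rho^\star}=\prod_i M_{\mu_i+1}$ in the $M$-basis expansion of $\prod_i R_{\mu_i+1}$ is exactly $1$, while $P$ expanded in the $M$-basis contains only monomials of $\deg_1<|\mu|+\ell(\mu)$ and therefore cannot modify this coefficient. Hence $(a')^\mu_{\rho^\star}|_{\gamma=0}=1$, so $(a')^\mu_{\rho^\star}\equiv 1$, proving both the equality and the non-vanishing.

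I do not expect any deep difficulty, the main input being Proposition~\ref{PropBound3}, which is already in hand. The only point that requires care is the last step, namely controlling the ``lower $\deg_1$'' remainder $P$: one must verify that it cannot produce the monomial $M_{\rho^\star}$ of maximal $\deg_1$, and this rests on the observation that the change of basis between $(R_k)$ and $(M_k)$ preserves $\deg_1$ (each $R_k$ being $\deg_1$-homogeneous of degree $k$).
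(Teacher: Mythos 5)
Your proposal is correct and follows the route the paper intends: the upper bound and the $\gamma$-degree-zero (hence $\alpha$-independent) nature of the top coefficients are read off directly from Proposition~\ref{PropBound3}, and sharpness is then reduced to the case $\alpha=1$. The paper states the corollary with no proof at all, so the one step you actually had to supply is the lower bound, and your witness works: $\rho^\star=(\mu_1+1,\dots,\mu_{\ell(\mu)}+1)$ has $m_2(\rho^\star)=m_1(\mu)$, hence $\deg_3(M'_{\rho^\star})=|\mu|-\ell(\mu)+m_1(\mu)$, and at $\gamma=0$ (where $M'_k=M_k$) the coefficient of $M_{\rho^\star}$ in $\prod_i R_{\mu_i+1}$ is $1$ because any cross-term arising from a non-leading term of some $R_{\mu_i+1}$ has strictly more than $\ell(\mu)$ factors $M_j$, while the remainder $P$ of Corollary~\ref{corol:dominant_deg1} lives in the span of $M_\rho$ with $|\rho|<|\mu|+\ell(\mu)$ and so cannot contribute. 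Since $(a')^\mu_{\rho^\star}$ is a constant in $\gamma$, it equals $1$ identically, and the degree is attained. This is a legitimate, self-contained completion of a step the authors treat as immediate.
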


\begin{remark}
The top degree part of $\Ch_\mu$ for $\deg_3$ has, as far as we know,
no close expression.
\end{remark}

\subsection{Gradations and characters}\label{SubsectDegCh}
In the previous sections we have defined three different gradations.
The elements of our favorite basis $(\Ch_\mu)$ are not homogeneous,
but have the following nice property:
if we define
\[ V_i^d := \{x \in \Pola : \deg_i(x) \leq d\} \]
then, for $i=1$ and $i=3$, each $V_i^d$ is spanned linearly
by the functions $\Ch_\mu$ that it contains
(this comes from a direct dimension argument).
This simple observation will be useful later.

The same argument can not be used for $i=2$, as the spaces $V_2^d$
are all infinite dimensional.

\begin{remark}
    The functions $\deg_i$, for $i=1,3$, define some gradations and 
    hence some filtrations on $\Pola$.
    These filtrations were known in the cases $\alpha=1,2$ ;
    see \cite{IvanovOlshanski2002,FerayInductionJM,Tout-Structure-constant-S2n-Hn}.

    In fact, Ivanov and Olshanski \cite[Proposition 4.9]{IvanovOlshanski2002}
    give many more filtrations for $\alpha=1$,
    but we have not been able to prove that 
    they hold for general $\alpha$.
    In particular, the filtration
    \begin{equation}\label{eq:Kerov_filtration}
    \deg(\Ch_\mu)=|\mu|+m_1(\mu)
    \end{equation}
     is central in their analysis of fluctuations
    of random Young diagrams.
    Unfortunately, we are unable to prove that
    \eqref{eq:Kerov_filtration} still defines a filtration in the general $\alpha$-case.
    We leave this as an open question.
    If we were able to positively answer it,
    we could use a moment method to obtain our fluctuation results
    (without the bound on the speed of convergence).
\end{remark}

\section{Polynomiality of structure constants of Jack characters}
\label{SectStructureConstants}

\subsection{Structure constants are polynomials in $\gamma$}
\label{SubsectStructPol}

In this section we are going to prove our main result for the structure constants of the algebra $\Pola$ of $\alpha$-polynomial functions which was stated as Theorem \ref{theo:struct-const}.

\begin{proof}[Proof of Theorem \ref{theo:struct-const}:]
First observe that for each $i \in \{1,2,3\}$ one has:
\[ n_i(\mu) = \deg_i(\Ch_\mu),\]
hence our bound on the degree of structure constants can be equivalently formulated using three gradations introduced in Section \ref{SectPolynomial}.
    
    Let us consider the bound involving $\deg_1$ (the case of $\deg_3$ is similar).
    We know by Proposition~\ref{PropBound1} that
    \[ \Ch_\mu = \sum_\rho a_\rho^\mu M_\rho,\]
    where each $a_\rho^\mu$ is a polynomial in $\gamma$ of degree 
    $\deg_1(\Ch_\mu)-\deg_1(M_\rho)$.
    Hence, we have
    \[ \Ch_\mu \cdot \Ch_\nu = \sum_\rho b_\rho^{\mu,\nu} M_\rho,\]
    where each $b_\rho^{\mu,\nu}$ is a polynomial in $\gamma$ of degree 
        $\deg_1(\Ch_\mu)+\deg_1(\Ch_\nu)-\deg_1(M_\rho)$.
        In particular $\Ch_\mu \cdot \Ch_\nu$ has degree at most $\deg_1(\Ch_\mu)+\deg_1(\Ch_\nu)$
        and hence, thanks to the remark of Section~\ref{SubsectDegCh},
        $g_{\mu,\nu;\pi}=0$ whenever 
        \[ n_1(\mu) + n_1(\nu) < n_1(\pi).\]
    The structure constants are obtained by solving the linear system:
    \begin{equation}
        \sum_\tau a_\rho^\tau g_{\mu,\nu;\tau} = b_\rho^{\mu,\nu}.
        \tag{S}\label{EqSystStructure}
    \end{equation}
    In this system, $\mu$ and $\nu$ are fixed, there is one equation for each partition $\rho$
    without parts equal to $1$.
    In each equation, the sum runs over partitions $\tau$ such that
    $n_1(\tau) \le n_1(\mu) + n_1(\nu)$.
    Finally, the unknown are $g_{\mu,\nu;\tau}$, for $\tau$ as above.

    We will prove our statement by induction over 
    \[ \deg_1(\Ch_\mu) + \deg_1(\Ch_\nu) - \deg_1(\Ch_\pi).\]
    Fix some partitions $\mu$,$\nu$ and $\pi$.
    If the quantity above is negative, the coefficient $g_{\mu,\nu;\pi}$ is equal to $0$ and
    the statement is true.
    Otherwise we suppose that for all partitions $\tau$ bigger than
    $\pi$ (in the sense that $\deg_1(\Ch_{\tau}) > \deg_1(\Ch_{\pi})$),
    the degree of $g_{\mu,\nu;\tau}$ is bounded from above by
    $\deg_1(\Ch_\mu) + \deg_1(\Ch_\nu) - \deg_1(\Ch_\tau)$.
    
    Note that $a_\rho^\tau$ vanishes as soon as $\deg_1(\Ch_\tau) < \deg_1(M_\rho)$.
    Then from \eqref{EqSystStructure} we extract a subsystem
    \begin{equation}                                          
        \sum_{\substack{\tau, \\ \deg_1(\Ch_\tau)=\deg_1(\Ch_{\pi})}} a_\rho^\tau g_{\mu,\nu;\tau} =
        b_\rho^{\mu,\nu} - \sum_{\substack{\tau, \\ \deg_1(\Ch_\tau)>\deg_1(\Ch_{\pi})}}
        a_\rho^\tau g_{\mu,\nu;\tau},      
        \tag{$S'$}
        \label{syst:Sp}
    \end{equation}
    where $\rho$ runs over partitions such that $\deg_1(M_\rho)=\deg_1(\Ch_\pi)$.
    The variables are $g_{\mu,\nu;\tau}$ for $\tau$ with $\deg_1(\Ch_\tau)=\deg_1(\Ch_{\pi})$.
    This system is invertible (because $(\Ch_\pi)$ is a basis of $\Pola$) and
    the coefficients on the left-hand side of
    \eqref{syst:Sp} are rational numbers (by Proposition~\ref{PropBound1}).
    Besides, all terms on the right-hand side are polynomials in $\gamma$ of degree
    at most $\deg_1(\Ch_\mu) + \deg_1(\Ch_\nu) - \deg_1(\Ch_\pi)$
    which finishes the proof.

    The proof of the parity follows in the same way.\medskip

    The bound involving $\deg_2$ is obtained in a slightly different way.
    First note that if $\mu$ and $\nu$ do not have any parts equal to $1$,
    this bound is weaker that the one with $\deg_3$.
    Hence, it holds in this case.
    Then the general case follows, using the fact that
    $\Ch_{\mu \cup (1)}=(|\lambda| - |\mu|)\Ch_\mu$.
\end{proof}

\subsection{Projection on functions on Young diagrams of size $n$}\label{SubsectProjN}
Recall that $\Pola$ is a subalgebra of the algebra of functions on all Young diagrams.
The latter has a natural projection map $\varphi_n$ onto $\F(\Young_n,\QQ)$, the algebra
of functions on Young diagrams of size $n$.
As Jack symmetric functions $J_\lambda$ form a basis of the symmetric function ring,
the functions $(\theta_\mu)_{\mu \vdash n}$ form a basis of $\F(\Young_n,\QQ)$
(see \cite[Proposition 4.1]{FerayInductionJM}).

We consider the structure constants $c_{\mu,\nu;\pi}$ 
of $\F(\Young_n,\QQ)$ with basis $(\theta_\mu)_{\mu \vdash n}$,
that is the numbers uniquely defined by:
\begin{equation}\label{EqDefC}
\theta_\mu(\lambda) \cdot \theta_\nu(\lambda)
= \sum_{\pi \vdash n} c_{\mu,\nu;\pi} \, \theta_\pi(\lambda) \quad \text{for all $\lambda \vdash n$}.
\end{equation}
Note that $c_{\mu,\nu;\pi}$ depends on $\alpha$,
but, according to our convention, we omit the superscript,
when it does not bring any confusion.
It is important to keep in mind that the $c$'s are indexed by triples of partitions
of \emph{the same size}, while the $g$'s are indexed by any triple of partitions.

It turns out that the quantities $c_{\mu,\nu;\pi}$ can be expressed in terms of the quantities $g_{\mu,\nu;\tau}$.
To explain that, for any partition $\mu$, let $\tilde{\mu}$ denotes the partition
obtained from $\mu$ by erasing all parts equal to $1$.
Fix two partitions $\mu$ and $\nu$ of the same integer $n$; then
\[\Ch_{\tilde{\mu}} \cdot \Ch_{\tilde{\nu}} 
= \sum_\tau g_{\tilde{\mu},\tilde{\nu};\tau} \Ch_{\tau}. \]
But using the definition of $\Ch$ from Section \ref{eq:definition-Jack},
this implies that, for all $\lambda \vdash n$, one has:
\begin{multline*}
    \alpha^{-\frac{|\mu|-\ell(\mu)}{2}}\ z_{\tilde{\mu}}\ \theta_\mu(\lambda) \cdot
 \alpha^{-\frac{|\nu|-\ell(\nu)}{2}}\ z_{\tilde{\nu}}\ \theta_\nu(\lambda) \\
= \sum_{\substack{\tau, \\|\tau| \leq n}} g_{\tilde{\mu},\tilde{\nu};\tau}\ 
 \alpha^{\frac{|\tau|-\ell(\tau)}{2}}\ z_\tau\ \binom{n-|\tau|+m_1(\tau)}{m_1(\tau)}\
 \theta_{\tau 1^{n-|\tau|}}(\lambda).
 \end{multline*}
Every partition $\tau$ with $|\tau| \leq n$ can be written uniquely as $\tilde{\pi} 1^i$ where
$\pi$ is a partition of $n$ and $i \leq m_1(\pi)$.
Denoting 
$$d(\mu,\nu;\pi) := |\mu|-\ell(\mu) + |\nu|-\ell(\nu) - (|\pi| - \ell(\pi)),$$ 
one has
\[ \theta_\mu(\lambda) \cdot \theta_\nu(\lambda)
= \frac{\alpha^{d(\mu,\nu;\pi)/2}}{z_{\tilde{\mu}} z_{\tilde{\nu}}} \sum_{\pi \vdash n} 
\left( \sum_{0 \leq i \leq m_1(\pi)} g_{\tilde{\mu},\tilde{\nu};\tilde{\pi}
1^i}
\cdot z_{\tilde{\pi}} \cdot i! \cdot \binom{n-|\tilde{\pi}|}{i} \right) \theta_\pi(\lambda).\]
As this holds for all partitions $\lambda$ of $n$, 
by definition of structure constants, we have:
\begin{equation}\label{EqCG}
    c_{\mu,\nu;\pi} = \frac{\alpha^{d(\mu,\nu;\pi)/2}}{z_{\tilde{\mu}} z_{\tilde{\nu}}} 
\sum_{0 \leq i \leq m_1(\pi)} g_{\tilde{\mu},\tilde{\nu};\tilde{\pi} 1^i}    
\cdot z_{\tilde{\pi}} \cdot i! \cdot \binom{n-|\tilde{\pi}|}{i}.
\end{equation}
Using Theorem \ref{theo:struct-const} with $\deg_3$, we know that
$g_{\tilde{\mu},\tilde{\nu};\tilde{\pi} 1^i}$ is a polynomial of degree at most
$d(\mu,\nu;\pi)-i$.
We have thus proved the following result:
\begin{proposition}\label{PropStructureTheta}
    Let $\mu$, $\nu$ and $\pi$ be three partitions without parts equal to $1$.
    Then, \newline
    $\alpha^{-d(\mu,\nu;\pi)/2} c_{\mu 1^{n-|\mu|},\nu 1^{n-|\nu|};\pi 1^{n-|\pi|}}$ is 
    a polynomial in $n$ and $\gamma$ with rational coefficients, of total degree
    at most $d(\mu,\nu;\pi)$.

    Moreover, seen as a polynomial in $\gamma$, it has the same parity as $d(\mu,\nu;\pi)$.
\end{proposition}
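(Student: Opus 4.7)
The plan is to deduce the proposition directly from equation~\eqref{EqCG} combined with Theorem~\ref{theo:struct-const}, which does all the real work. First I would specialize \eqref{EqCG} to the situation at hand. Since $\mu$, $\nu$ and $\pi$ have no parts equal to $1$, the ``tildes'' in \eqref{EqCG} act trivially, and $m_1(\pi \cup 1^{n-|\pi|}) = n-|\pi|$. Rearranging, this gives
\[\alpha^{-d(\mu,\nu;\pi)/2} \, c_{\mu 1^{n-|\mu|},\,\nu 1^{n-|\nu|};\,\pi 1^{n-|\pi|}} = \frac{z_\pi}{z_\mu\, z_\nu}\sum_{i=0}^{n-|\pi|} g_{\mu,\nu;\pi \cup 1^i}\cdot i!\cdot \binom{n-|\pi|}{i}.\]

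Next I would control each summand separately. Under our hypothesis, a direct computation yields
\[n_3(\mu) + n_3(\nu) - n_3(\pi \cup 1^i) = (|\mu|-\ell(\mu)) + (|\nu|-\ell(\nu)) - (|\pi|-\ell(\pi)+i) = d(\mu,\nu;\pi) - i.\]
Applying Theorem~\ref{theo:struct-const} with the $n_3$-bound, $g_{\mu,\nu;\pi\cup 1^i}$ is therefore a polynomial in $\gamma$ with rational coefficients, of degree at most $d(\mu,\nu;\pi)-i$. On the other hand, $i!\binom{n-|\pi|}{i} = (n-|\pi|)(n-|\pi|-1)\cdots(n-|\pi|-i+1)$ is a polynomial in $n$ of degree exactly $i$. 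Multiplying, each term of the sum is a polynomial in $(n,\gamma)$ of total degree at most $d(\mu,\nu;\pi)$, and this bound is preserved by summation and by multiplication by the constant $z_\pi/(z_\mu z_\nu) \in \QQ$.

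For the parity claim I would invoke the parity part of Theorem~\ref{theo:struct-const} under the $n_1$-gradation. A short computation gives
\[n_1(\mu)+n_1(\nu) - n_1(\pi\cup 1^i) = d(\mu,\nu;\pi) + 2\bigl(\ell(\mu)+\ell(\nu)-\ell(\pi)-i\bigr),\]
so the $\gamma$-parity of $g_{\mu,\nu;\pi\cup 1^i}$ is the parity of $d(\mu,\nu;\pi)$, uniformly in $i$. Since the factors $i!\binom{n-|\pi|}{i}$ do not involve $\gamma$, the full expression inherits this parity.

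Given Theorem~\ref{theo:struct-const}, there is essentially no obstacle here; the proposition is a short bookkeeping exercise. The only small subtlety is choosing the correct gradation: the $n_3$-bound is what is needed to pair the $-i$ contribution from $g_{\mu,\nu;\pi\cup 1^i}$ with the $+i$ contribution from $i!\binom{n-|\pi|}{i}$ and obtain a \emph{total} degree of at most $d(\mu,\nu;\pi)$. The $n_2$-bound, for instance, would lose this cancellation.
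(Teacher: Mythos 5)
Your proposal is correct and is essentially the paper's own proof: the paper likewise derives the identity \eqref{EqCG}, applies Theorem~\ref{theo:struct-const} with the $n_3$-gradation to get $\deg_\gamma g_{\mu,\nu;\pi\cup 1^i}\le d(\mu,\nu;\pi)-i$, and pairs this with the degree-$i$ falling factorial; your explicit parity computation via $n_1$ just fills in a detail the paper leaves implicit.
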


\begin{corollary}\label{CorolConnectionSeries}
    The quantity $c_{\mu 1^{n-|\mu|},\nu 1^{n-|\nu|};\pi 1^{n-|\pi|}}$ is a polynomial
    in $n$ and $\alpha$.
    Moreover, it has degree at most $d(\mu,\nu;\pi)$ in $n$ and at most $d(\mu,\nu;\pi)$
    in $\alpha$ (the total degree may be bigger).
\end{corollary}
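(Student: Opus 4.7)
The plan is to derive this from Proposition~\ref{PropStructureTheta} by a short direct manipulation based on the identity $\sqrt{\alpha}\,\gamma = 1 - \alpha$, which follows immediately from the definition $\gamma = \sqrt{\alpha}^{-1} - \sqrt{\alpha}$. Set $d := d(\mu,\nu;\pi)$. Proposition~\ref{PropStructureTheta} provides a polynomial $P \in \QQ[n,\gamma]$, of total degree at most $d$ in $(n,\gamma)$ and whose $\gamma$-parity equals the parity of $d$, such that
\begin{equation*}
c_{\mu 1^{n-|\mu|},\nu 1^{n-|\nu|};\pi 1^{n-|\pi|}} = \alpha^{d/2}\, P(n,\gamma).
\end{equation*}
I would then expand $P(n,\gamma) = \sum_{k \ge 0} a_k(n)\, \gamma^k$ with $a_k \in \QQ[n]$: the parity condition forces $a_k = 0$ whenever $k \not\equiv d \pmod 2$, and the total-degree bound gives $\deg a_k \le d-k$.

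For each surviving index $k$, both $k$ and $(d-k)/2$ are non-negative integers, so raising the identity $\sqrt{\alpha}\,\gamma = 1-\alpha$ to the $k$-th power yields
\begin{equation*}
\alpha^{d/2}\,\gamma^k = \alpha^{(d-k)/2}\,(1-\alpha)^k,
\end{equation*}
which is a polynomial in $\alpha$ with integer coefficients and degree $(d+k)/2$. Substituting gives
\begin{equation*}
c_{\mu 1^{n-|\mu|},\nu 1^{n-|\nu|};\pi 1^{n-|\pi|}} = \sum_{\substack{k \ge 0 \\ k \equiv d \,(2)}} a_k(n)\, \alpha^{(d-k)/2}\,(1-\alpha)^k,
\end{equation*}
which is manifestly a polynomial in $n$ and $\alpha$ with rational coefficients. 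The degree bound $\deg_n a_k \le d-k \le d$ gives the $n$-degree statement, while $(d+k)/2 \le d$ (using $k \le d$) gives the $\alpha$-degree statement.

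The only subtlety I would flag is the parity tracking: were Proposition~\ref{PropStructureTheta} to lack the $\gamma$-parity information, multiplying by $\alpha^{d/2}$ would in general leave half-integer powers of $\alpha$ and the whole argument would collapse. Thus the genuine obstacle has already been absorbed into Proposition~\ref{PropStructureTheta}, and this corollary is essentially the translation of the $(n,\gamma)$-polynomiality into $(n,\alpha)$-polynomiality via the change of variables $\gamma = \sqrt{\alpha}^{-1} - \sqrt{\alpha}$.
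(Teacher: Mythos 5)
Your derivation is correct and is precisely the argument the paper leaves implicit (the corollary is stated without proof immediately after Proposition~\ref{PropStructureTheta}): the parity of the $\gamma$-expansion combined with $\sqrt{\alpha}\,\gamma = 1-\alpha$ turns $\alpha^{d/2}P(n,\gamma)$ into a genuine polynomial in $(n,\alpha)$, and your degree bookkeeping, including the observation that the total degree may exceed $d$, matches the statement exactly.
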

Applications of these statements are given in next section 
as well as in appendix~\ref{app:Matching-Jack_And_Matsumoto}.

\section{Special values of $\alpha$ and polynomial interpolation}
\label{SectSpecialValues}
\subsection{Case $\alpha=1$: symmetric group algebra}
\label{SubsectStructA1}

In the case $\alpha=1$, the structure constants considered in the previous section are linked
with the symmetric group algebra. Let $\Sym{n}$ denote the symmetric group of size $n$,
\emph{i.e.} the group of permutations of the set $[n]:=\{1,\dots,n\}$.
Recall that the cycle-type of a permutation $\sigma \in \Sym{n}$ is the
integer partition $\mu \vdash n$ obtained by sorting the lengths of the
cycles of $\sigma$.
We consider the group algebra $\QQ[\Sym{n}]$ of $\Sym{n}$ over the rational field $\QQ$.
Its center $Z(\QQ[\Sym{n}])$ is spanned linearly by the conjugacy classes, that is the elements
\[ \cl_\mu = \sum_{\substack{\sigma \in \Sym{n}, \\ \text{cycle-type}(\sigma)=\mu}} \sigma.\]
By a classical result of Frobenius (see \cite{Frobenius1900} or
\cite[(I,7.8)]{Macdonald1995}),
for any $\lambda \vdash n$,
\[
\frac{\Tr \rho^\lambda(\cl_\mu)}{\text{dimension of $\rho^\lambda$}} = \theta_\mu^{(1)}(\lambda),
\]
where $\rho^\lambda$ is the irreducible representation of the symmetric group associated 
to the Young diagram $\lambda$.
In other words: $\theta_\mu^{(1)}$ is the image of $\cl_\mu$ by the abstract Fourier transform,
which is an algebra morphism.
Hence, the structure constants of the algebra $Z(\QQ[\Sym{n}])$ with the basis $(\cl_\mu)_{\mu \vdash n}$
coincide with $c^{(1)}_{\mu,\nu;\pi}$.

These structure constants have been widely studied in the last fifty years in algebra
and combinatorics (they count some families of graphs embedded in orientable surfaces).
A famous result in this topic is due to Farahat and Higman 
\cite[Theorem 2.2]{FarahatHigmanCentreQSn}:
the quantity $c^{(1)}_{\mu 1^{n-|\mu|},\nu 1^{n-|\nu|};\pi 1^{n-|\pi|}}$ is a polynomial
in $n$.
Note that this is a consequence of Proposition \ref{PropStructureTheta}.\bigskip

Besides, structure constants of the center of $Z(\QQ[\Sym{n}])$
have a well-known and obvious combinatorial interpretation.

\begin{lemma}
    Let $\mu$, $\nu$ and $\pi$ be three partitions of the same integer $n$. 
    Fix a permutation $\sigma$ of cycle-type $\pi$.
    Then $c^{(1)}_{\mu,\nu;\pi}$ is the number of pairs of permutations
    $(\sigma_1,\sigma_2)$, such that $\sigma_1$ has cycle-type $\mu$,
    $\sigma_2$ has cycle-type $\nu$ and $\sigma_1 \cdot \sigma_2=\sigma$.
    \label{lem:Interpretation_StructureConstants_A1}
\end{lemma}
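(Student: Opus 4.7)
The plan is to use the identification, recalled just above the lemma, that the structure constants $c^{(1)}_{\mu,\nu;\pi}$ of the basis $(\theta_\mu^{(1)})_{\mu \vdash n}$ of $\F(\Young_n,\QQ)$ coincide with those of the conjugacy-class basis $(\cl_\mu)_{\mu \vdash n}$ of the center $Z(\QQ[\Sym{n}])$. Once this is in hand, the statement becomes a direct computation inside the group algebra.

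First I would write, by definition,
\[
\cl_\mu \cdot \cl_\nu = \sum_{\substack{\sigma_1 \in \Sym{n} \\ \text{type}(\sigma_1)=\mu}} \sum_{\substack{\sigma_2 \in \Sym{n} \\ \text{type}(\sigma_2)=\nu}} \sigma_1 \sigma_2,
\]
and expand the right-hand side of the definition of the structure constants,
\[
\sum_{\pi \vdash n} c^{(1)}_{\mu,\nu;\pi}\, \cl_\pi = \sum_{\pi \vdash n} c^{(1)}_{\mu,\nu;\pi} \sum_{\substack{\sigma \in \Sym{n} \\ \text{type}(\sigma)=\pi}} \sigma,
\]
both viewed as elements of $\QQ[\Sym{n}]$.

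Next I would extract the coefficient of the fixed permutation $\sigma$ of cycle-type $\pi$ on both sides. On the left, this coefficient is exactly the number of pairs $(\sigma_1,\sigma_2)$ with the prescribed cycle-types satisfying $\sigma_1 \sigma_2 = \sigma$; on the right, since $\sigma$ contributes only to the term $\cl_\pi$ (with multiplicity one), the coefficient is $c^{(1)}_{\mu,\nu;\pi}$. Equating the two yields the claim.

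There is essentially no obstacle here: the only subtlety is that the count on the left-hand side is {\em a priori} a function of the chosen $\sigma$, and one must note that it depends only on the cycle-type $\pi$. But this follows at once from the fact that $\cl_\mu \cdot \cl_\nu$ is central, so the coefficient of $\sigma$ coincides with that of any conjugate of $\sigma$; equivalently, conjugating a solution pair $(\sigma_1,\sigma_2)$ by any element of $\Sym{n}$ gives a solution pair for the conjugated target, with the same cycle-types. This invariance justifies the phrase \emph{``fix a permutation $\sigma$ of cycle-type $\pi$''} in the statement and completes the proof.
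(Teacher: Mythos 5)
Your proof is correct and follows exactly the route the paper intends: it relies on the identification (established in the paragraph preceding the lemma, via Frobenius' formula) of $c^{(1)}_{\mu,\nu;\pi}$ with the structure constants of $Z(\QQ[\Sym{n}])$ in the conjugacy-class basis, and then extracts the coefficient of a fixed $\sigma$ in $\cl_\mu\cdot\cl_\nu$. The paper omits this computation as ``well-known and obvious,'' and your write-up, including the remark that centrality makes the count independent of the choice of $\sigma$ within its class, supplies precisely the missing details.
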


This can be used to compute $c^{(1)}_{\mu,\nu;\pi}$ in some particular cases,
which will be useful later.

\begin{lemma}
\label{lem:SC1}
We have the following identities:
\begin{enumerate}
\item
\label{eq:SC1-1} 
$c^{(1)}_{\mu, \nu; 1^n} = 0$ for any $\mu \neq \nu$;
\item
\label{eq:SC1-2}
$c^{(1)}_{(k 1^n),(k 1^n);(1^{k+n})} = \binom{k+n}{k} (k-1)!$.
\end{enumerate}
\end{lemma}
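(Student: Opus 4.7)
The plan is to apply the combinatorial interpretation of Lemma~\ref{lem:Interpretation_StructureConstants_A1} directly in both cases. The key observation is that when the target permutation is the identity, counting factorizations collapses to counting a single permutation with the prescribed cycle-type, because the second factor is forced to be the inverse of the first.

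For part~\eqref{eq:SC1-1}, I would fix $\sigma = \id \in \Sym{n}$, which has cycle-type $(1^n)$. By Lemma~\ref{lem:Interpretation_StructureConstants_A1}, $c^{(1)}_{\mu,\nu;1^n}$ counts pairs $(\sigma_1,\sigma_2)$ of cycle-types $\mu$ and $\nu$ with $\sigma_1\sigma_2 = \id$, that is, $\sigma_2 = \sigma_1^{-1}$. Since a permutation and its inverse always share the same cycle-type, the existence of such a pair forces $\mu = \nu$, and the count is zero whenever $\mu\neq\nu$.

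For part~\eqref{eq:SC1-2}, I would apply the same reasoning in $\Sym{k+n}$, again taking $\sigma = \id$. Then $c^{(1)}_{(k,1^n),(k,1^n);(1^{k+n})}$ equals the number of permutations $\sigma_1 \in \Sym{k+n}$ of cycle-type $(k,1^n)$, because $\sigma_2 = \sigma_1^{-1}$ automatically has the same cycle-type. The classical count gives
\[
\frac{(k+n)!}{z_{(k,1^n)}} = \frac{(k+n)!}{k \cdot n!} = \binom{k+n}{k}(k-1)!,
\]
using $z_{(k,1^n)} = k \cdot n!$, which is exactly the claimed value.

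Neither step presents a real obstacle: both reductions use only the elementary observation that $\sigma_1\sigma_2=\id$ forces $\sigma_2=\sigma_1^{-1}$, combined with the standard formula for the size of a conjugacy class in the symmetric group. The only minor check is the evaluation of $z_{(k,1^n)}$, which follows immediately from its definition as $\prod_i i^{m_i} m_i!$.
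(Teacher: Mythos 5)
Your proposal is correct and follows essentially the same route as the paper: both reduce to the combinatorial interpretation of Lemma~\ref{lem:Interpretation_StructureConstants_A1}, note that $\sigma_1\sigma_2=\id$ forces $\sigma_2=\sigma_1^{-1}$ (hence equal cycle-types), and then evaluate the conjugacy class size $(k+n)!/z_{(k,1^n)}=\binom{k+n}{k}(k-1)!$. Nothing is missing.
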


\begin{proof}
    Consider the first item.
    The only permutation $\pi$ of cycle type $(1^n)$ is $\sigma=\id$.
    Hence, the condition $\sigma_1 \cdot \sigma_2=\sigma$ corresponds
    to $\sigma_2=\sigma_1^{-1}$ and in particular $\sigma_1$ and 
    $\sigma_2$ must have the same cycle-type.

    Consider now the second item.
    As before, we must choose $\sigma=\id$.
    As $\sigma_2=\sigma_1^{-1}$ has always the same cycle-type as $\sigma_1$,
    the coefficient $c^{(1)}_{(k 1^n),(k 1^n);(1^{k+n})}$ is simply the number
    of permutations of $k+n$ of type $(k 1^n)$.
    This is well-known \cite[equation (1.2)]{SaganSymmetric} to be
    \[\frac{(k+n)!}{z_{(k 1^n)}}=\binom{k+n}{k} (k-1)!. \qedhere\]
\end{proof}

\begin{remark}
    The quantities $g^{(1)}_{\mu,\nu;\pi}$ also have a direct combinatorial interpretation
    in terms of partial permutations, see \cite{IvanovKerovPartialPermutations}.
\end{remark}

\subsection{Case $\alpha=2$: Hecke algebra of $(\Sym{2n},H_n)$}

\label{SubsectHecke}
An analogous interpretation of the structure constants exists in the case $\alpha=2$.
We explain it here, following the development given in \cite{GouldenJacksonMapsZonal}.

We can view the elements of the symmetric group $\Sym{2n}$ as permutations of the following set:
$\{1,\bar{1},\dots,n,\bar{n}\}$.
A subgroup formed by permutations $\sigma$ such that 
\[\overline{\sigma(i)} = \sigma(\overline{i})\qquad \text{for } i\in\{1,\dots,n\},\]
where by convention $\overline{\bar{j}}=j$, is called \emph{hyperoctahedral group} and is denoted by $H_n$.
We consider the subalgebra $\QQ[H_n \backslash \Sym{2n} / H_n] < \QQ[\Sym{2n}]$ of the elements
invariant by multiplication on the left or on the right by any element of $H_n$;
in other words
\[x \in \QQ[H_n \backslash \Sym{2n} / H_n] \stackrel{\text{\tiny def}}{\iff}
h x h'=x \quad \text{for all }h,h' \in H_n.\]
A non-trivial result is that this algebra is commutative.

The equivalence classes for the relation $x \sim hxh'$ (for $x \in \Sym{2n}$ and $h,h' \in H_n$)
are called \emph{double-cosets}.
They are naturally indexed by partitions of $n$, see \cite[(VII,2)]{Macdonald1995}.
We denote by $\cl^{(2)}_\mu \in \QQ[H_n \backslash \Sym{2n} / H_n]$ the sum
of all elements in the double coset corresponding to $\mu$.
The family $(\cl^{(2)}_\mu)_{\mu \vdash n}$ is a basis of $\QQ[H_n \backslash \Sym{2n} / H_n]$.

One can show (see \cite[Equation~(3) and (5)]{GouldenJacksonMapsZonal}) that
there exist some
orthogonal idempotents $E_\lambda$ such that:
\[ \cl^{(2)}_\mu = 2^n n! \sum_{\lambda \vdash n} \theta_\mu^{(2)}(\lambda)\ E_\lambda,\]
and one has
\begin{multline*}
    \cl^{(2)}_\mu \cdot \cl^{(2)}_\nu
= (2^n n!)^2 \sum_{\lambda \vdash n} \theta_\mu^{(2)}(\lambda)\ \theta_\nu^{(2)}(\lambda)\ E_\lambda\\
= (2^n n!)^2 \sum_{\pi \vdash n} \sum_{\lambda \vdash n} c_{\mu,\nu;\pi}^{(2)}\
\theta_\pi^{(2)}(\lambda)\ E_\lambda
=  (2^n n!) \sum_{\pi \vdash n} c_{\mu,\nu;\pi}^{(2)}\ \cl^{(2)}_\pi.
\end{multline*}
Hence, the structure constants $h_{\mu,\nu;\pi}$ of the algebra $\QQ[H_n \backslash \Sym{2n} / H_n]$
for the basis $(\cl^{(2)}_\mu)_{\mu \vdash n}$ are,
up to a factor $2^n n!$, the same as the ones of the algebra
$\F(\Young_n,\QQ)$ with the basis $(\theta_\mu^{(2)})_{\mu \vdash n}$.

In particular, Proposition \ref{PropStructureTheta} implies the following,
result, which is an analog of Farahet and Higman's result \cite{FarahatHigmanCentreQSn}
(a combinatorial proof of the polynomiality
has recently been given by O. Tout in \cite{Tout-Structure-constant-S2n-Hn}).
\begin{proposition}
    Let $\mu$, $\nu$ and $\pi$ be partitions without parts equal to $1$.
    The renormalized structure constant of the algebra $\QQ[H_n \backslash \Sym{2n} / H_n]$
    \[ \frac{h_{\mu 1^{n-|\mu|},\nu 1^{n-|\nu|};\pi 1^{n-|\pi|}}}{n!\ 2^n\ \sqrt{2}^{d(\mu,\nu,\pi)}}\]
    is a polynomial in $n$ of degree at most $d(\mu,\nu,\pi)$.
    Moreover, its coefficient of $n^{d(\mu,\nu,\pi)}$ is the same as in
    $c^{(1)}_{\mu 1^{n-|\mu|},\nu 1^{n-|\nu|};\pi 1^{n-|\pi|}}.$
    In particular,
    \begin{itemize}
        \item when $|\mu|-\ell(\mu) + |\nu|-\ell(\nu)=|\pi| - \ell(\pi)$, one has
            \[\frac{h_{\mu 1^{n-|\mu|},\nu 1^{n-|\nu|};\pi 1^{n-|\pi|}}}{n!\ 2^n}=
            c^{(1)}_{\mu 1^{n-|\mu|},\nu 1^{n-|\nu|};\pi 1^{n-|\pi|}}\]
            and this quantity is independent of $n$ ;
        \item when $|\mu|-\ell(\mu) + |\nu|-\ell(\nu)=|\pi| - \ell(\pi) - 1$,
            \[\frac{h_{\mu 1^{n-|\mu|},\nu 1^{n-|\nu|};\pi 1^{n-|\pi|}}}{n!\ 2^n}\]
            is independent of $n$.
    \end{itemize}
\end{proposition}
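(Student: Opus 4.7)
The plan is to reduce the statement to Proposition~\ref{PropStructureTheta} applied at $\alpha=2$, using the identity $h_{\mu,\nu;\pi} = 2^n n! \, c^{(2)}_{\mu,\nu;\pi}$ derived at the start of Section~\ref{SubsectHecke}. At $\alpha=2$ one has $\gamma = \frac{1}{\sqrt{2}} - \sqrt{2} = -\frac{1}{\sqrt{2}}$ and $\alpha^{d/2} = \sqrt{2}^{\,d}$; throughout I write $d := d(\mu,\nu;\pi)$.

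Let $P(n,\gamma) \in \QQ[n,\gamma]$ denote the polynomial of total degree at most $d$ provided by Proposition~\ref{PropStructureTheta}, so that
\[ \alpha^{-d/2}\, c^{(\alpha)}_{\mu 1^{n-|\mu|},\, \nu 1^{n-|\nu|};\, \pi 1^{n-|\pi|}} = P(n,\gamma). \]
Specialising $\alpha = 2$ yields
\[ \frac{h_{\mu 1^{n-|\mu|},\,\nu 1^{n-|\nu|};\,\pi 1^{n-|\pi|}}}{n!\, 2^n\, \sqrt{2}^{\,d}} \;=\; P\!\left(n,\, -\tfrac{1}{\sqrt{2}}\right), \]
which is visibly a polynomial in $n$ of degree at most $d$. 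For the dominant coefficient, observe that any monomial of $P$ involving a strictly positive power of $\gamma$ has degree in $n$ strictly less than $d$ (by the total degree bound). Hence the coefficient of $n^d$ in $P(n,\gamma)$ is a rational constant, independent of $\gamma$. Evaluating at $\gamma=0$ (i.e.\ $\alpha=1$) identifies this constant with the coefficient of $n^d$ in $c^{(1)}_{\mu 1^{n-|\mu|},\,\nu 1^{n-|\nu|};\,\pi 1^{n-|\pi|}}$, which is the second assertion of the proposition.

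The two itemised special cases then follow by inspecting the shape of $P$. When $d=0$, $P$ is a constant, so $h_{\mu 1^{n-|\mu|},\,\nu 1^{n-|\nu|};\,\pi 1^{n-|\pi|}}/(n!\, 2^n) = P(n,-1/\sqrt{2}) = P(n,0) = c^{(1)}_{\mu 1^{n-|\mu|},\,\nu 1^{n-|\nu|};\,\pi 1^{n-|\pi|}}$, independently of $n$. When $d=1$, the parity assertion of Proposition~\ref{PropStructureTheta} forces $P(n,\gamma)$ to be odd in $\gamma$; combined with the total degree bound, this kills the $1$ and $n$ monomials and leaves $P(n,\gamma) = c\,\gamma$ for some $c \in \QQ$. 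Therefore
\[ \frac{h_{\mu 1^{n-|\mu|},\,\nu 1^{n-|\nu|};\,\pi 1^{n-|\pi|}}}{n!\,2^n} \;=\; \sqrt{2}\cdot c \cdot \left(-\tfrac{1}{\sqrt{2}}\right) \;=\; -c, \]
again independent of $n$.

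Since this is essentially bookkeeping that repackages the polynomiality and parity already established for the $g^{(\alpha)}_{\mu,\nu;\pi}$, I do not anticipate any genuine difficulty. The only subtle point is the use of the parity statement in the case $d=1$: without it, one would only conclude that $h_{\mu 1^{n-|\mu|},\,\nu 1^{n-|\nu|};\,\pi 1^{n-|\pi|}}/(n!\, 2^n)$ is an \emph{affine} function of $n$, rather than a constant.
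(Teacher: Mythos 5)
Your proof is correct, and for the main polynomiality claim, the identification of the leading coefficient, and the first itemised case it follows exactly the paper's route: specialise Proposition~\ref{PropStructureTheta} to $\alpha=2$ (i.e.\ $\gamma=-1/\sqrt{2}$), note that the total-degree bound forces the coefficient of $n^{d}$ to be $\gamma$-free, and evaluate at $\gamma=0$. The one place where you genuinely diverge is the second item ($d=1$): the paper deduces that the affine function of $n$ has the same linear coefficient as $c^{(1)}_{\mu 1^{n-|\mu|},\nu 1^{n-|\nu|};\pi 1^{n-|\pi|}}$ and then invokes the external fact that this $\alpha=1$ structure constant vanishes identically whenever $d(\mu,\nu;\pi)$ is odd (because the sign of permutations is a group morphism), whereas you use the parity-in-$\gamma$ assertion of Proposition~\ref{PropStructureTheta} to conclude that the whole polynomial collapses to $P(n,\gamma)=c\gamma$, which is independent of $n$ outright. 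Both arguments are valid; yours is more self-contained (everything is read off from Proposition~\ref{PropStructureTheta} itself, and in fact it shows the stronger statement that $P$ has no constant-in-$\gamma$ part at all when $d=1$), while the paper's version trades the parity lemma for an elementary representation-theoretic observation about $\Sym{n}$ and is perhaps more illuminating about \emph{why} the linear term dies. You correctly flag that without one of these two inputs one would only get an affine, not constant, dependence on $n$.
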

\begin{proof}
    The claim that the renormalized structure constant mentioned above is a polynomial and
    the bound on its degree follow from 
    Proposition \ref{PropStructureTheta} specialized to $\alpha=2$.
    The dominant coefficient is a polynomial in $\gamma$ of degree $0$,
    so it is the same for $\gamma \in \{0, -1/\sqrt{2}\}$, that is 
    $\alpha \in \{1,2\}$.
    The first item follows immediately.

    In the second item, we consider the case $d(\mu,\nu,\pi)= 1$.
    So, 
    \[\frac{h_{\mu 1^{n-|\mu|},\nu 1^{n-|\nu|};\pi 1^{n-|\pi|}}}{n!\ 2^n\ \sqrt{2}}\]
    is an affine function of $n$ with the same linear coefficient as 
    $c^{(1)}_{\mu 1^{n-|\mu|},\nu 1^{n-|\nu|};\pi 1^{n-|\pi|}}$.
    But the latter is identically equal to $0$.
    Indeed, the fact that the sign of permutations is a group morphism implies that
    \[c^{(1)}_{\mu 1^{n-|\mu|},\nu 1^{n-|\nu|};\pi 1^{n-|\pi|}}=0\]
    whenever $d(\mu,\nu;\pi)$ is odd.
\end{proof}

Besides, Goulden and Jackson \cite{GouldenJacksonMapsZonal} described the coefficients $h_{\mu,\nu; \pi}$ combinatorially.
Let $\mathcal{F}_\mathcal{S}$ be the set of all (perfect) matchings on a set $\mathcal{S}$,
that is partitions of $\mathcal{S}$ in pairs.
When $\mathcal{S}$ is the set $\{1,2\dots,2n\}$, we also denote $\mathcal{F}_\mathcal{S}$ by $\mathcal{F}_n$.

For $F_1,F_2 \in \mathcal{F}_\mathcal{S}$,
let $G(F_1,F_2)$ be the multigraph with vertex-set $\mathcal{S}$ whose edges are formed by the pairs in $F_1,\dots,F_k$.
Because of the natural bicoloration of the edges,
the connected components of $G(F_1, F_2)$ are cycles of even length.
Let the list of their lengths in weakly decreasing order be $(2\theta_1, 2 \theta_2, \dots) = 2\theta$, and define $\Lambda$ by $\Lambda(F_1, F_2) = \theta$.

\begin{lemma}[{\cite[Lemma 2.2.]{GouldenJacksonMapsZonal}}]
\label{lem:ZonalSC}
Let $F_1, F_2$ be two fixed matchings in $\mathcal{F}_n$ such that $\Lambda(F_1,F_2) = \pi$, where $\pi \vdash n$. Then, for any $\mu, \nu \vdash n$ we have
\[ h_{\mu,\nu;\pi} = 2^n n! |\{F_3 \in \mathcal{F}_n: \Lambda(F_1, F_3) = \mu, \Lambda(F_2, F_3) = \nu\}|.\]
In particular 
\[ c^{(2)}_{\mu,\nu;\pi} = |\{F_3 \in \mathcal{F}_n: \Lambda(F_1, F_3) = \mu, \Lambda(F_2, F_3) = \nu\}|.\]
\end{lemma}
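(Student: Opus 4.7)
The plan is to translate everything into the natural action of $\Sym{2n}$ on matchings and use the identification of $\mathcal{F}_n$ with a coset space of $H_n$. Fix the canonical matching $F_\star = \big\{\{1,\bar 1\},\{2,\bar 2\},\dots,\{n,\bar n\}\big\}$ (or whatever pairing corresponds to how $H_n$ has been set up in Section \ref{SubsectHecke}). The crucial observations are the following:
\begin{enumerate}
    \item The group $\Sym{2n}$ acts transitively on $\mathcal{F}_n$ by relabeling vertices, and the stabilizer of $F_\star$ is precisely $H_n$. Consequently the map $\sigma\mapsto \sigma\cdot F_\star$ induces a bijection $\Sym{2n}/H_n \leftrightarrow \mathcal{F}_n$; each matching has exactly $|H_n|=2^n n!$ preimages.
    \item The statistic $\Lambda$ is $\Sym{2n}$-invariant in the diagonal sense: $\Lambda(\tau F,\tau F')=\Lambda(F,F')$ for any $\tau\in \Sym{2n}$, since $\tau$ merely relabels the vertices of the bipartite multigraph $G(F,F')$ without altering its cycle structure.
    \item Combining (1) and (2), the double cosets $H_n\sigma H_n$ are parametrised by $\Lambda(F_\star,\sigma\cdot F_\star)\in\mathcal{Y}_n$, and $\cl_\pi^{(2)}$ is exactly the sum of those $\sigma\in\Sym{2n}$ such that $\Lambda(F_\star,\sigma\cdot F_\star)=\pi$. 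In particular each element in $\cl_\pi^{(2)}$ appears with multiplicity one.
\end{enumerate}

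With these tools in hand, I would proceed as follows. Fix any $\sigma\in\Sym{2n}$ with $\Lambda(F_\star,\sigma\cdot F_\star)=\pi$; by (3) this $\sigma$ contributes with coefficient $1$ to $\cl_\pi^{(2)}$, so the structure constant $h_{\mu,\nu;\pi}$ equals the coefficient of $\sigma$ in the product $\cl_\mu^{(2)}\cdot\cl_\nu^{(2)}$, namely
\[
h_{\mu,\nu;\pi}=\#\big\{(\sigma_1,\sigma_2)\in\Sym{2n}\times\Sym{2n}:\sigma_1\in\cl_\mu^{(2)},\ \sigma_2\in\cl_\nu^{(2)},\ \sigma_1\sigma_2=\sigma\big\}.
\]
Eliminating $\sigma_2=\sigma_1^{-1}\sigma$, and using $\Sym{2n}$-invariance of $\Lambda$, the conditions on $\sigma_1$ become
\[
\Lambda(F_\star,\sigma_1\cdot F_\star)=\mu,\qquad \Lambda(\sigma_1\cdot F_\star,\sigma\cdot F_\star)=\nu.
\]
Setting $F_3:=\sigma_1\cdot F_\star$, the matching $F_3$ runs over all matchings in $\mathcal{F}_n$ satisfying $\Lambda(F_\star,F_3)=\mu$ and $\Lambda(F_3,\sigma\cdot F_\star)=\nu$, with each such $F_3$ being hit by exactly $2^n n!$ choices of $\sigma_1$ (by observation (1)). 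Hence
\[
h_{\mu,\nu;\pi}=2^n n!\cdot\#\big\{F_3\in\mathcal{F}_n:\Lambda(F_\star,F_3)=\mu,\ \Lambda(F_3,\sigma\cdot F_\star)=\nu\big\}.
\]

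Finally, I would argue that the count on the right depends only on $\Lambda(F_1,F_2)=\pi$ and not on the particular pair $(F_1,F_2)$. This is immediate from observation (2): if $(F_1',F_2')$ is another pair with the same $\Lambda$, then by (3) there exists $\tau\in\Sym{2n}$ mapping $(F_\star,\sigma\cdot F_\star)$ to $(F_1',F_2')$, and $F_3\mapsto \tau\cdot F_3$ is a bijection between the two counting sets. Specializing to $(F_1,F_2)=(F_\star,\sigma\cdot F_\star)$ therefore yields the stated formula, and the identity for $c^{(2)}_{\mu,\nu;\pi}$ follows at once from the relation $c^{(2)}_{\mu,\nu;\pi}=h_{\mu,\nu;\pi}/(2^n n!)$ established at the end of Section \ref{SubsectHecke}.

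The only real subtlety — and thus the main obstacle — is setting up the bijection between $\mathcal{F}_n$ and $\Sym{2n}/H_n$ cleanly, together with the identification of double cosets in terms of $\Lambda$, so that the factor $2^n n!$ comes out correctly from the cardinality of $H_n$. Once this bookkeeping is fixed, the rest is a direct translation into the matching picture via the $\Sym{2n}$-equivariance of $\Lambda$.
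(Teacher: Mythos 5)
Your argument is correct. Note that the paper does not actually prove this statement: it is imported verbatim as \cite[Lemma 2.2]{GouldenJacksonMapsZonal}, so there is no in-paper proof to compare against. What you wrote is the standard derivation behind that cited lemma — identifying $\mathcal{F}_n$ with $\Sym{2n}/H_n$ via the stabilizer of a reference matching, using the diagonal $\Sym{2n}$-invariance of $\Lambda$ to parametrize double cosets, and reading off the structure constant as the coefficient of a fixed $\sigma$ in $\cl^{(2)}_\mu\cdot\cl^{(2)}_\nu$ — and all the steps (the fibre count $2^n n!$, the symmetry $\Lambda(F_3,F_2)=\Lambda(F_2,F_3)$, the independence of the choice of $(F_1,F_2)$ within an orbit) check out.
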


From this lemma one can evaluate some special cases of structure constants which will be helpful in the next subsection.

\begin{lemma}
\label{lem:SC2}
We have the following identities:
\begin{enumerate}
\item
\label{eq:SC2-1} 
$c^{(2)}_{\mu, \nu; 1^n} = 0$ for any $\mu \neq \nu$;
\item
\label{eq:SC2-2}
$c^{(2)}_{(k 1^n),(k 1^n);(1^{k+n})} = \binom{k+n}{k}2^{k-1}(k-1)!$.
\end{enumerate}
\end{lemma}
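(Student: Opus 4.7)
The plan is to deduce both identities from the combinatorial description of $c^{(2)}_{\mu,\nu;\pi}$ given in Lemma~\ref{lem:ZonalSC}, by choosing convenient representatives $F_1, F_2$ with the required joint cycle-type $\pi$ and then counting matchings $F_3$ directly. The key preliminary observation is that $\Lambda(F_1, F_2) = 1^m$ if and only if $F_1 = F_2$: indeed, a connected component of $G(F_1, F_2)$ of length $2$ consists of two parallel edges (one from $F_1$, one from $F_2$) sharing the same pair of endpoints, so every component being of length $2$ forces $F_1$ and $F_2$ to agree on every pair.

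Applied to $\pi = 1^n$, this forces $F_1 = F_2$; since then $\Lambda(F_1, F_3) = \Lambda(F_2, F_3)$ for every $F_3 \in \mathcal{F}_n$, the set counted by Lemma~\ref{lem:ZonalSC} is empty unless $\mu = \nu$, which proves (1). For (2) I take again $F_1 = F_2$ (so $\Lambda(F_1, F_2) = 1^{k+n}$) and count matchings $F_3 \in \mathcal{F}_{k+n}$ with $\Lambda(F_1, F_3) = (k, 1^n)$. Such an $F_3$ must agree with $F_1$ on exactly $n$ of the $k+n$ pairs and differ on the remaining $k$; the non-shared edges of $F_1$, together with the corresponding $k$ edges of $F_3$, restricted to their $2k$ endpoints, must then form a single $2k$-cycle. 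Selecting the $n$ shared pairs contributes the factor $\binom{k+n}{k}$, so it suffices to count matchings on $2k$ vertices (prepartitioned by $F_1$ into $k$ pairs) whose union with $F_1$ is a single $2k$-cycle.

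For this last count I will use a standard traversal argument: fix a vertex $v$, follow the $F_1$-edge $v \to v'$, then choose the $F_3$-edge from $v'$ among the $2k-2$ vertices different from $v$ and $v'$ (landing on $v$ would instead create a premature 2-cycle), follow the forced $F_1$-edge, and iterate. At step $i$ with $1 \le i \le k-1$, exactly $2k-2i$ unused vertices are available for the next $F_3$-edge, and the final $F_3$-edge closes the cycle back at $v$ and is forced. This yields $(2k-2)(2k-4)\cdots 2 = 2^{k-1}(k-1)!$ matchings, and multiplication by $\binom{k+n}{k}$ gives the announced formula. The only delicate point, easily handled, is to verify that this traversal procedure really is a bijection between admissible matchings and such sequences; this follows because a single cycle admits a unique traversal from a fixed starting vertex with a prescribed first edge.
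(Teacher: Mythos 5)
Your proof is correct. Item (1) is argued exactly as in the paper: $\Lambda(F_1,F_2)=1^n$ forces $F_1=F_2$, so $\Lambda(F_1,F_3)=\Lambda(F_2,F_3)$ for every $F_3$ and the set counted in Lemma~\ref{lem:ZonalSC} is empty unless $\mu=\nu$. For item (2) you take a genuinely different route. The paper does not count the matchings $F_3$ with $\Lambda(F_1,F_3)=(k\,1^n)$ directly: it invokes an external enumeration (Lemma 2.4 of \cite{NousZonal}) giving the number $\frac{(2n+2k)!}{z_{(k1^n)}2^{n+1}}$ of \emph{pairs} $(F,F_3)$ with $\Lambda(F,F_3)=(k\,1^n)$, and then divides by the total number $\frac{(2n+2k)!}{2^{n+k}(n+k)!}$ of matchings, using that the count for fixed $F_1$ is independent of $F_1$. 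You instead give a self-contained bijective count: the length-$2$ components of $G(F_1,F_3)$ are exactly the common edges, so $F_3$ is determined by choosing the $n$ shared pairs ($\binom{k+n}{k}$ ways) together with a matching on the remaining $2k$ points that closes the $k$ leftover $F_1$-edges into a single alternating $2k$-cycle; your traversal argument correctly yields $(2k-2)(2k-4)\cdots 2=2^{k-1}(k-1)!$ for the latter (this is the classical count, and the traversal from a fixed start vertex with first step along $F_1$ is indeed a bijection). Your approach buys independence from the external reference at the cost of a slightly longer argument; the paper's buys brevity by importing a formula proved elsewhere. Both arguments, like the stated formula itself, implicitly assume $k\ge 2$, since for $k=1$ the type $(k\,1^n)$ degenerates to $(1^{n+1})$ and the normalizing factor $z_{(k1^n)}$ changes.
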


\begin{proof}
The first item is immediate from Lemma \ref{lem:ZonalSC},
since whenever $F_1$ and $F_2$ are matchings such that $\Lambda(F_1,F_2) = 1^n$ then clearly $F_1 = F_2$.

Let $F_1, F_2$ be two fixed matchings such that $\Lambda(F_1,F_2) = (1^{k+n})$ (hence $F_1 = F_2$).
We are looking for the number of matchings $F_3$ such that $\Lambda(F_1,F_3)=\Lambda(F_2,F_3)=(k 1^n)$.
Of course, as $F_1=F_2$, it is the number of matchings $F_3$ with $\Lambda(F_1,F_3)=(k 1^n)$.
This number does not depend on $F_1$.
Using \cite[Lemma 2.4]{NousZonal}, we know that the numbers of pairs $(F,F_3)$
with $\Lambda(F,F_3)=(k 1^n)$ is given by 
\[\frac{(2n+2k)!}{z_{(k 1^n)} 2^{n+1}}.\]
As there are $\frac{(2n+2k)!}{2^{n+k} (n+k)!}$ matchings in $\mathcal{F}_{n+k}$, that is possible values for $F$,
for a fixed $F_1$, the number of matchings $F_3$ with $\Lambda(F_1,F_3)=(k 1^n)$ is
\[\frac{2^{n+k} (n+k)!}{z_{(k 1^n)} 2^{n+1}}=\binom{k+n}{k}2^{k-1}(k-1)!.\qedhere\]
\end{proof}

\subsection{A technical lemma obtained by polynomial interpolation}

In order to study asymptotics of large Young diagrams we need to understand some special cases of structure constants. 
Here, we present some technical, but useful lemma about them:

\begin{lemma}
\label{lem:SC}
We have the following identities:
\begin{enumerate}
\item
\label{eq:SC-1} 
$g_{\mu,\nu;\rho} = \delta_{\rho, \mu \cup \nu}$ for $|\rho| + \ell(\rho) \geq |\mu| + \ell(\mu) + |\nu| + \ell(\nu)$;
\item
\label{eq:SC-2}
$g_{\mu,\nu;1^k} = 0$ for $\tilde{\mu} \neq \tilde{\nu}$ and 
$2k \geq |\mu| + \ell(\mu) + |\nu| + \ell(\nu) - 2$; 
\item 
\label{eq:SC-3}
$g_{(k),(k);1^k} = k$;
\item
\label{eq:SC-4} 
$g_{(k),(l);\rho} = 0$ for $|\rho| + \ell(\rho) = k+l+1$.
\end{enumerate}
\end{lemma}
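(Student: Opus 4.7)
The plan is to leverage the polynomiality and parity bounds on $g^{(\alpha)}_{\mu,\nu;\rho}(\gamma)$ provided by Theorem~\ref{theo:struct-const}, so that each identity reduces to checking finitely many numerical constraints at $\alpha=1$ and $\alpha=2$ (i.e.\ at $\gamma^{2}\in\{0,1/2\}$). The concrete numerical input at these two values of $\alpha$ will come from formula~\eqref{EqCG}, which converts the $g$'s into the classical structure constants $c^{(\alpha)}$, combined with the combinatorial evaluations of Lemma~\ref{lem:SC1} (symmetric group side) and Lemma~\ref{lem:SC2} (Gelfand-pair side). Note that any polynomial in $\gamma$ of degree at most $2$ with a prescribed parity is uniquely determined by its values at $\gamma^{2}\in\{0,1/2\}$.

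For item~(1), the $\deg_{1}$-bound of Theorem~\ref{theo:struct-const} forces $g_{\mu,\nu;\rho}\equiv 0$ whenever $n_{1}(\rho)>n_{1}(\mu)+n_{1}(\nu)$ and makes it a constant (in $\gamma$) when equality holds. I then extract the top $\deg_{1}$-component from both sides of $\Ch_{\mu}\Ch_{\nu}=\sum_{\rho}g_{\mu,\nu;\rho}\Ch_{\rho}$: by Corollary~\ref{corol:dominant_deg1} the top part of the product is $\prod_{i}R_{\mu_{i}+1}\prod_{j}R_{\nu_{j}+1}=\prod_{i}R_{(\mu\cup\nu)_{i}+1}$, which is also the top part of $\Ch_{\mu\cup\nu}$, and algebraic independence of the free cumulants $R_{k}$ forces $g_{\mu,\nu;\rho}=\delta_{\rho,\mu\cup\nu}$.

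For items~(2) and~(3), I substitute $\pi=1^{N}$ into~\eqref{EqCG}, obtaining
\[
c^{(\alpha)}_{\tilde\mu\cup 1^{N-|\tilde\mu|},\,\tilde\nu\cup 1^{N-|\tilde\nu|};\,1^{N}}
=\frac{\alpha^{d/2}}{z_{\tilde\mu}z_{\tilde\nu}}\sum_{i\ge 0}g^{(\alpha)}_{\tilde\mu,\tilde\nu;1^{i}}\,i!\binom{N}{i}.
\]
Since the falling factorials $\{i!\binom{N}{i}\}_{i\ge 0}$ are linearly independent as polynomials in $N$, reading off each $g^{(\alpha)}_{\tilde\mu,\tilde\nu;1^{i}}$ from the two specialisations $\alpha\in\{1,2\}$ reduces to evaluating the left-hand side. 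For item~(2), Lemma~\ref{lem:SC1}(1) and Lemma~\ref{lem:SC2}(1) give vanishing whenever $\tilde\mu\neq\tilde\nu$, hence $g^{(\alpha)}_{\tilde\mu,\tilde\nu;1^{i}}=0$ at both $\alpha=1$ and $\alpha=2$, and the polynomial in $\gamma$ of degree at most $2$ with fixed parity vanishing at $\gamma^{2}\in\{0,1/2\}$ is identically zero. For item~(3), Lemmas~\ref{lem:SC1}(2) and~\ref{lem:SC2}(2) give, after the powers of $2$ cancel on both sides in the $\alpha=2$ case, the common value $g^{(\alpha)}_{(k),(k);1^{k}}=k$ (and $g^{(\alpha)}_{(k),(k);1^{i}}=0$ for $i\neq k$); the unique even polynomial in $\gamma$ of degree at most $2$ taking the value $k$ at both specialisations is the constant $k$.

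Item~(4) is the most delicate. Theorem~\ref{theo:struct-const} gives $g_{(k),(l);\rho}(\gamma)=c\gamma$, and $\alpha=1$ kills it automatically, so the task is to show $g^{(2)}_{(k),(l);\rho}=0$. I split by the shape of $\rho$. If $\rho$ contains a part equal to $1$, the sharper $\deg_{3}$-bound of Proposition~\ref{PropBound3} shrinks the degree in $\gamma$ to $0$, and odd parity kills it. If $\rho=(k+l)$, then $d((k),(l);(k+l))=-1$, so Proposition~\ref{PropStructureTheta} makes $c^{(\alpha)}$ identically zero as a polynomial in $N$, and~\eqref{EqCG} with $\pi=(k+l)\cup 1^{N-k-l}$ together with the independence of the $(N)_{i}$ forces $g^{(\alpha)}_{(k),(l);(k+l)}=0$. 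The residual subcase is $\rho=(a_{1},\ldots,a_{m})$ with $m\ge 2$ and every $a_{j}\ge 2$; here I plug $\pi=\rho\cup 1^{N-|\rho|}$ into~\eqref{EqCG} at $\alpha=2$, use item~(1) to truncate the sum (the inner index $i$ is bounded by $\lfloor(k+l+2-|\rho|-\ell(\rho))/2\rfloor=0$), and invoke the proposition of Section~\ref{SubsectHecke}, whose content is that the leading $n^{d}$-coefficient of $c^{(2)}/\sqrt{2}^{d}$ coincides with that of $c^{(1)}$ and so vanishes for $d$ odd. The main obstacle lies precisely in this last subcase: once the sum collapses to its $i=0$ term, the degree drop of $c^{(2)}$ in $N$ alone does not directly imply vanishing of a constant, and one has to feed in the specific combinatorial evaluation of $c^{(2)}$ at $N=|\rho|$ via Lemma~\ref{lem:ZonalSC}, which is the technical heart of the proof.
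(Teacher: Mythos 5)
Your treatment of items (1)--(3) is essentially sound. For item (1) you take a genuinely different route from the paper: instead of reducing to the known combinatorial description at $\alpha=1$ from \cite[Proposition 4.9]{IvanovOlshanski2002}, you compare top $\deg_1$-homogeneous components using Corollary~\ref{corol:dominant_deg1} and the algebraic independence of the free cumulants; this is self-contained and works uniformly in $\alpha$. Items (2) and (3) follow the paper's argument via \eqref{EqCG}, Lemma~\ref{lem:SC1} and Lemma~\ref{lem:SC2}; note only that in item (2) you establish the vanishing of $g_{\tilde{\mu},\tilde{\nu};1^i}$ and should still add the one-line reduction $\Ch_\mu(\lambda)=(n-|\tilde{\mu}|)_{m_1(\mu)}\Ch_{\tilde{\mu}}(\lambda)$ to cover $\mu,\nu$ having parts equal to $1$, as the paper does.

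Item (4), however, contains a genuine gap. First, the claim that the $\deg_3$-bound forces degree $0$ whenever $\rho$ has a part equal to $1$ is false: take $k=l=4$ and $\rho=(3,2,1)$, so that $|\rho|+\ell(\rho)=9=k+l+1$; then $n_3((4))+n_3((4))-n_3(\rho)=3+3-4=2$, and the three bounds of Theorem~\ref{theo:struct-const} give $\min(1,3,2)=1$, so the degree and parity constraints alone only yield $g=c\gamma$ and do not kill it. Second, and more seriously, you concede that in the main subcase the sum in \eqref{EqCG} collapses to its $i=0$ term, so $c^{(2)}$ is constant in $N$ and the degree-drop statement of the proposition in Section~\ref{SubsectHecke} carries no information; the fix you propose, namely evaluating $c^{(2)}$ via Lemma~\ref{lem:ZonalSC}, does not close the argument, because that lemma only expresses $c^{(2)}_{\mu,\nu;\pi}$ as a nonnegative count of matchings and nothing in your argument shows this count vanishes. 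This is precisely where the paper brings in an input of a different nature: the map-enumeration result \cite[Theorem 5.3]{NousZonal} together with an Euler-characteristic inequality, which gives $\deg_1\bigl(\Ch^{(2)}_{(k)}\Ch^{(2)}_{(l)}-\Ch^{(2)}_{(k,l)}\bigr)\le k+l$ and hence the vanishing of every $g^{(2)}_{(k),(l);\rho}$ with $|\rho|+\ell(\rho)=k+l+1$. Without this (or an equivalent vanishing statement for the matching count), item (4) remains unproved.
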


\begin{proof}
Let 
$x(\mu,\nu;\rho) :=  |\mu| + \ell(\mu) + |\nu| + \ell(\nu) -  (|\rho| + \ell(\rho))$.

By Theorem \ref{theo:struct-const} we know that $g_{\mu,\nu;\rho}$ is a polynomial in $\gamma$
of degree at most $x(\mu,\nu;\rho)$ and of the same parity as
$x(\mu,\nu;\rho)$.
Hence, if one wants to prove that for some particular partitions,
$g_{\mu,\nu;\rho}$ is identically equal to some constant $c$,
it is enough to prove that:
\begin{itemize}
\item $g^{(1)}_{\mu,\nu;\rho} = c$ in the case $x(\mu,\nu;\rho) = 0$;
\item $g^{(2)}_{\mu,\nu;\rho} = c$ in the case $x(\mu,\nu;\rho) = 1$
    (necessarily, $c=0$ in this case);
\item $g^{(1)}_{\mu,\nu;\rho} = g^{(2)}_{\mu,\nu;\rho} = c$ in the case $x(\mu,\nu;\rho) = 2$.
    \bigskip
\end{itemize}

Applying this idea, we see that
the first item holds true, since this is true for $\alpha = 1$
\cite[Proposition 4.9.]{IvanovOlshanski2002}.\bigskip

Consider now the second item.
In this case, $x(\mu,\nu;\rho) \le 2$, hence
we need to prove that $g^{(1)}_{\mu,\nu;1^k} = g^{(2)}_{\mu,\nu;1^k} = 0$.
We know from Lemma \ref{lem:SC1} \eqref{eq:SC1-1}
that $c^{(1)}_{\pi,\rho;1^n} = 0$ for any pair of different partitions $\pi, \rho \vdash n$.
It means that for $n$ large enough, thanks to \eqref{EqCG}, we have the following equation:
\[0 = \sum_{0 \leq i \leq k+1} g^{(1)}_{\tilde{\pi},\tilde{\rho}; (1^i)} \cdot i! \cdot \binom{n}{i},\]
hence \[ g^{(1)}_{\tilde{\pi},\tilde{\rho}; (1^i)} = 0 \quad \text{for each $0\leq i\leq k+1$.}\]
The same holds for $g^{(2)}$ using Lemma \ref{lem:SC2}.
This finishes the case where $\mu$ and $\nu$ have no parts equal to $1$.

The general case follows, since for any $\lambda \vdash n$ one has
\[ \Ch_\mu(\lambda) = (n-|\tilde{\mu}|)_{m_1(\mu)}\Ch_{\tilde{\mu}}(\lambda).\bigskip\]

In order to prove the third item,
we need to prove the equalities 
\[g^{(1)}_{(k),(k);1^k} = k = g^{(2)}_{(k),(k);1^k}
\text{ (as $x( (k),(k);1^k ))=2$)}.\]
Here, we use Equation \eqref{EqCG} again. We have that
\[ c^{(1)}_{(k 1^n),(k 1^n);(1^{k+n})} = \frac{1}{k^2} 
\sum_{0 \leq i \leq k+n} g^{(1)}_{(k),(k);(1^i)}    
i! \binom{k+n}{i}. \]
Moreover, the summation index can be restricted to $i \le k$
as $g^{(1)}_{(k),(k);(1^i)}=0$ for $i>k$.
By Lemma \ref{lem:SC1} \eqref{eq:SC1-2} one has that 
\[c^{(1)}_{(k 1^n),(k 1^n);(1^{k+n})} = \binom{k+n}{k} (k-1)!. \]
It gives us
\[ \binom{k+n}{k} (k-1)! = \frac{1}{k^2} 
\sum_{0 \leq i \leq k} g^{(1)}_{(k),(k);(1^i)}    
i! \binom{k+n}{i}\]
and since both sides of the equation are polynomials in $n$,
the equation $g^{(1)}_{(k),(k);(1^k)} = k$ follows
(all others $g^{(1)}_{(k),(k);(1^i)}$ vanish).
The same proof works for $\alpha=2$, using Lemma \ref{lem:SC2} \eqref{eq:SC2-2}.

Finally, let us prove the last item.
As $x( (k),(l),\rho)=1$ in this case,
it is enough to prove that $g^{(2)}_{(k),(l);\rho} = 0$.
Here, we shall use a different approach.
It is proved in \cite[Theorem 5.3]{NousZonal} that the coefficient of
\[
 \left(R_2^{(2)}\right)^{s_2}
\left(R_3^{(2)}\right)^{s_3} \cdots
\]
in Kerov's expansion of $\Ch^{(2)}_{(k)}\Ch^{(2)}_{(l)} - \Ch^{(2)}_{(k,l)}$ is,
up to some constant factor,
the number of maps with $2$ faces, $k+l$ edges, $2s_2+ 3s_3 + \cdots$ vertices
and some additional properties
(the details of which is irrelevant here).
But, using the theory of Euler characteristic, such maps may exist only if
\[2-(k+l)+2s_2+ 3s_3 + \cdots \le 2,\]
that is
\[2s_2+ 3s_3 + \cdots \le k+l.\]
This implies that 
\[\deg_1\left(\Ch^{(2)}_{(k)}\Ch^{(2)}_{(l)} - \Ch^{(2)}_{(k,l)}\right) = k+l. \]                          
Expanding it in the Jack characters basis one has                               
\[ \Ch^{(2)}_{(k)}\Ch^{(2)}_{(l)} = \Ch^{(2)}_{(k,l)} + \sum_{|\rho|+\ell(\rho) \leq k+l} g^{(2)}_{(k),(l);\rho} \Ch^{(2)}_\rho,\]
which finishes the proof.
\end{proof}

\begin{remark}
The equality $g^{(1)}_{(k),(k);(1^k)} = k$ was also established
by Kerov, Ivanov and Olshanski \cite[Proposition 4.12.]{IvanovOlshanski2002},
using their combinatorial interpretation for $g^{(1)}_{\mu,\nu;\rho}$.
\end{remark}

\section{Jack measure: law of large numbers}
\label{sect:FirstOrder}

The purpose of this Section is to prove Theorem~\ref{theo:RYD-limit}.
As in \cite{IvanovOlshanski2002},
the key point is to prove the convergence of polynomial functions.

\subsection{Convergence of polynomial functions}
Let us recall equation \eqref{eq:exp_Ch},
which gives us the expectation of Jack characters with respect to Jack measure:
\begin{equation*}
    \esper_{\PP_n^{(\alpha)}}(\Ch^{(\alpha)}_\mu)=\begin{cases}
    n(n-1)\cdots(n-k+1) & \text{if }\mu=1^k \text{ for some }k \leq n,\\
    0   & \text{otherwise.}
\end{cases}
\end{equation*}
As $\Ch_\mu$ is a linear basis of $\Pola$, it implies the following lemma
(which is an analogue of \cite[Theorem 5.5]{OlshanskiJackPlancherel} with
another gradation).
\begin{lemma}
    Let $F$ be an $\alpha$-polynomial function. Then
$\esper_{\PP_n^{(\alpha)}}(F)$
    is a polynomial in $n$ of degree at most $\deg_1(F)/2$.
    \label{LemDegEsper}
\end{lemma}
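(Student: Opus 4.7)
The plan is to reduce the statement, via linearity of expectation, to the case $F = \Ch_\mu^{(\alpha)}$ and then quote equation~\eqref{eq:exp_Ch}. The key point that makes the degree bound come out right is the filtration property observed in Section~\ref{SubsectDegCh}: the subspace $V_1^d = \{x \in \Pola : \deg_1(x) \le d\}$ is spanned by the Jack characters it contains. So if $d := \deg_1(F)$, we may write
\[ F = \sum_{\mu \,:\, \deg_1(\Ch_\mu) \le d} c_\mu \, \Ch_\mu^{(\alpha)} \]
for some scalars $c_\mu$ (depending on $\alpha$).

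Taking expectations and using equation~\eqref{eq:exp_Ch}, only the partitions of the form $\mu = 1^k$ contribute, so
\[ \esper_{\PP_n^{(\alpha)}}(F) = \sum_{k \,:\, \deg_1(\Ch_{1^k}) \le d} c_{1^k} \, (n)_k, \]
where $(n)_k = n(n-1)\cdots(n-k+1)$ is the falling factorial (and is interpreted as $0$ when $n<k$, consistently with the vanishing of $\Ch_{1^k}(\lambda)$ on diagrams of size less than $k$).

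By Corollary~\ref{corol:dominant_deg1} one has $\deg_1(\Ch_{1^k}) = |1^k| + \ell(1^k) = 2k$, so the condition $\deg_1(\Ch_{1^k}) \le d$ is exactly $k \le d/2$. Since $(n)_k$ is a polynomial in $n$ of degree $k$, the whole sum is a polynomial in $n$ of degree at most $d/2 = \deg_1(F)/2$, as required.

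There is no real obstacle here: the only point to verify is that one is entitled to truncate the expansion of $F$ in the basis $(\Ch_\mu)$ to those $\mu$ with $\deg_1(\Ch_\mu) \le \deg_1(F)$, and this is exactly the content of the filtration observation made at the end of Section~\ref{SubsectDegCh} (valid for $i=1$, precisely the gradation we need).
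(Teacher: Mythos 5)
Your proof is correct and follows exactly the paper's argument: expand $F$ in the basis $(\Ch_\mu^{(\alpha)})$ using the filtration property from Section~\ref{SubsectDegCh}, then apply equation~\eqref{eq:exp_Ch} and note that $\deg_1(\Ch_{1^k})=2k$ forces $k\le \deg_1(F)/2$. The paper's proof is just a terser version of the same reasoning.
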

\begin{proof}
    It is enough to verify this lemma on the basis $\Ch_\mu$ because
    of the remark in Section \ref{SubsectDegCh}.
    But in this case $\esper_{\PP_n^{(\alpha)}}(F)$
    is explicit (see formula \eqref{eq:exp_Ch}) and the lemma is obvious
    (recall that $\deg_1(\Ch_\mu)=|\mu|+\ell(\mu)$, see Section
    \ref{SubsectBound1}).
\end{proof}

Informally, smaller terms for $\deg_1$ are asymptotically negligible.
We can now prove the following weak convergence result:

\begin{proposition} \label{PropAPlanchWeakConv}
Let $(\lambda_{(n)})_{n \geq 1}$ be a sequence of random partitions 
distributed with Jack measure.
    For any 1-polynomial function $F \in \Pol^{(1)}$, when $n \to \infty$, one has
    \[ F\bigg(D_{1/\sqrt{n}} \big(A_\alpha(\lambda_{(n)})\big)\bigg) \xrightarrow{\PP_n^{(\alpha)}} 
    F( \Omega) , \]
    where $\xrightarrow{\PP_n^{(\alpha)}}$ means convergence in probability and $\Omega$ is given by \eqref{eq:DefOmega}.
\end{proposition}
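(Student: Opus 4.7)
My plan is to reduce to the algebraic generators of $\Pol^{(1)}$. Since $(R_k^{(1)})_{k \ge 2}$ is an algebraic basis of $\Pol^{(1)}$ and convergence in probability is preserved by continuous operations on finite-dimensional vectors, it suffices to prove the statement for $F = R_k^{(1)}$, $k \ge 2$. The homogeneity $R_k^{(1)}(D_s(\mu)) = s^k R_k^{(1)}(\mu)$ together with the definition $R_k^{(\alpha)}(\lambda) = R_k^{(1)}(A_\alpha(\lambda))$ yields
\[ R_k^{(1)}\bigl(D_{1/\sqrt{n}}(A_\alpha(\lambda_{(n)}))\bigr) = n^{-k/2}\, R_k^{(\alpha)}(\lambda_{(n)}), \]
while Biane's result recalled in Section~\ref{SubsectContinuousYD} gives $R_k(\Omega)=\delta_{k,2}$. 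The case $k=2$ is immediate, since $R_2^{(\alpha)}=\Ch_{(1)}^{(\alpha)}$ is deterministically equal to $|\lambda|=n$.

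For $k\ge 3$ I would apply Chebyshev's inequality to $X_n := n^{-k/2}\, R_k^{(\alpha)}(\lambda_{(n)})$, so that it suffices to show $\esper(X_n)\to 0$ and $\Var(X_n)\to 0$. Both bounds will be read off from the expansions of $R_k^{(\alpha)}$ and $(R_k^{(\alpha)})^2$ in the Jack character basis, combined with the clean expectation formula $\esper_{\PP_n^{(\alpha)}}(\Ch_\mu^{(\alpha)}) = \delta_{\mu,1^j}\,(n)_j$ from~\eqref{eq:exp_Ch}. The structural input is Corollary~\ref{corol:dominant_deg1}: inverting the top-$\deg_1$ identification $\Ch_\mu^{(\alpha)} \equiv \prod_i R^{(\alpha)}_{\mu_i+1}$ picks out a \emph{unique} $\Ch_\mu$ at the leading degree in the $\Ch$-expansion of any monomial in free cumulants, giving
\[ R_k^{(\alpha)} = \Ch_{(k-1)}^{(\alpha)} + (\text{terms of } \deg_1 < k), \qquad \bigl(R_k^{(\alpha)}\bigr)^2 = \Ch_{(k-1,k-1)}^{(\alpha)} + (\text{terms of } \deg_1 < 2k). \]
Since neither $(k-1)$ nor $(k-1,k-1)$ is of the form $1^j$ for $k\ge 3$, the leading characters contribute zero to the expectation, and the surviving $\Ch_{1^j}^{(\alpha)}$-terms satisfy $2j<k$ and $2j<2k$ respectively, yielding $\esper(R_k^{(\alpha)})=O(n^{\lfloor(k-1)/2\rfloor})$ and $\esper((R_k^{(\alpha)})^2)=O(n^{k-1})$. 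Consequently $\esper(X_n)=O(n^{-1/2})$ and $\Var(X_n)\le n^{-k}\,\esper((R_k^{(\alpha)})^2)=O(n^{-1})$, both of which tend to~$0$.

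The main subtlety, and the reason why Lemma~\ref{LemDegEsper} alone is insufficient, is the \emph{exact} vanishing of the coefficient of $\Ch_{1^{k/2}}^{(\alpha)}$ in the $\Ch$-expansion of $R_k^{(\alpha)}$ when $k$ is even: without this vanishing one would pick up a spurious $\Theta(n^{k/2})$ contribution in $\esper(R_k^{(\alpha)})$, and the scaling $n^{-k/2}$ would fail to kill it. This vanishing is precisely the uniqueness part of Corollary~\ref{corol:dominant_deg1}, since the top-$R$ component of $\Ch_{1^{k/2}}^{(\alpha)}$ is $R_2^{k/2}$ and not $R_k$. Once this is in place, the remainder of the proof is a routine Chebyshev estimate together with continuity of polynomial operations to pass from the generators $R_k^{(1)}$ to an arbitrary $F \in \Pol^{(1)}$.
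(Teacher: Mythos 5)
Your proposal is correct and follows essentially the same route as the paper: reduce to the algebraic generators $R_k^{(1)}$, identify the top-$\deg_1$ part of $R_k^{(\alpha)}$ (and of its square) via Corollary~\ref{corol:dominant_deg1}, use the expectation formula \eqref{eq:exp_Ch} to see that only $\Ch_{1^j}$-terms of strictly smaller $\deg_1$ contribute, and conclude by a Chebyshev estimate. The paper packages the degree bookkeeping into Lemma~\ref{LemDegEsper} rather than tracking the surviving $\Ch_{1^j}$ explicitly, but the content — including the key vanishing of the $\Ch_{1^{k/2}}$ coefficient at top degree that you single out — is the same.
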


\begin{proof}
    As $(R^{(1)}_k)_{k \geq 2}$ is an algebraic basis of $\Pol^{(1)}$,
    it is enough to prove the proposition for any $R^{(1)}_k$.

Let $\mu$ be partition.
By Corollary \ref{corol:dominant_deg1}
\begin{multline}\label{EqTopDeg1}
    \prod_{i \leq \ell(\mu)} R_{\mu_i+1} =
    \Ch_\mu + \text{ terms of degree at most }\\
    |\mu|+\ell(\mu)-1\text{ with respect to }\deg_1.
\end{multline}
Together with Lemma~\ref{LemDegEsper} and the formula \eqref{eq:exp_Ch} for $\esper_{\PP_n^{(\alpha)}}(\Ch_\mu)$,
this implies:
\[ \esper_{\PP_n^{(\alpha)}} \left(\prod_{i \leq \ell(\mu)}R_{\mu_i+1}\right) = \begin{cases}
    n(n-1)\cdots(n-k+1) + O(n^{k-1}) & \text{if }\mu=1^k \text{ for some }k; \\
    o(n^{\frac{|\mu|+\ell(\mu)}{2}}) & \text{otherwise.}
\end{cases}\]
In particular
\begin{align*}
    \esper_{\PP_n^{(\alpha)}} (R_k(D_{1/\sqrt{n}}(\lambda_{(n)}))) &=
\frac{1}{n^{k/2}}
    \esper_{\PP_n^{(\alpha)}} (R_k) = \delta_{k,2} + O\left(\frac{1}{\sqrt{n}}\right), \\
    \Var_{\PP_n^{(\alpha)}} (R_k(D_{1/\sqrt{n}}(\lambda_{(n)}))) &= \frac{1}{n^k}
\left( 
    \esper_{\PP_n^{(\alpha)}} \big((R_k)^2\big) - \esper_{\PP_n^{(\alpha)}} (R_k)^2
\right)
    = O\left(\frac{1}{n}\right).
\end{align*}
Thus, for each $k$, $R_k(D_{1/\sqrt{n}}(\lambda_{(n)}))$ converges in probability
towards $\delta_{k,2}$. 
But, by definition 
\[R_k(D_{1/\sqrt{n}}(\lambda_{(n)}))=R^{(1)}_k \bigg(D_{1/\sqrt{n}}\big(A_\alpha(\lambda_{(n)})\big)\bigg)\]
and $(\delta_{k,2})_{k \geq 2}$ is the sequence of free cumulants of the continuous diagram $\Omega$
(see \cite[Section 3.1]{Biane2001}),
\emph{i.e.} 
\[ \delta_{k,2} = R_k^{(1)}(\Omega).\qedhere\]
\end{proof}

\subsection{Shape convergence}
In the previous Section, we proved that evaluations of polynomial functions
at $D_{1/\sqrt{n}} \big(A_\alpha(\lambda_{(n)})\big)$
converge towards the evaluation at $\Omega$.
Ivanov and Olshanski has established that,
if one can prove that the support of
these renormalized Young diagrams lies in some compact,
this would imply the uniform convergence, that is Theorem~\ref{theo:RYD-limit}.

The following technical lemma,
proved by Fulman \cite[Lemma 6.6]{FulmanFluctuationChA2}
will allow us to conclude:

\begin{lemma}
    \label{lem:FiniteSupport}
Suppose that $\alpha > 0$. Then
\begin{enumerate}
\item
\[ \PP_n^{(\alpha)}\left(\lambda_1\geq 2 e\sqrt{\frac{n}{\alpha}}\right) \leq \alpha n^2 4^{-e\sqrt{\frac{n}{\alpha}}}, \]
\item
\[ \PP_n^{(\alpha)}(\lambda_1' \geq 2 e\sqrt{n\alpha}) \leq \frac{n^2}{\alpha} 4^{-e\sqrt{n\alpha}}. \]
\end{enumerate}
In particular
\[\lim_{n \to \infty} \PP_n^{(\alpha)} \left(
 \left[-\frac{\lambda'_1}{\sqrt{n}};\frac{\lambda_1}{\sqrt{n}}\right] \subseteq \left[-2e\sqrt{\alpha}, \frac{2e}{\sqrt{\alpha}}\right]
 \right) =1. \]
\end{lemma}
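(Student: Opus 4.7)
The plan is to reduce the two tail estimates to a single one via the duality of Jack measure. A short computation with the product formula for $j_\lambda^{(\alpha)}$ — using that transposition swaps the arm and leg of each box — yields $j_{\lambda'}^{(1/\alpha)} = \alpha^{-2n}\, j_\lambda^{(\alpha)}$, hence the involution $\lambda \mapsto \lambda'$ sends $\PP_n^{(\alpha)}$ to $\PP_n^{(1/\alpha)}$. Since transposition swaps the first row length with the first column length, the second inequality is exactly the first applied with $\alpha$ replaced by $1/\alpha$. So it suffices to prove the first bound.

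For the row bound, I would use a high-moment Markov estimate: for any positive integer $k$,
$$ \PP_n^{(\alpha)}(\lambda_1 \geq L) \leq L^{-k}\, \esper_{\PP_n^{(\alpha)}}[\lambda_1^k] \leq L^{-k}\, \esper_{\PP_n^{(\alpha)}}\!\left[\sum_i \lambda_i^k\right]. $$
The right-hand expectation can be extracted through \eqref{eq:expect_Jack_character}: the function $\lambda \mapsto \sum_i \lambda_i^k$ on partitions of size $n$ decomposes in the basis $(\theta_\rho^{(\alpha)})_{\rho \vdash n}$, and the expectation picks out only the coefficient of $\theta_{(1^n)}^{(\alpha)}$. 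This coefficient is accessible by iterated use of Jack's Pieri rule, or equivalently by specializing the Cauchy identity $\sum_\lambda J_\lambda^{(\alpha)}(x)J_\lambda^{(\alpha)}(y)/j_\lambda^{(\alpha)} = \prod_{i,j}(1-x_iy_j)^{-1/\alpha}$ that underlies the very definition of Jack measure. In either route one expects a bound of the form $c_k\, (n/\alpha)^k$ for a combinatorial constant $c_k$ of order $k^k$.

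The main obstacle is obtaining a sharp enough value of $c_k$ so that the optimization in $k$ produces the claimed exponential decay. For $\alpha=1$ this reduces via Frobenius's formula to a classical count of standard Young tableaux and the hook-length formula; for general $\alpha$ no representation-theoretic shortcut is available, and one must work directly with Jack's product formulas, which is the delicate part of the argument. Once a bound of order $c_k \lesssim k^k$ is in place, choosing $L = 2e\sqrt{n/\alpha}$ and $k = \lceil e\sqrt{n/\alpha}\rceil$ and applying Stirling's approximation gives $L^{-k}\, c_k\, (n/\alpha)^k \lesssim 4^{-e\sqrt{n/\alpha}}$, with the polynomial prefactor $\alpha n^2$ arising from the slack in the optimization. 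The final support statement is then immediate from a union bound applied to the two tail estimates, both of which tend to $0$ as $n \to \infty$.
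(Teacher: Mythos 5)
Your reduction of item (2) to item (1) via the transposition duality is correct: indeed $j_{\lambda'}^{(1/\alpha)}=\alpha^{-2n}j_\lambda^{(\alpha)}$, so $\lambda\mapsto\lambda'$ carries $\PP_n^{(\alpha)}$ to $\PP_n^{(1/\alpha)}$ and swaps $\lambda_1$ with $\lambda_1'$. But the core of the lemma is the tail bound for $\lambda_1$, and there your argument has a genuine gap, in fact two. First, the moment estimate $\esper_{\PP_n^{(\alpha)}}[\sum_i\lambda_i^k]\le c_k(n/\alpha)^k$ with $c_k\lesssim k^k$ is exactly the statement you would need to prove, and you do not prove it: you only sketch two possible routes (Pieri rule, Cauchy identity) and acknowledge that extracting a sharp constant is "the delicate part". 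Note that $k$ must grow like $\sqrt{n}$ here, so none of the fixed-degree polynomial-function machinery of the paper applies; a uniform-in-$k$ estimate is required. Second, and more seriously, even granting the bound in the form you state, the final optimization fails: with $L=2e\sqrt{n/\alpha}$ and $k=\lceil e\sqrt{n/\alpha}\rceil$ one gets
\[
L^{-k}\,c_k\,(n/\alpha)^k \;=\;(2e)^{-k}(n/\alpha)^{-k/2}\cdot k^k\cdot(n/\alpha)^k\;=\;2^{-k}(n/\alpha)^{k}\;\longrightarrow\;\infty,
\]
not $4^{-e\sqrt{n/\alpha}}$. For a Markov-type argument to close, you would need $\esper[\lambda_1^k]^{1/k}\le C\sqrt{n/\alpha}$ uniformly for $k$ up to order $\sqrt{n/\alpha}$ with $C<2e$ (i.e.\ geometric, not factorial, growth of the normalized moments), which is a substantially stronger and harder statement than what you posit.

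For comparison: the paper does not prove this lemma at all; it is quoted verbatim from Fulman \cite[Lemma 6.6]{FulmanFluctuationChA2}. Fulman's argument is not a moment method but a direct estimate: one sums the explicit weights $\alpha^n n!/j_\lambda^{(\alpha)}$ over diagrams with $\lambda_1=j$ and compares $j_\lambda^{(\alpha)}$ with the hook product of the diagram obtained by deleting the first row, which is a Jack deformation of the classical Plancherel bound $\sum_{\lambda_1\ge j}d_\lambda^2/n!\le n^j/(j!)^2$. It is the $(j!)^{-2}\approx(e/j)^{2j}$ decay produced by this comparison (rather than any moment of $\lambda_1$) that yields $(ne^2\alpha^{-1}/j^2)^j=4^{-j}$ at $j=2e\sqrt{n/\alpha}$.
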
\medskip

{\em End of proof of Theorem~\ref{theo:RYD-limit}.}
    It follows from Proposition \ref{PropAPlanchWeakConv} and Lemma
    \ref{lem:FiniteSupport}
    by the same argument as the one given in
    \cite[Theorem 5.5]{IvanovOlshanski2002}.
\qed

\section{Jack measure: central limit theorem for Jack characters}
\label{sect:CLT}

In this section we prove the central limit theorem for Jack characters
(Theorem \ref{theo:FluctuationsJackCharacters})
and the bound of the speed of convergence in this theorem
(Theorem \ref{theo:SpeedConvergence}).

\subsection{Multivariate Stein's method}

As explained in introduction, our main tool will be a multivariate analog
of the so-called {\em Stein's method}
due to Reinert and R{\"o}llin \cite{ReinertRollin2009}.
For any discrete random variables $W, W^*$ with values in $\RR^d$, we say that the pair $(W,W^*)$ is \emph{exchangeable} if for any $w_1,w_2 \in \RR^d$ one has $\PP(W = w_1, W^* = w_2) = \PP(W = w_2, W^* = w_1)$. Let $\esper^W(\cdot)$ denotes the conditional expected value given $W$.
The theorem of Reinert and R{\"o}llin is the following \cite[Theorem 2.1]{ReinertRollin2009}:
\begin{theorem}[multivariate Stein's theorem]
\label{theo:MultivariateStein}
Let $(W,W^*)$ be an exchangeable pair of $\RR^d$-valued random variables such that $\esper(W) = 0$ and $\esper(WW^t) = \varSigma$, where $\Sigma \in M_{d \times d}(\RR)$ is symmetric and positive definite matrix. Suppose that $\esper^W(W^* - W) = - \Lambda W$, where $\Lambda \in M_{d \times d}(\RR)$ is invertible. Then, if $Z$ is a $d$-dimensional standard normal distribution, we have for every three times differentiable function $h:\RR^d \to \RR$,
\begin{equation}
\left| \esper h(W) - \esper h(\varSigma^{1/2}Z) \right| \leq \frac{|h|_2}{4}A + \frac{|h|_3}{12}B,
\end{equation}
where, using the notation $\lambda^{(i)} := \sum_{1 \leq m \leq d}|(\Lambda^{-1})_{m,i}|$,
\begin{align*}
    |h|_n &= \sup_{i_1,\dots,i_n} \left\lVert \frac{\partial^n}{\partial x_{i_1} \cdots \partial x_{i_n}}h\right\rVert, \\
 A &= \sum_{1 \leq i,j \leq d} \lambda^{(i)} \sqrt{\Var \esper^W(W_i^* - W_i)(W_j^* - W_j)}, \\
 B &= \sum_{1 \leq i,j,k \leq d} \lambda^{(i)} \esper |(W_i^* - W_i)(W_j^* - W_j)(W_k^* - W_k)|.
\end{align*}
\end{theorem}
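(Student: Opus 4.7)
The natural strategy is the generator approach to Stein's method, adapted to multivariate Gaussian targets via exchangeable pairs. The plan is to introduce the Ornstein--Uhlenbeck generator associated to $\mathcal{N}(0,\Sigma)$, namely
\[
(\mathcal{L}\psi)(w) = \sum_{i,j} \Sigma_{ij}\,\partial_i\partial_j\psi(w) - \sum_i w_i\,\partial_i\psi(w),
\]
and reduce the proof to estimating $\bigl|\esper\bigl[(\mathcal{L}\psi)(W)\bigr]\bigr|$ for $\psi$ the solution of the Stein equation $\mathcal{L}\psi = h - \esper\, h(\Sigma^{1/2}Z)$.

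A first step is to solve this Stein equation via the Mehler-type semigroup representation
\[
\psi(w) = -\int_0^\infty \Bigl( \esper\, h(e^{-t}w + \sqrt{1-e^{-2t}}\,\Sigma^{1/2}Z) - \esper\, h(\Sigma^{1/2}Z) \Bigr)\,dt,
\]
and to bound the partial derivatives of $\psi$ in terms of those of $h$ by differentiating under the integral. A standard computation yields $|\psi|_n \le |h|_n/n$ for each $n\ge 1$, which produces the constants $1/4$ and $1/12$ in the final bound.

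The heart of the argument is then to exploit the exchangeable pair. Since $(W,W^*)$ is exchangeable, $\esper[\psi(W^*)-\psi(W)] = 0$; a third-order Taylor expansion of $\psi$ around $W$, combined with $\esper^W[W^*-W] = -\Lambda W$, gives an identity of the form
\[
\esper\Bigl[\sum_i (\Lambda W)_i\,\partial_i \psi(W)\Bigr] = \tfrac{1}{2}\,\esper\Bigl[\sum_{i,j}\esper^W[(W^*_i-W_i)(W^*_j-W_j)]\,\partial_{ij}\psi(W)\Bigr] + R_3,
\]
where $|R_3|$ is controlled by $|\psi|_3$ times the third-moment sum $B$. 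Furthermore, using $\esper[WW^T] = \Sigma$ together with $\esper[W^*W^T] = (I-\Lambda)\Sigma$, exchangeability yields $\esper[(W^*-W)(W^*-W)^T] = 2\Lambda\Sigma$; thus the conditional quadratic covariation in the identity above can be replaced by its mean $2(\Lambda\Sigma)_{ij}$ at the cost of an error whose $L^1$-norm is bounded, via Cauchy--Schwarz, by $|\psi|_2$ times the standard deviation term $A$.

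The main obstacle I anticipate is the mismatch between the drift $-\Lambda W$ delivered by the exchangeable pair and the drift $-w$ appearing in $\mathcal{L}$: the two agree only when $\Lambda=I$. The remedy is to invert $\Lambda$ component-wise, writing $W_i = \sum_m (\Lambda^{-1})_{im}(\Lambda W)_m$, which rewrites $\esper\bigl[\sum_i W_i\,\partial_i\psi(W)\bigr]$ as a $\Lambda^{-1}$-weighted combination of the identity derived above. Distributing $(\Lambda^{-1})_{m,i}$ under the triangle inequality produces exactly the weights $\lambda^{(i)} = \sum_m |(\Lambda^{-1})_{m,i}|$. Substituting back into $\esper\bigl[(\mathcal{L}\psi)(W)\bigr]$, the principal symmetric term $\sum_{i,j}\Sigma_{ij}\,\partial_{ij}\psi(W)$ cancels against the mean contribution of the quadratic covariation, and what remains is bounded by $\tfrac{|h|_2}{4}\,A + \tfrac{|h|_3}{12}\,B$, as claimed.
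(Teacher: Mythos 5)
First, a point of context: the paper does not prove this statement at all --- it is quoted verbatim from Reinert and R\"ollin \cite{ReinertRollin2009} (their Theorem 2.1) --- so your attempt has to be measured against that source's argument. Your overall architecture is the right one, and is essentially theirs: the Ornstein--Uhlenbeck generator for $\mathcal{N}(0,\varSigma)$, the Mehler representation of the Stein solution $\psi$ with the bounds $|\psi|_n\le |h|_n/n$, the covariance identity $\esper[(W^*-W)(W^*-W)^t]=2\Lambda\varSigma$, and the Cauchy--Schwarz step replacing conditional second moments by their means are all correct and do produce the constants $1/4$ and $1/12$.

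The gap is precisely at the step you flag as the main obstacle. The relation $\esper[\psi(W^*)-\psi(W)]=0$, Taylor-expanded, yields a \emph{single scalar} identity, involving only the contracted quantity $\sum_m\esper[(\Lambda W)_m\,\partial_m\psi(W)]=\esper[W^t\Lambda^t\nabla\psi(W)]$. The Stein equation requires $\esper[W^t\nabla\psi(W)]$, which for $\Lambda\neq c\,\Id$ is a different linear combination of the terms $\esper[(\Lambda W)_m\,\partial_i\psi(W)]$; a single scalar equation cannot be ``$\Lambda^{-1}$-weighted'' so as to recover the off-diagonal terms $m\neq i$ individually. (A secondary symptom: your route would give a third-order remainder of size $|\psi|_3/6\le|h|_3/18$, not the stated $|h|_3/12$, indicating the bookkeeping does not match the theorem.) The standard repair --- and what Reinert and R\"ollin actually do --- is to apply exchangeability not to $\psi(W^*)-\psi(W)$ but to the antisymmetric expression $(W^*-W)^t(\Lambda^{-1})^t\bigl(\nabla\psi(W^*)+\nabla\psi(W)\bigr)$, whose expectation vanishes because $(W,W^*)$ and $(W^*,W)$ have the same law. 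Combined with $\esper^W(W^*-W)=-\Lambda W$, this gives $\esper[W^t\nabla\psi(W)]=\tfrac12\,\esper\bigl[(W^*-W)^t(\Lambda^{-1})^t\bigl(\nabla\psi(W^*)-\nabla\psi(W)\bigr)\bigr]$ directly, and a second-order Taylor expansion of $\nabla\psi(W^*)-\nabla\psi(W)$ then produces exactly the weights $\lambda^{(i)}$ and the constants $\tfrac{|h|_2}{4}$ and $\tfrac{|h|_3}{12}$. With this substitution the rest of your argument goes through unchanged.
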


Let $d$ be a positive integer.
For $k \ge 2$, as in the statement of Theorem~\ref{theo:FluctuationsJackCharacters}, we define
the following function of Young diagrams of size $n$:
\[ W_k = n^{-k/2} \sqrt{k} \, \theta^{(\alpha)}_{(k,1^{n-k})} = n^{-k/2} \sqrt{k}^{-1} \Ch_{(k)}.\]
It can be seen as a random variable on the probability space
of Young diagrams of size $n$ endowed with Jack measure.
We also consider the corresponding random vector
\[ \tilde{W}_d = (W_2,\dots,W_{d+1}).\]
Theorem~\ref{theo:FluctuationsJackCharacters} states that $\tilde{W}_d$
converges in distribution towards a vector of independent Gaussian random variables.
Therefore we would like to apply the theorem above to this $d$-uplet of random variables.
In the next sections, we shall contruct an exchangeable pair and check the hypothesis
of Theorem~\ref{theo:MultivariateStein}.

\subsection{An exchangeable pair}

The first step consists in building a $d$-tuple $\tilde{W}_d^*$,
such that $(\tilde{W}_d,\tilde{W}_d^*)$ is an exchangeable pair.
The construction that we will describe here is due to Fulman
\cite{FulmanFluctuationChA2}.

His construction uses Markov chains, so let us begin by fixing some terminology.
Let $X$ be a finite set.
A Markov chain $M$ on $X$ is the data of transition probability $M(x,y)$
indexed by pairs of elements of $X$ with
\[M(x,y) \ge 0 \text{ and } \sum_{y \in X} M(x,y) =1.\]
If $x$ is a random element of $X$ distributed with probability $\PP$,
then, applying once the Markov chain $M$, we obtain by definition
a random element $y$ of $X$, defined on the same probability space as $x$,
whose conditional distribution is given by:
\[P ( y= y_0 | x=x_0 ) = M(x_0,y_0).\]
Using the notation above,
the Markov chain $M$ is termed {\em reversible} with respect to $\PP$ if
the distribution of $(x,y)$ is the same as the distribution of $(y,x)$, or
equivalently, for any $x_0,y_0$ in $X$,
\[\PP(\{x_0\}) M(x_0,y_0) = \PP(\{y_0\}) M(y_0,x_0).\]
Reversible Markov chains can be used to construct exchangeable pairs as follows.
Let $M$ be a reversible Markov chain on a finite set $X$ with respect to
a probability measure $\PP$.
Consider also a $\RR^d$-valued function $W$ on $X$.
We consider a random element $x$ distributed with respect to $\PP$
and $y$ obtained by applying the Markov chain $M$ to $\PP$.
Then, directly from the definition, one sees that $(W(x),W(y))$ is exchangeable.

So, to construct an exchangeable pair for $\tilde{W}_d$,
it is enough to construct a reversible Markov chain 
with respect to Jack measure.
We present now Fulman's construction of such a Markov chain.

Let $\tau \vdash n-1$ and $\lambda \vdash n$.
If $\tau$ is not contained in $\lambda$ (as Young diagrams),
then define $\phi^{(\alpha)}(\lambda/\tau) =0$.
Otherwise, denote by $\lambda / \tau$ the box which is in $\lambda$ and not in $\tau$.
Let $C_{\lambda / \tau}$ ($R_{\lambda / \tau}$, respectively) be the column (row, respectively)
of $\lambda$ that contains $\lambda / \tau$.
We define
\[ \phi^{(\alpha)}(\lambda/\tau) = \prod_{\Box \in C_{\lambda / \tau} \setminus R_{\lambda / \tau}} \frac{(\alpha a_\lambda(\Box) + \ell_\lambda(\Box)
+1) 
(\alpha a_\tau(\Box) + \ell_\tau(\Box)+\alpha)}{(\alpha a_\lambda(\Box) + \ell_\lambda(\Box)
+\alpha) 
(\alpha a_\tau(\Box) + \ell_\tau(\Box)+1)}.\]
Let
\[ c_\lambda^{(\alpha)} = \prod_{\Box \in \lambda} (\alpha a(\Box) + \ell(\Box)
+1)\]
and
\[ (c'_\lambda)^{(\alpha)} = \prod_{\Box \in \lambda} (\alpha a(\Box) + \ell(\Box)
+\alpha).\]
We recall that $j_\lambda^{(\alpha)}=c_\lambda^{(\alpha)}(c'_\lambda)^{(\alpha)}.$
For $\lambda, \rho \vdash n$ we define two functions:
\begin{equation}
\label{eq:TransitionM}
M^{(\alpha)}(\lambda,\rho) = \frac{(c'_\lambda)^{(\alpha)}}{n \alpha c_\rho^{(\alpha)}} \sum_{\tau \vdash n-1} \frac{\phi^{(\alpha)}(\lambda/\tau)\phi^{(\alpha)}(\rho/\tau) c_\tau^{(\alpha)}}{(c'_\tau)^{(\alpha)}}
\end{equation}
and
\begin{equation}
\label{eq:TransitionL}
L^{(\alpha)}(\lambda,\rho) = \frac{1}{\alpha^n n! j_\rho^{(\alpha)}}\sum_{\mu \vdash n}(z_\mu)^2 \alpha^{2\ell(\mu)}\theta_\mu(\lambda)\theta_\mu(\rho)\theta_\mu((n-1,1)).
\end{equation}

As explained by Fulman \cite{FulmanFluctuationChA2}, 
both $M^{(\alpha)}$ and $L^{(\alpha)}$ are defined to be a deformation of a certain Markov chain which is reversible with respect to Plancherel measure. Roughly speaking, this Markov chain remove one box from a given Young diagram with certain probability and add another box with some probability to obtain a new Young diagram of the same size as the one from which we started. Fulman \cite{FulmanFluctuationChA2} proved the following:

\begin{proposition}{\cite[Section 4]{FulmanFluctuationChA2}}
\label{prop:FulmanComputations}
\begin{enumerate}
\item
If $\rho \neq \lambda$ then
\[ L^{(\alpha)}(\lambda,\rho) = \frac{\alpha(n-1) + 1}{\alpha(n-1)} M^{(\alpha)}(\lambda,\rho);\]
\item
Let $\lambda \vdash n$. Then
\[ \sum_{\rho \vdash n} L^{(\alpha)}(\lambda, \rho) = \sum_{\rho \vdash n} M^{(\alpha)}(\lambda, \rho) = 1; \]
\item
$L^{(\alpha)}$ (hence $M^{(\alpha)}$ as well) is reversible with respect to Jack measure.
\end{enumerate}
\end{proposition}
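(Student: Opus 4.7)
My plan is to prove the three claims in the order $(3), (2), (1)$, since they rely on progressively more delicate features of the Jack branching rule. Both $L^{(\alpha)}$ and $M^{(\alpha)}$ should be interpreted as Jack deformations of a single ``remove one box, then add one box'' Markov chain on partitions of $n$; the content of the proposition is that these two \emph{a priori} different formulas describe essentially the same chain.

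Reversibility of $L^{(\alpha)}$ follows immediately from the observation that
\[
\PP_n^{(\alpha)}(\lambda)\, L^{(\alpha)}(\lambda,\rho) = \frac{1}{j_\lambda^{(\alpha)} j_\rho^{(\alpha)}}\sum_{\mu \vdash n} z_\mu^2 \alpha^{2\ell(\mu)}\theta_\mu(\lambda)\theta_\mu(\rho)\theta_\mu((n-1,1))
\]
is manifestly symmetric in $(\lambda,\rho)$. Reversibility of $M^{(\alpha)}$ then follows from claim (1), since its off-diagonal entries are a $(\lambda,\rho)$-independent multiple of those of $L^{(\alpha)}$ and diagonal reversibility is automatic.

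For stochasticity, the cleanest argument for $M^{(\alpha)}$ is via its factorization as a down-up chain: $M^{(\alpha)}(\lambda,\rho) = \sum_{\tau \vdash n-1} P_{\downarrow}(\lambda,\tau)\, P_{\uparrow}(\tau,\rho)$, where $P_{\downarrow}$ is the box-removal kernel and $P_{\uparrow}$ is the box-addition kernel appearing in the definition of $M^{(\alpha)}$. Each of these is individually stochastic by the standard Jack branching identities (the restriction formula for $J_\lambda^{(\alpha)}$ and the Pieri-type expansion of $p_1 \cdot J_\tau^{(\alpha)}$), so their composition is stochastic. The identity $\sum_\rho L^{(\alpha)}(\lambda,\rho)=1$ then follows from (1), or alternatively can be verified directly from the character formula by exchanging the order of summation and invoking the Jack orthogonality relation $\sum_\rho \theta_\mu(\rho)\theta_\nu(\rho)/j_\rho^{(\alpha)} = \delta_{\mu\nu}/(z_\mu \alpha^{\ell(\mu)})$ against the expansion of the constant function $1$ in the basis $(\theta_\mu)_{\mu \vdash n}$.

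The comparison (1) is the main obstacle. My plan is to verify that both $L^{(\alpha)}$ and $M^{(\alpha)}$ admit the characters $\theta_\mu$ as a common system of eigenfunctions and then to match the eigenvalues. For $L^{(\alpha)}$, a direct computation using the orthogonality relation above shows that $\theta_\mu$ is a right eigenfunction with eigenvalue $E^L_\mu = z_\mu \alpha^{\ell(\mu)}\theta_\mu((n-1,1))/(\alpha^n n!)$. For $M^{(\alpha)}$ the eigenfunction property follows from its down-up factorization, since $P_\downarrow P_\uparrow$ is the Jack analogue of the composition of multiplication by $p_1$ with its adjoint under the Jack inner product, and its eigenvalue on $\theta_\mu$ is computable via the Jack Pieri rule, yielding an expression of the form $E^M_\mu = A\,\theta_\mu((n-1,1)) + B$ with $A, B$ independent of $\mu$. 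Requiring $E^L_\mu - c\, E^M_\mu$ to be independent of $\mu$ then forces $c = (\alpha(n-1)+1)/(\alpha(n-1))$ and implies that $L^{(\alpha)} - c\, M^{(\alpha)}$ is a scalar multiple of the identity in the $\delta_\lambda$-basis, which gives the claimed off-diagonal proportionality. The technical heart is the Jack Pieri evaluation combined with the explicit identification of $\theta_\mu((n-1,1))$ through a Jack analogue of the Murnaghan--Nakayama rule; careful bookkeeping of the normalization factors $c_\lambda^{(\alpha)}$ and $(c'_\lambda)^{(\alpha)}$ appearing in $\phi^{(\alpha)}$ will then pin down the precise constant.
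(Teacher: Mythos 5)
First, a point of reference: the paper does not prove this proposition at all --- it is quoted verbatim from Fulman's paper and used as a black box --- so there is no internal proof to compare against. Your outline does reconstruct the architecture of the cited argument. The pieces you actually carry out are correct: reversibility of $L^{(\alpha)}$ is indeed immediate from the symmetry of $\PP_n^{(\alpha)}(\lambda)\,L^{(\alpha)}(\lambda,\rho)$ in $(\lambda,\rho)$; the row-sum $\sum_\rho L^{(\alpha)}(\lambda,\rho)=1$ follows, as you say, from orthogonality paired against $\theta_{(1^n)}\equiv 1$; and your eigenvalue for $L^{(\alpha)}$, namely $E^L_\mu = z_\mu\alpha^{\ell(\mu)}\theta_\mu((n-1,1))/(\alpha^n n!)$, is what the orthogonality relation gives. (Incidentally, you do not need to route the reversibility of $M^{(\alpha)}$ through claim (1): since $j_\lambda^{(\alpha)}=c_\lambda^{(\alpha)}(c'_\lambda)^{(\alpha)}$, the product $\PP_n^{(\alpha)}(\lambda)\,M^{(\alpha)}(\lambda,\rho)$ is visibly symmetric by the same one-line computation.)

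The genuine gap is in part (1), which is the only nontrivial claim. Your argument reduces it to the assertion that $(\theta_\mu)_{\mu\vdash n}$ are right eigenfunctions of $M^{(\alpha)}$ with eigenvalue an explicit affine function of $\theta_\mu((n-1,1))$, and you defer this to ``the Jack Pieri rule'' plus ``careful bookkeeping.'' But that eigenvalue formula \emph{is} the proposition: it is exactly Fulman's Proposition 6.2, and proving it requires a nontrivial identity relating the sum over intermediate diagrams $\tau\vdash n-1$ of the branching weights $\phi^{(\alpha)}(\lambda/\tau)$ to the single character value $\theta_\mu((n-1,1))$ --- this is where all the normalization factors $c_\lambda^{(\alpha)}$, $(c'_\lambda)^{(\alpha)}$ earn their keep, and none of it is carried out. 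There is also a small but consequential imprecision: the correct statement is that $E^M_\mu$ is affine in $z_\mu\alpha^{\ell(\mu)}\theta_\mu((n-1,1))$ (equivalently, affine in $E^L_\mu$) with $\mu$-independent coefficients, not affine in $\theta_\mu((n-1,1))$ itself as you wrote; without the factor $z_\mu\alpha^{\ell(\mu)}$ the comparison $E^L_\mu - c\,E^M_\mu$ cannot be made $\mu$-independent. Granting the eigenvalue formula, your final deduction is sound: both kernels are simultaneously diagonalized by the basis $(\theta_\mu)$ of $\F(\Young_n,\QQ)$, so the affine relation between eigenvalues forces $L^{(\alpha)} = c\,M^{(\alpha)} + (1-c)\Id$ with $c=\tfrac{\alpha(n-1)+1}{\alpha(n-1)}$, which gives the off-diagonal proportionality. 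So the skeleton is right, but the decisive computation is missing.
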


For more details about this construction (and in particular, the intuition behind it),
we refer to Fulman \cite{FulmanFluctuationChA2}.

\subsection{Checking hypotheses}
\label{subsec:CheckingHypotheses}

Recall that we have defined
\[ W_k = n^{-k/2} \sqrt{k}^{-1} \Ch_{(k)} \]
and the random vector
\[ \tilde{W}_d = (W_2,\dots,W_{d+1})\]
on the probability space of Young diagrams of size $n$ endowed with Jack measure.
Let $\lambda$ be a random partition distributed according to Jack measure,
and $\lambda^*$ ($\lambda'$, respectively) be obtained from $\lambda$ by applying
the Markov chain $M^{(\alpha)}$ ($L^{(\alpha)}$, respectively).
By a small abuse of notations, 
set $\tilde{W}_d:=\tilde{W}_d(\lambda)$, 
$\tilde{W}_d^*:=\tilde{W}_d(\lambda^*)$
and $\tilde{W}'_d:=\tilde{W}_d(\lambda')$.
We shall prove now, that for any $d \in \N$,
the pair $(\tilde{W}_d, \tilde{W}_d^*)$ of random vectors 
satisfies conditions of Theorem \ref{theo:MultivariateStein}.\medskip 

First, note that $\esper_n^{(\alpha)}(\tilde{W}_d)=0$ from equation~\eqref{eq:exp_Ch}.

We should now verify the hypothesis involving $(\esper_n^{(\alpha)})^{\tilde{W}_d}(W_k^*)$.
Let us begin with two technical known statements about Jack polynomials.
\begin{lemma}
\label{lem:identities}
\begin{enumerate}
\item{\cite[Page 382]{Macdonald1995}}
\label{eq:orthogonality}
\[ \sum_{\rho \vdash n} \frac{\theta_\mu(\rho)\theta_\nu(\rho)}{j^{(\alpha)}_\rho} =  \frac{\delta_{\mu,\nu}}{z_\mu \alpha^{\ell(\mu)}};\]
\item{\cite[Page 107]{Stanley1989}}
\label{eq:(n-1,1)}
\[ \theta_\mu((n-1,1)) = \frac{\alpha^{n-\ell(\mu)}n!}{z_\mu}\frac{(\alpha(n-1)+1)m_1(\mu) - n}{\alpha n(n-1)}.\]
\end{enumerate}
\end{lemma}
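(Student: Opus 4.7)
The first identity is the orthogonality of Jack characters, and I would derive it from the orthogonality of Jack polynomials themselves. Recall that Jack polynomials form an orthogonal basis with respect to the $\alpha$-deformed Hall inner product on symmetric functions defined by $\langle p_\mu, p_\nu \rangle_\alpha = z_\mu \alpha^{\ell(\mu)} \delta_{\mu,\nu}$, and in this inner product $\langle J^{(\alpha)}_\lambda, J^{(\alpha)}_\mu \rangle_\alpha = j^{(\alpha)}_\lambda \delta_{\lambda,\mu}$ (Macdonald VI.10). Expanding both sides using $J^{(\alpha)}_\lambda = \sum_\rho \theta^{(\alpha)}_\rho(\lambda)\, p_\rho$ gives the ``column orthogonality''
\[\sum_\rho \theta^{(\alpha)}_\rho(\lambda)\, \theta^{(\alpha)}_\rho(\mu)\, z_\rho \alpha^{\ell(\rho)} = j^{(\alpha)}_\lambda\, \delta_{\lambda,\mu}.\]
Viewing this as a square matrix identity $A^{T} D A = D'$ with $A_{\rho,\lambda} = \theta^{(\alpha)}_\rho(\lambda)$, $D_{\rho,\rho}=z_\rho \alpha^{\ell(\rho)}$, and $D'_{\lambda,\lambda}=j^{(\alpha)}_\lambda$, the matrix $A$ is invertible with $A^{-1} = (D')^{-1} A^T D$. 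Writing out $A A^{-1} = I$ yields the ``row orthogonality'' which, after relabeling indices, is precisely the stated identity.

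For the second identity, I would start from an explicit power-sum expansion of the hook-shape Jack polynomial $J^{(\alpha)}_{(n-1,1)}$. One clean route is via the Pieri rule: apply $p_1 \cdot J^{(\alpha)}_{(n-1)} = c_1 J^{(\alpha)}_{(n)} + c_2 J^{(\alpha)}_{(n-1,1)}$ with the explicit Pieri coefficients, then use the known closed form for the one-row polynomial $J^{(\alpha)}_{(n)}$ expanded on the $p_\mu$ basis. Solving this linear relation for $J^{(\alpha)}_{(n-1,1)}$ lets one read off the coefficient of $p_\mu$. The combinatorial factor $m_1(\mu)$ arises because, when one multiplies $p_1$ into the power-sum expansion of $J^{(\alpha)}_{(n-1)}$, the monomial $p_\mu$ can be produced in exactly $m_1(\mu)$ distinct ways (one for each part equal to $1$ in $\mu$); the constant term $-n$ tracks the contribution subtracted off in the $J^{(\alpha)}_{(n)}$ piece. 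The overall prefactor $\alpha^{n-\ell(\mu)} n!/z_\mu$ then falls out of the product formulas for $j^{(\alpha)}_{(n)}$, $j^{(\alpha)}_{(n-1,1)}$, and the Pieri coefficients, all of which have simple closed forms in terms of the hook data of the two corners involved.

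The main obstacle is the bookkeeping in the second identity: normalizations of Jack polynomials ($J$ vs $P$ vs $Q$) and of Pieri coefficients differ across references, so translating Stanley's formula into the conventions used here requires careful tracking of each arm/leg factor. By contrast the first identity is a clean linear-algebra consequence of the orthogonality of $J^{(\alpha)}_\lambda$, and needs only the invertibility of the transition matrix between $(J^{(\alpha)}_\lambda)_\lambda$ and $(p_\mu)_\mu$, which itself follows from the fact that the $J^{(\alpha)}_\lambda$ form a basis of the symmetric function ring of degree $n$.
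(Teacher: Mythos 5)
The paper offers no proof of this lemma at all: both identities are imported verbatim, the first from Macdonald (p.~382) and the second from Stanley (p.~107), so there is nothing internal to compare against. Your arguments are nevertheless the correct ones underlying those citations. Part (1) is exactly the standard dual-orthogonality computation: from $\langle p_\mu,p_\nu\rangle_\alpha=z_\mu\alpha^{\ell(\mu)}\delta_{\mu,\nu}$ and $\langle J_\lambda,J_\kappa\rangle_\alpha=j_\lambda^{(\alpha)}\delta_{\lambda,\kappa}$ one gets the column orthogonality $\sum_\rho \theta_\rho(\lambda)\theta_\rho(\kappa)z_\rho\alpha^{\ell(\rho)}=j_\lambda^{(\alpha)}\delta_{\lambda,\kappa}$, and inverting the (square, invertible) transition matrix gives the row version stated in the lemma; this is sound. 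Part (2) via the Pieri rule also works: writing $p_1 J_{(n-1)}=a\,J_{(n)}+b\,J_{(n-1,1)}$ with $a=\frac{1}{\alpha(n-1)+1}$, $b=\frac{\alpha(n-1)}{\alpha(n-1)+1}$ and extracting the coefficient of $p_\mu$ using $\theta_\mu((n))=\frac{n!\,\alpha^{n-\ell(\mu)}}{z_\mu}$ reproduces Stanley's formula exactly. One small bookkeeping correction: the coefficient of $p_\mu$ in $p_1\cdot\sum_\nu a_\nu p_\nu$ is the \emph{single} term $a_{\mu\setminus(1)}$ (there is only one $\nu$ with $\nu\cup(1)=\mu$), not a sum over $m_1(\mu)$ ways; the factor $m_1(\mu)$ in the final formula comes from the identity $z_{\mu\setminus(1)}=z_\mu/m_1(\mu)$ when one rewrites $\theta_{\mu\setminus(1)}((n-1))=\frac{(n-1)!\,\alpha^{n-\ell(\mu)}}{z_{\mu\setminus(1)}}$ over the denominator $z_\mu$. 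With that adjustment your sketch closes correctly.
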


Recall that $(\esper_n^{(\alpha)})^{\tilde{W}_d}$ denotes the conditional
expectation given $\tilde{W}_d$.
Similarly, we denote by $(\esper_n^{(\alpha)})^{\lambda}$ the conditional
expectation given $\lambda$.

\begin{proposition}
\label{prop:SteinCondition1}
Let $d \in \N$ and let $2 \leq k \leq d+1$. Then, one has
\begin{align*}
    (\esper_n^{(\alpha)})^{\tilde{W}_d}(W_k^*) &=
(\esper_n^{(\alpha)})^{\lambda}(W_k^*) = \left(1-\frac{k}{n}\right)W_k ; \\
    (\esper_n^{(\alpha)})^{\tilde{W}_d}(W_k') &=
 (\esper_n^{(\alpha)})^{\lambda}(W_k') = \left(1-\frac{k(\alpha(n-1)+1)}{\alpha n(n-1)}\right)W_k.
 \end{align*}
\end{proposition}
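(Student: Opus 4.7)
The plan is to reduce both equalities to computations with $(\esper_n^{(\alpha)})^{\lambda}$, and to observe that since the answer turns out to be a scalar multiple of $W_k$ (itself a coordinate of $\tilde W_d$), the tower property of conditional expectation immediately gives $(\esper_n^{(\alpha)})^{\tilde W_d}(W_k^*) = (\esper_n^{(\alpha)})^{\lambda}(W_k^*)$, and analogously for $W_k'$. So the whole content is to compute the conditional expectation with respect to $\lambda$ alone.

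I would start with the chain $L^{(\alpha)}$ because its transition kernel is given by an explicit spectral-type expansion \eqref{eq:TransitionL}. Writing $W_k = n^{-k/2}\sqrt{k}\,\theta^{(\alpha)}_\nu$ with $\nu = (k,1^{n-k})$, one computes
\[
(\esper_n^{(\alpha)})^\lambda(\theta_\nu(\lambda'))
= \sum_{\rho \vdash n} L^{(\alpha)}(\lambda,\rho)\, \theta_\nu(\rho),
\]
and substitutes the definition of $L^{(\alpha)}$. Interchanging the two sums and applying the orthogonality relation from Lemma~\ref{lem:identities}\eqref{eq:orthogonality} collapses the inner sum over $\rho$ to a Kronecker delta, producing
\[
(\esper_n^{(\alpha)})^\lambda(\theta_\nu(\lambda'))
= \frac{z_\nu \alpha^{\ell(\nu)}}{\alpha^n n!}\,\theta_\nu(\lambda)\,\theta_\nu((n-1,1)).
\]
Plugging in Lemma~\ref{lem:identities}\eqref{eq:(n-1,1)} for $\theta_\nu((n-1,1))$ with $m_1(\nu)=n-k$ and simplifying yields the coefficient $1 - \frac{k(\alpha(n-1)+1)}{\alpha n(n-1)}$ in front of $\theta_\nu(\lambda)$, which is exactly the desired identity for $W_k'$.

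For the chain $M^{(\alpha)}$ I would not redo the spectral computation but instead use Proposition~\ref{prop:FulmanComputations}. Setting $c = \frac{\alpha(n-1)+1}{\alpha(n-1)}$, the relations $L^{(\alpha)}(\lambda,\rho) = c\, M^{(\alpha)}(\lambda,\rho)$ for $\rho \neq \lambda$, together with both rows summing to $1$, give $L^{(\alpha)}(\lambda,\lambda) = 1 - c + c\, M^{(\alpha)}(\lambda,\lambda)$. Consequently
\[
\sum_{\rho} L^{(\alpha)}(\lambda,\rho)\theta_\nu(\rho)
= (1-c)\theta_\nu(\lambda) + c \sum_{\rho} M^{(\alpha)}(\lambda,\rho)\theta_\nu(\rho).
\]
Solving for the $M^{(\alpha)}$-sum and inserting the value already obtained for the $L^{(\alpha)}$-sum gives
\[
\sum_\rho M^{(\alpha)}(\lambda,\rho)\theta_\nu(\rho)
= \frac{1}{c}\left(1 - \frac{k(\alpha(n-1)+1)}{\alpha n(n-1)} - (1-c)\right)\theta_\nu(\lambda);
\]
a short algebraic simplification (factoring $(n-k)$ and cancelling $\alpha(n-1)+1$ against $c$) reduces this to the clean multiplier $1 - k/n$, as required.

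The main obstacles are essentially bookkeeping: one must carefully track the normalising factors $z_\mu$, $\alpha^{\ell(\mu)}$, $j_\rho^{(\alpha)}$ in \eqref{eq:TransitionL} so that the orthogonality identity produces the correct residual constants, and one must check that the algebraic simplification $\frac{1}{c}(1 - \frac{k(\alpha(n-1)+1)}{\alpha n(n-1)} - (1-c)) = 1 - k/n$ is genuine. Once the explicit formulas in Lemma~\ref{lem:identities} are plugged in, both computations are essentially forced, and the tower-property step requires no additional work because the resulting expressions are scalar multiples of the coordinate $W_k$ of $\tilde W_d$.
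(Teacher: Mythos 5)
Your proof is correct, and it reaches the result by a mildly but genuinely different route than the paper. The paper imports Fulman's result that $(\theta_\mu(\lambda))_{\lambda\vdash n}$ is an eigenvector of $M^{(\alpha)}$ with eigenvalue $d_\mu = 1 + \tfrac{\alpha(n-1)}{\alpha(n-1)+1}\bigl(\tfrac{z_\mu}{\alpha^{n-\ell(\mu)}n!}\theta_\mu((n-1,1))-1\bigr)$, simplifies this via Lemma~\ref{lem:identities}\eqref{eq:(n-1,1)} to $d_\mu = m_1(\mu)/n$, and then deduces the $L^{(\alpha)}$ identity from the $M^{(\alpha)}$ one via Proposition~\ref{prop:FulmanComputations}. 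You go in the opposite direction: you compute the $L^{(\alpha)}$-eigenvalue directly from the spectral expansion \eqref{eq:TransitionL} together with the orthogonality relation of Lemma~\ref{lem:identities}\eqref{eq:orthogonality}, and only then transfer to $M^{(\alpha)}$ through the affine relation $L^{(\alpha)}(\lambda,\rho)=c\,M^{(\alpha)}(\lambda,\rho)$ off the diagonal. What your route buys is self-containedness: every ingredient you use is stated in the paper itself, whereas the paper's proof leans on an external eigenvector statement from Fulman whose own proof is essentially your orthogonality computation. I checked your algebra: the collapse to $\tfrac{z_\nu\alpha^{\ell(\nu)}}{\alpha^n n!}\theta_\nu(\lambda)\theta_\nu((n-1,1))$, the evaluation $\tfrac{(\alpha(n-1)+1)(n-k)-n}{\alpha n(n-1)} = 1-\tfrac{k(\alpha(n-1)+1)}{\alpha n(n-1)}$, and the final simplification $\tfrac{1}{c}\bigl(-\tfrac{k(\alpha(n-1)+1)}{\alpha n(n-1)}\bigr)+1 = 1-\tfrac{k}{n}$ with $c=\tfrac{\alpha(n-1)+1}{\alpha(n-1)}$ all hold. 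Your tower-property argument for replacing conditioning on $\lambda$ by conditioning on $\tilde W_d$ is the same as the paper's.
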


\begin{proof}
    By definition, 
    \[(\esper_n^{(\alpha)})^{\lambda}(W_k^*)=
    \sum_{\lambda^*} M(\lambda,\lambda^*) W_k^*(\lambda^*).\]
    From \cite[Proposition 6.2.]{FulmanFluctuationChA2},
    we have that $(\theta_\mu(\lambda))_{\lambda \vdash n}$ is an eigenvector of $M^{(\alpha)}$ with eigenvalue
\[d_\mu := 1 + \frac{\alpha(n-1)}{\alpha(n-1)+1}\left( \frac{z_\mu}{\alpha^{n-\ell(\mu)}n!}\theta_\mu((n-1,1)) - 1\right). \]
Using Lemma~\ref{lem:identities}-(2), this eigenvalue can be simplified to 
(surprisingly, this was not noticed by Fulman)
\[d_\mu=                                                                  
 \frac{(\alpha(n-1)+1)m_1(\mu)}{n(\alpha (n-1) + 1)} = \frac{m_1(\mu)}{n}.\]

As $(W_k^*(\lambda))_{\lambda \vdash n}$ is a multiple of 
$(\theta_\mu(\lambda))_{\lambda \vdash n}$,
it is also an eigenvector of $M^{(\alpha)}$, with the same eigenvalue.
Hence, 
\[(\esper_n^{(\alpha)})^{\lambda}(W_k^*) (\lambda)= d_{(k,1^{n-k})}
W_k^*(\lambda).\]
 For $\mu=(k,1^{n-k})$, we have $d_{(k,1^{n-k})}=1-\frac{k}{n}$,
 which finishes the proof of the second equality of the first statement.

 In particular, we see that $(\esper_n^{(\alpha)})^{\lambda}(W_k^*)$
 depends only on $W_k$ and hence on $\tilde{W}_d$.
 As, conversely,  $\tilde{W}_d$ is determined by $\lambda$, one has
 \[(\esper_n^{(\alpha)})^{\tilde{W}_d}(W_k^*) =
 (\esper_n^{(\alpha)})^{\lambda}(W_k^*). \]

The statement for $W_k'$ follows easily from the one for $W_k^\star$,
using Proposition~\ref{prop:FulmanComputations}.
\end{proof}

\begin{corollary}
\label{cor:SteinCondition1}
Let $d \in \N$. Then
\[ (\esper_n^{(\alpha)})^{\tilde{W}_d}(\tilde{W}_d^* - \tilde{W}_d) = -\Lambda \tilde{W}_d, \]
with $\Lambda_{i,j} = \delta_{i,j}\frac{i+1}{n}$. In particular,
with the notation of Theorem~\ref{theo:MultivariateStein},
\[ \lambda^{(i)} = \frac{n}{i+1}.\]
\end{corollary}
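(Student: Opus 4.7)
The plan is to obtain this corollary as an essentially immediate bookkeeping consequence of Proposition~\ref{prop:SteinCondition1}. First I would subtract $W_k$ from both sides of the identity $(\esper_n^{(\alpha)})^{\tilde{W}_d}(W_k^*) = (1 - k/n) W_k$ proved there, which gives
\[ (\esper_n^{(\alpha)})^{\tilde{W}_d}(W_k^* - W_k) = -\frac{k}{n}\, W_k \qquad \text{for each } 2 \le k \le d+1. \]
Since by construction the $i$-th coordinate of $\tilde{W}_d$ is $W_{i+1}$, indexed by $i = 1, \dots, d$, this collection of scalar identities reassembles into the vector equation $(\esper_n^{(\alpha)})^{\tilde{W}_d}(\tilde{W}_d^* - \tilde{W}_d) = -\Lambda \tilde{W}_d$ with $\Lambda$ diagonal and $\Lambda_{i,i} = (i+1)/n$, which is exactly the claim.

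For the second statement, I would simply note that since $\Lambda$ is diagonal with positive entries, it is invertible with $(\Lambda^{-1})_{m,i} = \delta_{m,i}\, n/(i+1)$. Substituting into the definition
\[ \lambda^{(i)} = \sum_{1 \le m \le d} |(\Lambda^{-1})_{m,i}| \]
from Theorem~\ref{theo:MultivariateStein} yields $\lambda^{(i)} = n/(i+1)$ at once. There is no genuine obstacle in this corollary; it is purely a translation of Proposition~\ref{prop:SteinCondition1} into the matrix language needed so that Theorem~\ref{theo:MultivariateStein} can be applied in the next subsection. The substantive content, namely that the conditional expectation of $W_k^*$ is a scalar multiple of $W_k$ with an explicit eigenvalue, has already been extracted from Fulman's eigenvector computation for the Markov chain $M^{(\alpha)}$.
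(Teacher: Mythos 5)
Your proposal is correct and is precisely the "straightforward consequence of Proposition~\ref{prop:SteinCondition1}" that the paper intends: subtract $W_k$ from the eigenvalue identity, reindex so that the $i$-th coordinate of $\tilde{W}_d$ is $W_{i+1}$, and invert the resulting diagonal matrix. Nothing to add.
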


\begin{proof}
It is a straightforward consequence of Proposition \ref{prop:SteinCondition1}.
\end{proof}

Consider now  $\varSigma = \esper_n^{(\alpha)}(\tilde{W}_d\tilde{W}_d^t)$
as in the statement of Theorem~\ref{theo:MultivariateStein}.
This matrix is symmetric by definition, but one has to check that
it is positive definite.

For a matrix $A$, let $\lVert A \rVert:=\max_{i,j} |A_{i,j}|$ denotes
the supremum norm on matrices.

\begin{proposition}
\label{prop:SteinCondition2}
There exists a constant $A_{d,\alpha}$ which depends only on $d$ and $\alpha$ such that for any $n \geq A_{d,\alpha}$ the matrix $\varSigma$ is positive definite. Moreover,
\[ \lVert \varSigma^{1/2} - \Id \rVert = O(n^{-1/2}).\]
\end{proposition}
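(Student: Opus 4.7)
The plan is to compute $\Sigma_{k,\ell} = \esper_n^{(\alpha)}(W_k W_\ell)$ for $k,\ell \in \{2,\ldots,d+1\}$ by expanding the product $\Ch_{(k)} \Ch_{(\ell)}$ in the basis of Jack characters and using the explicit formula \eqref{eq:exp_Ch} for the expectation. Writing
\[
\esper_n^{(\alpha)}(W_k W_\ell) = \frac{1}{n^{(k+\ell)/2}\sqrt{k\ell}} \sum_{j\ge 0} g^{(\alpha)}_{(k),(\ell);1^j}\,(n)_j,
\]
the sum is finite because $g^{(\alpha)}_{(k),(\ell);1^j}=\delta_{1^j,(k,\ell)}=0$ for $2j\ge k+\ell+2$ (Lemma~\ref{lem:SC}\eqref{eq:SC-1}), and each term of index $j$ is $O(n^{j-(k+\ell)/2})$.

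The first key step is to identify the dominant term in the diagonal case $k=\ell$: the largest admissible index is $j=k$, and Lemma~\ref{lem:SC}\eqref{eq:SC-3} gives $g^{(\alpha)}_{(k),(k);1^k}=k$, which produces contribution $(n)_k/(k\cdot n^k) = 1+O(n^{-1})$. The remaining indices $j\le k-1$ contribute at most $O(n^{-1})$, so $\Sigma_{k,k} = 1 + O(n^{-1})$. The second key step is to show the off-diagonal entries are small. When $k\ne \ell$, so that $\widetilde{(k)}\ne\widetilde{(\ell)}$, Lemma~\ref{lem:SC}\eqref{eq:SC-2} kills all terms with $2j\ge k+\ell$; and when $k+\ell$ is odd, Lemma~\ref{lem:SC}\eqref{eq:SC-4} (applied with $\rho=1^j$, $2j=k+\ell+1$) kills the top remaining index. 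Therefore
\[
\Sigma_{k,\ell} = \begin{cases} O(n^{-1}) & k\ne \ell \text{ and } k+\ell \text{ even},\\ O(n^{-1/2}) & k\ne\ell \text{ and } k+\ell \text{ odd}, \end{cases}
\]
where the bounds depend only on $d$ and $\alpha$ (via the polynomial evaluations $g^{(\alpha)}_{(k),(\ell);1^j}$, which are polynomials in $\gamma$ by Theorem~\ref{theo:struct-const}). In particular $\lVert \Sigma - \Id\rVert = O(n^{-1/2})$.

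From this, positive definiteness and the bound on $\Sigma^{1/2}-\Id$ follow by standard matrix perturbation. Writing $\Sigma = \Id + E$ with $\lVert E\rVert \le C_{d,\alpha}\, n^{-1/2}$, the operator norm satisfies $\lVert E\rVert_{\mathrm{op}} \le d\lVert E\rVert$, so for $n\ge A_{d,\alpha}:=(2dC_{d,\alpha})^2$ every eigenvalue of $\Sigma$ lies in $[1/2, 3/2]$, proving positive definiteness. Applying the analytic functional calculus (or simply the series expansion of $\sqrt{1+x}$ on the spectrum of $E$), $\Sigma^{1/2} = \Id + \tfrac{1}{2}E + O(\lVert E\rVert_{\mathrm{op}}^2)$, hence $\lVert \Sigma^{1/2}-\Id\rVert = O(n^{-1/2})$.

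The main obstacle is the bookkeeping of which structure constants $g^{(\alpha)}_{(k),(\ell);1^j}$ vanish: once Lemma~\ref{lem:SC} is in hand, the rest is essentially asymptotic analysis of a polynomial in $n$. Everything else (the Taylor bound for $\sqrt{1+E}$, the norm comparison) is routine, and the proof works uniformly in $\alpha>0$ since all structure constants involved are polynomials in $\gamma$ with coefficients depending only on the fixed indices.
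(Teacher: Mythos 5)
Your proof is correct and follows essentially the same route as the paper: expand $\esper_n^{(\alpha)}(W_kW_\ell)$ in the $\Ch_\mu$ basis via the structure constants, apply the explicit expectation formula \eqref{eq:exp_Ch}, invoke Lemma~\ref{lem:SC} to isolate the dominant diagonal term $g^{(\alpha)}_{(k),(k);1^k}=k$ and to bound the off-diagonal entries by $O(n^{-1/2})$, and conclude by matrix perturbation (your explicit $\Sigma=\Id+E$ estimate is just a quantitative version of the paper's appeal to openness of the positive-definite cone and differentiability of $A\mapsto\sqrt{A}$). One trivial slip: the index $2j=k+\ell+1$ you feed into Lemma~\ref{lem:SC}\eqref{eq:SC-4} is already excluded by item~\eqref{eq:SC-2}, so that parenthetical is redundant rather than load-bearing; the stated $O(n^{-1/2})$ bound follows from item~\eqref{eq:SC-2} alone.
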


\begin{proof}
Strictly from the definition we have that
 \begin{multline} 
 \varSigma_{i,j} = \esper_n^{(\alpha)} \left( \frac{1}{\sqrt{i+1}\sqrt{j+1} n^{(i+j+2)/2}} \Ch_{(i+1)} \Ch_{(j+1)} \right) \\
 = \frac{1}{\sqrt{i+1}\sqrt{j+1} n^{(i+j+2)/2}} \sum_\lambda g_{(i+1),(j+1);\lambda} \esper_n^{(\alpha)}(\Ch_\lambda). 
 \end{multline}
Since
\[\esper_{\PP_n^{(\alpha)}}(\Ch_\mu)=\begin{cases}
    (n)_k & \text{if }\mu=1^k \text{ for some }k \leq n,\\
    0   & \text{otherwise,}
\end{cases}\]
we have that
\[  \varSigma_{i,j} = \frac{1}{\sqrt{i+1}\sqrt{j+1} n^{(i+j+2)/2}} \sum_l g_{(i+1),(j+1);(1^l)} (n)_l, \]
and using items \eqref{eq:SC-1}, \eqref{eq:SC-2} and \eqref{eq:SC-3} of Lemma \ref{lem:SC},
we have, that
\[ \varSigma_{i,j} = \delta_{i,j} + O(n^{-1/2}).\]
In other terms,
\[ \lVert \varSigma - \Id \rVert = O(n^{-1/2}).\]
As the set of positive definite matrix is an open set of the space
of symmetric matrices, this implies that $\varSigma$ is positive definite
for $n$ big enough.

Besides, the application $A \mapsto \sqrt{A}$ is differentiable on this open set,
which implies the bound
$ \lVert \varSigma^{1/2} - \Id \rVert = O(n^{-1/2})$.
\end{proof}

\subsection{Error term}
\label{subsec:ErrorTerm}
In the previous Section, we have checked that the pair $(\tilde{W}_d, \tilde{W}_d^*)$ of the random vectors satisfies the assumptions of Theorem \ref{theo:MultivariateStein}.
In order to prove that the random vector $\tilde{W}_d$ is asymptotically Gaussian, we need to show that
quantities $A$ and $B$ from Theorem \ref{theo:MultivariateStein} vanish as $n \to \infty$. 
This section is devoted to making these calculations. 

\begin{lemma}
\label{lem:fourth_moment}
The following inequality holds:
\begin{multline}
\Var_n^{(\alpha)}\left((\esper_n^{(\alpha)})^{\tilde{W}_d}(W_i - W_i^*)(W_j - W_j^*)\right) \leq \frac{1}{ij n^{i+j+2}}\\
\times \left(\sum_{\mu_1, \mu_2, l \atop 
{\tiny |\mu_1|+\ell(\mu_1) \le i+j \atop |\mu_2|+\ell(\mu_2) \le i+j} }
H^{(i,j)}_{\mu_1, \mu_2; (1^l)} (n)_l - (i+j)^2\left(\sum_{l \ge 0} g_{(i), (j); l} (n)_l \right)^2 \right),
\end{multline}
where 
\[ H^{(i,j)}_{\mu_1, \mu_2; \mu} := (i+j-|\mu_1|+m_1(\mu_1))(i+j-|\mu_2|+m_1(\mu_2))g_{(i),(j); \mu_1}g_{(i),(j); \mu_2}g_{\mu_1,\mu_2; \mu}. \]
\end{lemma}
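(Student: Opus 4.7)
The plan is to combine the eigenvector structure of the Markov chain $M^{(\alpha)}$ acting on Jack characters with the formula $\esper_n^{(\alpha)}[\Ch_\mu] = (n)_l \delta_{\mu,1^l}$ and the law of total variance. The key preliminary is a refinement of Proposition \ref{prop:SteinCondition1}: the argument given in its proof shows that $\theta_\nu$, restricted to partitions of size $n$, is an eigenvector of $M^{(\alpha)}$ with eigenvalue $m_1(\nu)/n$. Since $\Ch_\mu(\lambda)$ is a scalar multiple of $\theta_{\mu \cup 1^{n-|\mu|}}(\lambda)$ whenever $|\mu|\leq n$, it follows that $\Ch_\mu$ is itself an eigenvector of $M^{(\alpha)}$ with eigenvalue $(n-|\mu|+m_1(\mu))/n$.

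Expanding $W_iW_j = \frac{1}{\sqrt{ij}\,n^{(i+j)/2}} \sum_\mu g_{(i),(j);\mu}\Ch_\mu$ in the Jack character basis, applying this eigenvalue relation to obtain $(\esper_n^{(\alpha)})^\lambda(W_i^*W_j^*)$, and then using $(\esper_n^{(\alpha)})^\lambda(W_k^*) = (1-k/n)W_k$ for the cross-terms, an elementary computation yields
\[
f(\lambda) := (\esper_n^{(\alpha)})^\lambda[(W_i-W_i^*)(W_j-W_j^*)] = \frac{1}{\sqrt{ij}\,n^{(i+j+2)/2}} \sum_\mu (i+j-|\mu|+m_1(\mu))\, g_{(i),(j);\mu}\,\Ch_\mu(\lambda).
\]
Since $\tilde W_d$ is a function of $\lambda$, the tower property gives $(\esper_n^{(\alpha)})^{\tilde W_d}[(W_i-W_i^*)(W_j-W_j^*)] = (\esper_n^{(\alpha)})^{\tilde W_d}[f(\lambda)]$, and the law of total variance then bounds its variance by $\Var_n^{(\alpha)}(f(\lambda))$. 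Writing $\Var(f)=\esper(f^2)-\esper(f)^2$ and expanding via $\Ch_{\mu_1}\Ch_{\mu_2}=\sum_\mu g_{\mu_1,\mu_2;\mu}\Ch_\mu$ together with $\esper[\Ch_\mu] = (n)_l \delta_{\mu,1^l}$ produces the expression in the lemma (the factor $(i+j)^2$ appears because on $\mu=1^l$ the prefactor $i+j-|\mu|+m_1(\mu)$ equals $i+j$), up to the restriction of the summation range on $\mu_1$ and $\mu_2$.

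It remains to verify that only partitions $\mu_k$ with $|\mu_k|+\ell(\mu_k)\leq i+j$ contribute. Theorem \ref{theo:struct-const} implies that $g_{(i),(j);\mu}$ vanishes as soon as $|\mu|+\ell(\mu)>i+j+2$. In the extremal case $|\mu|+\ell(\mu)=i+j+2$, item (1) of Lemma \ref{lem:SC} forces $\mu$ to be the sorted two-part partition with entries $i$ and $j$; since $i,j\geq 2$ one has $m_1(\mu)=0$, so the prefactor $i+j-|\mu|+m_1(\mu)$ vanishes and the term drops out. In the intermediate case $|\mu|+\ell(\mu)=i+j+1$, item (4) of Lemma \ref{lem:SC} yields $g_{(i),(j);\mu}=0$. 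The only genuinely non-routine step in the argument is the spectral identification of $\Ch_\mu$ as an eigenvector of $M^{(\alpha)}$; once this is recognised, the rest of the proof is careful bookkeeping with structure constants.
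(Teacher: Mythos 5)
Your proof is correct, and it reaches the key identity
$(\esper_n^{(\alpha)})^{\lambda}\big[(W_i-W_i^*)(W_j-W_j^*)\big]=\tfrac{1}{\sqrt{ij}\,n^{(i+j+2)/2}}\sum_{\mu}(i+j-|\mu|+m_1(\mu))\,g_{(i),(j);\mu}\,\Ch_{\mu}$ by a genuinely shorter route than the paper. The paper first passes from $M^{(\alpha)}$ to the auxiliary chain $L^{(\alpha)}$ via Proposition \ref{prop:FulmanComputations}, then computes $(\esper_n^{(\alpha)})^{\lambda}(W_i'W_j')$ from the explicit double sum defining $L^{(\alpha)}$, using both the orthogonality relation and the special value $\theta_{\mu}((n-1,1))$ from Lemma \ref{lem:identities}, before converting back. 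You instead observe that each $\Ch_{\mu}$, being a scalar multiple of $\theta_{\mu\cup 1^{n-|\mu|}}$ on diagrams of size $n$, is an eigenvector of $M^{(\alpha)}$ with eigenvalue $(n-|\mu|+m_1(\mu))/n$ --- a fact the paper establishes (via Fulman) in the proof of Proposition \ref{prop:SteinCondition1} but exploits only for the single characters $W_k$, not for the product $W_iW_j$ expanded in the $\Ch$ basis. Applying the eigenvalue termwise to that expansion eliminates the detour through $L^{(\alpha)}$ and all of Lemma \ref{lem:identities}; the cancellation $\tfrac{n-|\mu|+m_1(\mu)}{n}-2+\tfrac{i+j}{n}+1=\tfrac{i+j-|\mu|+m_1(\mu)}{n}$ does the rest. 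The remaining steps --- conditional Jensen (equivalently, the law of total variance) to pass from conditioning on $\tilde{W}_d$ to conditioning on $\lambda$, the restriction of the summation to $|\mu|+\ell(\mu)\le i+j$ via Theorem \ref{theo:struct-const} and items (1) and (4) of Lemma \ref{lem:SC}, and the final expansion using $\esper_n^{(\alpha)}(\Ch_{\mu})=(n)_l\,\delta_{\mu,1^l}$ --- coincide with the paper's, and your treatment of the borderline cases $|\mu|+\ell(\mu)\in\{i+j+1,i+j+2\}$ is, if anything, slightly more carefully argued than the paper's one-line remark. What the paper's longer computation buys is essentially nothing for this lemma; your spectral shortcut is the cleaner argument.
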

Note that the sums in the Lemma are clearly finite.

\begin{proof}
Following Fulman \cite[Proof of Proposition 6.4]{FulmanFluctuationChA2},
from Jensen's inequality for conditional expectations,
the fact that $\tilde{W}_d$ is determined by $\lambda$ implies that
\[ \esper_n^{(\alpha)}\left((\esper_n^{(\alpha)})^{\tilde{W}_d}(W_i - W_i^*)(W_j - W_j^*)\right)^2 \leq \esper_n^{(\alpha)}\left((\esper_n^{(\alpha)})^\lambda(W_i - W_i^*)(W_j - W_j^*)\right)^2. \]
Fix a partition $\lambda$ of $n$. We have, by Proposition~\ref{prop:FulmanComputations}, that
\begin{multline}
\label{eq:blablabla}
(\esper_n^{(\alpha)})^\lambda(W_i^* - W_i)(W_j^* - W_j) = \frac{\alpha(n-1)}{\alpha(n-1)+1}(\esper_n^{(\alpha)})^\lambda(W_i' - W_i)(W_j' - W_j)\\
 = \frac{\alpha(n-1)}{\alpha(n-1)+1}\left( (\esper_n^{(\alpha)})^\lambda(W_i'W_j') - (\esper_n^{(\alpha)})^\lambda(W_i')W_j - (\esper_n^{(\alpha)})^\lambda(W_j')W_i + W_iW_j \right)\\
 = \frac{\alpha(n-1)}{\alpha(n-1)+1}\left( (\esper_n^{(\alpha)})^\lambda(W_i'W_j') + \left( \frac{(i+j)(\alpha(n-1)+1)}{\alpha n(n-1)}-1\right) W_iW_j \right),
 \end{multline}
where the last equality follows from Proposition \ref{prop:SteinCondition1}.
But the product $W_i W_j$ expands as
\begin{equation}
    W_i W_j = \frac{1}{\sqrt{ij} n^{(i+j)/2}} \sum_{|\mu| \leq i+j+2} g_{(i),(j); \mu} \Ch_\mu.
    \label{eq:prod_W}
\end{equation}
Thus, strictly from the definition of $L^{(\alpha)}$, one has
\begin{multline}
    (\esper_n^{(\alpha)})^\lambda(W_i'W_j') = \sum_{\rho \vdash n} L(\lambda,\rho) W_i(\rho) W_j(\rho)
    \\= \sum_{\tau \vdash n}\theta_\tau(\lambda)\theta_\tau(n-1,1)
    \frac{(z_\tau)^2\alpha^{2\ell(\tau)}}{\alpha^n n!\sqrt{ij} n^{(i+j)/2}}
 \sum_{|\mu| \leq i+j+2} g_{(i),(j); \mu} \sum_{\rho \vdash n}
\frac{\Ch_\mu(\rho)\theta_\tau(\rho)}{j_\rho^{(\alpha)}}.
\end{multline}
We may assume $n \ge i+j$.
Recall that $\Ch_\mu(\rho)$ is a multiple of $\theta_{\mu 1^{n-|\mu|}}(\rho)$
and, hence, using  Lemma~\ref{lem:identities} \eqref{eq:orthogonality},
only terms corresponding to $\tau=\mu 1^{n-|\mu|}$ survives
and we get:
\begin{multline}
    (\esper_n^{(\alpha)})^\lambda(W_i'W_j') =
    \frac{1}{\sqrt{ij} n^{(i+j)/2}} \sum_{|\mu| \leq i+j+2} g_{(i),(j); \mu} \Ch_\mu(\lambda)\\
    \theta_{\mu\cup 1^{n-|\mu|}}(n-1,1)
    \frac{z_{\mu\cup 1^{n-|\mu|}}\alpha^{\ell({\mu\cup 1^{n-|\mu|}})}}{\alpha^n n!}.
\end{multline}
We now apply Lemma~\ref{lem:identities} \eqref{eq:(n-1,1)}:
\begin{multline}
(\esper_n^{(\alpha)})^\lambda(W_i'W_j') = \frac{1}{\sqrt{ij} n^{(i+j)/2}} 
\sum_{|\mu| \le i+j+2} g_{(i),(j); \mu} \Ch_\mu(\lambda) \\
\times \frac{(\alpha(n-1)+1)(n-|\mu|+m_1(\mu))-n}{\alpha n(n-1)}.
\end{multline}

One can substitute above equation and equation \eqref{eq:prod_W} to the equation \eqref{eq:blablabla} and
simplify it to obtain
\begin{multline}
    \label{eq:2nd_moment_wi*-wi}
(\esper_n^{(\alpha)})^\lambda(W_i^* - W_i)(W_j^* - W_j) \\
= \frac{1}{\sqrt{ij} n^{(i+j)/2}} 
\sum_{|\mu| \le i+j+2} \frac{i+j-|\mu|+m_1(\mu)}{n} g_{(i),(j); \mu} \Ch_\mu(\lambda).
\end{multline}
Notice that $g_{(i),(j); \mu}=0$ for $|\mu|+\ell(\mu) > i+j$, unless $\mu=(i,j)$
(Lemma \ref{lem:SC} \eqref{eq:SC-4}).
In the latter case ($\mu=(i,j)$), 
the numerical factor $i+j-|\mu|+m_1(\mu)$ vanishes.
It gives that the summation in equation~\eqref{eq:2nd_moment_wi*-wi} can be restricted
to partitions $\mu$ with $|\mu|+\ell(\mu) \le i+j$.

Taking the square, it gives us
\begin{multline}
\esper_n^{(\alpha)}\left((\esper_n^{(\alpha)})^\lambda(W_i^* - W_i)(W_j^* - W_j) \right)^2 \\
= \frac{1}{ij n^{i+j+2}}\esper_n^{(\alpha)}\left( \sum_{|\mu|+\ell(\mu) \le i+j} (i+j-|\mu|+m_1(\mu)) g_{(i),(j); \mu} \Ch_\mu(\lambda)\right)^2 \\
= \frac{1}{ij n^{i+j+2}}
\sum_{\mu_1, \mu_2, \mu \atop 
{\tiny |\mu_1|+\ell(\mu_1) \le i+j \atop |\mu_2|+\ell(\mu_2) \le i+j} }
H^{(i,j)}_{\mu_1, \mu_2; \mu}\esper_n^{(\alpha)}(\Ch_\mu(\lambda)) \\
= \frac{1}{ij n^{i+j+2}} \sum_{\mu_1, \mu_2, l\atop                   
{\tiny |\mu_1|+\ell(\mu_1) \le i+j \atop |\mu_2|+\ell(\mu_2) \le i+j} }
H^{(i,j)}_{\mu_1, \mu_2; (1^l)} (n)_l,
\end{multline}
where $H^{(i,j)}_{\mu_1, \mu_2; \mu}$ is defined as in the statement 
of the lemma. 
The last equality comes from the easy formula for the expectation of $\Ch_\mu$,
see equation~\eqref{eq:exp_Ch}.

Let us now analyse 
$\left(\esper_n^{(\alpha)}\left( (\esper_n^{(\alpha)})^{\tilde{W}_d}(W_i^* - W_i)(W_j^* - W_j)\right) \right)$.
After expanding the product, each term can be dealt with as follows:
\begin{align*}
    \esper_n^{(\alpha)}\left( (\esper_n^{(\alpha)})^{\tilde{W}_d}( W_i^* W_j^*) \right)&= \esper_n^{(\alpha)}(W_i^* W_j^*)=
    \esper_n^{(\alpha)}(W_i W_j) \\
    \esper_n^{(\alpha)}\left( (\esper_n^{(\alpha)})^{\tilde{W}_d}( W_i^* W_j) \right)&= \esper_n^{(\alpha)}\left(W_j (\esper_n^{(\alpha)})^{\tilde{W}_d}( W_i^*)\right)=\left( 1-\frac{i}{n} \right) \esper_n^{(\alpha)} (W_i W_j)\\
\esper_n^{(\alpha)}\left( (\esper_n^{(\alpha)})^{\tilde{W}_d}( W_i^* W_j) \right)&=
    \left( 1-\frac{j}{n} \right) \esper_n^{(\alpha)} (W_i W_j).\\
    \esper_n^{(\alpha)}\left( (\esper_n^{(\alpha)})^{\tilde{W}_d}( W_i W_j) \right)&=
    \esper_n^{(\alpha)}(W_i W_j) 
\end{align*}
The first equation comes from the fact that $\tilde{W}_d^*$ and $\tilde{W}_d$
have the same distribution, while the second one is a consequence of Proposition~\ref{prop:SteinCondition1}.
The third one is similar to the second one.
Therefore
\begin{multline}
\left(\esper_n^{(\alpha)}\left( (\esper_n^{(\alpha)})^{\tilde{W}_d}(W_i^* - W_i)(W_j^* - W_j)\right) \right)^2\\
= \left( \frac{i+j}{n} \esper_n^{(\alpha)}(W_iW_j) \right)^2 = \frac{(i+j)^2}{ij n^{i+j+2}}\left(\sum_\mu g_{(i), (j); \mu} \esper_n^{(\alpha)}(\Ch_\mu)\right)^2\\
=\frac{(i+j)^2}{ij n^{i+j+2}} \left(\sum_l g_{(i), (j);l} (n)_l\right)^2,
\end{multline}
where we used equations~\eqref{eq:prod_W} and \eqref{eq:exp_Ch}
in the second and third equalities.
We finish the proof by the following inequality:
\begin{multline}
\Var_n^{(\alpha)}\left((\esper_n^{(\alpha)})^{\tilde{W}_d}(W_i - W_i^*)(W_j - W_j^*)\right) \\
= \esper_n^{(\alpha)}\left((\esper_n^{(\alpha)})^{\tilde{W}_d}(W_i^* - W_i)(W_j^* - W_j) \right)^2 - \left(\esper_n^{(\alpha)}\left( (\esper_n^{(\alpha)})^{\tilde{W}_d}(W_i^* - W_i)(W_j^* - W_j)\right) \right)^2 \\
\leq \esper_n^{(\alpha)}\left((\esper_n^{(\alpha)})^\lambda(W_i^* - W_i)(W_j^* - W_j) \right)^2 - \left(\esper_n^{(\alpha)}\left( (\esper_n^{(\alpha)})^{\tilde{W}_d}(W_i^* - W_i)(W_j^* - W_j)\right) \right)^2.
\end{multline}
\end{proof}

\begin{proposition}
\label{prop:ErrorTermA}
Let $d \in \N$ and let 
\[ A = \sum_{2 \leq i,j \leq d+1} \frac{n}{i} \sqrt{\Var_n^{(\alpha)}\left((\esper_n^{(\alpha)})^{\tilde{W}_d}(W_i - W_i^*)(W_j - W_j^*)\right)}.\]
\end{proposition}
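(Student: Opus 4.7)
The plan is to establish the bound $A = O(n^{-1/2})$. Since $A$ is a finite sum of $O(d^2)$ terms of the form $\frac{n}{i}\sqrt{\Var_n^{(\alpha)}(\cdots)}$, it suffices to show that each such variance is $O(n^{-3})$. By Lemma~\ref{lem:fourth_moment}, this reduces to proving that the bracketed expression
\[ \Delta_{i,j}(n) := \sum_{\mu_1,\mu_2,l} H^{(i,j)}_{\mu_1,\mu_2;(1^l)}\,(n)_l \;-\; (i+j)^2\!\left(\sum_{l\ge 0} g_{(i),(j);(1^l)}(n)_l\right)^{\!2} \]
is a polynomial in $n$ of degree at most $i+j-1$.

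The key idea is to split the double sum according to whether each $\mu_k$ consists only of parts equal to $1$. Since $\Ch_{1^s}(\lambda) = (|\lambda|)_s$ depends only on $|\lambda|$, the product $\Ch_{1^s}\Ch_{1^t}$ expands only on $(\Ch_{(1^l)})_l$; more generally, using the identity $\Ch_{\mu\cup 1^j}(\lambda) = (|\lambda|-|\mu|)_j \Ch_\mu(\lambda)$, one checks that $\Ch_{1^s}\Ch_\mu$ expands only on $(\Ch_{\mu\cup 1^j})_j$. This produces two cancellations: \textbf{(a)} when both $\mu_1,\mu_2$ are of the form $1^s$, the prefactors $(i+j-|\mu_k|+m_1(\mu_k))$ both simplify to $i+j$ and a direct computation shows the contribution equals $(i+j)^2\bigl(\sum_l g_{(i),(j);(1^l)}(n)_l\bigr)^2$, exactly the subtracted term; \textbf{(b)} when exactly one of $\mu_1,\mu_2$ is of the form $1^s$, the observation above yields $g_{1^s,\mu_k;(1^l)} = 0$, so that contribution vanishes. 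Hence $\Delta_{i,j}(n)$ reduces to the sum over pairs $(\mu_1,\mu_2)$ in which both partitions contain at least one part of size $\geq 2$.

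To bound this surviving sum, we combine two consequences of Theorem~\ref{theo:struct-const} and Lemma~\ref{lem:SC}. On the one hand, $g_{(i),(j);\mu_k}\neq 0$ forces $n_1(\mu_k)\leq i+j+2$; when equality holds, Lemma~\ref{lem:SC}(1) gives $\mu_k = (i,j)$, in which case the prefactor $(i+j-|\mu_k|+m_1(\mu_k))$ vanishes, so effectively $n_1(\mu_k)\leq i+j$. On the other hand, since $\mu_1\cup\mu_2$ contains at least one part $\geq 2$ and is therefore never of the form $(1^l)$, Lemma~\ref{lem:SC}(1) applied to $g_{\mu_1,\mu_2;(1^l)}$ forces this coefficient to vanish whenever $2l \geq n_1(\mu_1)+n_1(\mu_2)$. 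Thus the relevant $l$ satisfy $l \leq (n_1(\mu_1)+n_1(\mu_2))/2 - 1 \leq i+j-1$, so the surviving sum is indeed a polynomial of degree at most $i+j-1$ in $n$. This yields $\Var_n^{(\alpha)}(\cdots) = O(n^{-3})$, $\sqrt{\Var} = O(n^{-3/2})$, and summing over the finite range of $(i,j)$ gives $A = O(n^{-1/2})$.

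The main obstacle is recognizing the Case~(a) cancellation: one must verify that the contribution from both $\mu_k$ of pure $1$-part type factorizes as a squared sum matching the subtracted term, which relies on the factorization identities for products involving $\Ch_{1^s}$. The subsequent degree analysis is then a straightforward application of the structural results of Section~\ref{SectStructureConstants}, the crucial point being that Lemma~\ref{lem:SC}(1) excludes $\rho = (1^l)$ as a top-degree contribution in $g_{\mu_1,\mu_2;\rho}$ precisely because $\mu_1\cup\mu_2$ necessarily carries a part of size $\geq 2$.
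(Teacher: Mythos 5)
Your proof is correct, and it reaches the bound $A=O(n^{-1/2})$ by a genuinely different organization of the key cancellation than the paper's. The paper does not split the sum by the shape of $(\mu_1,\mu_2)$; instead it computes the leading ($n^{i+j}$-order) coefficient of each of the two sums separately and checks that both equal $\delta_{i,j}\,4i^4$, using the explicit value $g_{(k),(k);(1^k)}=k$ from Lemma~\ref{lem:SC}\eqref{eq:SC-3} together with items \eqref{eq:SC-1} and \eqref{eq:SC-2}, and then bounds everything else by $O(n^{i+j-1})$. Your block decomposition is cleaner in that the whole $(1^s,1^t)$ block cancels \emph{exactly} against the subtracted square (via $\sum_l g_{(1^s),(1^t);(1^l)}(n)_l=(n)_s(n)_t$), so you never need the explicit value $k$ of $g_{(k),(k);(1^k)}$ — only its existence; your case (b) and the remaining block are then handled by the same vanishing arguments (Lemma~\ref{lem:SC}\eqref{eq:SC-1} applied to $g_{\mu_1,\mu_2;(1^l)}$ with $\mu_1\cup\mu_2\neq(1^l)$) that the paper uses. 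One point you should make explicit for the case (a) cancellation to be literally exact: the first sum in Lemma~\ref{lem:fourth_moment} is restricted to $n_1(\mu_k)\le i+j$, whereas the subtracted square runs over all $l\ge 0$; you therefore need $g_{(i),(j);(1^s)}=0$ for $2s>i+j$, which does hold (the case $2s=i+j+2$ by Lemma~\ref{lem:SC}\eqref{eq:SC-1}, the case $2s=i+j+1$ by Lemma~\ref{lem:SC}\eqref{eq:SC-4}, and $2s>i+j+2$ by the degree bound of Theorem~\ref{theo:struct-const}), but it is not automatic from what you wrote. With that observation added, the argument is complete.
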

Then $A = O(n^{-1/2})$.

\begin{proof}
By Lemma \ref{lem:fourth_moment} we need to estimate the following sum:
\begin{equation}\label{eq:to_bound}
    \left(
\sum_{\mu_1, \mu_2, l \ge 1 \atop 
{\tiny |\mu_1|+\ell(\mu_1) \le i+j \atop |\mu_2|+\ell(\mu_2) \le i+j} }
H^{(i,j)}_{\mu_1, \mu_2; (1^l)}(n)_l - (i+j)^2\sum_{l,k} g_{(i), (j); (1^l)}g_{(i), (j); (1^k)} (n)_l(n)_k \right)^{1/2}.
\end{equation}
Let us first consider the second sum, which is simpler.
Suppose that $i \neq j$.
Then, by Lemma \ref{lem:SC} \eqref{eq:SC-2},
summands corresponding to $l \ge (i+j)/2$ or $k \ge (i+j)/2$ vanish
and the sum is $O(n^{i+j-1})$.

Consider now the case $i=j$ (recall that $i,j \ge 2$).
We use this time Lemma \ref{lem:SC} \eqref{eq:SC-1}:
summands corresponding to $l \ge i+1$ or $k \ge i+1$ vanish.
Therefore there is no summands of order $n^{i+j+1}$.
The unique summand of order $n^{i+j}$ (corresponding to $l=k=i$) is
$i^2 \, (n)_i^2$ by Lemma \ref{lem:SC} \eqref{eq:SC-3}.

Finally, we have that
\begin{equation}\label{eq:estimate_sum_g2}
    (i+j)^2\sum_{l,k} g_{(i), (j); (1^l)}g_{(i), (j); (1^k)} (n)_l(n)_k =
\delta_{i,j} 4\, i^4 n^{i+j} + O(n^{i+j-1}).
\end{equation}
Let us describe the terms corresponding to $l \ge i+j$ 
in the first sum of \eqref{eq:to_bound}.
By Lemma \ref{lem:SC} \eqref{eq:SC-1},
$g_{\mu_1,\mu_2,(1^l)}$, and hence $H^{(i,j)}_{\mu_1,\mu_2,(1^l)}$, vanishes
unless
\[ |\mu_1|+\ell(\mu_1) + |\mu_2|+\ell(\mu_2) \ge 2l \ge 2(i+j).\]
Comparing it with the conditions under the first summation symbol of \eqref{eq:to_bound}, we have, in fact, equalities instead of inequalities above, {\em i.~e.}
\[ l = i+j = |\mu_1|+\ell(\mu_1) = |\mu_2|+\ell(\mu_2).\]
Moreover, in this case, by Lemma \ref{lem:SC} \eqref{eq:SC-1},
we have that $g_{\mu_1, \mu_2; (1^{i+j})} = 0$ unless $\mu_1 \cup \mu_2 = 1^{i+j}$, which means that
$\mu_1 = \mu_2 = (1^{(i+j)/2})$
(in particular, $i+j$ must be even).
Then, by Lemma \ref{lem:SC} \eqref{eq:SC-2},
one has that $g_{(i),(j);(1^{(i+j)/2})} = 0$ unless $i=j$.
Therefore, $H^{(i,j)}_{\mu_1, \mu_2; (1^l)}=0$ if $i \neq j$ and $l \ge i+j$.

Additionally, using the definition of $H^{(i,j)}_{\mu_1,\mu_2,\mu}$,
the observations above and
Lemma \ref{lem:SC} \eqref{eq:SC-3},
one has that $H^{(i,i)}_{(1^i), (1^i); (1^{2i})} = 4 \, i^4$. Concluding,
\begin{equation}\label{eq:estimate_sum_H}
    \sum_{\mu_1, \mu_2, l}H^{(i,j)}_{\mu_1, \mu_2; (1^l)}(n)_l = \delta_{i,j}
    4\, i^4 n^{i+j} + O(n^{i+j-1}).
\end{equation}
Comparing equations~\eqref{eq:estimate_sum_g2} and \eqref{eq:estimate_sum_H},
it gives
\begin{multline}
\left(\sum_{\mu_1, \mu_2, l}H^{(i,j)}_{\mu_1, \mu_2; (1^l)}(n)_l - (i+j)^2\sum_{l,k} g_{(i), (j); (1^l)}g_{(i), (j); (1^k)} (n)_l(n)_k \right)^{1/2}\\
 = O(n^{(i+j-1)/2}),
 \label{eq:TechnicalBound}
\end{multline}
which implies that
\[ A = \sum_{2 \leq i,j \leq d+1} \frac{1}{i\sqrt{ij} n^{(i+j)/2}}O(n^{(i+j-1)/2}) = O(n^{-1/2}),\]
which finishes the proof.
\end{proof}

\begin{lemma}
\label{lem:momentsDegree}
For any $k \geq 2$, and $\lambda \vdash n$ there exists $B_{\alpha,k} \in \RR$, which depends only on $k$ and $\alpha$ such that
\[ |M_k(\lambda)| \leq B_{\alpha,k} \max(\lambda_1, \lambda_1')^k .\]
\end{lemma}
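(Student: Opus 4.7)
The plan is to use the probabilistic interpretation of $M_k(\lambda)$ as a moment of the transition measure and bound the support of this measure in terms of $\lambda_1$ and $\lambda_1'$.

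Recall from Section~\ref{subsect:TransMeasure} that $M_k^{(1)}(\mu)$ is the $k$-th moment of the transition measure $\mu_\nu$, which is a probability measure supported on the set $\II_\nu$ of contents of the inner corners of $\nu$. By the definition of $\alpha$-anisotropic moments, $M_k(\lambda) = M_k^{(\alpha)}(\lambda) = M_k^{(1)}(A_\alpha(\lambda))$, so $M_k(\lambda)$ is the $k$-th moment of $\mu_{A_\alpha(\lambda)}$.

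Since $\mu_{A_\alpha(\lambda)}$ is a probability measure, one has the trivial bound
\[ |M_k(\lambda)| \leq \max_{z \in \II_{A_\alpha(\lambda)}} |z|^k. \]
The first step is then to describe $\II_{A_\alpha(\lambda)}$: by construction, the inner corners of $A_\alpha(\lambda)$ correspond exactly to the inner corners of $\lambda$, and if $(x,y) \in \II_\lambda$ (viewed as a point of the plane) then the corresponding corner of $A_\alpha(\lambda)$ sits at $(\sqrt{\alpha}\, x, \sqrt{\alpha}^{-1} y)$ and has content $z = \sqrt{\alpha}\, x - \sqrt{\alpha}^{-1} y$. Every inner corner $(x,y)$ of $\lambda$ satisfies $0 \leq x \leq \lambda_1$ and $0 \leq y \leq \lambda_1'$, hence
\[ |z| \leq \sqrt{\alpha}\, \lambda_1 + \sqrt{\alpha}^{-1}\, \lambda_1' \leq (\sqrt{\alpha} + \sqrt{\alpha}^{-1})\, \max(\lambda_1, \lambda_1'). \]

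Combining the two steps yields
\[ |M_k(\lambda)| \leq \left( (\sqrt{\alpha} + \sqrt{\alpha}^{-1})\, \max(\lambda_1, \lambda_1') \right)^k, \]
so one can take $B_{\alpha,k} = (\sqrt{\alpha} + \sqrt{\alpha}^{-1})^k$. There is no real obstacle here; the only thing to be careful about is to invoke the correct convention for the $\alpha$-anisotropic rescaling of contents (this is exactly the formula $z_i = \sqrt{\alpha}\, x - \sqrt{\alpha}^{-1} y$ already used in Proposition~\ref{PropMkLo}), and to remember that $\mu_{A_\alpha(\lambda)}$ is supported on the (finite) set of inner-corner contents, so the supremum bound is achieved by a single point mass.
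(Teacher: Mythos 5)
Your proposal is correct and is essentially the paper's own proof: both identify $M_k(\lambda)$ as the $k$-th moment of the transition measure of $A_\alpha(\lambda)$, which is supported on the inner-corner contents, and bound those contents by a constant times $\max(\lambda_1,\lambda_1')$. The only cosmetic difference is that the paper bounds each content by $\max(\sqrt{\alpha}\,\lambda_1,\sqrt{\alpha}^{-1}\lambda_1')$ rather than by the sum, which changes nothing since any finite $B_{\alpha,k}$ suffices.
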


\begin{proof}
    As explained in Section \ref{SectDefKerov}, $M^{(\alpha)}_k(\lambda)=M^{(1)}_k(A_\alpha(\lambda))$
    is the $k$-th moment of the transition measure of the anisotropic digram $A_\alpha(\lambda)$.
    But this measure is supported by the contents of inner corners of $A_\alpha(\lambda)$.
    All these contents are clearly bounded in absolute value 
    by $\max(\sqrt{\alpha} \, \lambda_1, \sqrt{\alpha}^{-1} \lambda'_1)$.
    Hence the $k$-th moment of the measure is bounded by the $k$-th power of this number,
    which proves the lemma.
\end{proof}

\begin{proposition}
\label{prop:ErrorTermB}
Let $d \in \N$ and let
\[ \beta := \sum_{2 \leq i,j,k \leq d+1} \frac{n}{i} |(W_i^* - W_i)(W_j^* - W_j)(W_k^* - W_k)|.\] 
Then $B := \esper_n^{(\alpha)}(\beta) = O(n^{-1/2})$.
\end{proposition}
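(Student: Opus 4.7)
The crucial observation is that whenever $M^{(\alpha)}(\lambda,\lambda^{*})>0$ the two diagrams share an intermediate partition $\tau\vdash n-1$, that is $\lambda=\tau^{(i)}$ and $\lambda^{*}=\tau^{(j)}$ for inner corners $i,j$ of $\tau$. I will exploit this local modification to prove a pointwise estimate $|W_{k}^{*}-W_{k}|=O(n^{-1/2})$ on the event
\[ \mathcal{G}_{n}:=\Big\{\max(\lambda_{1},\lambda_{1}')\leq K\sqrt{n}\,\Big\},\qquad K:=2e\cdot\max(\sqrt{\alpha},1/\sqrt{\alpha}),\]
and then absorb the complementary event using Lemma~\ref{lem:FiniteSupport}.

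\textbf{Step 1: pointwise one-box estimate.} Expand $\Ch_{(k)}=\sum_{\rho}a^{(k)}_{\rho}M_{\rho}$, where by Proposition~\ref{PropBound1} the sum is over partitions with $\deg_{1}(M_{\rho})\leq k+1$. Corollary~\ref{CorolMlo} gives
\[ M_{\rho}(\tau^{(i)})-M_{\rho}(\tau)=\sum_{g,\pi}b^{\rho}_{g,\pi}(\gamma)\,z_{i}^{g}\,M_{\pi}(\tau),\]
and Lemma~\ref{lem:firstbound} ensures that every nonzero term satisfies $g+|\pi|\leq|\rho|-2\leq k-1$. Since $|z_{i}|\leq\sqrt{\alpha}\,\tau_{1}+\sqrt{\alpha}^{-1}\tau'_{1}$ for any inner corner $i$ of $\tau$ and Lemma~\ref{lem:momentsDegree} bounds $|M_{\pi}(\tau)|\leq C_{\pi}\max(\tau_{1},\tau_{1}')^{|\pi|}$, the triangle inequality yields the deterministic estimate
\[ \bigl|\Ch_{(k)}(\tau^{(i)})-\Ch_{(k)}(\tau)\bigr|\leq C_{k,\alpha}\,\max(\tau_{1},\tau_{1}')^{k-1}.\]

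\textbf{Step 2: contribution of $\mathcal{G}_{n}$.} Writing $\lambda=\tau^{(i)}$, $\lambda^{*}=\tau^{(j)}$, and noting that $\tau\subset\lambda$ forces $\max(\tau_{1},\tau_{1}')\leq\max(\lambda_{1},\lambda_{1}')$, the triangle inequality applied twice gives, on $\mathcal{G}_{n}$,
\[ |W_{k}^{*}-W_{k}|=\frac{|\Ch_{(k)}(\lambda^{*})-\Ch_{(k)}(\lambda)|}{\sqrt{k}\,n^{k/2}}\leq\frac{2C_{k,\alpha}(K\sqrt{n})^{k-1}}{\sqrt{k}\,n^{k/2}}=O(n^{-1/2}).\]
Hence the triple product is $O(n^{-3/2})$ on $\mathcal{G}_{n}$, and summing finitely many terms (with the weight $n/i$) yields $\esper_{n}^{(\alpha)}\bigl(\beta\,\mathbf{1}_{\mathcal{G}_{n}}\bigr)=O(n^{-1/2})$.

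\textbf{Step 3: contribution of $\overline{\mathcal{G}_{n}}$.} On the complement the crude bound $\max(\lambda_{1},\lambda_{1}')\leq n$ together with Lemmas~\ref{lem:momentsDegree} and Proposition~\ref{PropBound1} gives $|W_{k}|\leq C_{k,\alpha}n^{(k+1)/2}$, and the same for $W_{k}^{*}$, whence $\beta\leq P(n)$ for some polynomial $P$. By Lemma~\ref{lem:FiniteSupport} we have $\PP_{n}^{(\alpha)}(\overline{\mathcal{G}_{n}})=O(\exp(-c\sqrt{n}))$, so Cauchy--Schwarz gives
\[ \esper_{n}^{(\alpha)}\bigl(\beta\,\mathbf{1}_{\overline{\mathcal{G}_{n}}}\bigr)\leq P(n)\,\PP_{n}^{(\alpha)}(\overline{\mathcal{G}_{n}})=O\bigl(e^{-c'\sqrt{n}}\bigr),\]
which is negligible compared to $n^{-1/2}$. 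Combining Steps~2 and~3 finishes the proof.

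\textbf{Main obstacle.} Every ingredient is already in place: the real work is concentrated in the one-box estimate of Step~1, which crucially relies on the ``degree drop by two'' of Lemma~\ref{lem:firstbound}. Without this refined control of the polynomials $b^{\rho}_{g,\pi}(\gamma)$ one would only get $|W^{*}_{k}-W_{k}|=O(1)$ and the bound $B=O(n^{-1/2})$ would fail.
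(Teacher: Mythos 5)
Your proposal is correct and follows essentially the same route as the paper: a one-box estimate for $\Ch_{(k)}(\tau^{(i)})-\Ch_{(k)}(\tau)$ via the degree drop in Lemma~\ref{lem:firstbound} and Lemma~\ref{lem:momentsDegree}, yielding $|W_k^*-W_k|=O(n^{-1/2})$ on the event $\max(\lambda_1,\lambda_1')=O(\sqrt{n})$, with the complementary event absorbed by Lemma~\ref{lem:FiniteSupport}. (Minor remark: in Step~3 no Cauchy--Schwarz is needed -- the displayed inequality is just the deterministic bound $\beta\le P(n)$ multiplied by the probability of the bad event, which is exactly what the paper does.)
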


\begin{proof}
Fix some integer $k \ge 2$. 
From the definition of $M^{(\alpha)}$, 
we know that $\lambda^*$ is obtained from $\lambda$ by removing a box from the diagram of $\lambda$ and reattaching it somewhere. It means that $\lambda = \tau^{(i_1)}$ and $\lambda^* = \tau^{(i_2)}$ for some $\tau \vdash n-1$. It implies that
\[ \left|W_k^* - W_k\right| \leq \sqrt{k}^{-1} n^{-k/2} \left| \Ch_{(k)}(\tau^{(i_1)}) - \Ch_{(k)}(\tau^{(i_2)}) \right| . \]
By equation \eqref{EqIng1Lassalle}, the right hand side of the above inequality is equal to
\[  \sqrt{k}^{-1} n^{-k/2}\left|\sum_\rho a^{(k)}_\rho \left( \sum_{\substack{g,h \geq 0, \\ \pi \vdash h}}
    b_{g,\pi}^\rho(\gamma)\ M_\pi(\tau)\ \left(z_{i_1}^g - z_{i_2}^g\right) \right)\right|,\]
    where $|\pi| \leq |\rho| - g - 2$.
By Proposition \ref{PropBound1} we know, that $a^{(k)}_\rho = 0$ for $|\rho| > k+1$, hence $a^{(k)}_\rho b_{g,\pi}^\rho(\gamma) = 0$ for $|\pi| > k-g-1$.
But $z_{i_1}^g$ and $z_{i_2}^g$ are bounded by $O_{\alpha,g}(\max(\lambda_1, \lambda_1')^g)$,
as $\alpha$-contents of some box or corner of $\lambda$.
Thanks to Lemma \ref{lem:momentsDegree}, it implies
that there exists some $C_{\alpha,k} \in \RR$ which depends only on $k$ and $\alpha$ such that
\[ \left|W_k^* - W_k\right| \leq n^{-k/2} C_{\alpha,k} \max(\lambda_1, \lambda_1')^{k-1}.\]
If $\lambda_1 \leq 2 e\sqrt{\frac{n}{\alpha}}$ and $\lambda_1' \leq 2 e\sqrt{n\alpha}$, then, for any integers $i,j,k \ge 2$, one has
\begin{multline}\label{eq:BoundTripleProductGoodCase}
\left|(W_i^* - W_i)(W_j^* - W_j)(W_k^* - W_k)\right| \leq  |(W_i^* - W_i)| |(W_j^* - W_j)| |(W_k^* - W_k)|\\
 = O(n^{-3/2}) .
\end{multline}
Summing over $i$, $j$ and $k$, we get $\beta = O(n^{-1/2})$.
Otherwise, we use the obvious bounds $\lambda_1,\lambda'_1 \le n$, and we get
\begin{multline}
\left|(W_i^* - W_i)(W_j^* - W_j)(W_k^* - W_k)\right| \leq  |(W_i^* - W_i)| |(W_j^* - W_j)| |(W_k^* - W_k)|\\
 = O(n^{(i+j+k)/2-3})
\end{multline}
that is $\beta = O(n^{3(d+1)/2-2})$.
By Lemma \ref{lem:FiniteSupport}, the probability that the second case occurs
is exponentially small.
Hence the bound of the first case holds in expectation. It finishes the proof.
\end{proof}

\subsection{Proof of the central limit theorem}
We are now ready to prove the main result of this section:

\begin{proof}[Proof of Theorem \ref{theo:FluctuationsJackCharacters}]
    Let $\tilde{\Xi}_d = (\Xi_2,\dots,\Xi_{d+1})$. In order to show that
    \[ \left( \frac{\Ch_{(k)}}{ \sqrt{k} n^{k/2}} \right)_{k=2,3,\dots} \xrightarrow{d} \left( \Xi_k \right)_{k=2,3,\dots} \]
as $n \to \infty$, it is enough to show, that for all $d \in \N$ and for any smooth function $h$ on $\RR^d$, with all derivatives bounded, one has, as $n \to \infty$:
\[ \left| \esper_n^{(\alpha)}h(\tilde{W}_d) - \esper h(\tilde{\Xi}_d) \right| \to 0.\]

Fix a positive integer $d$ and a function $h:\RR^d \to \RR$ as above.
Let $\varSigma = (\esper_n^{(\alpha)})(\tilde{W}_d\tilde{W}_d^t)$.
As $h$ has its first derivative bounded, one has
\[\left| \esper \left( h(\tilde{\Xi}_d) - 
h(\varSigma^{1/2}\tilde{\Xi}_d) \right) \right| \le
|h|_1 \cdot d\, \lVert \Id - \varSigma^{1/2} \rVert \cdot \esper (\lVert \tilde{\Xi}_d \rVert).\]
But $|h|_1$ and $\esper_n^{(\alpha)}(\lVert \tilde{\Xi}_d \rVert)$ are fixed
finite numbers, while $\lVert \Id - \varSigma^{1/2} \rVert$ is $O(n^{-1/2})$
by Proposition \ref{prop:SteinCondition2}.
Hence,
\begin{equation}
    \left| \esper \left( h(\tilde{\Xi}_d) - 
    h(\varSigma^{1/2}\tilde{\Xi}_d) \right) \right| = O(n^{-1/2}).
    \label{eq:forget_Sigma}
\end{equation}

By Corollary \ref{cor:SteinCondition1} and Proposition \ref{prop:SteinCondition2} we know, that the pair $(\tilde{W}_d, \tilde{W}_d^*)$ satisfies all hypotheses of Theorem \ref{theo:MultivariateStein}.
Using this theorem, we get
\[ \left| \esper_n^{(\alpha)}h(\tilde{W}_d) - \esper h(\varSigma \tilde{\Xi}_d)\right| \leq |h|_2\frac{A}{4} + |h|_3\frac{B}{12},\]
where
\[ A = \sum_{2 \leq i,j \leq d+1} \frac{n}{i} \sqrt{\Var_n^{(\alpha)}\left((\esper_n^{(\alpha)})^{\tilde{W}_d}(W_i - W_i^*)(W_j - W_j^*)\right)}\]
and
\[ B = \sum_{2 \leq i,j,k \leq d+1} \frac{n}{i} \esper_n^{(\alpha)} |(W_i^* - W_i)(W_j^* - W_j)(W_k^* - W_k)|.\] 
Propositions \ref{prop:ErrorTermA} and \ref{prop:ErrorTermB} imply that
\[ \left| \esper_n^{(\alpha)}h(\tilde{W}_d) - \esper h(\varSigma^{1/2} \tilde{\Xi}_d)\right| = O(n^{-1/2}).\]
Together with equation \eqref{eq:forget_Sigma},
it finishes the proof.
\end{proof}

\subsection{Speed in convergence}

We now use \cite[Corollary 3.1]{ReinertRollin2009}, which gives an estimate for
\[ \left| \esper_n^{(\alpha)}h(\tilde{W}_d) - \esper h(\tilde{\Xi}_d) \right|\]
for non-smooth test functions $h$.
In particular, we shall consider functions $h$
in the set $\HHH$ of indicator functions of convex sets.
We have the following result,
which is stronger than Theorem~\ref{theo:SpeedConvergence}.

\begin{theorem}
    For any integer $d \ge 2$, we have
    \[\sup_{h \in \HHH} |\esper_n^{(\alpha)}\, h(\tilde{W}_d) - \esper\, h(\tilde{\Xi}_d)| 
        =O\big(n^{-1/4} \big).\]
    \label{theo:SpeedConvergenceConvex}
\end{theorem}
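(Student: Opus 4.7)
The plan is to invoke the non-smooth companion of the multivariate Stein theorem of Reinert and R\"ollin, namely their Corollary~3.1 in \cite{ReinertRollin2009}, which extends Theorem~\ref{theo:MultivariateStein} to test functions of the form $\mathbf{1}_C$ where $C$ is a convex subset of $\RR^d$. All the hypotheses it requires on the exchangeable pair $(\tilde{W}_d, \tilde{W}_d^*)$ have already been verified in Section~\ref{sect:CLT}: the linear regression $(\esper_n^{(\alpha)})^{\tilde{W}_d}(\tilde{W}_d^* - \tilde{W}_d) = -\Lambda\, \tilde{W}_d$ with explicit invertible $\Lambda$ is Corollary~\ref{cor:SteinCondition1}, while the positive-definiteness of $\Sigma$ and the estimate $\lVert \Sigma^{1/2} - \Id \rVert = O(n^{-1/2})$ are provided by Proposition~\ref{prop:SteinCondition2}.

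Corollary~3.1 of \cite{ReinertRollin2009} performs once and for all the standard smoothing argument: one approximates $\mathbf{1}_C$ by a smooth function that agrees with it outside a $\varepsilon$-tube around $\partial C$, plugs it into the smooth-test-function bound $\tfrac{|h|_2}{4}A + \tfrac{|h|_3}{12}B$, and optimizes $\varepsilon$ using Gaussian anticoncentration on convex bodies. The optimization costs a square root, and the output is, for every convex $C \subseteq \RR^d$,
\[
\bigl|\PP_n^{(\alpha)}(\tilde{W}_d \in C) - \PP(\Sigma^{1/2} \tilde{\Xi}_d \in C)\bigr| = O\bigl((A+B)^{1/2}\bigr).
\]
Inserting the bounds $A = O(n^{-1/2})$ and $B = O(n^{-1/2})$ established in Propositions~\ref{prop:ErrorTermA} and \ref{prop:ErrorTermB}, the right-hand side becomes $O(n^{-1/4})$, which is the order claimed in the statement.

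It only remains to trade the Gaussian $\Sigma^{1/2}\tilde{\Xi}_d$ for the standard Gaussian $\tilde{\Xi}_d$. Since the Gaussian measure of the $\varepsilon$-neighbourhood of the boundary of any convex set is $O(\varepsilon)$ uniformly in the set (a classical anticoncentration fact), and $\lVert \Sigma^{1/2} - \Id \rVert = O(n^{-1/2})$, one obtains $\sup_{h \in \HHH} |\esper h(\Sigma^{1/2}\tilde{\Xi}_d) - \esper h(\tilde{\Xi}_d)| = O(n^{-1/2})$, which is absorbed in the $O(n^{-1/4})$ error. The one genuinely delicate point of this strategy is the square-root loss from smooth test functions to indicators built into the Reinert--R\"ollin corollary: improving the $n^{-1/4}$ rate to $n^{-1/2}$ would require either bypassing the smoothing argument (using, e.g., a direct anticoncentration estimate for $\tilde{W}_d$ itself, which seems out of reach with the tools of this paper) or strengthening the bounds on $A$ and $B$ by a further factor of $n^{1/2}$, a much stronger statement than what our present combinatorial control on Jack structure constants provides.
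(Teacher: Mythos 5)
Your proposal is correct and follows essentially the same route as the paper: both invoke Corollary~3.1 of Reinert and R\"ollin for the class $\HHH$ of indicators of convex sets, reuse the hypotheses verified via Corollary~\ref{cor:SteinCondition1} and Proposition~\ref{prop:SteinCondition2}, and feed in the $O(n^{-1/2})$ bounds of Propositions~\ref{prop:ErrorTermA} and~\ref{prop:ErrorTermB}, with the square-root loss from smoothing producing the $n^{-1/4}$ rate. The only cosmetic difference is that the paper works directly with the $\varSigma^{-1/2}$-conjugated quantities $A'$, $B'$, $T'$ appearing in the corollary (so the comparison is with the standard Gaussian from the start and no separate step trading $\varSigma^{1/2}\tilde{\Xi}_d$ for $\tilde{\Xi}_d$ is needed), whereas you recover the same conclusion by a final anticoncentration argument.
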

\begin{proof}
    Fix an integer $d \ge 2$ and consider the exchangeable pair
    $(\tilde{W}_d,\tilde{W}^\star_d)$ defined as in the previous sections.

    As shown in Section \ref{subsec:CheckingHypotheses},
    this exchangeable pair fulfills the condition of \cite[Theorem 2.1]{ReinertRollin2009}.
    Besides, as mentioned in \cite[Section 3]{ReinertRollin2009}, 
    the set $\HHH$ of functions fulfills conditions $(C1)$, $(C2)$ and $(C3)$ 
    (with $a=2\sqrt{d}$) from this paper.
    Therefore, one can apply \cite[Corollary 3.1]{ReinertRollin2009}
    and we find that there exists a constant $\zeta=\zeta(d)$ such that
    \begin{equation}\label{eq:ReinertNonSmooth}
        \sup_{h \in \HHH} |\esper_n^{(\alpha)}\, h(\tilde{W}_d) - \esper\, h(\tilde{\Xi}_d)|
    \le \zeta^ 2 \left(  \frac{A' \log(1/T')}{2} +\frac{B'}{2\sqrt{T'}} +2\sqrt{dT'} \right),
    \end{equation}
    where we define
    \begin{align*}
        \hat{\lambda}^{(i)} &:= \sum_{m=1}^d |(\varSigma^{-1/2} \Lambda^{-1} \varSigma^{1/2})_{m,i}|,\\
        A'&:= \sum_{i,j} \hat{\lambda}^{(i)} \sqrt{\sum_{k,l} \varSigma^{-1/2}_{i,k}  \varSigma^{-1/2}_{j,\ell}
        \Var (\esper_n^{(\alpha)})^{\tilde{W}_d} (W'_k -W_k)(W'_\ell-W_\ell)},\\
        B'&:= \sum_{i,j,k} \hat{\lambda}^{(i)} \esper_n^{(\alpha)} \left|
        \sum_{r,s,t} \varSigma^{-1/2}_{i,r} \varSigma^{-1/2}_{j,s} \varSigma^{-1/2}_{k,t}
        (W'_r -W_r)(W'_s -W_s)(W'_t -W_t)\right|,\\
        T'&:= \frac{1}{4d}\left( \frac{A'}{2} + \sqrt{B'\sqrt{d} + \frac{(A')^2}{4}} \right)^2.
    \end{align*}
    Comparing this with the statement in \cite{ReinertRollin2009}, note that in our case,
    there is no remaining matrix $R$, and hence $C'=0$.
    Besides, for the set $\HHH$ considered here (indicator functions of convex subsets of $\RR^d$),
    one can choose $a=2\sqrt{d}$.

    We shall now describe the asymptotic behaviour of the quantities above.
    Note that all sums appearing above have fixed number of summands since
    the summation index set is always the set of integers less or equal to $d$.

    Recall that, in our setting, $\Lambda^{-1}$ is the diagonal matrix $(n/(i+1) \cdot  \delta_{i,j})$.
    Besides $\varSigma^{-1/2}$ (well-defined for $n$ big enough) is a bounded matrix 
    (Proposition~\ref{prop:SteinCondition2}).
    Hence for any $i \le d$,
    \[\hat{\lambda}^{(i)} = O(n).\]
    Consider now $A'$. It was proven in Section \ref{subsec:ErrorTerm}
    (Lemma~\ref{lem:fourth_moment} and equation~\eqref{eq:TechnicalBound})
    that, for any $k$ and $\ell$,
    \[|\Var (\esper_n^{(\alpha)})^{\tilde{W}_d} (W'_k -W_k)(W'_\ell-W_\ell) | = O(n^{-3}).\]
    Together with the bound on $\hat{\lambda}^{(i)}$ and the fact that
    $\varSigma^{-1/2}$ is bounded, this implies
    \[A'=O(n^{-1/2}).\]
    Consider now $B'$. We have proved that the bound \eqref{eq:BoundTripleProductGoodCase}
    holds in expectation, that is
    \[\esper_n^{(\alpha)} \left|(W'_r -W_r)(W'_s -W_s)(W'_t -W_t)\right| \leq O(n^{-3/2}). \]
    As before, the bound above on $\hat{\lambda}^{(i)}$ and the fact that
    $\varSigma^{-1/2}$ is bounded implies
    \[B'=O(n^{-1/2}).\]
    Combining the bounds for $A'$ and $B'$, we get $T' = O(n^{-1/2})$.
    Strictly from the definition of $T'$, we also get
    $T' \ge B'$, which implies that $\frac{B'}{2\sqrt{T'}} \le \sqrt{B'}=O(n^{-1/4})$.
    Plugging all these estimates in equation~\eqref{eq:ReinertNonSmooth},
    we get the desired result.
\end{proof}

\subsection{Fluctuations of other polynomial functions}\label{subsect:Fluct_NonHook}

As $\Ch_{(k)}$ is an algebraic basis of polynomial functions,
Theorem~\ref{theo:FluctuationsJackCharacters} implies that any polynomial function $F$
converge, after proper normalization, towards a multivariate polynomial evaluated in independent
Gaussian variables.

However, the proper order of normalization and the actual polynomial are not easy
to be expressed explicitly, as this relies on the expansion of $F$ in the $\Ch_{(k)}$ basis.
In particular, we are not able to do it for $\Ch_{\mu}$, when $\mu$ is not a hook and hence
we can not describe the fluctuations of these random variables
as it was done in the case $\alpha=1$;
see \cite{HoraFluctuationsCharacters} and \cite[Theorem 6.5]{IvanovOlshanski2002}.

\section{Jack measure: central limit theorems for Young diagrams and transition measures}
\label{sect:CLT2}

In this section, we state formally and prove our fluctuation results
for Young diagrams under Jack measure.
We will also present a fluctuation result for the transition measures of these diagrams.

Before we state our results, we need some preparations.
We follow here notations from \cite[Sections 7 and 8]{IvanovOlshanski2002}.
First, we define
\begin{align*}
u_k(x) &= U_k(x/2) = \sum_{0 \leq j \leq \lfloor k/2 \rfloor}(-1)^j\binom{k-j}{j}x^{k-2j},\\
 t_k(x) &= 2T_k(x/2) = \sum_{0 \leq j \leq \lfloor k/2 \rfloor}(-1)^j\frac{k}{k-j}\binom{k-j}{j}x^{k-2j},
\end{align*}
where $T_k$ and $U_k$ are respectively Chebyshev polynomials of the first and second kind.
They can be alternatively defined by the following equations:
\begin{align*}
 u_k(2 \cos(\theta)) &= \frac{\sin((k+1)\theta)}{\sin(\theta)};\\
 t_k(2 \cos(\theta)) &= 2 \cos(k\theta).
\end{align*}
It is known that
$\left(u_k(x)\right)_k$ and $\left(t_k(x)\right)_k$ form a family of orthonormal polynomials with respect to the measures $\frac{\sqrt{4-x^2}}{2\pi}dx$ and $\frac{1}{2\pi \sqrt{4-x^2}}dx$, respectively, {\it i.~e.}:
\begin{align*}
 \int^2_{-2} u_k(x)u_l(x)\frac{\sqrt{4-x^2}}{2\pi }dx &= \delta_{k,l};\\
 \int^2_{-2} t_k(x)t_l(x)\frac{1}{2\pi \sqrt{4-x^2}}dx &= \delta_{k,l}.
\end{align*}
The measure $\frac{\sqrt{4-x^2}}{2\pi}dx$ supported on the interval $[-2,2]$ is called the \emph{semi-circular distribution} and is denoted by the $\mu_{S-C}$ (see Subsection \ref{subsect:JackPlancherel}).

Recall from Theorem \ref{theo:RYD-limit} that the limit shape of scaled Young diagrams
$$\omega\bigg(D_{1/\sqrt{n}}\big(A_\alpha(\lambda_{(n)})\big)\bigg)$$ is given by $\Omega$,
where $\lambda_{(n)}$ is a random Young diagram with $n$ boxes distributed according to Jack measure.
Hence, in order to study fluctuations of the random Young diagrams around the limit shape,
we introduce the following application from the set of Young diagrams to the space of functions from $\RR$ to $\RR$:
\[ \Delta^{(\alpha)}(\lambda)(x) := \sqrt{n}\frac{\omega\bigg(D_{1/\sqrt{n}}\big(A_\alpha(\lambda)\big)\bigg)(x)
 - \Omega(x)}{2},
\]
where $n$ is the number of boxes of $\lambda$.

Besides, tt was shown by Kerov \cite{Kerov1993} that the transition measure (see Subsection \ref{subsect:TransMeasure}) of the continuous Young diagram $\Omega$ is the semi-circular distribution (see Subsection \ref{subsect:JackPlancherel}).
Thus we define the following function on the set of Young diagrams with $n$ boxes with values in the space of real signed measures:
\[ \widehat{\Delta}^{(\alpha)}(\lambda) := \sqrt{n}\left( \mu_{\bigg(D_{1/\sqrt{n}}\big(A_\alpha(\lambda)\big)\bigg)} - \mu_{S-C} \right) .\]
As above, $n$ is the number of boxes of $\lambda$.
This function describes the (scaled) difference between the transition measure of the scaled Young diagram and the limiting semi-circular measure.

Now, we are ready to formulate the central limit theorem for the Jack measure.
Here, we use the usual notation $[\text{condition}]$ for the indicator function of the corresponding condition.
\begin{theorem}
\label{theo:Fluctuations}
Choose a sequence $\left(\Xi_k \right)_{k=2,3,\dots}$ of independent standard Gaussian random variables
and let $\lambda_{(n)}$ be a random Young diagram of size $n$ distributed according to Jack measure.
As $n \to \infty$, we have
\begin{enumerate}
\item
\label{item:fluctuationsOfDiagrams}
a central limit theorem for Young diagrams: 
\[ \left( u^{(\alpha)}_{k}(\lambda_{(n)}) \right)_{k=1,2,\dots} \xrightarrow{d} 
\left(  \frac{\Xi_{k+1}}{\sqrt{k+1}} - \frac{\gamma}{k+1} \, [k\text{ is odd}]\right)_{k=1,2,\dots}, \]
where $u^{(\alpha)}_{k} (\lambda) = \int_\RR u_k(x) \Delta^{(\alpha)}(\lambda)(x)\ dx$;\medskip
\item
\label{item:fluctuationsOfTransition}
and a central limit theorem for transition measures:
\[ \left( t^{(\alpha)}_{k}(\lambda_{(n)}) \right)_{k=3,4,\dots} \xrightarrow{d} \left(  \sqrt{k-1}\, \Xi_{k-1} - \gamma \, [k\text{ is odd}]  \right)_{k=3,4,\dots}, \]
where $t^{(\alpha)}_{k} (\lambda) = \int_\RR t_k(x) \widehat{\Delta}^{(\alpha)}(\lambda)(dx)$.
\end{enumerate}
\end{theorem}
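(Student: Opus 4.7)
The plan is to establish both parts simultaneously by exhibiting $t_k^{(\alpha)}$ and $u_k^{(\alpha)}$ as $\alpha$-polynomial functions in the anisotropic free cumulants $R_j^{(\alpha)}$, and then to reduce to the central limit theorem for Jack characters (Theorem~\ref{theo:FluctuationsJackCharacters}) via Corollary~\ref{corol:dominant_deg1}. The starting point is the classical expansion of Chebyshev integrals in free cumulants: for any compactly supported probability measure $\mu$ with vanishing first moment, $\int t_k(x)\,d\mu(x)$ is a polynomial with integer coefficients in $(R_j(\mu))_{j\ge 2}$, with leading term $R_k(\mu)$ (and once one substitutes $R_2\mapsto R_2-1$, every non-leading monomial has at least two factors with index $\ge 3$). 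Applied to $\nu_\lambda:=\mu_{D_{1/\sqrt{n}}(A_\alpha\lambda)}$, whose free cumulants are $n^{-j/2}R_j^{(\alpha)}(\lambda)$ and for which $R_2\equiv 1$, this gives
\[
t_k^{(\alpha)}(\lambda) = n^{(1-k)/2}R_k^{(\alpha)}(\lambda) \;+\; \sqrt{n}\cdot \bigl(\text{polynomial in the } R_j(\nu_\lambda),\; j\ge 3, \text{ with each monomial of length } \ge 2\bigr).
\]
Through the classical relation between the profile $\omega(\lambda)$ and the point measure $\sum_{i\in\II_\lambda}\delta_i-\sum_{o\in\OO_\lambda}\delta_o$, followed by iterated integration by parts, an analogous identity gives
\[
u_k^{(\alpha)}(\lambda) = \tfrac{1}{k+1}\,n^{-(k+1)/2}R_{k+2}^{(\alpha)}(\lambda) + (\text{correction of the same form}).
\]

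Next, I would invoke Corollary~\ref{corol:dominant_deg1} to write $R_j^{(\alpha)}=\Ch^{(\alpha)}_{(j-1)} + L_j$, where $L_j$ collects the terms of $\deg_1<j$ in the Kerov expansion of $\Ch^{(\alpha)}_{(j-1)}$. After the appropriate rescaling, the $\Ch^{(\alpha)}_{(j-1)}$ part converges to a Gaussian multiple of $\Xi_{j-1}$ by Theorem~\ref{theo:FluctuationsJackCharacters}, producing the random limits $\Xi_{k+1}/\sqrt{k+1}$ (for $u_k$) and $\sqrt{k-1}\,\Xi_{k-1}$ (for $t_k$). The correction terms split into monomials that (i) involve at least one fluctuating factor $n^{-j/2}R_j^{(\alpha)}$ with $j\ge 3$, which vanish in $L^2$ by the variance estimates from Section~\ref{subsec:ErrorTerm} together with Theorem~\ref{theo:struct-const}, and (ii) involve only the deterministic quantity $R_2^{(\alpha)}=n$, which contribute a constant.

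The deterministic shift then comes entirely from the monomial $R_2^m$ in $L_{k+2}$ (for $u_k$) or $L_k$ (for $t_k$), with $m=(k+1)/2$ or $m=(k-1)/2$ respectively. This monomial exists precisely when $k$ is odd, which accounts for the indicator $[k\text{ odd}]$ in both parts. Its coefficient equals, up to sign, the coefficient of $R_2^m$ in $K_{(2m)}^{(\alpha)}$; by the parity statement in Proposition~\ref{PropBound1} this is an \emph{odd} polynomial in $\gamma$, so only its $\gamma^1$-part contributes at the relevant scale. The explicit formulas for $K_{(2)}$ and $K_{(4)}$ displayed in Section~\ref{SectDefKerov} yield $[\gamma R_2]K_{(2)}=[\gamma R_2^2]K_{(4)}=1$, and after multiplication by the prefactor $1/(k+1)$ in the $u_k$ expansion (resp.\ $1$ in the $t_k$ expansion) this produces the shifts $-\gamma/(k+1)$ and $-\gamma$ claimed by the theorem.

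The main obstacle is therefore to establish the identity $[\gamma R_2^m]K_{(2m)}^{(\alpha)}=1$ for every $m\ge 1$. I expect this to follow from a careful analysis of Lassalle's inductive algorithm (Section~\ref{SubsectAlgo}), by isolating the $\gamma$-linear part of the recursion and showing that the coefficient of $R_2^m$ satisfies a simple one-step recursion with initial value $1$. A secondary and more routine step is the derivation of the Chebyshev--free-cumulant expansions used in the first step, together with the verification that the contributing remainders vanish in $L^2$ uniformly in $\lambda$; this parallels \cite[\S7--8]{IvanovOlshanski2002} in the case $\alpha=1$, but must be checked here in the anisotropic setting.
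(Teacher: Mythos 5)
Your overall strategy is the right one and is essentially the skeleton of the paper's proof: reduce $u_k^{(\alpha)}$ and $t_k^{(\alpha)}$ to $\Ch^{(\alpha)}_{(k+1)}$, resp.\ $\Ch^{(\alpha)}_{(k-1)}$, modulo negligible terms, apply Theorem~\ref{theo:FluctuationsJackCharacters}, and read the deterministic shift off the coefficient of $R_2^m$ in $K_{(2m)}$. (One economy you miss: the paper does not rederive the Chebyshev/free-cumulant expansions ``in the anisotropic setting''; since the Ivanov--Olshanski identities are identities of functions on all continuous diagrams and $u_k^{(\alpha)}(\lambda)=u_k^{(1)}(A_\alpha(\lambda))$ by definition, it simply evaluates them at $A_\alpha(\lambda)$ and then converts $\Ch^{(1)}_{(k)}(A_\alpha(\lambda))$ into $\Ch^{(\alpha)}_{(k)}(\lambda)$ via Lemma~\ref{lem:1toAlpha}.) However, there are two genuine gaps. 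First, the identity $[R_2^m]K_{(2m)}=\gamma$ for all $m$ --- which \emph{is} the deterministic shift --- is left as ``the main obstacle,'' with only the hope that Lassalle's recursion yields it; checking $m=1,2$ does not prove it. Moreover your justification ``only the $\gamma^1$-part contributes at the relevant scale'' is not an argument: $\gamma$ and $\gamma^3$ sit at the same scale in $n$. What actually closes this is that Proposition~\ref{PropBound1} bounds the $\gamma$-degree of this coefficient by $|\mu|+\ell(\mu)-|\rho|=2m+1-2m=1$, so together with the parity it equals $c\gamma$ exactly, and a single evaluation at $\alpha=2$ (where the Kerov expansion is known combinatorially) gives $c=1$; this is Proposition~\ref{prop:TopDegrees}, or alternatively Lassalle's Theorem~10.3.

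Second, your $L^2$ control of the remainder would fail as stated. A monomial with $p$ fluctuating factors requires moments of order $2p$ of the $\Ch_{(j)}$'s, and the structure-constant machinery of this paper only controls mixed moments up to order four --- that is precisely why Stein's method is used instead of the moment method. The crude bound $\esper_{\PP_n^{(\alpha)}}(F)=O(n^{\deg_1(F)/2})$ is too weak here (it loses a factor $n^{\ell(\mu)}$), and sharpening it to what your $L^2$ argument needs is equivalent to establishing the filtration \eqref{eq:Kerov_filtration}, which the authors explicitly state they cannot prove for general $\alpha$. The fix is cheap and is what the paper does in Lemma~\ref{lem:NegDeg=>Zero}: by Theorem~\ref{theo:FluctuationsJackCharacters} the vector $(W_j)_j$ converges jointly in distribution, so every monomial $\widetilde{\Ch}^{(\alpha)}_\mu\,(\Ch_{(1)})^{-m/2}$ of negative $\deg_4$ converges to $0$ \emph{in probability}; convergence in distribution of the remainder to $0$ is all that is needed to conclude via Corollary~\ref{corol:uktk_chka}.
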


\begin{remark}
    Notice that, for $\gamma=0$ ({\it i.e.} $\alpha=1$),
    this theorem specializes to Kerov's central limit theorems for Plancherel measure \cite{Kerov1993, IvanovOlshanski2002}.
\end{remark}

\subsection{Extended algebra of polynomial functions and gradations }

The proof combines our fluctuation results for Jack characters
and arguments from the proof of Kerov, Ivanov and Olshanski for the case $\alpha=1$.
In particular we shall compare some $\alpha$-polynomial functions with their counterpart for $\alpha=1$.
Therefore, throughout this section, we will make the dependence in $(\alpha)$ explicit and 
we use the notations $\Ch_\mu^{(\alpha)}$, $M_k^{(\alpha)}$, and so on.
The only exception to this is $\Ch_{(1)}$ as, for any $\alpha$, the function $\Ch_{(1)}$ associates
to a Young diagram its number of boxes.

To prove Theorem \ref{theo:Fluctuations},
it is convenient to extend the algebra $\Pola$ in the same way as Ivanov and Olshanski \cite{IvanovOlshanski2002}: we adjoin to it the square root of
the element $\Ch_{(1)}$ and then localize over the multiplicative family generated by $\sqrt{\Ch_{(1)}}$. Let $\Polaext$ denotes the resulting algebra.

We also define, for a partition $\mu$ of length $\ell$,
\[\widetilde{\Ch}^{(\alpha)}_\mu := \prod_{i=1}^\ell \Ch^{(\alpha)}_{(\mu_i)}.\]
Then $\widetilde{\Ch}^{(\alpha)}_\mu$ is a multiplicative basis of $\Pola$,
while a multiplicative basis in $\Polaext$ is given by
\[ \widetilde{\Ch}^{(\alpha)}_\mu \left(\Ch_{(1)}\right)^{m/2},\text{ with } m_1(\mu)=0, \quad m \in \Z.\]
We equip $\Polaext$ with a gradation defined by
\[ \deg_4\bigg( \widetilde{\Ch}^{(\alpha)}_\mu \left(\Ch_{(1)}\right)^{m/2}\bigg) = |\mu| + m.\]
Note that some elements have negative degree.
Besides, for a general partition $\mu$, one has:
\[ \deg_4\big( \widetilde{\Ch}^{(\alpha)}_\mu \big) = |\mu| + m_1(\mu).\]

In \cite{IvanovOlshanski2002}, for $\alpha=1$, the authors consider a slightly different filtration
on $\Poluext$, namely they define $\deg_{\{1\}}$ as follows\footnote{In \cite{IvanovOlshanski2002}, $\deg_{\{1\}}$ is
abbreviated as $\deg_{1}$ but we shall not do that to avoid a conflict of notation with Section~\ref{SubsectBound1}.}:
for $\mu$ without part equal to $1$ and $m \in \Z$,
\begin{equation}
    \label{eq:def_grad_IO}
    \deg_{\{1\}}\bigg(\Ch^{(1)}_\mu \left(\Ch_{(1)}\right)^{m/2}\bigg) = |\mu|+m.
\end{equation}
Note that, for a general partition $\mu$, one has:
\[ \deg_{\{1\}}\big( \Ch^{(1)}_\mu \big) = \mu + m_1(\mu).\]
Let us compare $\deg_4$ and $\deg_{\{1\}}$.
For any integer $d$ (positive or not),
let $V^{\le d}$, ($V^{\le d}_{\{1\}}$, respectively) denotes the subspace of $\Poluext$ containing elements $x$
with $\deg_4(x)\le d$ ($\deg_{\{1\}}(x)\le d$, respectively).
\begin{lemma}
    For any integer $d$, one has
    \[V^{\le d}=V^{\le d}_{\{1\}}.\]
    \label{lem:ComparisonDegree}
\end{lemma}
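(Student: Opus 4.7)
My plan is to show that both $\deg_4$ and $\deg_{\{1\}}$ coincide with a single natural gradation on $\Poluext$ coming from its description as a polynomial algebra, which immediately yields the equality of the corresponding subspaces. Since $(\Ch^{(1)}_{(k)})_{k \ge 1}$ is an algebraic basis of $\Polun$, the elements $(\Ch^{(1)}_{(k)})_{k \ge 2}$ together with $\sqrt{\Ch_{(1)}}^{\pm 1}$ (using $\Ch_{(1)} = \Ch^{(1)}_{(1)}$) are algebraically independent generators of $\Poluext$. I will use the gradation $\deg^*$ on $\Poluext$ defined by $\deg^*(\Ch^{(1)}_{(k)}) = k$ for $k \ge 2$ and $\deg^*(\sqrt{\Ch_{(1)}}) = 1$ (so $\deg^*(\Ch_{(1)}) = 2$).

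The identity $\deg_4 = \deg^*$ is immediate from the definition of $V^{\le d}$: its basis elements $\widetilde{\Ch}^{(1)}_\mu (\Ch_{(1)})^{m/2} = \prod_i \Ch^{(1)}_{(\mu_i)}\,(\sqrt{\Ch_{(1)}})^m$ (with $\mu_i \ge 2$) are exactly the $\deg^*$-monomials, each of $\deg^* = |\mu|+m$ matching $\deg_4$. Because distinct monomials in a polynomial expansion cannot cancel, $\deg_4(x) = \max_i \deg_4(b_i) = \max_i \deg^*(b_i) = \deg^*(x)$ for any $x=\sum c_i b_i$. In particular $V^{\le d} = \{x \in \Poluext : \deg^*(x) \le d\}$.

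For the equality $\deg_{\{1\}} = \deg^*$, I would invoke Ivanov and Olshanski's \cite[Proposition 4.9]{IvanovOlshanski2002}: they establish both that $\deg_{\{1\}}$ is a multiplicative filtration and that $\deg^*(\Ch^{(1)}_\mu) = |\mu| + m_1(\mu)$. From multiplicativity of $\deg_{\{1\}}$ together with $\deg_{\{1\}}(\Ch^{(1)}_{(k)}) = k = \deg^*(\Ch^{(1)}_{(k)})$ on generators, one gets $\deg_{\{1\}}(x) \le \deg^*(x)$ on all of $\Poluext$. On a basis element $b'= \Ch^{(1)}_\mu (\Ch_{(1)})^{m/2}$ of $V^{\le d}_{\{1\}}$ (with $m_1(\mu)=0$) one has $\deg_{\{1\}}(b') = |\mu|+m = \deg^*(\Ch^{(1)}_\mu) + m = \deg^*(b')$, and the triangularity of the change of basis between $(\widetilde{\Ch}^{(1)}_\nu (\Ch_{(1)})^{m'/2})$ and $(\Ch^{(1)}_\nu (\Ch_{(1)})^{m'/2})$ (as established in loc.~cit.) ensures that $V^{\le d}_{\{1\}}$ is, like $V^{\le d}$, the full subspace $\{x : \deg^*(x) \le d\}$.

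The main obstacle is the compatibility of the basis $(\Ch^{(1)}_\mu(\Ch_{(1)})^{m/2})$ with the $\deg^*$-gradation, i.e.\ the invertibility of the top-degree change-of-basis matrix between $(\widetilde{\Ch}^{(1)}_\mu)_{m_1(\mu)=0}$ and $(\Ch^{(1)}_\mu)_{m_1(\mu)=0}$ at each fixed $|\mu|$. I would handle this by iterating the top-degree identity $g^{(1)}_{(k),(l);(k,l)} = 1$ supplied by Lemma \ref{lem:SC}(1), which organizes the structure constants in triangular form with respect to the $\deg_1$ filtration and, when combined with the Ivanov--Olshanski filtration $\deg_{\{1\}}$, shows that the relevant top matrix is upper-triangular with unit diagonal in a suitable ordering of partitions.
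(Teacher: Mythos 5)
Your proof is correct, but its second half takes a genuinely different route from the paper's. The first inclusion is the same in both arguments: the submultiplicativity of $\deg_{\{1\}}$ (the Ivanov--Olshanski filtration property, \cite[Proposition 4.9]{IvanovOlshanski2002}) applied to $\widetilde{\Ch}^{(1)}_\mu=\prod_i\Ch^{(1)}_{(\mu_i)}$ gives $\deg_{\{1\}}\le\deg_4$ and hence $V^{\le d}\subseteq V^{\le d}_{\{1\}}$. For the reverse inclusion the paper proves no triangularity at all: it first restricts to $\Polun$, notes that $V^{\le d}\cap\Polun$ and $V^{\le d}_{\{1\}}\cap\Polun$ are spanned by the two linearly independent families $(\widetilde{\Ch}^{(1)}_\mu)$ and $(\Ch^{(1)}_\mu)$ indexed by the \emph{same finite set} $\{\mu : |\mu|+m_1(\mu)\le d\}$, and concludes from the one inclusion plus equality of finite dimensions; the passage to $\Poluext$ is then handled by writing $F=\Ch_{(1)}^m F_1+\Ch_{(1)}^{m+1/2}F_2$ with $F_1,F_2\in\Polun$. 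You instead prove the reverse inclusion directly by establishing $\deg^*(\Ch^{(1)}_\mu)\le|\mu|+m_1(\mu)$, which does require the unitriangularity of the transition matrix between $(\widetilde{\Ch}^{(1)}_\mu)$ and $(\Ch^{(1)}_\mu)$ --- you rightly flag this as the main obstacle, and your resolution is sound: the matrix expressing the $\widetilde{\Ch}^{(1)}_\mu$ in the $\Ch^{(1)}$-basis is unitriangular for the order given by $|\cdot|+\ell(\cdot)$ (iterating Lemma~\ref{lem:SC}~(1)), every nonzero entry also respects the bound $|\rho|+m_1(\rho)\le|\mu|+m_1(\mu)$ by the $\deg_{\{1\}}$ filtration, and both properties pass to the inverse matrix via the terminating Neumann series $(I+N)^{-1}=\sum_k(-N)^k$. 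Just be careful in the write-up not to attribute the statement $\deg^*(\Ch^{(1)}_\mu)=|\mu|+m_1(\mu)$ to \cite[Proposition 4.9]{IvanovOlshanski2002} as if it were stated there: that proposition gives the product expansion, and the gradation statement is exactly the inversion step you defer to your last paragraph. What your approach buys is an explicit identification of the two degree functions on basis elements; what the paper's dimension count buys is the ability to skip the inversion argument entirely.
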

\begin{proof}
    Let us first show that
    \begin{equation}
        \label{eq:SameDegree_in_Polun}
        V^{\le d} \cap \Polun =V^{\le d}_{\{1\}} \cap \Polun.
    \end{equation}
    By definition, the left-hand side has basis $(\widetilde{\Ch}^{(1)}_\mu)_{|\mu|+m_1(\mu) \le d}$,
    while the right-hand side has basis $(\Ch^{(1)}_\mu)_{|\mu|+m_1(\mu) \le d}$.
    But, if $|\mu|+m_1(\mu) \le d$,
    \[\deg_{\{1\}} \big(\widetilde{\Ch}^{(1)}_\mu \big)
    \leq \sum_{i=1}^\ell \deg_{\{1\}} \big(\widetilde{\Ch}^{(1)}_{(\mu_i)} \big) = |\mu|+m_1(\mu) \le d,\]
    which shows an inclusion between the two spaces.
    As they have the same dimension, \eqref{eq:SameDegree_in_Polun} holds.

    Observe now that, for both gradations,
    an element $F \in \Poluext$ has degree at most $d$
    if and only if it can be written as
    \[F = \Ch_{(1)}^m \, F_1 + \Ch_{(1)}^{m+1/2} \, F_2 \]
    for some integer $m$ and elements $F_1$, $F_2$ from $\Polun$
    of degree at most $d_1$ and $d_2$ with $2m+d_1 \le d$ and
    $2m+1+d_2 \le d$.
    Hence, the lemma follows from \eqref{eq:SameDegree_in_Polun}.
\end{proof}

\begin{remark}
    We are not able to show that
    \[\deg_{\{1\}}\bigg(\Ch^{(\alpha)}_\mu \left(\Ch_{(1)}\right)^{m/2}\bigg) = |\mu|+m\]
    defines a filtration of $\Polaext$, which would make a natural extension of \eqref{eq:def_grad_IO}. However, thanks to Lemma \ref{lem:ComparisonDegree}, we can use the multiplicative family $\widetilde{\Ch}^{(\alpha)}_\mu$ instead.
\end{remark}

\subsection{Proof of Theorem \ref{theo:Fluctuations}}

The main part of the proof of Kerov, Ivanov and Olshanski
is to prove that
$u_k^{(1)}$ and $t_k^{(1)}$ are in $\Poluext$ and fulfill
\begin{align}
    u_{k}^{(1)} &= \frac{\Ch^{(1)}_{(k+1)}}{(k+1) \Ch_{(1)}^{(k+1)/2}} +\text{ terms of negative degree for $\deg_4$;}
    \label{eq:FromIO_u}\\
    t_{k}^{(1)} &= \frac{\Ch^{(1)}_{(k-1)}}{\Ch_{(1)}^{(k-1)/2}} +\text{ terms of negative degree for $\deg_4$.}
    \label{eq:FromIO_t}
\end{align}

These equations are respectively the last equations of Sections 7 and 8 in paper \cite{IvanovOlshanski2002}.
As the notations are a little bit different here, let us give a few precisions.
\begin{itemize}
    \item The quantity $\eta_{k+1}$ in \cite{IvanovOlshanski2002} is
        defined by equation (6.5) and definition 3.1.
    \item In \cite{IvanovOlshanski2002}, it is shown that the reminder
        has negative degree in the filtration $\deg_{\{1\}}$, while here we use the gradation $\deg_4$.
        But we have proven in Lemma \ref{lem:ComparisonDegree} that both notions coincide on $\Polun$.
    \item The identities in \cite{IvanovOlshanski2002} are equalities of random
        variables, that is of functions on the set of Young diagrams
        of size $n$ (which is here the probability space) ;
        as they are valid for any $n$, we have in fact
        identities of functions on the set of Young diagrams, as claimed above.
\end{itemize}
Note that, as equalities of functions on the set of Young diagrams,
one can evaluate them on continuous diagrams, in particular on $A_\alpha(\lambda)$.

Our goal is to establish similar formulas in the general $\alpha$-case.
From the definition, it is straight-forward that
\begin{align}                                   
    u_{k}^{(\alpha)}(\lambda) &= u_{k}^{(1)}\big(A_\alpha(\lambda) \big); \\
    t_{k}^{(\alpha)}(\lambda) &= t_{k}^{(1)}\big(A_\alpha(\lambda) \big).
\end{align}
Therefore, applying equations~\eqref{eq:FromIO_u} and \eqref{eq:FromIO_t} on $A_\alpha(\lambda)$,
we get:
\begin{align}
    u_{k}^{(\alpha)}(\lambda) &= \frac{\Ch^{(1)}_{(k+1)}\big(A_\alpha(\lambda) \big)}{(k+1) \left(\Ch_{(1)}\right)^{(k+1)/2}} 
    +\text{ terms of negative degree for $\deg_4$;}
    \label{eq:FromIO_u_la^al}\\
    t_{k}^{(\alpha)}(\lambda) &= \frac{\Ch^{(1)}_{(k-1)}\big(A_\alpha(\lambda) \big)}{\left(\Ch_{(1)}\right)^{(k-1)/2}}
    +\text{ terms of negative degree for $\deg_4$.}
    \label{eq:FromIO_t_la^al}
\end{align}
Of course in general, although both quantities lie in $\Pola$,
\[\Ch^{(1)}_{(k)}\big(A_\alpha(\lambda) \big) \neq \Ch^{(\alpha)}_{(k)}(\lambda).\]
The following lemma compares the highest degree terms of quantities above:
\begin{lemma}
\label{lem:1toAlpha}
For any integer $m \geq 1$ we have that
\begin{multline} 
\Ch^{(1)}_{(k)}\big(A_\alpha(\lambda)\big) 
= \Ch^{(\alpha)}_{(k)}(\lambda) - \gamma\left(\Ch_{(1)}\right)^{(k/2)} \, [k\text{ is even}] \\
+ \text{ terms of degree less than }
    k\text{ with respect to }\deg_4.
\end{multline}
\end{lemma}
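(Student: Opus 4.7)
The plan is to reduce the lemma to a statement about Lassalle coefficients, identify the top-$\deg_4$ contribution via the non-crossing partition expansion of moments, and then pin down a single numerical coefficient.

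First, since $M_j^{(1)}(A_\alpha(\lambda)) = M_j^{(\alpha)}(\lambda)$, both sides of the lemma arise as Lassalle polynomials applied to the \emph{same} data:
\[
\Ch^{(\alpha)}_{(k)}(\lambda) - \Ch^{(1)}_{(k)}(A_\alpha(\lambda)) = \sum_\rho \bigl(a^{(k)}_\rho(\gamma) - a^{(k)}_\rho(0)\bigr)\,M^{(\alpha)}_\rho(\lambda),
\]
where $a^{(k)}_\rho$ is the coefficient of $M_\rho$ in $L^{(\alpha)}_{(k)}$. By Proposition~\ref{PropBound1}, $a^{(k)}_\rho(\gamma)-a^{(k)}_\rho(0)$ vanishes for $|\rho|\ge k+1$, and for $|\rho|=k$ it is of the form $\hat a^{(k)}_\rho\,\gamma$ (degree at most one in $\gamma$, odd parity).

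Next I would compute $\deg_4(M_\rho)$ using the free-cumulant expansion $M_j = \sum_{\pi\in\mathrm{NC}(j)} R_\pi$, with $R_\pi := \prod_{B\in\pi}R_{|B|}$. Since $\deg_4(R_i) = i-1+[i=2]$, one gets $\deg_4(R_\pi) = j - (\text{number of non-pair blocks of }\pi)$, so $\deg_4(M_j)\le j$ with equality iff $j$ is even, in which case the top part equals $C_{j/2}\,\Ch_{(1)}^{j/2}$ (the Catalan count of non-crossing pair partitions of $[j]$, together with $R_2 = \Ch_{(1)}$). Multiplicativity of $\deg_4$ then gives $\deg_4(M_\rho) \le |\rho| - r(\rho)$, where $r(\rho)$ is the number of odd parts of $\rho$, with equality iff $r(\rho) = 0$; in that case the top part of $M_\rho$ is $\bigl(\prod_i C_{\rho_i/2}\bigr)\,\Ch_{(1)}^{|\rho|/2}$. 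Combining with the first step, only $\rho\vdash k$ with all parts even can contribute to the top-$\deg_4=k$ part of the difference. For $k$ odd no such $\rho$ exists, giving the lemma; for $k$ even, the top-$\deg_4=k$ part equals
\[
\gamma\,\Ch_{(1)}^{k/2}\cdot S_k, \qquad S_k \;:=\; \sum_{\rho\vdash k,\ \text{all parts even}} \hat a^{(k)}_\rho\,\prod_i C_{\rho_i/2}.
\]

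The main obstacle is to show $S_k = 1$ for every even $k$. The base case $S_2 = \hat a^{(2)}_{(2)}\cdot C_1 = 1$ is immediate from $L_{(2)} = M_3 + \gamma M_2$. Running the same $\deg_4$ analysis in the Kerov basis yields the equivalent formulation $S_k = [\gamma R_2^{k/2}]\,K^{(\alpha)}_{(k)}$, so it suffices to compute a single Kerov-polynomial coefficient. I would tackle this by induction on $k$ via Lassalle's equation $(A_\tau)$ of Section~\ref{SubsectAlgo} applied to $\mu=(k)$ and $\tau = (2^{k/2-1})$: by Lemma~\ref{LemTriangle} this isolates $a^{(k)}_{(2^{k/2})}$ in terms of the coefficients of $L_{(k-1)}$ and the explicit polynomials $b^{\rho'}_{g,\pi}$ from Proposition~\ref{PropMkLo}, and extracting the $\gamma$-linear part should reduce $S_k=1$ to a Catalan-type recursion. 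Cross-verification at $\alpha=2$ via the matching combinatorics of Section~\ref{SubsectHecke} provides an independent sanity check.
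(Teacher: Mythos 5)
Your reduction is sound and follows a genuinely different route from the paper's. The paper first observes, via Corollary~\ref{corol:dominant_deg1}, that the top $\deg_1$-component $R_{k+1}$ is $\alpha$-independent, so the difference $\Ch^{(1)}_{(k)}(A_\alpha(\lambda))-\Ch^{(\alpha)}_{(k)}(\lambda)$ has $\deg_1\le k$; it then expands this difference in the multiplicative basis $\widetilde{\Ch}_\mu$ and uses the inequality $\deg_4(\widetilde{\Ch}_\mu)=|\mu|+m_1(\mu)\le |\mu|+\ell(\mu)=\deg_1(\widetilde{\Ch}_\mu)$, with equality only for $\mu=(1^m)$, to conclude that the sole possible top-$\deg_4$ term is $a^{2m}_{(1^m)}\Ch_{(1)}^m$ when $k=2m$. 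You instead work in the $M_\rho$ basis, use the degree/parity statement of Proposition~\ref{PropBound1} to kill everything with $|\rho|\ge k+1$ and to isolate a $\gamma$-linear contribution at $|\rho|=k$, and then control $\deg_4(M_\rho)$ through the non-crossing moment--cumulant formula. That computation is correct (the gradation $\deg_4$ is multiplicative on the basis $\widetilde{\Ch}_\mu\Ch_{(1)}^{m/2}$, $\deg_4(R_2)=2$ and $\deg_4(R_i)\le i-1$ for $i\ge 3$ by Corollary~\ref{corol:dominant_deg1}), and both arguments funnel into the identical numerical fact: the coefficient of $R_2^{k/2}$ in $K_{(k)}$ equals $\gamma$ for $k$ even --- your $S_k=[\gamma R_2^{k/2}]K_{(k)}$ is exactly the paper's $a^{2m}_{(1^m)}=[R_2^m]\Ch^{(\alpha)}_{(2m)}$.

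The one genuine gap is that you do not actually prove $S_k=1$. The paper disposes of this by citing Lassalle's Theorem~10.3 (equivalently, Proposition~\ref{prop:TopDegrees}, itself obtained by interpolation at $\alpha\in\{1/2,1,2\}$): the subdominant term of $K_{(k)}$ is $\gamma\frac{k}{2}\sum_{|\mu|=k}(\ell(\mu)-1)!\,\widetilde{R}_\mu$, whose $\mu=(2^{k/2})$ summand contributes $\gamma\cdot\frac{k}{2}\cdot(k/2-1)!/(k/2)!=\gamma$ to $R_2^{k/2}$. Your proposed substitute --- an induction through Lassalle's system --- is only a sketch, and as stated it starts on the wrong foot: for $\mu=(k)$ one has $m_1(\mu)=0$, so the right-hand side of equation $(A_\tau)$ is zero and that equation expresses $a^{(k)}_{(2^{k/2})}$ in terms of the other unknowns $a^{(k)}_{\rho'}$ with $\rho'>_1(2^{k/2})$, not "in terms of the coefficients of $L_{(k-1)}$"; the link to $L_{(k-1)}$ only enters through the $(B_\tau)$ equations further up the triangular system. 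The asserted "Catalan-type recursion" is therefore not exhibited, and tracking the $\gamma$-linear parts of all $a^{(k)}_\rho$ with $\rho\vdash k$ even-parted through that system is a nontrivial computation you would need to carry out. The cleanest repair is simply to cite the known subdominant term of $K_{(k)}$ at this point, as the paper does.
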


\begin{proof}
    We know, by Corollary~\ref{corol:dominant_deg1}, that for any $k \geq 2$,
    \[ \deg_1\left(\Ch^{(1)}_{(k)}\big(A_\alpha(\lambda)\big) - \Ch^{(\alpha)}_{(k)}(\lambda)\right) = k.\]
Let us consider its $\widetilde{\Ch}_\mu$ expansion:
\[ \Ch^{(1)}_{(k)}\big(A_\alpha(\lambda)\big) - \Ch^{(\alpha)}_{(k)}(\lambda) = \sum_\mu a^k_\mu \widetilde{\Ch}_\mu.\]
Since $\deg_4(\widetilde{\Ch}_\mu) \leq \deg_1(\widetilde{\Ch}_\mu)$ with equality only for $\mu = (1^m)$
for some non-negative integer $m$, one has that
\begin{align*}
    \Ch^{(1)}_{(2m+1)}\big(A_\alpha(\lambda)\big) &= \Ch^{(\alpha)}_{(2m+1)}(\lambda) 
    \\&\qquad+ \text{ terms of degree less than }
    2m+1\text{ with respect to }\deg_4;   \\
        \Ch^{(1)}_{(2m)}\big(A_\alpha(\lambda)\big)& 
        = \Ch^{(\alpha)}_{(2m)}(\lambda) - a^{2m}_{(1^m)}\left(\Ch_{(1)}\right)^m
        \\&\qquad + \text{ terms of degree less than }
    2m\text{ with respect to }\deg_4. \
\end{align*}
But 
\[ a^{2m}_{(1^m)} = [\left(R^{(\alpha)}_2(\lambda)\right)^m] \left(\Ch^{(\alpha)}_{(2m)}(\lambda)
- \Ch^{(1)}_{(2m)}\big(A_\alpha(\lambda)\big)\right) = [R_2^m]\Ch^{(\alpha)}_{(2m)}(\lambda) = \gamma,\]
by \cite[Theorem 10.3]{Lassalle2009} or Proposition \ref{prop:TopDegrees} here
(the coefficient of $R_2^m$ in $\Ch^{(1)}_{(2m)}$ is zero for parity reason).
This finishes our proof.
\end{proof}

\begin{corollary}
    \label{corol:uktk_chka}
One has the following equalities in $\Polaext$:
\begin{align}
    u_{k}^{(\alpha)} &= 
    \frac{\Ch^{(\alpha)}_{(k+1)}}{(k+1) \left(\Ch_{(1)}\right)^{(k+1)/2}}  - \frac{\gamma}{k+1} [k\text{ is odd}]
    +\text{ terms of negative degree;}
    \label{eq:uka_chka}\\
    t_{k}^{(\alpha)} &= \frac{\Ch^{(\alpha)}_{(k-1)} }{\left(\Ch_{(1)}\right)^{(k-1)/2}} - \gamma \, [k\text{ is odd}]
    +\text{ terms of negative degree.}
    \label{eq:tka_chka}
\end{align}
\end{corollary}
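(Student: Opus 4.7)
The plan is simply to combine the two inputs already gathered: the identities \eqref{eq:FromIO_u_la^al} and \eqref{eq:FromIO_t_la^al} (obtained by evaluating the Kerov--Ivanov--Olshanski formulas \eqref{eq:FromIO_u}--\eqref{eq:FromIO_t} on the anisotropic diagram $A_\alpha(\lambda)$) and the comparison Lemma~\ref{lem:1toAlpha}. Both are already established, so what remains is a routine substitution with careful parity and degree bookkeeping.

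\medskip

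First I would treat $u_k^{(\alpha)}$. Starting from \eqref{eq:FromIO_u_la^al}, I would substitute Lemma~\ref{lem:1toAlpha} with the index $k+1$ in place of $k$, which gives
\[
\Ch^{(1)}_{(k+1)}\big(A_\alpha(\lambda)\big)
= \Ch^{(\alpha)}_{(k+1)}(\lambda) - \gamma\,(\Ch_{(1)})^{(k+1)/2}\,[k+1\text{ is even}] + R_{k+1},
\]
where $R_{k+1}$ has $\deg_4$ strictly less than $k+1$. I would then observe that $k+1$ is even if and only if $k$ is odd, replace the bracket, and divide the whole equation by $(k+1)(\Ch_{(1)})^{(k+1)/2}$, noting that the $\gamma$-term collapses to the constant $\gamma/(k+1)$ because the $(\Ch_{(1)})^{(k+1)/2}$ factors cancel. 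This produces exactly the leading terms on the right-hand side of \eqref{eq:uka_chka}. The treatment of $t_k^{(\alpha)}$ is entirely parallel, using \eqref{eq:FromIO_t_la^al} and Lemma~\ref{lem:1toAlpha} at index $k-1$ (where $k-1$ is even iff $k$ is odd), and dividing by $(\Ch_{(1)})^{(k-1)/2}$ instead.

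\medskip

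The only nontrivial bookkeeping concerns the two different error terms. On the one hand, the remainder in \eqref{eq:FromIO_u_la^al} is already of negative $\deg_4$, so dividing does not alter that claim provided division is understood relative to the fixed normalization already in \eqref{eq:FromIO_u_la^al}. On the other hand, the $R_{k+1}$ produced by Lemma~\ref{lem:1toAlpha} has $\deg_4(R_{k+1})\le k$, and after dividing by $(\Ch_{(1)})^{(k+1)/2}$, which has $\deg_4$ equal to $k+1$ (since $\Ch_{(1)}$ has $\deg_4=2$ as noted after the definition of $\deg_4$), the resulting term has $\deg_4 \le k-(k+1) = -1$, hence negative. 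The same arithmetic with $k-1$ in place of $k+1$ handles the $t_k^{(\alpha)}$ case.

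\medskip

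This is the whole argument; no step is a real obstacle, since all the genuine work (the shape of the leading term at $\alpha=1$, the extraction of the coefficient of $R_2^m$ in $\Ch^{(\alpha)}_{(2m)}$ used inside Lemma~\ref{lem:1toAlpha}, and the verification that $\deg_4$ behaves well on $\Polaext$) has already been carried out. The mildest point to watch is the consistency of the filtration bookkeeping: one should make sure that the two different notions of "negative degree" appearing in \eqref{eq:FromIO_u_la^al} and in Lemma~\ref{lem:1toAlpha} are both $\deg_4$ (in the first case, after transporting from $\deg_{\{1\}}$ to $\deg_4$ via Lemma~\ref{lem:ComparisonDegree}), so that once combined they sit together inside the same graded piece of $\Polaext$ and can be absorbed into a single "terms of negative degree" remainder.
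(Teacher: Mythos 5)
Your proposal is correct and follows essentially the same route as the paper: the paper's own proof of this corollary is precisely the one-line combination of equations \eqref{eq:FromIO_u_la^al}--\eqref{eq:FromIO_t_la^al} with Lemma~\ref{lem:1toAlpha}. Your additional bookkeeping (the parity shift $k+1$ even iff $k$ odd, the cancellation of the $(\Ch_{(1)})^{(k+1)/2}$ factors in the $\gamma$-term, and the check that $\deg_4\le k$ minus $\deg_4\big((\Ch_{(1)})^{(k+1)/2}\big)=k+1$ leaves negative degree, with both remainders measured in $\deg_4$ via Lemma~\ref{lem:ComparisonDegree}) is exactly the detail the paper leaves implicit.
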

\begin{proof}
    For any Young diagram $\lambda$,
    equation \eqref{eq:uka_chka} evaluated at $\lambda$
    is obtained from equation~\eqref{eq:FromIO_u_la^al} and Lemma~\ref{lem:1toAlpha}.
    Similarly, equation \eqref{eq:tka_chka} is a consequence of equation~\eqref{eq:FromIO_t_la^al}
     and Lemma~\ref{lem:1toAlpha}.
\end{proof}

Now, we will prove that 
elements of negative degree are asymptotically negligible.

\begin{lemma}
\label{lem:NegDeg=>Zero}
Let $f \in \Polaext$ be a function of degree less than $0$. Then, as $n \to \infty$,
\[ f(\lambda_{(n)}) \xrightarrow{d} 0,\]
where the distribution of $\lambda_{(n)}$ is Jack measure
of size $n$.
\end{lemma}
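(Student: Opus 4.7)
The plan is to reduce to the central limit theorem for Jack characters (Theorem~\ref{theo:FluctuationsJackCharacters}) via the multiplicative basis of $\Polaext$ introduced just above. Since $\bigl(\widetilde{\Ch}^{(\alpha)}_\mu \left(\Ch_{(1)}\right)^{m/2}\bigr)_{m_1(\mu)=0,\ m\in\Z}$ is a linear basis of $\Polaext$ whose element $\widetilde{\Ch}^{(\alpha)}_\mu (\Ch_{(1)})^{m/2}$ has $\deg_4$ equal to $|\mu|+m$, any $f\in\Polaext$ with $\deg_4(f)<0$ admits a finite expansion
\[ f = \sum_{(\mu,m):\ m_1(\mu)=0,\ |\mu|+m<0} c_{\mu,m}\, \widetilde{\Ch}^{(\alpha)}_\mu \bigl(\Ch_{(1)}\bigr)^{m/2}. \]
By linearity, it suffices to prove that $B(\lambda_{(n)}) \xrightarrow{\ \PP\ } 0$ for each basis element $B = \widetilde{\Ch}^{(\alpha)}_\mu \bigl(\Ch_{(1)}\bigr)^{m/2}$ with $|\mu|+m<0$, since convergence in probability to the constant $0$ implies convergence in distribution.

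For such a $B$, using the identity $\Ch_{(1)}(\lambda_{(n)}) = n$, one has
\[ B(\lambda_{(n)}) = n^{m/2}\, \prod_{i=1}^{\ell(\mu)} \Ch^{(\alpha)}_{(\mu_i)}(\lambda_{(n)}). \]
By Theorem~\ref{theo:FluctuationsJackCharacters}, for every integer $k\geq 2$ the rescaled variable $\Ch^{(\alpha)}_{(k)}(\lambda_{(n)})/n^{k/2} = \sqrt{k}\, W_k(\lambda_{(n)})$ converges in distribution to $\sqrt{k}\,\Xi_k$, hence is tight, so $\Ch^{(\alpha)}_{(k)}(\lambda_{(n)}) = O_P(n^{k/2})$. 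Since $m_1(\mu)=0$ ensures $\mu_i \geq 2$ for all $i$, multiplying over the parts yields $\prod_i \Ch^{(\alpha)}_{(\mu_i)}(\lambda_{(n)}) = O_P(n^{|\mu|/2})$, and therefore
\[ B(\lambda_{(n)}) = O_P\bigl(n^{(|\mu|+m)/2}\bigr) \xrightarrow{\ \PP\ } 0, \]
because $|\mu|+m<0$.

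No real obstacle arises; the only point worth noting is that the normalization $n^{k/2}$ in Theorem~\ref{theo:FluctuationsJackCharacters} is precisely aligned with the gradation $\deg_4$ on the multiplicative generators, which is exactly why a negative degree translates cleanly into a negative power of $\sqrt{n}$ and why no further input beyond the tightness already implied by the CLT is needed.
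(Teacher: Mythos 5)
Your proof is correct and follows essentially the same route as the paper: reduce by linearity to the basis elements $\widetilde{\Ch}^{(\alpha)}_\mu \left(\Ch_{(1)}\right)^{m/2}$ of negative $\deg_4$, then use Theorem~\ref{theo:FluctuationsJackCharacters} together with $\Ch_{(1)}(\lambda_{(n)})=n$ to see that each such element is $o$ of a power of $\sqrt{n}$ that tends to $0$. The only cosmetic difference is that you invoke tightness of the rescaled characters rather than their explicit Gaussian limit, which is an equally valid (and slightly more economical) use of the same theorem.
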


\begin{proof}
It is enough to show that, as $n \to \infty$
\begin{equation}\label{eq:NegDeg=>Zero__OnBasis}
 \widetilde{\Ch}_\mu(\lambda_{(n)}) \left(\Ch_1(\lambda_{(n)})\right)^{-m/2} \xrightarrow{d} 0
\end{equation}
for $|\mu| < m$, where the distribution of $\lambda_{(n)}$ is Jack measure
of size $n$. But this is a consequence of Theorem~\ref{theo:FluctuationsJackCharacters}. Indeed, let $\left(\Xi_k \right)_{k=2,3,\dots}$ be a family 
of independent standard Gaussian random variables. Then Theorem~\ref{theo:FluctuationsJackCharacters} states that
\[ \widetilde{\Ch}_\mu(\lambda_{(n)}) \, n^{-|\mu|/2} \xrightarrow{d} \prod_{i}\sqrt{\mu_i} \, \Xi_{\mu_i}.\]
As $\Ch_1(\lambda_{(n)}) \equiv n$, this implies \eqref{eq:NegDeg=>Zero__OnBasis} and finishes the proof.
\end{proof}

Finally, Theorem \ref{theo:Fluctuations} follows from
Corollary~\ref{corol:uktk_chka}, Theorem~\ref{theo:FluctuationsJackCharacters} and Lemma~\ref{lem:NegDeg=>Zero}.

\subsection{Informal reformulation of Theorem \ref{theo:Fluctuations}}
\label{subsect:informal}
Choose, as above, a sequence of independent standard Gaussian random variables $\left(\Xi_k \right)_{k=2,3,\dots}$
and consider the random series
\begin{multline*}
\Delta_\infty^{(\alpha)} (2 \cos(\theta) ):=
\frac{1}{\pi} \sum_{k=2}^\infty \left(\frac{\Xi_k}{\sqrt{k}}-\frac{\gamma}{k} [k\text{ is even}]\right) \sin(k \theta) \\
= \frac{1}{\pi} \sum_{k=2}^\infty \frac{\Xi_k}{\sqrt{k}} \sin(k \theta) - \gamma/4 +\gamma \theta/2\pi.
\end{multline*}
This series is nowhere convergent (almost surely),
but it makes sense as a generalized Gaussian process with values in
the space of generalized functions $(\mathcal{C}^\infty(\RR))'$,
that is the dual of the space of infinitely differentiable functions;
see \cite[Section 9]{IvanovOlshanski2002} for details.

For polynomials $u_\ell(x)$, one has
\begin{multline*}
    \langle u_\ell,\Delta_\infty^{(\alpha)} \rangle = \int_{-2}^2 u_\ell(x) \Delta_\infty^{(\alpha)}(x) dx
= 2 \int_0^\pi u_\ell(2 \cos(\theta)  \Delta_\infty^{(\alpha)}(2 \cos(\theta)) \sin(\theta) d\theta\\
=\frac{2}{\pi} \sum_{k=2}^\infty \left(\frac{\Xi_k}{\sqrt{k}}-\frac{\gamma}{k} [k\text{ is even}]\right)
\int_0^\pi \sin\big( (\ell+1)\theta \big) \sin(k \theta) d\theta.
\end{multline*}
Only the integral for $k=\ell+1$ is non-zero (it is equal to $2/\pi$). Thus we get
\[\langle u_\ell,\Delta_\infty^{(\alpha)} \rangle =
\frac{\Xi_{\ell+1}}{\sqrt{\ell+1}}-\frac{\gamma}{\ell+1} [\ell\text{ is odd}],\]
which, from Theorem~\ref{theo:Fluctuations}, is exactly the limit in distribution of 
\[ \langle u_\ell,\Delta^{(\alpha)}(\lambda_{(n)}) \rangle.\]
As this limit in distribution holds jointly for different values of $\ell$,
by linearity, one can replace $u_\ell$ by any polynomial $P$.
Hence, $\Delta_\infty^{(\alpha)}$ can be informally seen as the limit
of the random functions $\Delta^{(\alpha)}(\lambda_{(n)})$,
which justifies equation~\eqref{eq:informal}.

\appendix

\section{Kerov polynomials}
\label{app:Kerov_polynomials}

In this Section, we answer some questions of Lassalle 
concerning Kerov polynomials \cite{Lassalle2009}.
The results are consequences of the methods or results from Section \ref{SectPolynomial}
and thus fit in the scope of this paper.
However, as Kerov polynomials are 
used in this paper only as a tool, 
we decided to present them in Appendix.

\subsection{Comparison with Lassalle's normalizations}
\label{SubsectPolAvecLassalleConvention}
Recall that our normalization is different than the one used by Lassalle.
As in Section \ref{SubsectAlgo}, we use boldface font for quantities defined
in Lassalle's paper \cite{Lassalle2009}.
Our second bound on the degree of coefficients of $K_\mu$,
implies the following result.
\begin{proposition}
    The coefficient $\bm{c^\mu_\rho}$ of $\bm{R_\rho}$ in $\bm{K_\mu}$
    with Lassalle's normalization is a polynomial in $\alpha$
    divisible by
    $\alpha^{|\rho|-\ell(\rho)}$.
\end{proposition}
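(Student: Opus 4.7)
The plan is to convert the parity-refined degree bound on $c^\mu_\rho$ (the coefficient of $\prod R_{\rho_i}$ in our normalization of $K_\mu$) into the claimed divisibility statement for Lassalle's $\bm{c^\mu_\rho}$. The work is essentially a bookkeeping of normalization constants combined with the substitution $\gamma = \alpha^{-1/2}(1-\alpha)$.

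First I would establish the precise conversion formula between the two normalizations. The reparametrization $\bm{M_k} = \alpha^{-k/2} M_k$ recorded in the proof of Proposition~\ref{PropMkLo} transfers verbatim to free cumulants, giving $\bm{R_k} = \alpha^{-k/2} R_k$; and Section~\ref{SubsectCoefJackPoly} together with the definition of $\Ch_\mu$ in \eqref{eq:definition-Jack} yields $\bm{\Ch_\mu} = \alpha^{(|\mu|-\ell(\mu))/2} \Ch_\mu$. Expanding both $\Ch_\mu = K_\mu(R_2, R_3, \ldots)$ and $\bm{\Ch_\mu} = \bm{K_\mu}(\bm{R_2}, \bm{R_3}, \ldots)$ and identifying the coefficient of $\prod R_{\rho_i}$ yields
\[ \bm{c^\mu_\rho} = \alpha^{(|\mu|-\ell(\mu)+|\rho|)/2}\, c^\mu_\rho. \]

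Next I would invoke Proposition~\ref{PropBound2} and the parity statement from Proposition~\ref{PropBound1} (both of which apply to $K_\mu$, since the transition matrix between $(M_\rho)$ and $(R_\rho)$ has integer coefficients independent of $\gamma$ and hence preserves the parity structure in $\gamma$). Together they say that $c^\mu_\rho \in \QQ[\gamma]$ has degree at most $d := |\mu| - \ell(\mu) - (|\rho| - 2\ell(\rho))$ and involves only powers $\gamma^{i}$ with $i \equiv d \pmod{2}$. Thus we may write $c^\mu_\rho = \sum_{k \ge 0,\, d-2k \ge 0} b_k\, \gamma^{d-2k}$ for some $b_k \in \QQ$.

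Then I would substitute $\gamma^{d-2k} = \alpha^{-(d-2k)/2}(1-\alpha)^{d-2k}$. Using the identity $(|\mu|-\ell(\mu)+|\rho|-d)/2 = |\rho| - \ell(\rho)$, this gives
\[ \bm{c^\mu_\rho} \;=\; \alpha^{|\rho|-\ell(\rho)} \sum_{k \ge 0,\, d-2k \ge 0} b_k\, \alpha^{k}\, (1-\alpha)^{d-2k}, \]
which is manifestly $\alpha^{|\rho|-\ell(\rho)}$ times a polynomial in $\alpha$ with rational coefficients. The proof contains no real obstacle; the only point requiring care is ensuring that the parity refinement genuinely transfers from $L_\mu$ to $K_\mu$, which as noted follows from the $\gamma$-independence of the change of basis between $(M_\rho)$ and $(R_\rho)$.
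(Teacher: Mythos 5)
Your proposal is correct and follows essentially the same route as the paper: both derive the conversion $\bm{c^\mu_\rho} = \alpha^{(|\mu|-\ell(\mu)+|\rho|)/2} c^\mu_\rho$ from the normalizations of $\Ch_\mu$ and $R_k$, then combine the degree bound of Proposition~\ref{PropBound2} with the parity statement of Proposition~\ref{PropBound1} to conclude divisibility by $\alpha^{|\rho|-\ell(\rho)}$ and polynomiality in $\alpha$. Your version merely makes the final substitution $\gamma^{d-2k}=\alpha^{-(d-2k)/2}(1-\alpha)^{d-2k}$ explicit where the paper leaves it implicit.
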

\begin{proof}
Let us start by a comparison of Lassalle's conventions with ours.
If $\mu$ does not contain a part equal to $1$ then
\begin{align*}
    \bm{\vartheta_\mu^\lambda(\alpha)} &=
    z_\mu \theta^{(\alpha)}_{\mu,1^{|\lambda|-|\mu|}}(\lambda),\\
    \intertext{so that } \Ch_\mu(\lambda)&=\alpha^{-\frac{|\mu|-\ell(\mu)}{2}}
     \bm{\vartheta_\mu^\lambda(\alpha)}.\\
    \intertext{Besides, } \bm{R_k(\lambda)} &= \alpha^{-k/2} R_k(\lambda) \\
    \intertext{and } \bm{\vartheta_\mu^\lambda(\alpha)} &= \bm{K_\mu} (\bm{R_2},
    \bm{R_3},\cdots).
\end{align*}
Finally, the coefficient $\bm{c^\mu_\rho}$ of $\bm{R_\rho}$ in $\bm{K_\mu}$
with Lassalle's normalization is related to the coefficient $d^\mu_\rho$ of $R_\rho$
in $K_\mu$ with our conventions by:
\[ \bm{c^\mu_\rho} = \alpha^{\frac{|\mu|-\ell(\mu)}{2}+\frac{|\rho|}{2}} d^\mu_\rho.\]
But we have shown that $d^\mu_\rho$ is a polynomial in $\gamma=\frac{1-\alpha}{\sqrt{\alpha}}$
of degree less than $|\mu|-\ell(\mu)-(|\rho|-2\ell(\rho))$.
Thus $\bm{c^\mu_\rho}$ is a polynomial in $\sqrt{\alpha}$ divisible by
$\alpha^{|\rho|-\ell(\rho)}$.
Our parity result for  $d^\mu_\rho$ (second part of Proposition~\ref{PropBound1})
implies that $\bm{c^\mu_\rho}$ is in fact a polynomial
in $\alpha$.
\end{proof}

Lassalle had only proved in his article that these quantities were rational functions
in $\alpha$.
He conjectured that they are polynomials with integer coefficients 
\cite[Conjecture 1.1]{Lassalle2009}.
Our result is weaker than this conjecture as we are not able to prove the integrity
of the coefficients.
However, we also proved that the polynomials are divisible by $\alpha^{|\rho|-\ell(\rho)}$,
which fits with
Lassalle's data \cite[Section 1]{Lassalle2009}, but was not mentioned by him.

\subsection{Linear terms in Kerov polynomials}
\label{SectLinearTerms}

In this short section, we compute the top degree part of the coefficients of linear
terms in Kerov polynomials. 
This proves a conjecture of Lassalle
\cite[page 31]{Lassalle2009}.

\begin{proposition}
For any integers $k > 0$ and $k-1 \geq i \geq 0$, we have
$$[R_{k+1-i}]K_{(k)} = \left[ \begin{matrix} k\\ k-i\\ \end{matrix} \right] \gamma^i
+ \text{lower degree terms},$$
where $\left[ \begin{matrix} k\\ k-i\\ \end{matrix} \right]$ denotes the
positive Stirling number of the first kind.
\end{proposition}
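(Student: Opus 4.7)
The plan is to compute the top $\gamma$-degree coefficient $s(k,i) := [\gamma^i]\bigl([R_{k+1-i}]K_{(k)}\bigr)$ by exploiting the inductive algorithm of Section~\ref{SubsectAlgo}, and then to verify that $s(k,i)$ satisfies the classical recursion for unsigned Stirling numbers of the first kind.

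First I would reduce to a computation on $L_\mu$. Since $M_k = R_k + \text{polynomial in } R_2,\dots,R_{k-1}$, the expansions of $K_{(k)}$ in free cumulants and of $L_{(k)}$ in moments have the same \emph{linear} part, so
\[ [R_{k+1-i}]K_{(k)} = [M_{k+1-i}]L_{(k)} = a^{(k)}_{(k+1-i)}. \]
By Proposition~\ref{PropBound1} this is a polynomial in $\gamma$ of degree at most $i$ and parity $i$. For $i \leq k-2$ (so that $\tau := (k-i)$ has smallest part $\geq 2$), I would apply equation $(B_\tau)$ of Section~\ref{SubsectAlgo} with $\mu = (k)$: only the pair $(g,\pi) = (k-i-1, \emptyset)$ satisfies $\pi \cup (g+1) = \tau$, and Lemma~\ref{LemTriangle} yields
\[ (k-i)\, a^{(k)}_{(k+1-i)} = k\, a^{(k-1)}_{(k-i)} - \sum_{\rho' >_1 (k+1-i)} b^{\rho'}_{k-i-1,\emptyset}(\gamma)\, a^{(k)}_{\rho'}. \]

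The crucial step is a degree analysis showing that only \emph{single-part} $\rho' = (k')$ can contribute to the $\gamma^i$ coefficient of the right-hand side. For a multi-part $\rho'$ of length $\ell \geq 2$, multiplicativity of $M_{\rho'}$ (cf.\ the proof of Corollary~\ref{CorolMlo}) forces $b^{\rho'}_{k-i-1,\emptyset} = \sum_{g_1+\dots+g_\ell = k-i-1} \prod_j b^{(\rho'_j)}_{g_j,\emptyset}$, and combining this with Lemma~\ref{LemDeg1B} loses $2(\ell-1)$ units of $\gamma$-degree, so $\deg_\gamma\bigl(b^{\rho'}_{k-i-1,\emptyset}\, a^{(k)}_{\rho'}\bigr) \leq i - 2(\ell-1) < i$. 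For a single-part $\rho' = (k')$ with $k' > k+1-i$, Proposition~\ref{PropMkLo} shows that only the summand with $r = 1$, $t = 0$, $s = k'-k+i-1$ reaches the top $\gamma$-degree, contributing $(-1)^{k'-k+i-1}\binom{k'-1}{k'-k+i}\gamma^{k'-k+i-1}$ to $b^{(k')}_{k-i-1,\emptyset}$. Using $a^{(k)}_{(k')} = 0$ for $k' \geq k+2$ (Proposition~\ref{PropBound1}), reindexing $j = k'-k+i-1$, and absorbing $(k-i)\,s(k,i)$ as the $j=0$ term of the resulting sum, one obtains the explicit recursion
\[ k\, s(k-1, i) = \sum_{j=0}^{i} (-1)^j \binom{k-i+j}{j+1}\, s(k, i-j). \]

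Finally I would conclude by strong induction on $(k,i)$, with the base case $s(k,0) = 1$ granted by Corollary~\ref{corol:dominant_deg1}. Substituting the candidate $s(k,i) = \left[\begin{smallmatrix}k\\k-i\end{smallmatrix}\right]$ reduces the verification to a classical binomial--Stirling identity, easily dispatched via the generating function $\sum_k \left[\begin{smallmatrix}k\\j\end{smallmatrix}\right] x^k/k! = (-\log(1-x))^j/j!$ or equivalently via the expansion $x(x+1)\cdots(x+k-1) = \sum_j \left[\begin{smallmatrix}k\\j\end{smallmatrix}\right] x^j$. The boundary case $i = k-1$ (coefficient of $R_2$) cannot use $(B_{(1)})$ since $M_1 \equiv 0$; I would instead apply equation $(A_\emptyset)$, whose analogous top-degree extraction gives the consistency relation $\sum_{j=0}^{k-1}(-1)^j s(k,j) = 0$. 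This pins down $s(k,k-1)$ from the already computed values and matches $\left[\begin{smallmatrix}k\\1\end{smallmatrix}\right] = (k-1)!$ by virtue of the evaluation of the rising-factorial identity at $x=-1$. The main obstacle is the degree-counting in the third step---ruling out multi-part $\rho'$ by tracking the $2(\ell-1)$ loss from multiplicativity, and pinpointing the unique extremal choice $r=1$ in Proposition~\ref{PropMkLo}---after which the resulting identity is purely combinatorial.
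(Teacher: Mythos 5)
Your proposal is correct and follows essentially the same route as the paper's proof: reduce to the linear coefficient $a^{(k)}_{(k+1-i)}$ of $L_{(k)}$, apply equation $(B_\tau)$ of Lassalle's algorithm with $\mu=(k)$, use Proposition~\ref{PropBound1} together with Lemma~\ref{LemDeg2B} to discard multi-part $\rho'$ (the $2(\ell(\rho')-1)$ loss in $\gamma$-degree), extract the extremal $r=1$ term of Proposition~\ref{PropMkLo}, and verify the resulting recursion via the rising-factorial generating function of the Stirling numbers. The only (minor) divergence is your separate treatment of the boundary case $i=k-1$ through $(A_\emptyset)$, which is a slightly more careful handling of a case the paper folds into the general identity, whose right-hand side simply vanishes there.
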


\begin{proof}
We recall that
\[ \Ch_\mu(\lambda) = \sum_\rho a_\rho^\mu M_\rho(\lambda).\]
Thanks to the relation between moments and free cumulants -- see Section \ref{subsect:TransMeasure} -- it is enough to prove that 
$$a^{(k)}_{(k+1-i)} = \left[ \begin{matrix} k\\ k-i\\
\end{matrix} \right] \gamma^i
+ \text{lower degree terms}$$
for any positive integers $k > 0$
and $k-1 \geq i \geq 0$. We will prove it by induction over $k$. For $k=1$ we have
that $K_{(1)} = M_2 = R_2$ and the inductive assertion holds in this case. 

Putting $\mu = (k)$ in Equation \eqref{EqRec2} we have that
$$\sum_\rho a^{(k)}_\rho \left( \sum_{\substack{g,h \geq 0, \\ \pi \vdash h}}
b_{g,\pi}^\rho(\gamma) M_{\pi \cup (g+1)}  \right) = k L_{k-1},$$
hence
$$\sum_\rho a^{(k)}_\rho b_{k-1-i,0}^\rho(\gamma) = k a^{(k-1)}_{(k-i)}$$
for any integer $0 \leq i \leq k-1$.
From Lemma
\ref{LemDeg1B}, $b_{k-1-i,0}^\rho$ vanishes for $|\rho| < k+1-i$.
Moreover, by Proposition \ref{PropBound1} and by Lemma
\ref{LemDeg2B}, we have that
$$\deg_\gamma(a^{(k)}_\rho b_{k-1-i,0}^\rho(\gamma)) \leq k + 1 - |\rho| + |\rho| -
2\ell(\rho) - (k-3-i)= i - 2(\ell(\rho)-1),$$
By inductive hypothesis one has
\begin{equation}
\label{eq:ForLinearTerms}
\sum_{k+1 \geq r \geq k+1-i} a^{(k)}_{(r)} b_{k-1-i,0}^{(r)}(\gamma) = k \left[
\begin{matrix} k-1\\ k-1-i\\
\end{matrix} \right] \gamma^i
+ \text{lower degree terms}
\end{equation}
for any integer $0 \leq i \leq k-1$.
From Proposition \ref{PropMkLo} we know that
$$b_{k-1-i,0}^{(r)}(\gamma) = \binom{r-1}{r-(k-i)}(-\gamma)^r-(k-i+1).$$
Putting it into Equation \eqref{eq:ForLinearTerms} we obtain that in order to
finish the proof it is enough to prove the following identity
(set $r=k+1-i+j$ in the summation index)
$$\sum_{0 \leq j \leq i}\binom{k-i+j}{j+1}(-1)^j \left[ \begin{matrix} k\\
k-i+j\\
\end{matrix} \right] = k\left[ \begin{matrix} k-1\\ k-1-i\\
\end{matrix} \right]$$
for any integer $0 \leq i \leq k-1$.

The following proof has been communicated to us by Goulden.
It uses the fact (see, e.g.,~\cite{GJbook}, Ex. 3.3.17) 
that Stirling numbers of the first kind are defined by 
$$\sum_{j \ge 0} \left[  \begin{matrix} k \\ j\\ \end{matrix} \right]
    x^j = (x)^{(k)},\qquad k\ge 0,$$
using the notation for rising factorials $(a)^{(m)} =a(a+1)\cdots (a+m-1)$
for positive integer $m$, and $(a)^{(0)}=1$. Thus we have
    \begin{align*}
        &\sum_{0 \leq j \leq i} \binom{k-i+j}{j+1}(-1)^j \left[ \begin{matrix}
k\\ k-i+j\\ \end{matrix} \right]  \\
        &= - \sum_{-1 \leq j \leq i} \binom{k-i+j}{k-i-1}(-1)^{j+1} \left[
\begin{matrix} k\\ k-i+j\\ \end{matrix} \right] + \left[ \begin{matrix} k\\
k-i-1\\ \end{matrix} \right]  \\
        &= - \sum_{-1 \leq j \leq i} [x^{k-i-1}] (x-1)^{k-i+j} \left[
\begin{matrix} k\\
k-i+j\\ \end{matrix} \right] + [x^{k-i-1}] (x)^{(k)} \\
        &= - [x^{k-i-1}]\left( \sum_{j \geq 0} \left[
\begin{matrix} k\\
j\\ \end{matrix} \right](x-1)^{j} - \sum_{0 \leq j \leq k-i-2}\left[
\begin{matrix} k\\
j\\ \end{matrix} \right](x-1)^{j} \right) + [x^{k-i-1}] (x)^{(k)} \\
	&= -[x^{k-i-1}](x-1)^{(k)} + [x^{k-i-1}] (x)^{(k)} \\
        &=[x^{k-i-1}] (x)^{(k-1)} \Bigl\{ -(x-1)+(x+k-1) \Bigr\} \\
        &= k \left[ \begin{matrix} k-1 \\ k-i-1 \\ \end{matrix} \right],
    \end{align*}
for all $0 \le i \le k-1$, establishing the required identity.
\end{proof}

\subsection{High degree terms of Kerov polynomials for $\deg_1$}
\label{SectRandomYD}

In Corollary \ref{corol:dominant_deg1},
we have given the highest degree term of $K_\mu$ for $\deg_1$.
We shall now describe the next two terms for a one-part partition $\mu=(k)$.

Let $\m_\pi(\mu)$ denotes the monomial symmetric function indexed by $\pi$
evaluated in variables $\mu_1$,$\mu_2$, \dots. For example,
\[\m_{1^2}(\mu)= \sum_{i<j}\mu_i \mu_j.\]
We also introduce the notation
$\tilde{R}_i =
(i-1)R_i$ and $\tilde{R}_{\mu} =
\prod_i\frac{\tilde{R}_i^{m_i(\mu)}}{m_i(\mu)!}$.

\begin{proposition}
\label{prop:TopDegrees}
    For $k \geq 1$, one has
\begin{multline}
K_{(k)} = R_{k+1} + \gamma \frac{k}{2}\sum_{|\mu| = k}(\ell(\mu)-1)!\tilde{R}_\mu +
\\
\sum_{|\mu| = k-1} \left(\frac{1}{4}\binom{k+1}{3} + \gamma^2 k
\frac{3\m_2(\mu) +
4\m_{1^2}(\mu) + 2\m_1(\mu)}{24}\right)\ell(\mu)!\tilde{R}_\mu + \\
\text{terms of
degree less than $k-1$ with respect to $\deg_1$.}
\end{multline}
\end{proposition}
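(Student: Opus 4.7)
The plan is to prove the three explicit pieces separately by combining the classical Kerov polynomial at $\alpha=1$ with a careful analysis of Lassalle's algorithm from Section~\ref{SubsectAlgo}. The leading term $R_{k+1}$ is exactly Corollary~\ref{corol:dominant_deg1}. For the remaining terms, Proposition~\ref{PropBound1} tells us that the coefficient of $\tilde{R}_\mu$ in $K_{(k)}$ is a polynomial in $\gamma$ of degree at most $k+1-|\mu|$ and of the same parity as $k+1-|\mu|$. Thus partitions $\mu$ with $|\mu|=k$ contribute only a $\gamma^1$ piece, partitions with $|\mu|=k-1$ contribute only $\gamma^0$ and $\gamma^2$ pieces, and all smaller $|\mu|$ yield strictly lower $\deg_1$-degree. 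The $\gamma^0$ part specializes to $\alpha=1$: there it is Kerov's classical formula for $K_{(k)}^{(1)}$ (Biane, \'Sniady), which produces precisely the claimed $\frac{1}{4}\binom{k+1}{3}\sum_{|\mu|=k-1}\ell(\mu)!\,\tilde{R}_\mu$; the consistency of this specialization with the stated coefficients can also be checked directly on the low-$k$ examples listed after Proposition~\ref{prop:Jack-characters-basis}.

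To extract the $\gamma^1$ piece (attached to $|\mu|=k$) and the $\gamma^2$ piece (attached to $|\mu|=k-1$), I proceed by induction on $k$, unwinding Lassalle's relations $(A_\tau)$ and $(B_\tau)$ of Section~\ref{SubsectAlgo} for $\mu=(k)$. As in the proof of Proposition~\ref{PropBound1}, Lemma~\ref{LemTriangle} turns these relations into a triangular linear system whose diagonal is a non-zero rational number; isolating the leading $\gamma$-degree in each equation — using Proposition~\ref{PropMkLo} to read off the top-$\gamma$ contribution to $b^\rho_{g,\pi}(\gamma)$, which comes from the triples $(r,s,t)$ saturating $2r+s+t=k$ — yields a cleaner recursion for those coefficients $a^{(k)}_\rho$ with $|\rho|\in\{k,k-1\}$ that carry the $\gamma$-powers we want. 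I perform the computation in the $M$-basis first, where Lassalle's algorithm is most transparent, and only then translate the result to $\tilde{R}_\mu$ via the relations $M_k=h_k^\star(\II_\lambda-\OO_\lambda)$ and $R_k=e_k^\star(\II_\lambda-\OO_\lambda)$ coming from equation~\eqref{eq:FreeCumulantsLassalle}.

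The main obstacle will be the combinatorial identity needed to verify that the output of this recursion, once translated to the $\tilde{R}_\mu$ basis, matches the claimed weights $(\ell(\mu)-1)!$ and $\frac{3m_2(\mu)+4m_{1^2}(\mu)+2m_1(\mu)}{24}\,\ell(\mu)!$. After unfolding the recursion, the $\gamma^1$ (respectively $\gamma^2$) coefficient comes out as a sum, indexed by compositions of $\mu$, of ratios of factorials reminiscent of multinomial coefficients; pulling out the monomial symmetric combination $3m_2+4m_{1^2}+2m_1$ from these sums requires a Stirling-type identity in the same spirit as the one appearing in Section~\ref{SectLinearTerms}. Verifying this identity is the non-routine core of the proof; the rest is a direct unwinding of Lassalle's algorithm combined with the polynomiality and parity statements of Section~\ref{SectPolynomial}.
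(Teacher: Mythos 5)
Your overall frame is right and matches the paper's first step: write $K_{(k)}=\sum_\mu c_\mu R_\mu$, invoke Proposition~\ref{PropBound1} (degree at most $k+1-|\mu|$ in $\gamma$, with matching parity) to see that only a $\gamma^1$ coefficient can appear for $|\mu|=k$ and only $\gamma^0+\gamma^2$ for $|\mu|=k-1$, and get the $\gamma^0$ part from the known $\alpha=1$ formula. But from there the paper does something much cheaper than what you propose: since each relevant $c_\mu$ is a polynomial in $\gamma$ of degree at most $2$ with known parity, it is pinned down by its values at $\gamma\in\{-1/\sqrt2,0,1/\sqrt2\}$, i.e.\ $\alpha\in\{2,1,1/2\}$, and at $\alpha=1/2,2$ the expansion of $\Ch^{(\alpha)}_{(k)}$ in free cumulants is known explicitly from the zonal-polynomial literature (the combinatorial interpretation of \cite{NousZonal} together with the genus-one computation of \cite{AgnieskaZonalGenus1}); the coefficients for $|\mu|\ge k$ are moreover already in Lassalle's Theorems~10.2--10.3. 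Polynomial interpolation then yields the stated $\gamma$ and $\gamma^2$ coefficients with no new computation.

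Your alternative route for the $\gamma^1$ and $\gamma^2$ pieces --- unwinding Lassalle's triangular system $(A_\tau)$, $(B_\tau)$ and extracting the top $\gamma$-degree of the $b^\rho_{g,\pi}$ --- is plausible in principle (it is how Section~\ref{SectLinearTerms} handles the linear terms), but as written it has a genuine gap: the entire content of the proposition for $|\mu|\in\{k,k-1\}$ is concentrated in the ``Stirling-type identity'' that you announce as ``the non-routine core'' and never state, let alone prove. Without it you have not established the weights $(\ell(\mu)-1)!$ or $\bigl(3\m_2(\mu)+4\m_{1^2}(\mu)+2\m_1(\mu)\bigr)\ell(\mu)!/24$, which are precisely what the proposition asserts; everything you do prove (degree and parity bounds, the $\alpha=1$ specialization) only constrains the shape of the answer. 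To close the argument you would either have to carry out that multi-step recursion and identity in full --- a substantially harder computation than anything in Section~\ref{SectLinearTerms}, since you need all of $|\mu|=k$ and $|\mu|=k-1$, not just one-part $\mu$ --- or fall back on the interpolation at $\alpha=1/2$ and $\alpha=2$ that the paper uses, which requires importing the explicit zonal results rather than re-deriving them.
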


\begin{proof}
Let us write:
$$ K_{(k)} = \sum_{\mu}c_\mu R_\mu.$$
By Proposition \ref{PropBound1}, $c_\mu$ is a polynomial in
$\gamma$ of degree at most $k+1-|\mu|$, hence $c_\mu$ is a polynomial in $\gamma$ of degree at
most $2$ for $|\mu| \geq k-1$.
Moreover, we know explicitly how to express $K_{(k)}$ in terms
of free cumulants for $\alpha \in \{ \frac{1}{2}, 1, 2\}$ (which corresponds to $\gamma \in \{
-\frac{1}{\sqrt{2}}, 0, \frac{1}{\sqrt{2}}\}$).
The case $\alpha=1$ has been solved separately in papers
\cite{GouldenRattan2007,SniadyGenusExpansion},
while the cases $\alpha=1/2$ and $2$ follows from the combinatorial
interpretation given in \cite{NousZonal} and the explicit computation
done in \cite{AgnieskaZonalGenus1}.
\end{proof}

\begin{remark}
One can notice, that the explicit formulas for $c_\mu$ with $|\mu| \geq k$ were
also proved by Lassalle \cite[Theorems 10.2 and 10.3]{Lassalle2009}.
Moreover, our calculations
for $c_\mu$ with $|\mu| = k-1$ are consistent with Lassalle's computer
experiments \cite[p. 2257]{Lassalle2009}, which provide a new evidence to
Conjecture 11.2 of Lassalle \cite{Lassalle2009}.
\end{remark}

\section{Other consequences of the second main result}
\label{app:Matching-Jack_And_Matsumoto}
We present here three consequences of our polynomiality result
for structure constants for Jack characters (see Theorem \ref{theo:struct-const}).
These results were mentioned in the introduction 
(Section \ref{subsect:other_applications}),
but, as they are quite independent of the rest of the paper,
we present them in Appendix.

\subsection{Recovering a recent result of Vassilieva}
\label{sect:Recovering_Katya}
Corollary \ref{CorolConnectionSeries},
which gives a bound on the degree in $\alpha$ of $c_{\mu,\nu;\pi}$,
can be used to give a short proof of a recent result 
of Vassilieva.
In the paper \cite{VassilievaJack}, she considered the following quantity:
for $\mu$ a partition of $n$, let $r=|\mu|-\ell(\mu)$ and
\begin{equation}
    \label{eq:def_Katya}
\bm{a^r_\mu(\alpha)} := \sum_{\lambda \vdash n} \frac{1}{j_\lambda^{(\alpha)}} 
\theta_\mu(\lambda) \left(\theta_{(2,1^{n-2})}(\lambda)\right)^r.
\end{equation}
Using structure constants, we can write: for any partition $\lambda$ of $n$,
\[\left(\theta_{(2,1^{r-2})}(\lambda)\right)^r
=\sum_{\mu^1,\mu^2,\ldots,\mu^r \vdash n \atop \mu^1=(2,1^{r-2})}
\left(\prod_{i=1}^{r-1} c_{\mu^i,(2,1^{r-2});\mu^{i+1}}\right) \theta_{\mu^r}(\lambda).\]
Plugging this into Equation \eqref{eq:def_Katya} and using the orthogonality relation presented in Lemma \ref{lem:identities} \eqref{eq:orthogonality}:
\[\sum_{\lambda \vdash n} \frac{1}{j_\lambda^{(\alpha)}} 
\theta_\mu(\lambda) \theta_\nu(\lambda)
=\frac{\delta_{\mu,\nu}}{z_\mu \alpha^{\ell(\mu)}}\]
(see \cite[Section 3.3]{VassilievaJack}),
we get
\begin{equation}
    \label{eq:Katya_translated}
    \bm{a^r_\mu(\alpha)} = \frac{1}{z_\mu \alpha^{\ell(\mu)}}
\sum_{\mu^1,\mu^2,\ldots,\mu^r \vdash n \atop \mu^1=(2,1^{r-2}), \ \mu^r=\mu}
\prod_{i=1}^{r-1} c_{\mu^i,(2,1^{r-2});\mu^{i+1}}.
\end{equation}
From Corollary \ref{CorolConnectionSeries},
the coefficient $c_{\mu^i,(2,1^{r-2});\mu^{i+1}}$ vanishes unless
\begin{equation}\label{eq:l_mu_increase_by_1}
    |\mu^{i+1}| -\ell(\mu^{i+1}) \le |\mu^i| -\ell(\mu^i) + 1.
\end{equation}
As $|\mu^1|-\ell(\mu^1)=1$ and $|\mu^r|-\ell(\mu^r)=|\mu| - \ell(\mu) = r$,
for any non-zero summand in~\eqref{eq:Katya_translated},
one has equality in~\eqref{eq:l_mu_increase_by_1} for all integers $i$.
But, again from Corollary \ref{CorolConnectionSeries},
equality in~\eqref{eq:l_mu_increase_by_1} implies that the
coefficient $c_{\mu^i,(2,1^{r-2});\mu^{i+1}}$ is independent of $\alpha$.
Hence, the quantity $\alpha^{\ell(\mu)} z_\mu \bm{a^r_\mu(\alpha)}$ is independent on $\alpha$.

In the case $\alpha=1$, it can be interpreted as some number of {\em minimal} factorizations
in the symmetric group (see \cite[Lemma 1]{VassilievaJack} or 
\cite[Proposition 3.1]{GouldenJacksonMatchingJack}),
which has been computed by Dénes in \cite{Denes1959}:
\[z_\mu \bm{a^r_\mu(1)} = \binom{r}{\mu_1-1,\cdots,\mu_{\ell(\mu)-1}} \prod_{i=1}^{\ell(\mu)-1} \mu_i^{\mu_i-2}.\]
Dénes in fact considered only the case $\mu=(n)$, that is minimal factorizations of a cycle,
but it can be easily proved that minimal factorizations of a product of disjoint cycles
are obtained by shuffling factors of minimal factorizations of its cycles.

From the case $\alpha=1$ and the independence on $\alpha$,
we conclude immediately that
\[\bm{a^r_\mu(\alpha)}
= \frac{1}{\alpha^{\ell(\mu)} z_\mu} \binom{r}{\mu_1-1,\cdots,\mu_{\ell(\mu)-1}}
\prod_{i=1}^{\ell(\mu)-1} \mu_i^{\mu_i-2},\]
which is the main result in \cite{VassilievaJack}.

\subsection{Goulden's and Jackson's $b$-conjecture}
\label{sect:b-conj}
In this Section, we explain that our quantities 
$c_{\mu,\nu;\pi}$ (for a general value of the parameter $\alpha$)
are the same as quantities $\bm{c_{\mu,\nu}^\pi(b)}$
considered by Goulden and Jackson in \cite{GouldenJacksonMatchingJack}.
As a consequence, we give a partial answer to a question
raised by these authors.
We use the convention that the boldface quantities
refer to the notations of Goulden and Jackson \cite{GouldenJacksonMatchingJack}.

To establish this connection we will need to use the $\alpha$-scalar
product on the symmetric functions, for which Jack polynomials and
power-sum symmetric functions are orthogonal basis \cite[(VI,10)]{Macdonald1995}.
The following formula is a natural extension of Frobenius counting formula,
see {\em e.g.} \cite[Appendix, Theorem 2]{LandoZvonkin2004}.
\begin{proposition}\label{PropTripleProduit}
    Let $\mu$, $\nu$ and $\pi$ be three partitions of the same integer $n$.
    Then 
    \[c_{\mu,\nu;\pi} = z_\pi \alpha^{\ell(\pi)} \sum_{\lambda \vdash n}
    \frac{\theta_{\pi}(\lambda)\ \theta_{\mu}(\lambda)\ \theta_{\nu}(\lambda)}{
    \langle J_\lambda, J_\lambda \rangle}. \]
\end{proposition}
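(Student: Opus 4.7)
The plan is to derive the formula directly from the definition of the structure constants combined with the orthogonality relation already recorded in Lemma~\ref{lem:identities}\eqref{eq:orthogonality}. The key point is that the family $(\theta_\mu)_{\mu \vdash n}$ is essentially self-dual with respect to the bilinear form $\langle f, g\rangle_n := \sum_{\lambda \vdash n} \frac{f(\lambda) g(\lambda)}{j_\lambda^{(\alpha)}}$ on $\F(\Young_n,\QQ)$, so extracting any coefficient of an expansion in this basis just amounts to computing an inner product.

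First, I would apply equation~\eqref{EqDefC} at an arbitrary partition $\lambda \vdash n$, multiply both sides by $\theta_\pi(\lambda) / j_\lambda^{(\alpha)}$, and sum over $\lambda \vdash n$. This gives
\[
\sum_{\lambda \vdash n} \frac{\theta_\mu(\lambda)\, \theta_\nu(\lambda)\, \theta_\pi(\lambda)}{j_\lambda^{(\alpha)}}
= \sum_{\tau \vdash n} c_{\mu,\nu;\tau} \sum_{\lambda \vdash n} \frac{\theta_\tau(\lambda)\, \theta_\pi(\lambda)}{j_\lambda^{(\alpha)}}.
\]
By Lemma~\ref{lem:identities}\eqref{eq:orthogonality}, the inner sum on the right-hand side equals $\delta_{\tau,\pi}/(z_\pi \alpha^{\ell(\pi)})$, so only the term $\tau = \pi$ survives and we obtain
\[
c_{\mu,\nu;\pi} = z_\pi \alpha^{\ell(\pi)} \sum_{\lambda \vdash n} \frac{\theta_\pi(\lambda)\, \theta_\mu(\lambda)\, \theta_\nu(\lambda)}{j_\lambda^{(\alpha)}}.
\]

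To conclude, it only remains to identify the denominator $j_\lambda^{(\alpha)}$ with $\langle J_\lambda, J_\lambda\rangle$ in the $\alpha$-deformed Hall scalar product. This is a standard fact recorded in \cite[Chapter VI]{Macdonald1995}: Jack polynomials are orthogonal for $\langle \cdot,\cdot\rangle_\alpha$, with squared norm $\langle J_\lambda^{(\alpha)}, J_\lambda^{(\alpha)}\rangle_\alpha = j_\lambda^{(\alpha)}$, which is precisely the quantity appearing in the definition of Jack measure~\eqref{eq:JackMeasure}. Substituting this identity yields the statement.

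There is no real obstacle: the only thing to be careful about is checking that Lemma~\ref{lem:identities}\eqref{eq:orthogonality}, as stated for $\theta_\mu$ on partitions of $n$, matches the normalization of the $\alpha$-scalar product in which $\langle J_\lambda, J_\lambda\rangle = j_\lambda^{(\alpha)}$; both conventions are the ones of \cite[Chapter VI]{Macdonald1995} and are already used consistently throughout the paper (see equation~\eqref{eq:jack-characters} and the proof of Lemma~\ref{lem:fourth_moment}), so this is just a bookkeeping step.
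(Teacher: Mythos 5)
Your proof is correct, and it takes a slightly different (dual) route from the paper's. The paper fixes $\mu,\nu$ and introduces the auxiliary symmetric function $F=\sum_{\lambda\vdash n}\frac{\theta_\mu(\lambda)\theta_\nu(\lambda)}{\langle J_\lambda,J_\lambda\rangle}J_\lambda$, expands it two ways using the defining relation \eqref{EqDefC} together with the identity $p_\pi=\langle p_\pi,p_\pi\rangle\sum_\lambda\frac{\theta_\pi(\lambda)}{\langle J_\lambda,J_\lambda\rangle}J_\lambda$, and then extracts $c_{\mu,\nu;\pi}=\langle F,p_\pi\rangle$; in other words it works inside the symmetric function ring and re-derives, on the fly, exactly the dual orthogonality statement that you instead invoke as Lemma~\ref{lem:identities}~\eqref{eq:orthogonality}. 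Your version stays entirely in the finite-dimensional space $\F(\Young_n,\QQ)$: you pair the relation \eqref{EqDefC} against $\theta_\pi(\lambda)/j_\lambda^{(\alpha)}$ and let the already-recorded orthogonality kill all terms but $\tau=\pi$. The two arguments are transposes of one another (both ultimately rest on the orthogonality of the $J_\lambda$ and the $p_\mu$ for the $\alpha$-deformed Hall product, with $\langle J_\lambda,J_\lambda\rangle=j_\lambda^{(\alpha)}$ and $\langle p_\pi,p_\pi\rangle=z_\pi\alpha^{\ell(\pi)}$); yours is shorter because the needed input is already available as a lemma in the paper (and is used in exactly this way in the Vassilieva application of Appendix~\ref{app:Matching-Jack_And_Matsumoto}), while the paper's is more self-contained in that it only quotes the orthogonality of the two bases of the symmetric function ring. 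Your final bookkeeping point — that the $j_\lambda^{(\alpha)}$ of the Jack measure coincides with $\langle J_\lambda,J_\lambda\rangle$ — is indeed the standard normalization of \cite[Chapter VI]{Macdonald1995} and matches the paper's own identity $j_\lambda^{(\alpha)}=c_\lambda^{(\alpha)}(c'_\lambda)^{(\alpha)}$, so no gap remains.
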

\begin{proof}
    Let partitions $\mu\vdash n$ and $\nu\vdash n$ be fixed.
    We consider the following symmetric function:
    \[F:=\sum_{\lambda \vdash n} \frac{\theta_\mu(\lambda)\ \theta_\nu(\lambda)}{
    \langle J_\lambda, J_\lambda \rangle} J_\lambda.\]
    By definition of $c_{\mu,\nu;\pi}$, one has:
    \begin{equation}\label{EqTec1}
        F=\sum_{\lambda \vdash n} \sum_{\pi \vdash n} c_{\mu,\nu;\pi}
    \left( \frac{\theta_\pi(\lambda)}{\langle J_\lambda, J_\lambda \rangle} J_\lambda \right).
    \end{equation}
    But $\theta_\pi(\lambda)$ is defined by
    \[J_\lambda = \sum_{\pi \vdash n} \theta_\pi(\lambda)\ p_\pi.\]
    As $p_\pi$ is an orthogonal basis, this implies
    \[ \theta_\pi(\lambda) = \frac{\langle J_\lambda, p_\pi \rangle}{\langle p_\pi, p_\pi \rangle}.\]
    But $J_\lambda$ is also an orthogonal basis, hence:
    \begin{equation}\label{EqJinP}
        p_\pi = \sum_\lambda \frac{\langle J_\lambda, p_\pi \rangle}{
    \langle J_\lambda, J_\lambda \rangle} J_\lambda =
    \langle p_\pi, p_\pi \rangle \sum_\lambda 
    \frac{\theta_\pi(\lambda)}{\langle J_\lambda, J_\lambda \rangle} J_\lambda.
\end{equation}
        Plugging this into \eqref{EqTec1}, one has:
    \[F=\sum_{\pi \vdash n} c_{\mu,\nu;\pi} \frac{p_\pi}{\langle p_\pi, p_\pi \rangle}\]
    and thus,
    \begin{multline*}
        c_{\mu,\nu;\pi} = \langle F,p_\pi \rangle =
        \sum_{\lambda \vdash n} \frac{\theta_\mu(\lambda)\ \theta_\nu(\lambda)}{        
            \langle J_\lambda, J_\lambda \rangle} \langle J_\lambda,p_\pi \rangle \\
           = \sum_{\lambda \vdash n} \frac{\theta_\mu(\lambda)\ \theta_\nu(\lambda)}{        
            \langle J_\lambda, J_\lambda \rangle}
            \langle p_\pi, p_\pi \rangle \theta_\pi(\lambda).
        \end{multline*}
        As $\langle p_\pi, p_\pi \rangle= z_\pi \cdot \alpha^{\ell(\pi)}$,
        we obtain the claimed formula.
\end{proof}

Comparing the proposition with the definition of the connection series 
$\bm{c_{\mu,\nu}^\pi(b)}$ \cite[equations (1) and (5)]{GouldenJacksonMatchingJack},
we get that
\begin{equation}
    c_{\mu,\nu;\pi} = \bm{c_{\mu,\nu}^\pi(b)}.
    \label{EqIdentificationWithGJ}
\end{equation}
Goulden and Jackson had conjectured that they were polynomials
with non-negative integer coefficients in $b=\alpha-1$
(which have
conjecturally a combinatorial meaning in terms of matchings ;
see \cite[Section 4]{GouldenJacksonMatchingJack}).
Corollary~\ref{CorolConnectionSeries} implies the following weaker statement,
which was not known yet.

\begin{proposition}
    The connection series $\bm{c_{\mu,\nu}^\pi(b)}$ introduced in \cite{GouldenJacksonMatchingJack}
    is a polynomial in $b$ with rational coefficients of degree at most $d(\mu,\nu;\pi)$.
\end{proposition}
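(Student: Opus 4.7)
The plan is to reduce the statement directly to Corollary~\ref{CorolConnectionSeries} via the identification~\eqref{EqIdentificationWithGJ}. First I would invoke \eqref{EqIdentificationWithGJ} to replace $\bm{c_{\mu,\nu}^\pi(b)}$ by the structure constant $c_{\mu,\nu;\pi}$ of $\F(\Young_n,\QQ)$ equipped with the basis $(\theta_\mu)_{\mu \vdash n}$, where $n = |\mu|=|\nu|=|\pi|$. Since $b=\alpha-1$, polynomiality in $\alpha$ and in $b$ are equivalent, with identical degrees; so the proposition becomes: $c_{\mu,\nu;\pi}$ is a polynomial in $\alpha$ of degree at most $d(\mu,\nu;\pi)$.

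Next, I would write each partition in the form obtained by removing all parts equal to $1$, namely $\mu = \tilde\mu \cup 1^{n-|\tilde\mu|}$ and similarly for $\nu$ and $\pi$, where $\tilde\mu,\tilde\nu,\tilde\pi$ have no part equal to $1$. Then Corollary~\ref{CorolConnectionSeries} applies and tells us that $c_{\mu,\nu;\pi} = c_{\tilde\mu 1^{n-|\tilde\mu|},\tilde\nu 1^{n-|\tilde\nu|};\tilde\pi 1^{n-|\tilde\pi|}}$ is a polynomial in $n$ and $\alpha$, of degree at most $d(\tilde\mu,\tilde\nu;\tilde\pi)$ in $\alpha$.

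Then I would simply verify that removing parts equal to $1$ leaves the quantity $d$ unchanged: indeed, $|\tilde\mu| - \ell(\tilde\mu) = (|\mu| - m_1(\mu)) - (\ell(\mu)-m_1(\mu)) = |\mu|-\ell(\mu)$, and analogously for $\nu$ and $\pi$, so that $d(\tilde\mu,\tilde\nu;\tilde\pi) = d(\mu,\nu;\pi)$. Substituting, we conclude that $c_{\mu,\nu;\pi}$ is a polynomial in $\alpha$ of degree at most $d(\mu,\nu;\pi)$, and hence a polynomial in $b = \alpha -1$ of the same degree, which is what had to be shown.

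There is essentially no obstacle here: the proof consists in chaining together the identification with the Goulden--Jackson normalization, the polynomiality established in Corollary~\ref{CorolConnectionSeries}, and a trivial invariance check for~$d$. All the work has already been done upstream in Theorem~\ref{theo:struct-const} and Proposition~\ref{PropStructureTheta}; the only point worth emphasizing in the write-up is that the degree bound in $\alpha$ asserted by Corollary~\ref{CorolConnectionSeries}, unlike the degree bound in $n$, is insensitive to trailing parts equal to $1$, which is what permits the reduction from general $\mu,\nu,\pi \vdash n$ to the parts-without-$1$ setting of the corollary.
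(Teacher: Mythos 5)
Your proof is correct and follows exactly the paper's route: the paper likewise deduces the proposition from the identification \eqref{EqIdentificationWithGJ} together with Corollary~\ref{CorolConnectionSeries}, the only (implicit) verification being that $d(\mu,\nu;\pi)$ is unchanged by stripping parts equal to $1$, which you spell out. No issues.
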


\subsection{Symmetric functions of contents}\label{SubsectMatsumoto}
In this section we consider a closely related problem
considered by Matsumoto in \cite[Section 8]{MatsumotoOddJM}
in connection with matrix integrals.
Our results allow us to prove two conjectures stated in his paper.

For a box $\Box =(i,j)$ of a Young diagram $\lambda$ ($i$ is the row-index,
$j$ is the column index and $j\le \lambda_i$), we define its
\emph{($\alpha$-)content}
as $c(\Box) = \sqrt{\alpha} (j-1) - \sqrt{\alpha}^{-1} (i-1)$.
The \emph{alphabet of the contents} of $\lambda$ is the multiset 
$\CCC_\lambda=\{c(\Box) : \Box \in \lambda\}$.

Matsumoto \cite[Equation (8.9)]{MatsumotoOddJM} (beware that in his paper the
normalization is different than ours) showed the following remarkable result: for any partition $\lambda$
\begin{equation}
    e_k(\CCC_\lambda) = \sum_{\substack{\mu: \\ |\mu|-\ell(\mu)=k,\\ m_1(\mu)=0}}
    \frac{\Ch_\mu(\lambda)}{z_\mu}.
    \label{EqElemJM}
\end{equation}
In particular, $\lambda \mapsto e_k(\CCC_\lambda)$ is a shifted symmetric function.
Therefore, for any symmetric function $F$, the map $\lambda \mapsto F(\CCC_\lambda)$
is also a shifted symmetric function and one may wonder how it can be expressed in the $\Ch$ basis.
Explicitly, we are interested in the coefficients $a_\mu(F)$ defined by:
\begin{equation}\label{DefAMuF}
    F(\CCC_\lambda) = \sum_{\mu \text{ partition}}
a_\mu(F) \Ch_\mu(\lambda).
\end{equation}

Using the results of Section \ref{SubsectStructPol}, one has the following result:
\begin{proposition}\label{PropDegJM}
    Let $F$ be a symmetric function of degree $d$ and let $\mu$ be a partition.
    The coefficient $a_\mu(F)$ is a polynomial in $\gamma$ of degree at most
    $$d - (|\mu|-\ell(\mu)+m_1(\mu)).$$
\end{proposition}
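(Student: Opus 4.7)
The plan is to reduce the statement to the multiplicative structure of the $\Ch_\mu$ basis, for which Theorem~\ref{theo:struct-const} provides exactly the bound in terms of $n_3$ that we need. Since the elementary symmetric functions $(e_k)_{k \ge 1}$ form an algebraic basis of the ring of symmetric functions, and since for each fixed $\lambda$ the map $F \mapsto F(\CCC_\lambda)$ is a ring homomorphism (it is an evaluation on the multi-alphabet $\CCC_\lambda$), any homogeneous symmetric function $F$ of degree $d$ can be written as a rational linear combination of monomials $e_{k_1}\cdots e_{k_r}$ with $k_1+\cdots+k_r=d$. By linearity it suffices to prove the bound when $F$ is such a monomial, the non-homogeneous case then following by summing over homogeneous components.

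The base case $r=1$ is handled directly by Matsumoto's formula~\eqref{EqElemJM}: the sum there is restricted to partitions $\mu$ with $m_1(\mu)=0$ and $|\mu|-\ell(\mu)=k$, so $n_3(\mu)=|\mu|-\ell(\mu)+m_1(\mu)=k$ for every contributing $\mu$, and the coefficient $1/z_\mu$ is a constant polynomial in $\gamma$ of degree $0=k-n_3(\mu)$, matching the required bound. It is precisely the constraint $m_1(\mu)=0$ built into~\eqref{EqElemJM} that makes the base case saturate the $n_3$-inequality; this is why the $n_3$-component of the minimum in Theorem~\ref{theo:struct-const} is the right one to use throughout.

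For the inductive step, assume $e_{k_1}\cdots e_{k_{r-1}}(\CCC_\lambda)=\sum_\nu b_\nu\,\Ch_\nu(\lambda)$ with $\deg_\gamma b_\nu\le (k_1+\cdots+k_{r-1})-n_3(\nu)$. Multiplying by the base-case expansion $e_{k_r}(\CCC_\lambda)=\sum_{\nu'}z_{\nu'}^{-1}\Ch_{\nu'}(\lambda)$, each contributing $\nu'$ satisfying $n_3(\nu')=k_r$, and expanding each product $\Ch_\nu\Ch_{\nu'}=\sum_\mu g^{(\alpha)}_{\nu,\nu';\mu}\Ch_\mu$ via Theorem~\ref{theo:struct-const} with the $n_3$-bound $\deg_\gamma g^{(\alpha)}_{\nu,\nu';\mu}\le n_3(\nu)+n_3(\nu')-n_3(\mu)$, we obtain the coefficient $a_\mu=\sum_{\nu,\nu'}b_\nu\,z_{\nu'}^{-1}\,g^{(\alpha)}_{\nu,\nu';\mu}$, whence
\[
\deg_\gamma a_\mu \le \max_{\nu,\nu'}\Bigl((k_1+\cdots+k_{r-1})-n_3(\nu)+n_3(\nu)+n_3(\nu')-n_3(\mu)\Bigr)=d-n_3(\mu).
\]

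I do not foresee a substantial obstacle: the argument is a clean induction on the number of elementary factors, closed by the fact that Theorem~\ref{theo:struct-const} supplies a bound tailored precisely to the gradation $\deg_3$. The only point worth emphasizing is that among the three gradations in Theorem~\ref{theo:struct-const}, it is $n_3$ that both matches the form of the proposition's bound and is preserved along the induction thanks to the $m_1(\mu)=0$ restriction in Matsumoto's formula.
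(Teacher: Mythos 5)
Your proof is correct and follows essentially the same route as the paper: the base case $F=e_k$ from Matsumoto's formula \eqref{EqElemJM}, linearity, and multiplicativity via the $n_3$-bound of Theorem~\ref{theo:struct-const}. The paper states this in three lines; your version merely spells out the degree bookkeeping in the inductive step.
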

\begin{proof}
    From~\eqref{EqElemJM}, the proposition is true for $F=e_k$ for any $k\geq 1$.
    Besides, if it is true for two symmetric functions $F_1$ and $F_2$, it is
clearly true for any linear combination
    of them.
    Using Theorem \ref{theo:struct-const}, it is also true for $F_1 \cdot F_2$.
    Since the elementary symmetric functions form a basis of symmetric functions, 
    it follows that the proposition is true for any symmetric function $F$.
\end{proof}

From now on, we use the convention that
the boldface quantities refer to the notation of Matsumoto.
The coefficients $a_\mu(F)$ are closely related to the quantities
$\bm{\mathcal{A}_\mu^{(\alpha)}(F,n)}$ introduced by S.~Matsumoto \cite{MatsumotoOddJM}.
Namely, one has the following lemma (which extends \cite[Lemma 8.5]{MatsumotoOddJM}):
\begin{lemma}
    Let $\mu$ be a partition.
    For $n \geq |\mu|+\ell(\mu)$, let $\pi := \mu + (1^{n-|\mu|})$ be the partition obtained from $\mu$ by adding $1$ to every part and adding new parts equal to $1$.
    Then, for any homogeneous symmetric function $F$ of degree $d$, one has:
    \[\bm{\mathcal{A}_\mu^{(\alpha)}(F,n)} =
    \alpha^{\frac{d-(|\pi|-\ell(\pi))}{2}}
    \left[ \sum_{i \leq m_1(\pi)}\,
        a_{\tilde{\pi} 1^i}(F)\, z_{\tilde{\pi}} \, i!  \, 
            \binom{n-|\tilde{\pi}|}{i} \right],\]
    \label{LemLinkMatsumoto}
    where $\bm{\mathcal{A}_\mu^{(\alpha)}(F,n)}$ is the quantity defined in 
    \cite[Section 8.3]{MatsumotoOddJM}.
\end{lemma}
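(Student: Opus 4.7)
The plan is to derive the formula by direct substitution of \eqref{DefAMuF} into the expansion of $\lambda \mapsto F(\CCC_\lambda)$ (restricted to $\lambda \vdash n$) in the basis $(\theta^{(\alpha)}_\rho)_{\rho \vdash n}$, and then matching coefficients with Matsumoto's normalization convention. The starting point is Matsumoto's definition, which expresses $\bm{\mathcal{A}_M^{(\alpha)}(F,n)}$ as (a fixed $\alpha^{d/2}$ multiple of) the coefficient of $\theta^{(\alpha)}_{\pi(M)}$ in the unique expansion of $F(\CCC_{\cdot})$ in this basis; note that for $F = e_k$ and $M = (k-1)$ this reduces, via \eqref{EqElemJM}, to \cite[Lemma~8.5]{MatsumotoOddJM}.

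First, I would substitute \eqref{DefAMuF} and then use the definition of $\Ch^{(\alpha)}_\nu$ from Section~\ref{SectDefKerov} to rewrite, for any $\lambda \vdash n$,
\[
F(\CCC_\lambda) \;=\; \sum_{\nu:\,|\nu| \le n} a_\nu(F)\,\alpha^{-(|\nu|-\ell(\nu))/2}\, z_\nu\,\binom{n - |\nu| + m_1(\nu)}{m_1(\nu)}\, \theta^{(\alpha)}_{\nu\, 1^{n-|\nu|}}(\lambda).
\]
Next, I would decompose $\nu = \tilde{\nu} \cup (1^j)$, where $\tilde{\nu}$ denotes the sub-partition of parts $\ge 2$ and $0 \le j \le n - |\tilde{\nu}|$, and reorganize the sum as a double sum over $\tilde{\nu}$ and $j$. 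The crucial observation is that the partition $\nu \cup (1^{n-|\nu|}) = \tilde{\nu} \cup (1^{n-|\tilde{\nu}|})$ depends only on $\tilde{\nu}$ (not on $j$), so the factor $\theta^{(\alpha)}_{\tilde{\nu}\,1^{n-|\tilde{\nu}|}}(\lambda)$ pulls out of the inner $j$-sum. Using the simplifications $|\nu| - \ell(\nu) = |\tilde{\nu}| - \ell(\tilde{\nu})$, $z_\nu = z_{\tilde{\nu}} \cdot j!$, and $\binom{n-|\nu|+m_1(\nu)}{m_1(\nu)} = \binom{n-|\tilde{\nu}|}{j}$, this yields, after grouping,
\[
F(\CCC_\lambda) \;=\; \sum_{\tilde{\nu} \text{ no part } 1} \alpha^{-(|\tilde{\nu}|-\ell(\tilde{\nu}))/2}\, z_{\tilde{\nu}}\,\theta^{(\alpha)}_{\tilde{\nu}\,1^{n-|\tilde{\nu}|}}(\lambda) \sum_{j \ge 0} a_{\tilde{\nu}\,1^j}(F)\, j!\,\binom{n-|\tilde{\nu}|}{j}.
\]
Since every $\rho \vdash n$ is uniquely of the form $\tilde{\rho}\,1^{m_1(\rho)}$ with $\tilde{\rho}$ free of parts equal to $1$, the above is the (unique) expansion in the basis $(\theta^{(\alpha)}_\rho)_{\rho \vdash n}$.

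Finally, I would match Matsumoto's indexing by setting $\rho = \pi(M)$ and $\tilde{\rho} = \tilde{\pi}(M) = (M_1+1,\ldots,M_{\ell(M)}+1)$, so that $|\tilde{\rho}| - \ell(\tilde{\rho}) = |M|$ and $|\pi(M)| - \ell(\pi(M)) = |M|$. Multiplying the extracted coefficient by $\alpha^{d/2}$ (Matsumoto's normalization, which accounts precisely for the shift from $-|M|/2$ to $(d-|M|)/2$ in the exponent) produces the right-hand side of the lemma, with the summation bound $i \le m_1(\pi) = n - |M|$ being effectively cut off at $n - |\tilde{\pi}|$ by the binomial coefficient.

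The main obstacle is essentially bookkeeping rather than conceptual: one has to track carefully the $\alpha$-powers, the interaction of $z_\nu$ with $z_{\tilde{\nu}}$, and the combinatorial reindexing $\nu \leftrightarrow (\tilde{\nu}, j)$, and then verify that the global $\alpha^{d/2}$ factor is exactly what is needed to reconcile our normalization of $\Ch^{(\alpha)}_\mu$ with Matsumoto's convention for $\bm{\mathcal{A}}$. Once the rearrangement of sums is done cleanly, the identification of coefficients in the $(\theta^{(\alpha)}_\rho)_{\rho \vdash n}$ basis is automatic.
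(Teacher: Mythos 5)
Your computation is essentially the paper's: both proofs substitute \eqref{DefAMuF} into $F(\CCC_\lambda)$, unpack the definition of $\Ch^{(\alpha)}_\nu$, reindex the sum over $\nu$ with $|\nu|\le n$ as a double sum over $\pi\vdash n$ and $i=m_1(\nu)\le m_1(\pi)$ (using $z_\nu=z_{\tilde\nu}\,i!$, $|\nu|-\ell(\nu)=|\pi|-\ell(\pi)$ and $\binom{n-|\nu|+m_1(\nu)}{m_1(\nu)}=\binom{n-|\tilde\nu|}{i}$), and read off the coefficient of $\theta^{(\alpha)}_\pi$ in the basis $(\theta^{(\alpha)}_\rho)_{\rho\vdash n}$. (Your sign $\alpha^{-(|\nu|-\ell(\nu))/2}$ is the one consistent with the definition of $\Ch^{(\alpha)}_\mu$ and with the exponent in the statement.)

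The one step you treat as definitional is not quite Matsumoto's definition: $\bm{\mathcal{A}_\mu^{(\alpha)}(F,n)}$ is defined in \cite[Section 8.3]{MatsumotoOddJM} as (up to the factor $\alpha^{d/2}$) the weighted sum $\sum_{\lambda\vdash n}F(\CCC_\lambda)\,\theta_\pi(\lambda)\,\langle p_\pi,p_\pi\rangle/\langle J_\lambda,J_\lambda\rangle$, not as ``the coefficient of $\theta_\pi$.'' The identification of this weighted sum with the coefficient-extraction functional is exactly what the second half of the paper's proof establishes, by expanding $\sum_\lambda F(\CCC_\lambda)J_\lambda/\langle J_\lambda,J_\lambda\rangle$ in the $p_\pi$ basis via the dual-basis relation \eqref{EqJinP} and pairing with $p_\pi$. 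Equivalently, it is a one-line consequence of the orthogonality relation of Lemma~\ref{lem:identities}\,\eqref{eq:orthogonality} together with $\langle p_\pi,p_\pi\rangle=z_\pi\alpha^{\ell(\pi)}$ and $j_\lambda^{(\alpha)}=\langle J_\lambda,J_\lambda\rangle$. So your argument is correct, but to be complete you must insert this orthogonality computation rather than declare the coefficient identification ``automatic''; it is the only genuinely non-bookkeeping ingredient of the proof.
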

\begin{proof}
    If we fix the integer $n$, one may rewrite
    Equation~\eqref{DefAMuF} using the definition of $\Ch$:
    \begin{multline*}
    F(\CCC_\lambda) = \sum_{\substack{\nu,\\ |\nu| \leq n}}
    a_\nu(F) \alpha^{\frac{|\nu|-\ell(\nu)}{2}} z_\nu \binom{n-|\nu|+m_1(\nu)}{m_1(\nu)}
    \theta_{\nu 1^{n-|\nu|}}(\lambda) \\
    = \sum_{\pi \vdash n}  \theta_\pi(\lambda)
    \left[\alpha^{\frac{|\pi|-\ell(\pi)}{2}} \sum_{i \leq m_1(\pi)}
    a_{\tilde{\pi} 1^i}(F) z_{\tilde{\pi}} i! 
    \binom{n-|\tilde{\pi}|}{i} \right].
    \end{multline*}
    The notations are the same as in Section~\ref{SubsectProjN}.
    The second equality comes from the fact that each partition $\nu$ of size
    at most $n$ writes uniquely as $\tilde{\pi} 1^i$ where $\pi$ is a partition
    of $n$ and $i$ a non-negative integer smaller or equal to $m_1(\pi)$.
    Let $A_\pi$ denotes the expression in the bracket in the equation above.

    As in the proof of Proposition~\ref{PropTripleProduit} we shall use
    the Jack deformation of Hall scalar product on the space of symmetric functions.
    \[ \sum_{\lambda \vdash n} F(\CCC_\lambda) 
    \frac{J_\lambda}{\langle J_\lambda, J_\lambda \rangle} 
    = \sum_{\lambda,\pi \vdash n} A_\pi \theta_\pi(\lambda) 
    \frac{J_\lambda}{\langle J_\lambda, J_\lambda \rangle} 
    = \sum_{\pi \vdash n} A_\pi \frac{p_\pi}{\langle p_\pi,p_\pi \rangle}.\]
    The last equality corresponds to \eqref{EqJinP}.
    We deduce that
    \[A_\pi = \left\langle \sum_{\lambda \vdash n} F(\CCC_\lambda)             
        \frac{J_\lambda}{\langle J_\lambda, J_\lambda \rangle},
        p_\pi \right\rangle =
        \sum_{\lambda \vdash n} F(\CCC_\lambda) \frac{\theta_\pi(\lambda)
        \cdot \langle p_\pi,p_\pi \rangle
        }{\langle J_\lambda, J_\lambda \rangle}.\]
    This formula coincides with the definition of $\bm{\mathcal{A}_\mu^{(\alpha)}(F,n)}$
    in \cite[paragraph 8.3]{MatsumotoOddJM} up to a scalar multiplication, namely,
    \[ \bm{\mathcal{A}_\mu^{(\alpha)}(F,n)} = \alpha^{d/2} A_\pi.\]
    The only difficulty is the difference of notations.
    To help the reader, we provide the following dictionary.
    First recall that $\langle p_\pi,p_\pi \rangle = z_\pi\ \alpha^{\ell(\pi)}$.
    Then our partition $\pi$ corresponds to $\bm{\mu + (1^{n-|\mu|})}$.
    In particular, one has  $\bm{|\mu|}=|\pi|-\ell(\pi)$ and $\bm{z_{\mu + (1^{n-|\mu|})}}=z_\pi$.
    Besides, $F(\CCC_\lambda)$ in our paper corresponds to
    $\bm{\alpha^{d/2}\ F(A_\lambda^{\alpha})}$ in \cite{MatsumotoOddJM}.
    Finally, the probability $\bm{\PP_n^{(\alpha)}(\lambda)}$ is simply given
    by $\frac{n! \alpha^n}{\langle J_\lambda, J_\lambda \rangle}$.
\end{proof}

Proposition~\ref{PropDegJM}, when translated into Matsumoto's notation by
Lemma~\ref{LemLinkMatsumoto}, has several interesting consequences.
As above, we consider an homogeneous symmetric function $F$ of degree $d$.
\begin{itemize}
    \item If $d=|\mu|$, the only term of the sum which can be non-zero
        corresponds to $i=0$ (by Proposition~\ref{PropDegJM}).
        Moreover, it does not depend on $\alpha$.
        Besides, the exponent of $\alpha$ in the formula is equal to zero.
        Finally, $\bm{\mathcal{A}_\mu^{(\alpha)}(F,n)}$ does not depend
        neither on $\alpha$ nor on $n$, which proves \cite[Conjecture 9.2]{MatsumotoOddJM}.
    \item If $d=|\mu|+1$, there are only two terms
        (corresponding to $i=0,1$) which can be
        non-zero in the sum.
        Besides, the coefficient $a_{\tilde{\pi} 1}$ does not depend on $\alpha$
        because of Proposition~\ref{PropDegJM}.
        But it is easy to prove that it is equal to $0$ in the case $\alpha=1$
        (it comes from the combinatorial interpretation of $\bm{\mathcal{A}_\mu^{(1)}(F,n)}$,
        see \cite[Example 9.2]{MatsumotoOddJM}).
        Hence, $a_{\tilde{\pi} 1}=0$ and only the term corresponding to $i=0$
        is non-zero.
        In particular, one can see that $\bm{\mathcal{A}_\mu^{(\alpha)}(F,n)}$
        does not depend on $n$, which proves \cite[Conjecture 9.3]{MatsumotoOddJM}.
    \item In the general case, non-zero terms of the sum are indexed by 
        values of $i$ smaller or equal to $d - |\mu|$ (by Proposition~\ref{PropDegJM}).
        Hence $\bm{\mathcal{A}_\mu^{(\alpha)}(F,n)}$ is a polynomial in $n$ of degree
        at most $d - |\mu|$.
        This result is not stronger that the bound of Matsumoto on the
        degree of $\bm{\mathcal{A}_\mu^{(\alpha)}(F,n)}$ \cite[Theorem 8.8]{MatsumotoOddJM}.
        Nevertheless, it is better in some cases 
        and we also have some control on the dependence on $\alpha$
        (as illustrated by the proofs of the
        conjectures above).
\end{itemize}
\section*{Acknowledgments}

We thank I.~Goulden and P.~\'Sniady for enlightening discussions on the subject
and relevant pieces of advice for the presentation of the paper.

\bibliographystyle{alpha}

\bibliography{biblio2011}
\end{document}